\theoremstyle{plain} 
 \newtheorem{thm}{Theorem}[section]
 \newtheorem{lem}[thm]{Lemma}
 \newtheorem{cor}[thm]{Corollary}
 \newtheorem{prop}[thm]{Proposition}
 \newtheorem{claim}[thm]{Claim}
\theoremstyle{definition}
  \newtheorem{defn}[thm]{Definition}
  \newtheorem{ex}[thm]{Example}
  \newtheorem{notation}[thm]{Notation}
\theoremstyle{remark}
  \newtheorem{rem}[thm]{Remark}
\newcommand{\comm}{{\rm Comm}}
\newcommand{\stab}{{\rm Stab}}
\newcommand{\aut}{{\rm Aut}}
\newcommand{\ad}{{\rm Ad}}
\newcommand{\isom}{{\rm Isom}}
\newcommand{\cal}{\mathcal}
\newcommand{\ci}[2]{\cite[#1]{#2}}
\renewcommand{\c}{\curvearrowright}
\newcommand{\diag}{{\rm diag}}
\newcommand{\faq}[2]{\noindent#1\quad \dotfill\quad\makebox[1em][r]{#2}\par}
\begin{document}

\title[Rigidity of amalgamated free products]{Rigidity of amalgamated free products\\ in measure equivalence}
\author{Yoshikata Kida}
\address{Department of Mathematics, Kyoto University, 606-8502 Kyoto, Japan}
\email{kida@math.kyoto-u.ac.jp}
\date{February 17, 2009, revised on February 24, 2011}
\subjclass[2010]{20E06, 20E08, 37A20.}
\keywords{Amalgamated free products, Bass-Serre trees, measure equivalence, orbit equivalence}

\begin{abstract}
A discrete countable group $\Gamma$ is said to be ME rigid if any discrete countable group that is measure equivalent to $\Gamma$ is virtually isomorphic to $\Gamma$.
In this paper, we construct ME rigid groups by amalgamating two groups satisfying rigidity in a sense of measure equivalence.
A class of amalgamated free products is introduced, and discrete countable groups which are measure equivalent to a group in that class are investigated.
\end{abstract}

\maketitle

\tableofcontents

\section{Introduction}\label{sec-int}

The aspect of rigidity is one of the major focuses of recent advance in the study of measure equivalence.
Among other things, fascinating rigidity is discovered for higher rank lattices and for mapping class groups of compact orientable surfaces (see \cite{furman-mer} and \cite{kida-mer}, respectively).
The aim of this paper is to construct rigid groups by amalgamating two rigid groups, where rigidity is formulated in terms of measure equivalence and orbit equivalence.
All amalgamated free products in the theorems stated below are obtained by amalgamating groups over their infinite subgroups.
In particular, we do not deal with free products.
In general, free products do not satisfy such superrigidity even if their factor subgroups are rigid groups mentioned already.
We refer to \cite{al-gab} and \cite{ipp} for rigidity of strong type for free products.
We shall recall the definition of measure equivalence.
Throughout the paper, we refer to discrete countable groups as discrete groups for simplicity.

\begin{defn}[\ci{0.5.E}{gromov-as-inv}]\label{defn-me}
Two discrete groups $\Gamma$ and $\Lambda$ are said to be {\it measure equivalent (ME)} if one has a standard Borel space $(\Sigma, m)$ with a $\sigma$-finite positive measure and a measure-preserving action of $\Gamma \times \Lambda$ on $(\Sigma, m)$ such that there exist Borel subsets $X, Y\subset \Sigma$ with the equality
\[\Sigma =\bigsqcup_{\gamma \in \Gamma}\gamma Y=\bigsqcup_{\lambda \in \Lambda}\lambda X\]
up to $m$-null sets.
In this case, the space $(\Sigma, m)$ equipped with the action of $\Gamma \times \Lambda$ is called a {\it coupling} of $\Gamma$ and $\Lambda$. 
The ratio $m(X)/m(Y)$ is called the {\it coupling constant} for the coupling $(\Sigma, m)$.
When $\Gamma$ and $\Lambda$ are ME, we write $\Gamma \sim_{\rm ME}\Lambda$.
\end{defn}

Indeed, ME defines an equivalence relation between discrete groups (see Section 2 in \cite{furman-mer}).
A basic problem is to determine the class of discrete groups ME to a given group.
Any two lattices in a locally compact second countable group are ME.
Any two virtually isomorphic groups are ME, where two discrete groups are said to be {\it virtually isomorphic} if they are isomorphic up to finitely many operations taking finite index subgroups and taking the quotients by finite normal subgroups.
We recommend the reader to consult \cite{furman-survey}, \cite{gab-survey} and \cite{shalom-survey} for basic knowledge and recent achievements related to measure equivalence.

One purpose of this paper is to describe a group ME to an amalgamated free product $\Gamma =\Gamma_1\ast_A\Gamma_2$ when some conditions are imposed on the inclusions $A<\Gamma_1$ and $A<\Gamma_2$.
It is widely known that one can construct the simplicial tree $T$, called the Bass-Serre tree, associated with the decomposition of $\Gamma$, on which $\Gamma$ acts by simplicial automorphisms.
This tree $T$ gives us a geometric viewpoint in the study of algebraic structure of $\Gamma$.
Theorem \ref{thm-lqn-str} below tells us that in a certain case, if $\Lambda$ is a discrete group ME to $\Gamma$, then $\Lambda$ also acts on $T$ so that the structure of $\Lambda$ can be understood through the Bass-Serre theory \cite{serre}.

Let us introduce notation to state Theorem \ref{thm-lqn-str}.
For a group $G$ and a subgroup $H$ of $G$, we define the {\it left quasi-normalizer} of $H$ in $G$ as the subsemigroup
\[{\rm LQN}_G(H)=\{ \, g\in G \mid [H: gHg^{-1}\cap H]<\infty \, \}\]
of $G$ containing $H$.
Note that the equality ${\rm LQN}_G(H)=H$ holds if and only if each orbit for the action $H\c (G/H)\setminus \{ H\}$ defined by left multiplication consists of infinitely many points.

\begin{thm}\label{thm-lqn-str}
Let $\Gamma =\Gamma_1\ast_A\Gamma_2$ be an amalgamated free product of discrete groups such that for each $i=1, 2$,
\begin{itemize}
\item $\Gamma_i$ satisfies property (T); and
\item $|A|=\infty$, $[\Gamma_i: A]=\infty$ and ${\rm LQN}_{\Gamma_i}(A)=A$.
\end{itemize}
We denote by $T$ the Bass-Serre tree associated with the decomposition of $\Gamma$.
Then the following assertions hold:
\begin{enumerate}
\item If $\Lambda$ is a discrete group ME to $\Gamma$, then there exists a subgroup $\Lambda_+$ of $\Lambda$ of index at most two acting on $T$ without inversions so that the stabilizer of each simplex $s$ of $T$ for the action $\Lambda_+\c T$ is ME to the stabilizer of $s$ for the action $\Gamma \c T$.
\item In assertion (i), if the action $A\c \Sigma /\Lambda$ is furthermore ergodic, then $\Lambda_+$ is decomposed as an amalgamated free product $\Lambda_+=\Lambda_1\ast_B\Lambda_2$ with $\Gamma_i\sim_{\rm ME}\Lambda_i$ for each $i=1, 2$ and $A\sim_{\rm ME}B$. 
\end{enumerate}
\end{thm}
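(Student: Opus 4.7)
The plan is to mirror Kida's strategy from the mapping class group setting, with the Bass-Serre tree $T$ playing the role of the curve complex. The overarching goal is to extract, from any ME-coupling $(\Sigma,m)$ of $\Gamma$ and $\Lambda$, a simplicial $\Lambda_{+}$-action on $T$ whose stabilizers correspond, under ME, to the $\Gamma$-stabilizers. First I would reformulate the ME data as a measurable cocycle $\alpha:\Gamma\times Y\to\Lambda$ (equivalently, as discrete measured equivalence relations on fundamental domains $X,Y$), so that subgroups of $\Gamma$ correspond to subrelations on the $\Lambda$-side. Passing to $\Gamma$ modulo the finite kernel of the $T$-action is harmless at the level of ME, and allows us to treat $\Gamma$ as acting faithfully on $T$.

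The crux is the measure-theoretic recognition of vertex and edge stabilizers. For each edge $e$ of $T$ the stabilizer $\Gamma_{e}$ is conjugate to $A$, and for each vertex $v$ the stabilizer $\Gamma_{v}$ is conjugate to $\Gamma_{1}$ or $\Gamma_{2}$. Property~(T) of $\Gamma_{i}$ rigidifies the vertex subrelations (it is preserved under ME at the relation level and constrains cocycles sharply), while the hypothesis $\mathrm{LQN}_{\Gamma_{i}}(A)=A$ is what guarantees the reverse reconstruction: an $A$-subrelation uniquely determines the $\Gamma_{i}$-subrelation containing it, because otherwise elements of a broader local quasi-normalizer would produce measure-theoretically indistinguishable candidates. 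Patching these recognitions together, one builds a Borel, $\Lambda$-equivariant assignment of tree simplices, and hence an action of $\Lambda$ on $T$. Because $\Lambda$ may permute the two $\Gamma$-orbits of vertex types (the bipartition of $T$), passing to the index-at-most-two subgroup $\Lambda_{+}$ preserving the bipartition yields an action without inversions whose simplex stabilizers are ME to the corresponding $\Gamma$-stabilizers; this gives~(i).

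For~(ii) the extra ergodicity of $A\curvearrowright\Sigma/\Lambda$ forces the $\Lambda_{+}$-action on $T$ to have exactly one orbit of edges and one orbit of each vertex type. Standard Bass-Serre theory then decomposes $\Lambda_{+}$ as $\Lambda_{1}\ast_{B}\Lambda_{2}$, where $\Lambda_{i}$ and $B$ are the respective stabilizers, and the ME equivalences $\Gamma_{i}\sim_{\mathrm{ME}}\Lambda_{i}$ and $A\sim_{\mathrm{ME}}B$ are inherited from~(i).

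The \emph{main obstacle} is the stabilizer recognition step in the second paragraph: marrying property~(T) (to pin down rigid behavior of $\Gamma_{i}$-subrelations inside the coupling) with the LQN condition (to preclude extraneous candidate vertex stabilizers arising from a given edge subrelation) requires substantial measured-groupoid machinery, likely paralleling the intersection and almost-malnormality arguments from Kida's earlier work. Everything after this recognition, including the passage from (i) to (ii) via Bass-Serre theory, is comparatively formal.
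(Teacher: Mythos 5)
Your proposal follows essentially the same route as the paper: recognize the vertex- and edge-stabilizer subgroupoids inside the coupling using the Adams--Spatzier property (T) theorem for trees together with the ${\rm LQN}$ hypothesis (which, as in the paper's Claims in Section \ref{sec-red}, forces an edge subgroupoid to ``see'' only its two endpoints), pass to the orientation-preserving subgroup $\Lambda_{+}$ of index at most two, realize each $\Sigma_{s}=\Phi^{-1}(\stab(s))$ as a coupling of $\Gamma_{s}$ and $\Lambda_{s}$ for assertion (i), and use ergodicity of $A\c \Sigma/\Lambda$ plus Bass--Serre theory for assertion (ii). The one compression worth noting is that your ``hence an action of $\Lambda$ on $T$'' conceals the necessary detour through coupling rigidity for \emph{self}-couplings of $\Gamma$ (where the recognition argument can be run symmetrically on both sides) followed by Furman's representation theorem, which is how the paper converts the measurable, groupoid-level data into an honest homomorphism $\rho\colon\Lambda\rightarrow\aut^{*}(T)$ for an unknown $\Lambda$.
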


As a next step, we deduce a superrigidity result in terms of orbit equivalence on the assumption that the factor subgroups $\Gamma_1$, $\Gamma_2$ of an amalgamated free product $\Gamma_1\ast_A\Gamma_2$ satisfy rigidity in a sense of ME.
There is a close connection between orbit equivalence and measure equivalence, whose details will be reviewed in Section \ref{subsec-me-woe}.
We say that a Borel action of a discrete group $\Gamma$ on a measure space $(X, \mu)$ is {\it f.f.m.p.\ }if $\mu$ is a finite positive measure on $X$ and if the action is essentially free and preserves $\mu$.
Two ergodic f.f.m.p.\ actions $\Gamma \c (X, \mu)$ and $\Lambda \c (Y, \nu)$ are said to be {\it orbit equivalent (OE)} if there exists a Borel isomorphism $f$ between conull Borel subsets of $X$ and $Y$ such that $f_*\mu$ and $\nu$ are equivalent and the equality $f(\Gamma x)=\Lambda f(x)$ holds for a.e.\ $x\in X$.

A notable example of rigid groups in measure equivalence is a lattice in a non-compact connected simple Lie group $G$ with its center trivial and its real rank at least two. 
Furman \cite{furman-mer} proves that a discrete group is ME to such a lattice if and only if it is virtually isomorphic to a lattice in $G$.
Theorem \ref{thm-oe-rigidity} below shows superrigidity of an ergodic f.f.m.p.\ action of $\Gamma_1\ast_A\Gamma_2$ with $\Gamma_1$ and $\Gamma_2$ such lattices, where an ergodicity condition is imposed on the action of $A$.
Let us say that a measure-preserving action of a discrete group $A$ on a measure space is {\it aperiodic} if any finite index subgroup of $A$ acts on it ergodically.

\begin{thm}\label{thm-oe-rigidity}
Let $\Gamma =\Gamma_1\ast_A\Gamma_2$ be an amalgamated free product of discrete groups such that for each $i=1, 2$,
\begin{itemize}
\item $\Gamma_i$ is isomorphic to a lattice in a non-compact connected simple Lie group with its center trivial and its real rank at least two; and
\item $|A|=\infty$, $[\Gamma_i: A]=\infty$ and ${\rm LQN}_{\Gamma_i}(A)=A$.
\end{itemize}
Let $\Gamma \c (X, \mu)$ be an ergodic f.f.m.p.\ action such that the restriction $A\c (X, \mu)$ is aperiodic. If the action $\Gamma \c (X, \mu)$ is OE to an ergodic f.f.m.p.\ action $\Lambda \c (Y, \nu)$ of a discrete group $\Lambda$, then the cocycle $\alpha \colon \Gamma \times X\rightarrow \Lambda$ associated with the OE is cohomologous to the cocycle arising from an isomorphism from $\Gamma$ onto $\Lambda$.
In particular, the two actions $\Gamma \c (X, \mu)$ and $\Lambda \c (Y, \nu)$ are conjugate.
\end{thm}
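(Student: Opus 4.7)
The plan is to convert the orbit equivalence into a measure equivalence coupling $(\Sigma, m)$ of $\Gamma$ and $\Lambda$ in the standard fashion, so that the $\Lambda$-action identifies $\Sigma/\Lambda$ with $(X,\mu)$ up to scaling. Since $A\c (X,\mu)$ is aperiodic, every finite index subgroup of $A$ acts ergodically on $\Sigma/\Lambda$; in particular $A\c \Sigma/\Lambda$ is ergodic, so Theorem \ref{thm-lqn-str} applies in full. I therefore obtain a subgroup $\Lambda_{+}<\Lambda$ of index at most two, an amalgamated decomposition $\Lambda_{+}=\Lambda_{1}\ast_{B}\Lambda_{2}$ with $\Gamma_{i}\sim_{\rm ME}\Lambda_{i}$ and $A\sim_{\rm ME}B$, together with an action of $\Lambda_{+}$ on the Bass--Serre tree $T$ whose vertex and edge stabilizers are ME to the corresponding stabilizers for $\Gamma \c T$.

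Next I would feed this structural result into Furman's ME and cocycle superrigidity theorems from \cite{furman-mer}. By hypothesis each $\Gamma_{i}$ is a lattice in a centerless simple Lie group $G_{i}$ of real rank at least two, so the ME identification $\Gamma_{i}\sim_{\rm ME}\Lambda_{i}$ forces $\Lambda_{i}$ to be isomorphic to a lattice in $G_{i}$ (using centerlessness to eliminate the finite normal subgroup ambiguity in virtual isomorphism). More importantly, Furman's cocycle superrigidity applied to the restricted cocycle $\alpha_{i}:=\alpha|_{\Gamma_{i}\times X}$ produces a Borel map $\varphi_{i}\colon X\rightarrow \Lambda$ and a homomorphism $\pi_{i}\colon \Gamma_{i}\rightarrow \Lambda$ (with image commensurable with $\Lambda_{i}$) such that $\alpha_{i}$ is cohomologous via $\varphi_{i}$ to the cocycle induced from $\pi_{i}$.

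The crucial patching step is to align the two coboundaries $\varphi_{1}$ and $\varphi_{2}$. Restricting each identification to $A$ gives two representations of $\alpha|_{A\times X}$ as coming from homomorphisms $\pi_{i}|_{A}\colon A\rightarrow \Lambda$. Here aperiodicity of $A\c(X,\mu)$ is decisive: it rules out non-trivial ergodic decomposition on $X$, so the coboundary relating these two representations is (essentially) a constant element of $\Lambda$, which may be absorbed by post-composing one of the $\pi_{i}$ with an inner automorphism. After this adjustment $\pi_{1}|_{A}=\pi_{2}|_{A}$, and the universal property of the amalgamated free product yields a single homomorphism $\pi\colon \Gamma \rightarrow \Lambda$ whose induced cocycle is cohomologous to $\alpha$. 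Using the $\Gamma$-action on $T$ and its transfer under $\pi$ to $\Lambda_{+}\c T$, one sees that $\pi$ is injective with image $\Lambda_{+}$; to upgrade this to $\pi(\Gamma)=\Lambda$, I would use that essential freeness of $\Lambda \c (Y,\nu)$ forces the induced cocycle to be valued in all of $\Lambda$, so any index-two gap would contradict the cocycle relation. Once $\pi$ is an isomorphism the conjugacy of the two actions follows automatically.

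The main obstacle is the patching paragraph: producing a \emph{single} coboundary $\varphi\colon X\rightarrow \Lambda$ that simultaneously trivializes $\alpha_{1}$ and $\alpha_{2}$ relative to compatible $\pi_{i}$. The cocycle superrigidity theorem only supplies $\varphi_{i}$ up to a coboundary in $\Lambda$, and on a non-ergodic $A$-space this ambiguity could be measurably twisted and obstruct the amalgamation. Aperiodicity of $A\c (X,\mu)$ is precisely what kills this twist, so the entire argument hinges on exploiting that hypothesis to promote a pointwise measurable identification to a single global element of $\Lambda$ that intertwines the two homomorphisms on $A$.
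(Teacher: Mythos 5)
Your overall architecture matches the paper's: pass from the OE to a coupling, use coupling rigidity with respect to $\aut^{*}(T)$ to get the tree action and the decomposition $\Lambda_{+}=\Lambda_{1}\ast_{B}\Lambda_{2}$ with small couplings $\Sigma_{v}$, $\Sigma_{e}$, apply Furman's rigidity vertex by vertex, and then patch the two identifications over the edge group using aperiodicity of $A$. The patching step you describe (the discrepancy on $A$ is an essentially constant element of $\Lambda$, absorbed by an inner twist, after which the universal property of the amalgam gives a single homomorphism) is exactly the paper's second Claim in the proof of Theorem \ref{thm-oer}.

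However, there is a genuine gap at the step where you invoke ``Furman's cocycle superrigidity'' to produce a Borel map $\varphi_{i}\colon X\rightarrow\Lambda$ and a homomorphism $\pi_{i}\colon\Gamma_{i}\rightarrow\Lambda$ untwisting $\alpha|_{\Gamma_{i}\times X}$. That is not what Furman's theorem gives. For a single higher rank lattice $\Gamma_{i}$ the conclusion you want is false: two non-isomorphic lattices in the same $G_{i}$ are ME (even WOE) via the standard coupling $\ad G_{i}$, and in that case the equivariant map $\Phi_{i}\colon\Sigma_{i}\rightarrow\aut(\ad G_{i})$ pushes the measure to the Haar measure on cosets of $\ad G_{i}$; the cocycle untwists only into the ambient Lie group, with a coboundary valued in $\aut(\ad G_{i})$, and no homomorphism $\Gamma_{i}\rightarrow\Lambda_{i}$ exists. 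To get a $\Lambda$-valued untwisting you must first rule out this Haar alternative for each vertex coupling $\Sigma_{v_{i}}$, i.e.\ prove that $(\Phi_{i})_{*}(m|_{\Sigma_{i}})$ is atomic. This is the content of Proposition \ref{prop-moore} and Corollary \ref{cor-moore}: one exploits that $\Sigma_{i}$ contains the smaller coupling $\Sigma_{e}$, so the ME cocycle of $\Sigma_{i}$ restricted to the infinite subgroup $A$ takes values in the infinite-index subgroup $\Lambda_{e}$, and Moore's ergodicity theorem shows this is impossible for a Haar-type coupling. Your proposal contains no substitute for this argument, and without it the homomorphisms $\pi_{i}$ you feed into the patching step need not exist. (A secondary, fixable issue: to conclude $\rho_{i}(\Lambda_{i})=\pi_{i}(\Gamma_{i})$ exactly rather than commensurably, the paper uses aperiodicity of $\Gamma_{i}\curvearrowright\Sigma_{i}/\Lambda_{i}$ together with the coupling-constant equality $m(X)=m(Y)$ coming from the OE hypothesis; your appeal to essential freeness of $\Lambda\curvearrowright(Y,\nu)$ does not by itself force surjectivity of $\pi$.)
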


We refer to Corollary \ref{cor-oer} for a more general form of this theorem.
If the action of $A$ is not assumed to be aperiodic, then the conclusion on the cocycle $\alpha$ does not hold in general.
A counterexample can be obtained by twisting an action of either $\Gamma_1$ or $\Gamma_2$.
This topic is discussed in Section \ref{sec-twist}.

Finally, we present new examples of ME rigid groups.
A discrete group $\Gamma$ is said to be {\it ME rigid} if any discrete group ME to $\Gamma$ is virtually isomorphic to $\Gamma$.
As shown in \cite{kida-mer}, mapping class groups of non-exceptional compact orientable surfaces are the first example of infinite ME rigid groups.

For a group $G$ and a subgroup $\Gamma$ of $G$, we denote by $\comm_G(\Gamma)$ the (relative) commensurator of $\Gamma$ in $G$ (see Definition \ref{defn-comm}).

\begin{thm}\label{thm-mer}
Let $G^0$ be a non-compact connected simple Lie group with its center trivial and its real rank at least two, and put $G=\aut(G^0)$.
Let $\Gamma$ be a lattice in $G^0$, and let $A$ be a subgroup of $\Gamma$ such that
\begin{itemize}
\item $|A|=\infty$, $[\Gamma : A]=\infty$ and ${\rm LQN}_{\Gamma}(A)=A$; and
\item the Dirac measure on the neutral element is the only probability measure on $\comm_G(\Gamma)$ that is invariant under conjugation by any element of $A$.
\end{itemize}
Then the amalgamated free product $\Gamma \ast_A\Gamma$ is ME rigid.
\end{thm}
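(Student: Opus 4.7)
The plan is to run the structure theorem from Theorem \ref{thm-lqn-str} and then pin down the pieces using Furman's ME rigidity of higher rank lattices, with the second hypothesis of Theorem \ref{thm-mer} playing the role of killing an ambiguity on the edge group. Let $\Lambda$ be a discrete group with $\Lambda \sim_{\rm ME}\Gamma \ast_{A}\Gamma$, and fix a coupling $(\Sigma, m)$. Since $\Gamma$ is a higher rank lattice in $G^{0}$, it has property (T) and (by Borel density, as $G^{0}$ has trivial center) trivial center, so the kernel of the action $\Gamma \ast_{A}\Gamma \c T$ on the associated Bass-Serre tree is trivial. Together with the hypotheses on $A$, this verifies the standing assumptions of Theorem \ref{thm-lqn-str}, so that assertion (i) produces a subgroup $\Lambda_{+}<\Lambda$ of index at most two acting on $T$ without inversions, whose simplex stabilizers are ME to the corresponding stabilizers for $\Gamma \ast_{A}\Gamma \c T$.

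To invoke assertion (ii), I would first arrange ergodicity of $A\c \Sigma /\Lambda$. The natural way is to pass to an ergodic component of this action; the conditions $|A|=\infty$ and ${\rm LQN}_{\Gamma}(A)=A$ guarantee that each component carries the structure of a genuine sub-coupling of the same pair $(\Gamma \ast_{A}\Gamma,\Lambda)$, so it is enough to work component by component. After this reduction Theorem \ref{thm-lqn-str}(ii) yields an amalgamated decomposition $\Lambda_{+}=\Lambda_{1}\ast_{B}\Lambda_{2}$ with $\Lambda_{i}\sim_{\rm ME}\Gamma$ for $i=1,2$ and $B\sim_{\rm ME}A$.

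Next, I would apply Furman's ME rigidity for higher rank lattices: each $\Lambda_{i}$, being ME to $\Gamma$, is virtually isomorphic to a lattice in $G=\aut(G^{0})$. After replacing $\Lambda$ by a finite index subgroup and quotienting by a finite normal subgroup (operations irrelevant for virtual isomorphism), I may identify each $\Lambda_{i}$ with a subgroup of $G$ commensurable with $\Gamma$, these identifications coming from the Furman cocycles associated to the induced sub-couplings $\Gamma \sim_{\rm ME}\Lambda_{i}$. The edge subgroup $B$ is thereby identified with a subgroup commensurable with $A$.

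The main obstacle is to ensure that the two identifications of $B$ obtained from the two vertex inclusions $B<\Lambda_{1}$ and $B<\Lambda_{2}$ are compatible, so that $\Lambda_{+}$ virtually equals $\Gamma \ast_{A}\Gamma$ rather than some twisted variant. Each Furman cocycle is canonical only up to conjugation by an element of $\comm_{G}(\Gamma)$, so the discrepancy between the two identifications on $B$ is measured by an $A$-equivariant map from the underlying coupling space into elements of $\comm_{G}(\Gamma)$ that conjugate (a finite index subgroup of) $A$ into a commensurable subgroup, and hence takes values in $\comm_{G}(\Gamma)\cap \comm_{G}(A)$. Pushing forward the probability measure on the coupling base yields an $A$-conjugation-invariant probability measure on $\comm_{G}(\Gamma)\cap \comm_{G}(A)$, which by the second hypothesis of the theorem must be the Dirac mass at the neutral element. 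This forces the two identifications to agree on $B$, so after the usual finite index and finite kernel adjustments $\Lambda_{+}\cong \Gamma \ast_{A}\Gamma$. Combined with $[\Lambda :\Lambda_{+}]\leq 2$, this shows that $\Lambda$ is virtually isomorphic to $\Gamma \ast_{A}\Gamma$, proving ME rigidity.
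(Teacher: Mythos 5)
Your outline breaks down at the very first reduction. You propose to pass to an ergodic component of the action $A\c \Sigma/\Lambda$ and claim that each component "carries the structure of a genuine sub-coupling of the same pair." It does not: an ergodic component of $A\c \Sigma/\Lambda$ is only $A$-invariant (and $\Lambda$-invariant upstairs), not invariant under the other factor copies of $\Gamma$ in $\Gamma\ast_{A}\Gamma$, so it is not a coupling of $(\Gamma\ast_{A}\Gamma,\Lambda)$ and Theorem \ref{thm-lqn-str}(ii) cannot be applied to it. This is not a technicality one can wave away: the entire point of Theorem \ref{thm-mer}, as opposed to Theorems \ref{thm-lqn-str}(ii) and \ref{thm-oe-rigidity}, is that no ergodicity of the $A$-action is assumed, and the paper's proof (Section \ref{sec-mer}) is organized precisely to avoid this reduction. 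Instead of decomposing $\Lambda_{+}$ as an amalgam, the paper constructs, simplex by simplex, equivariant maps $\Phi_{v}$ from the small couplings $\Sigma_{v}$ into the relative commensurators $C_{v}$, glues them into a single almost equivariant map from $\Sigma$ into the amalgam $G=C_{1}\ast_{C(\bar{A})}C_{2}$ of commensurators (Theorem \ref{thm-mer-ddagger}), converts this into coupling rigidity with respect to $\comm(\Gamma\ast_{A}\Gamma)$ via Lemma \ref{lem-red-comm}, and only then invokes Proposition \ref{prop-conseq-rigid} to get ME rigidity. Your use of the second hypothesis (the ICC-type condition on $\comm_{G}(\Gamma)\cap\comm_{G}(A)$) to kill the discrepancy between the two edge identifications is in the right spirit — it is exactly condition (f) of Assumption $(\ddagger)$, used in Claim \ref{claim-coincide} — but it is deployed inside a framework that does not apply.

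Two further steps are asserted without justification and each hides a real theorem. First, Furman's rigidity only tells you that $\Lambda_{i}$ is virtually isomorphic to \emph{some} lattice in $G$; to identify $\Lambda_{i}$ with a subgroup of $G$ commensurable with $\Gamma$ itself you must rule out that the equivariant map $\Sigma_{i}\rightarrow\aut(\ad G^{0})$ pushes the measure to the Haar measure (the "standard coupling" case). The paper does this in Proposition \ref{prop-moore} and Corollary \ref{cor-moore} using Moore's ergodicity theorem together with the presence of the infinite-index sub-coupling $\Sigma_{e}\subset\Sigma_{v}$; this is where the hypotheses $[\Gamma:A]=\infty$ and $|A|=\infty$ do real work, and it is absent from your argument. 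Second, even granting compatible identifications, "after the usual finite index and finite kernel adjustments $\Lambda_{+}\cong\Gamma\ast_{A}\Gamma$" is not a routine step: a finite index subgroup of $\Lambda_{1}\ast_{B}\Lambda_{2}$ is a fundamental group of a graph of groups, generally not an amalgam of finite index subgroups of $\Lambda_{1}$ and $\Lambda_{2}$, so commensurability of the pieces does not by itself yield virtual isomorphism of the amalgams. The paper sidesteps this by producing the representation of $\Lambda$ into the countable group $\comm(\Gamma\ast_{A}\Gamma)$ with finite kernel and image commensurable with the image of $\Gamma\ast_{A}\Gamma$ (via Lemma \ref{lem-image-atomic}), which is what actually delivers virtual isomorphism; note also that the self-couplings one must control there include those reversing the orientation of $T$, a case your argument never meets.
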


It is also shown that any ergodic f.f.m.p.\ action of the group $\Gamma \ast_A\Gamma$ in Theorem \ref{thm-mer} is superrigid, where no ergodicity condition is imposed on the action of any proper subgroup.
As an application of Theorem \ref{thm-mer}, we obtain the following examples.
General examples of the same type are presented in Theorems \ref{thm-mer-tensor} and \ref{thm-mer-but}.

\begin{thm}\label{thm-mer-ex}
We define $\Gamma$ and $A$ as in either the following (a) or (b):
\begin{enumerate}
\item[(a)] Fix an isomorphism $f\colon \mathbb{R}^2\otimes \mathbb{R}^3\rightarrow \mathbb{R}^6$ between real vector spaces so that $f(\mathbb{Z}^2\otimes \mathbb{Z}^3)=\mathbb{Z}^6$.
Define $\Gamma$ and $A$ to be the groups of linear automorphisms on $\mathbb{R}^6$ given as follows:
\[\Gamma =SL(6, \mathbb{Z}),\quad A=f(SL(2, \mathbb{Z})\otimes SL(3, \mathbb{Z}))f^{-1}.\]
\item[(b)] Put $\Gamma =SL(3, \mathbb{Z})$ and define $A$ to be the group consisting of all matrices in $SL(3, \mathbb{Z})$ both of whose $(2, 1)$- and $(3, 1)$-entries are zero. 
\end{enumerate}
Then the amalgamated free product $\Gamma \ast_A\Gamma$ is ME rigid.
\end{thm}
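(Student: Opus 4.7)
The plan is to apply Theorem \ref{thm-mer} in each case by verifying its hypotheses, after choosing the ambient simple Lie group $G^{0}$ appropriately. For case (b) take $G^{0}=SL(3,\mathbb{R})$, which has trivial center (since $3$ is odd) and real rank $2$; here $\Gamma =SL(3,\mathbb{Z})$ is directly a lattice in $G^{0}$. For case (a) take $G^{0}=PSL(4,\mathbb{R})$, which has trivial center and real rank $3$; since $-I=f((-I_{2})\otimes I_{2})f^{-1}$ belongs to $A$, it is central in $SL(4,\mathbb{Z})\ast_{A}SL(4,\mathbb{Z})$, so this amalgamated free product is virtually isomorphic to $PSL(4,\mathbb{Z})\ast_{\bar{A}}PSL(4,\mathbb{Z})$. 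Since ME rigidity is preserved by virtual isomorphism, it suffices to treat the latter via Theorem \ref{thm-mer} with $\Gamma =PSL(4,\mathbb{Z})$ a lattice in $G^{0}$. The conditions $|A|=\infty$ and $[\Gamma :A]=\infty$ are clear by inspection.

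Next I verify ${\rm LQN}_{\Gamma}(A)=A$, which by the remark after Theorem \ref{thm-lqn-str} amounts to showing that every nontrivial orbit of $A\c \Gamma /A$ is infinite. In case (b), $A$ is the stabilizer of the pair $\pm e_{1}$, so $\Gamma /A$ is in bijection with the primitive vectors of $\mathbb{Z}^{3}$ modulo sign via $gA\mapsto \pm ge_{1}$; for $v=(a,b,c)^{T}$ with $(b,c)\neq (0,0)$, the $GL(2,\mathbb{Z})$-block inside $A$ moves $(b,c)^{T}$ through infinitely many primitive vectors of the same gcd. In case (a), one invokes a Zariski-closure argument: the closure of $A$ is the algebraic group $GL(2)\otimes GL(2)\subset GL(4)$, whose normalizer is $(GL(2)\otimes GL(2))\cdot \{1,\text{swap}\}$; any $g$ commensurating $A$ must lie in this normalizer, and since the swap has determinant $-1$ the $SL(4,\mathbb{Z})$-points of the normalizer reduce to $A$ itself.

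The core step is the measure condition on $\comm_{G}(\Gamma)\cap \comm_{G}(A)$ with $G=\aut(G^{0})$. By the Margulis commensurator theorem for arithmetic lattices, $\comm_{G}(\Gamma)$ is identified with $PGL(n,\mathbb{Q})$ extended by the transpose-inverse outer involution, so $\comm_{G}(\Gamma)\cap \comm_{G}(A)$ is the rational commensurator of $A$ with possible outer twist: in case (b) it is the $\mathbb{Q}$-parabolic stabilizing $\mathbb{Q}e_{1}$, and in case (a) it is the $\mathbb{Q}$-normalizer of the tensor decomposition $\mathbb{Q}^{2}\otimes \mathbb{Q}^{2}$. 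To exclude nontrivial $A$-invariant probability measures, one exhibits a semisimple element $a\in A$ whose conjugation action on this space is sufficiently contracting that only $\delta_{e}$ is preserved: in case (b) a diagonal matrix in $A$ with widely separated eigenvalues does the job, and in case (a) a tensor product of such diagonals from each $GL(2,\mathbb{Z})$ factor.

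The main obstacle is this last step, especially in case (a): one has to pin down $\comm_{G}(\Gamma)\cap \comm_{G}(A)$ explicitly when $A$ is a tensor-product subgroup, and then show that the conjugation dynamics of $A$ precludes all nontrivial invariant probability measures. This requires a careful joint analysis of the rational commensurator and of the semisimple elements of $A$ acting with expanding dynamics in the relevant directions, whereas the parabolic case (b) follows a more familiar pattern.
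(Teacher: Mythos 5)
Your overall strategy---reduce to the centerless case and verify the hypotheses of Theorem \ref{thm-mer}---is exactly the paper's strategy, and your verifications of $|A|=\infty$, $[\Gamma :A]=\infty$ and ${\rm LQN}_{\Gamma}(A)=A$ are essentially the arguments given there (an elementary orbit count for (b), a Zariski-closure/normalizer argument for (a); in (a) you still owe the identification of the integral points of the normalizer of $SL(2)\otimes SL(2)$ with $GL(2,\mathbb{Z})\otimes GL(2,\mathbb{Z})$, which the paper proves in a separate lemma). The genuine gap is at the measure condition, which you yourself flag as the main obstacle. Since $\comm_{G}(\Gamma)\cap \comm_{G}(A)$ is countable, any $A$-conjugation-invariant probability measure is purely atomic and is carried by finite $A$-conjugation orbits, so the condition to prove is that no non-trivial element of this group centralizes a finite-index subgroup of $A$. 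Your proposed mechanism---a single semisimple $a\in A$ whose conjugation is ``sufficiently contracting''---cannot establish this: conjugation by any single $a$ fixes $a$ itself, which lies in the space in question (as $A<\Gamma <\comm_{G}(\Gamma)\cap \comm_{G}(A)$), and indeed fixes the entire rational centralizer of $a$, which for a hyperbolic element of the $SL(2,\mathbb{Z})$-block of $A$ contains an infinite rational torus. (Also, in case (b) there is no diagonal matrix in $SL(3,\mathbb{Z})$ with widely separated eigenvalues; you would have to use a non-diagonal semisimple element, which only makes the fixed-point problem more visible.)

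What is actually needed, and what the paper supplies, is that for every finite-index subgroup $A'$ of $A$ the identity is the only automorphism of the ambient simple group fixing $A'$ pointwise; this is Proposition \ref{prop-tensor-ddagger} (ii) in case (a) and Proposition \ref{prop-but-ddagger} (ii) in case (b). The proof requires three ingredients absent from your sketch: Dieudonn\'e's description of $\aut(SL(n, \mathbb{R}))$ (Theorem \ref{thm-dieudonne}), reducing to conjugations and transpose-inverse conjugations by elements of $GL(n, \mathbb{R})$; Zariski density of $A'$ in the parabolic $P(\Delta, \mathbb{C})$, resp.\ in the normalizer of $SL(2)\otimes SL(2)$, so that a centralizing element must centralize the whole algebraic group; and the explicit matrix computations of Lemmas \ref{lem-tensor-cent} and \ref{lem-trans-inv} and of the proof of Proposition \ref{prop-but-ddagger} (where the hypothesis that an extreme block has size at least two kills the residual $E_{1n}$-component) to rule out both types of automorphisms. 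The Margulis identification of $\comm_{G}(\Gamma)$ with $PGL(n, \mathbb{Q})$ extended by the outer involution, while correct, is not needed on this route: it is cleaner to kill centralizers in all of $\aut(G^{0})$ rather than to first pin down the commensurator.
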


To prove Theorem \ref{thm-mer}, we propose a useful formulation of rigidity in the setting of ME, which automatically implies not only ME rigidity but also rigidity in terms of OE and lattice embeddings.
Let $\Gamma$ be a discrete group and $G$ a standard Borel group.
Let $\pi \colon \Gamma \rightarrow G$ be a homomorphism.
We introduce a rigidity property of $\Gamma$, called {\it coupling rigidity} with respect to the pair $(G, \pi)$ (or with respect to $G$ if $\pi$ is not specified).
This rigidity forces the existence of an essentially unique, $(\Gamma \times \Gamma)$-equivariant Borel map from any self-coupling of $\Gamma$ (i.e., a coupling of $\Gamma$ and $\Gamma$) into $G$, where the action of $\Gamma \times \Gamma$ on $G$ is defined by the formula
\[(\gamma_1, \gamma_2)g=\pi(\gamma_1)g\pi(\gamma_2)^{-1},\quad \gamma_1, \gamma_2\in \Gamma,\ g\in G.\]   
If $\Gamma$ is coupling rigid with respect to $G$, then any self-coupling of $\Gamma$ can be understood through the action of $\Gamma \times \Gamma$ on $G$, which is somewhat familiar.
Thanks to Furman's representation theorem (see Theorem \ref{thm-furman-rep}), the coupling rigidity of $\Gamma$ brings an important consequence.
In fact, if $\Lambda$ is a discrete group ME to $\Gamma$, then a useful representation of $\Lambda$ into $G$ is obtained.
Furman's theorem provides us with a general principle that investigation of self-couplings of $\Gamma$ is the first step to understand the structure of $\Lambda$.
Furman \cite{furman-mer} showed the celebrated rigidity theorem for higher rank lattices mentioned already, by combining this theorem with Zimmer's cocycle superrigidity theorem.
This principle was followed by Monod-Shalom \cite{ms} and by the author \cite{kida-mer}.
We also follow this principle.
It is shown that if $\Gamma$ is the group in Theorem \ref{thm-mer}, then $\Gamma$ is coupling rigid with respect to the (abstract) commensurator $\comm(\Gamma)$ of $\Gamma$.
As a consequence, we deduce ME and OE rigidity results for $\Gamma$.
We refer to Section \ref{subsec-mecr} for an explicit formulation of coupling rigidity.

\medskip

\noindent {\bf Organization of the paper.} In Sections \ref{sec-groupoid} and \ref{sec-me-oe}, we collect basic notation and terminology related to ME, which contain discrete measured groupoids, (W)OE and the associated cocycles. 
The relationship among them is also reviewed briefly.
In Section \ref{sec-red}, we start the study of self-couplings of the amalgamated free products stated in Theorem \ref{thm-lqn-str}.
The main result of this section says that any amalgamated free product $\Gamma$ in the theorem is coupling rigid with respect to the automorphism group of the Bass-Serre tree $T$.
Any discrete group $\Lambda$ that is ME to $\Gamma$ thus acts on $T$.
In Section \ref{sec-mec-arb}, we investigate this action and prove Theorem \ref{thm-lqn-str}.
We also present sufficient conditions for the action to be locally cofinite and to be cocompact.

In Section \ref{sec-oer}, we prove Theorem \ref{thm-oe-rigidity}.
In Section \ref{sec-mer}, we give a general criterion for amalgamated free products $\Gamma$ of rigid groups to be coupling rigid with respect to $\comm(\Gamma)$ and prove Theorem \ref{thm-mer}.
In Section \ref{sec-twist}, we present amalgamated free products $\Gamma$ which are not coupling rigid with respect to $\comm(\Gamma)$.
In Sections \ref{sec-ex} and \ref{sec-mis-ex}, we provide examples of groups to which general results in Sections \ref{sec-oer}--\ref{sec-twist} are applied.
Theorem \ref{thm-mer-ex} is proved in Section \ref{sec-ex}.
Based on argument involving algebraic groups, we find many examples satisfying the assumption in Theorem \ref{thm-oe-rigidity}.
In Section \ref{sec-add}, we present a few consequences of coupling rigidity with respect to abstract commensurators and observations on groups ME to free products.

\medskip

\noindent {\bf Notation employed throughout the paper.} When $H$ is a subgroup of a group $G$, we write $H<G$.
If $H$ is a normal subgroup of $G$, then we write $H\lhd G$.
These symbols are also used in a similar way when $G$ and $H$ are groupoids.
When $H$ is a subgroup of a group $G$, the normalizer and centralizer of $H$ in $G$ are denoted by ${\rm N}_G(H)$ and ${\rm Z}_G(H)$, respectively.
If $A$ and $B$ are subsets of a set, then $A\triangle B$ stands for the symmetric difference of $A$ and $B$, i.e., the set $(A\setminus B)\cup (B\setminus A)$.
The cardinality of a set $A$ is denoted by $|A|$.
We list below Assumptions writing down conditions imposed on amalgamated free products, and list symbols employed throughout the paper.

\medskip

\begin{center}
{\sc List of Assumptions and symbols}
\end{center}

\setlength{\columnsep}{3em}
\begin{multicols}{2}
\setlength{\parskip}{0.5ex}
  \faq{Assumption $(\star)$}{\pageref{ass-star}}
  \faq{Assumption $(\circ)$}{\pageref{ass-circ}}
  \faq{Assumption $(\bullet)$}{\pageref{ass-bullet}}
  \faq{Assumption $(\dagger)$}{\pageref{ass-dagger}}
  \faq{Assumption $(\ddagger)$}{\pageref{ass-ddagger}}
  \faq{$\aut(T)$}{\pageref{aut}}
  \faq{$\aut^{*}(T)$}{\pageref{aut-star}}
  \faq{$\comm(\Gamma)$}{\pageref{abstractcomm}}
  \faq{$\comm_G(\Gamma)$}{\pageref{comm}}
  \faq{$E(T)$}{\pageref{edge}}
  \faq{{\bf i}}{\pageref{i}}
  \faq{${\rm LQN}_{\Gamma}(A)$}{\pageref{lqn}}
  \faq{$V(T)$}{\pageref{vertex}}
  \faq{$V_1(T)$, $V_2(T)$}{\pageref{vit}}
  \faq{$(\cal{G})_A$}{\pageref{restriction}}
  \faq{$\cal{G}A$}{\pageref{saturation}}
\end{multicols}

\noindent {\bf Acknowledgements.} Most of this work was done while the author was affiliated with Mathematical Institute, Tohoku University.
He expresses his sincere gratitude to his colleagues and staffs there for their kind help and support in his studies.
The manuscript of this paper was written during the stay at Institut des Hautes \'Etudes Scientifiques.
The author thanks the institute for giving nice environment and for warm hospitality.

\section{Discrete measured groupoids}\label{sec-groupoid}

We collect basic facts on discrete measured groupoids.
We recommend the reader to consult \cite{kechris} and Chapter XIII, Section 3 in \cite{take3} for basic knowledge of standard Borel spaces and discrete measured groupoids, respectively.

We refer to a standard Borel space with a finite positive measure as a {\it standard finite measure space}.
When the measure is a probability one, we refer to it as a {\it standard probability space}.
Given a discrete measured groupoid $\cal{G}$ on a standard finite measure space $(X, \mu)$ and a Borel subset $A\subset X$ of positive measure, we denote by
\[(\cal{G})_A=\{\, g\in \cal{G}\mid r(g), s(g)\in A\, \}\label{restriction}\]
the groupoid restricted to $A$, where $r, s\colon \cal{G}\rightarrow X$ are the range and source maps, respectively.
If $A$ is a Borel subset of $X$, then $\cal{G}A$ stands for the saturation
\[\cal{G}A=\{\, r(g)\in X\mid g\in \cal{G}, s(g)\in A\, \}\label{saturation},\]
which is a Borel subset of $X$.
We say that a discrete measured groupoid $\cal{G}$ on a standard finite measure space $(X, \mu)$ is {\it finite} if for a.e.\ $x\in X$, $r^{-1}(x)$ consists of at most finitely many points.
We say that $\cal{G}$ is of {\it infinite type} if for a.e.\ $x\in X$, $r^{-1}(x)$ consists of infinitely many points.

Let $\Gamma \c X$ be a Borel action of a discrete group $\Gamma$ on a standard Borel space $X$.
We equip the product space $\Gamma \times X$ with the following structure of a groupoid:
\begin{itemize}
\item The range and source maps are defined by $r(\gamma, x)=\gamma x$ and $s(\gamma, x)=x$, respectively, for $\gamma \in \Gamma$ and $x\in X$. 
\item The operation of products is defined by $(\gamma_1, \gamma_2x)(\gamma_2, x)=(\gamma_1\gamma_2, x)$ for $\gamma_1, \gamma_2\in \Gamma$ and $x\in X$.  
\item $(e, x)$ is the unit element at $x\in X$. 
\item The inverse of $(\gamma, x)\in \Gamma \times X$ is defined by $(\gamma^{-1}, \gamma x)$.
\end{itemize}
Let $\cal{G}$ denote this groupoid.
We say that a $\sigma$-finite positive measure $\mu$ on $X$ is {\it quasi-invariant} for the action $\Gamma \c X$ if the action preserves the class of $\mu$.
In this case, the action $\Gamma \c (X, \mu)$ is said to be {\it non-singular}.
We define a $\sigma$-finite positive measure $\tilde{\mu}$ on $\Gamma \times X$ by the formula
\[\tilde{\mu}(A)=\int_X\sum_{g\in \cal{G}_x}\chi_A(g)\,d\mu (x)\]
for a Borel subset $A\subset \Gamma \times X$, where $\cal{G}_x=\{\, g\in \cal{G}\mid s(g)=x\, \}$ for $x\in X$ and $\chi_A$ is the characteristic function on $A$.
The class of the measure $\tilde{\mu}$ is invariant under the map $\cal{G}\ni g\mapsto g^{-1}\in \cal{G}$.
When $\Gamma \times X$ is equipped with this structure of a groupoid and the measure $\tilde{\mu}$, we call it the {\it discrete measured groupoid} associated with the action $\Gamma \c (X, \mu)$ and denote it by $\Gamma \ltimes (X, \mu)$\label{groupoidfromgroupaction} or by $\Gamma \ltimes X$ if $\mu$ is not specified.
Note that $\Gamma \ltimes (X, \mu)$ and its quotient equivalence relation on $(X, \mu)$ are naturally isomorphic as discrete measured groupoids if the action $\Gamma \c (X, \mu)$ is essentially free.

There is a close connection between orbit equivalence and isomorphism of two discrete measured groupoids associated with group actions.
Let $\Gamma \c (X, \mu)$ and $\Lambda \c (Y, \nu)$ be ergodic f.f.m.p.\ actions on standard finite measure spaces, and let $\cal{G}$ and $\cal{H}$ be the associated groupoids, respectively.
The two actions are OE if and only if $\cal{G}$ and $\cal{H}$ are isomorphic as discrete measured groupoids.
If there are Borel subsets $A\subset X$ and $B\subset Y$ of positive measure such that $(\cal{G})_A$ and $(\cal{H})_B$ are isomorphic, then the two actions are said to be {\it weakly orbit equivalent (WOE)}.

Given an action of a groupoid $\cal{G}$ on a space $S$, i.e., a groupoid homomorphism from $\cal{G}$ into $\aut(S)$, one can formulate fixed points of it.
For simplicity, we consider only actions of $\cal{G}$ which factor through an action of a discrete group.

\begin{defn}\label{defn-inv-map}
Let $\cal{G}$ be a discrete measured groupoid on a standard finite measure space $(X, \mu)$, and let $S$ be a standard Borel space.
Suppose that we have a Borel action of a discrete group $\Gamma$ on $S$ and a groupoid homomorphism $\rho \colon \cal{G}\rightarrow \Gamma$.
A Borel map $\varphi \colon X\rightarrow S$ is said to be {\it $(\cal{G}, \rho)$-invariant} if we have the equality
\[\rho(g)\varphi(s(g))=\varphi(r(g))\quad \textrm{for\ a.e.}\ g\in \cal{G}.\]
In this case, if $\rho$ is not specified, then $\varphi$ is said to be {\it $\cal{G}$-invariant}.

More generally, if $A$ is a Borel subset of $X$ of positive measure and if a Borel map $\varphi \colon A\rightarrow S$ satisfies the above equality for a.e.\ $g\in (\cal{G})_A$, then we say for simplicity that $\varphi$ is {\it $\cal{G}$-invariant} although we should say that $\varphi$ is $(\cal{G})_A$-invariant.  
\end{defn}

We give a context in which invariant Borel maps are found.

\begin{thm}[\cite{adams-spa}]\label{thm-adams-spa}
Let $\Gamma$ be a discrete group satisfying property (T).
Suppose that we have a measure-preserving action $\Gamma \c (X, \mu)$ of $\Gamma$ on a standard finite measure space, a simplicial tree $T$ having at most countably many simplices, and a Borel cocycle $\rho \colon \Gamma \times X\rightarrow H$, where $H$ is the simplicial automorphism group of $T$ equipped with the standard Borel structure induced by the pointwise convergence topology.
Let $S(T)$ be the set of simplices of $T$, on which $H$ naturally acts.
Then there exists a Borel map $\varphi \colon X\rightarrow S(T)$ satisfying the equality
\[\rho(\gamma, x)\varphi(x)=\varphi(\gamma x)\quad \forall \gamma \in \Gamma,\ \textrm{a.e.\ }x\in X.\]
\end{thm}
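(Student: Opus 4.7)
My plan is to combine the Hilbert-space embedding of the tree $T$ with the Delorme--Guichardet characterization of property (T) and conclude with Serre's fixed-point theorem for bounded subtrees. First I would fix a base vertex $v_{0}\in V(T)$ and form the Hilbert space $\mathcal{H}=\ell^{2}(E(T))$. The path embedding $\iota\colon V(T)\to\mathcal{H}$, $\iota(v)=\chi_{[v_{0},v]}$ (where $[v_{0},v]$ denotes the set of edges on the geodesic from $v_{0}$ to $v$), satisfies $\|\iota(v)-\iota(w)\|^{2}=d_{T}(v,w)$. Together with the permutation action $\pi$ of $H$ on $E(T)$, the map $c(h)=\iota(hv_{0})$ is a $1$-cocycle for $\pi$ and promotes $\mathcal{H}$ to an affine isometric $H$-space via $h\cdot\xi=\pi(h)\xi+c(h)$; in particular $\iota$ becomes an $H$-equivariant embedding of $V(T)$ into $\mathcal{H}$.

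Composing $\rho$ with $\pi$ and $c$ then gives an affine isometric cocycle action of $\Gamma\c(X,\mu)$ on $\mathcal{H}$. To this I would apply the cocycle form of Delorme--Guichardet, realized inside the unitary representation of $\Gamma$ on $L^{2}(X,\mu;\mathcal{H})$: property (T) of $\Gamma$ and the invariance of $\mu$ produce a Borel section $\psi\colon X\to\mathcal{H}$ with
\[
\pi(\rho(\gamma,x))\psi(x)+c(\rho(\gamma,x))=\psi(\gamma x)\quad\text{for all }\gamma\in\Gamma,\ \text{a.e.\ }x\in X.
\]
Since $\pi$ is unitary this forces $\|c(\rho(\gamma,x))\|\leq\|\psi(x)\|+\|\psi(\gamma x)\|$, and since $\|c(h)\|^{2}=d_{T}(v_{0},hv_{0})$, the orbit $O_{x}=\{\rho(\gamma,x)v_{0}\colon\gamma\in\Gamma\}$ lies in a bounded subtree for a.e.\ $x$. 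By Serre's fixed-point theorem every bounded subset of a simplicial tree has a unique combinatorial circumcenter, either a vertex or the midpoint of an edge, and this circumcenter is equivariant under simplicial isometries; setting $\varphi(x)\in S(T)$ to be the circumcenter of $O_{x}$ yields the desired equivariant Borel map, measurability following from standard measurable-selection theorems once one notes that $x\mapsto O_{x}$ is a Borel family of bounded subtrees.

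The main obstacle is the cocycle version of Delorme--Guichardet in the middle step: one must package the Borel affine cocycle as a genuine first cohomology class of $\Gamma$ valued in a unitary representation, verify enough integrability for property (T) to force this class to vanish, and then recover a Borel section $\psi$ defined pointwise on $X$ rather than merely an $L^{2}$ representative. Once $\psi$ is in hand, the conversion to a fixed simplex via Serre's circumcenter is routine.
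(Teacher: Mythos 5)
The paper does not actually prove this statement; it is quoted from Adams--Spatzier \cite{adams-spa}, and your outline follows the same strategy as their argument (Hilbert-space embedding of the tree, a fixed-point property for measurable cocycles derived from property (T), then circumcenters). Two steps in your write-up are genuine gaps rather than routine details. The most serious is the passage from the invariant section $\psi$ to bounded orbits. From $\pi(\rho(\gamma,x))\psi(x)+c(\rho(\gamma,x))=\psi(\gamma x)$ you deduce $\|c(\rho(\gamma,x))\|\le\|\psi(x)\|+\|\psi(\gamma x)\|$, but the right-hand side depends on $\gamma$ through $\|\psi(\gamma x)\|$, which for fixed $x$ is in general unbounded along the orbit of $x$ (unless $\|\psi\|$ happens to be essentially bounded). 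So you cannot conclude that $O_{x}$ lies in a bounded subtree, and the circumcenter step has nothing to bite on. The standard repair is to drop the orbit of $v_{0}$ altogether: set $d(x)=\inf_{v\in V(T)}\|\iota(v)-\psi(x)\|$ and $B(x)=\{\, v\in V(T)\mid \|\iota(v)-\psi(x)\|\le d(x)+1/2\,\}$. Since $\|\iota(v)\|^{2}=d_{T}(v_{0},v)\to\infty$, each $B(x)$ is a nonempty set of vertices of uniformly bounded diameter, and the equivariance of $\iota$ and $\psi$ gives $\rho(\gamma,x)B(x)=B(\gamma x)$ a.e.; now the circumcenter argument applies to $B(x)$.

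Two further points. The unsigned embedding $\iota(v)=\chi_{[v_{0},v]}$ does compute the metric, but it is not equivariant and $c(h)=\iota(hv_{0})$ is not a cocycle: $\pi(h_{1})c(h_{2})+c(h_{1})=\chi_{[h_{1}v_{0},h_{1}h_{2}v_{0}]}+\chi_{[v_{0},h_{1}v_{0}]}$ differs from $\chi_{[v_{0},h_{1}h_{2}v_{0}]}$ by twice the indicator of the overlap of the two geodesics; one has to work with signed sums over oriented edges (the antisymmetric part of $\ell^{2}$ of oriented edges) for the affine action and the identity $h\cdot\iota(v)=\iota(hv)$ to hold. Finally, the obstacle you yourself flag in the Delorme--Guichardet step is real and unresolved in your sketch: the translation part need not be square-integrable over $X$, so the affine action on $L^{2}(X,\mu;\mathcal{H})$ need not be defined and first cohomology cannot be invoked directly. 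Adams--Spatzier avoid this by passing fiberwise to the Fock/Gaussian unitary representations built from $\exp(-t\|\cdot\|^{2})$, for which the vacuum section becomes almost invariant as $t\to 0$ and property (T) in the form ``almost invariant vectors imply invariant vectors'' applies with no integrability hypothesis; some such device is needed to make your middle step rigorous.
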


Finally, we present an observation on invariant Borel maps, which will repeatedly be used in Section \ref{sec-red}.

\begin{lem}\label{lem-inv-ext}
Let $\cal{G}$ be a discrete measured groupoid on a standard finite measure space $(X, \mu)$.
Suppose that we have a Borel action of a discrete group $\Gamma$ on a standard Borel space $S$ and a groupoid homomorphism $\rho \colon \cal{G}\rightarrow \Gamma$.
If $A\subset X$ is a Borel subset of positive measure and if $\varphi \colon A\rightarrow S$ is a $(\cal{G}, \rho)$-invariant Borel map, then $\varphi$ extends to a $(\cal{G}, \rho)$-invariant Borel map from $\cal{G}A$ into $S$. 
\end{lem}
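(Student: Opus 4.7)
The plan is to build $\bar\varphi$ by choosing, for each $x\in \mathcal{G}A$, a groupoid element that moves $x$ back into $A$ and then transporting $\varphi$ along it. Concretely, I would like to set $\bar\varphi(x)=\rho(g)\varphi(s(g))$ for some $g\in s^{-1}(A)$ with $r(g)=x$. Before worrying about measurability I would verify that this value is independent of $g$: if $g,g'\in s^{-1}(A)$ both satisfy $r(g)=r(g')=x$, then $h:=g^{-1}g'$ has source $s(g')\in A$ and range $s(g)\in A$, so $h\in (\mathcal{G})_{A}$, and $\rho$-invariance of $\varphi$ on $A$ gives $\rho(h)\varphi(s(h))=\varphi(r(h))$, i.e.\ $\rho(g)\varphi(s(g))=\rho(g')\varphi(s(g'))$.

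To turn the pointwise prescription into a Borel map I would invoke Lusin--Novikov uniformization. Since $\mathcal{G}$ is discrete, the source and range maps have countable fibres, so the restriction $r\colon s^{-1}(A)\to X$ is a countable-to-one Borel map; Lusin--Novikov then guarantees that its image $\mathcal{G}A$ is Borel and admits a Borel section $\sigma\colon \mathcal{G}A\to s^{-1}(A)$ of $r$. I would then define
\[
\bar\varphi(x):=\rho(\sigma(x))\,\varphi\bigl(s(\sigma(x))\bigr),\qquad x\in \mathcal{G}A,
\]
which is manifestly Borel. Taking $g$ to be the unit at $x$ in the well-definedness computation above shows $\bar\varphi|_A=\varphi$ almost everywhere.

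For $\rho$-invariance of $\bar\varphi$, I would take $g\in (\mathcal{G})_{\mathcal{G}A}$ and set $g_1:=\sigma(s(g))$, $g_2:=\sigma(r(g))$. The element $h:=g_2^{-1}gg_1$ is composable, has source $s(g_1)\in A$ and range $s(g_2)\in A$, so lies in $(\mathcal{G})_{A}$. Applying invariance of $\varphi$ to $h$ and rearranging by the groupoid homomorphism property of $\rho$ yields $\rho(g)\bar\varphi(s(g))=\bar\varphi(r(g))$ almost everywhere, as required.

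The only real obstacle is producing the Borel section $\sigma$; this is exactly where discreteness of $\mathcal{G}$ enters, via Lusin--Novikov. Everything else is an algebraic manipulation driven by the well-definedness observation, with the usual care that all identities hold outside a $\tilde\mu$-null subset of $\mathcal{G}$.
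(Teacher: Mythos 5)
Your argument is correct and is essentially the paper's proof: the paper also invokes the Lusin--Novikov uniformization theorem (cited as 18.14 in Kechris) to choose a Borel family of groupoid elements joining each point of $\mathcal{G}A$ to a point of $A$, and then transports $\varphi$ along them exactly as you do. Your explicit verification of well-definedness and of the invariance identity fills in what the paper leaves as ``easy to check.''
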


\begin{proof}
Using 18.14 in \cite{kechris}, one can find a Borel map $f\colon \cal{G}A\rightarrow \cal{G}$ such that
\begin{itemize}
\item $s(f(x))=x$ and $r(f(x))\in A$ for a.e.\ $x\in \cal{G}A$; and
\item $f(x)=e_x\in \cal{G}_x^x$ for a.e.\ $x\in A$, 
\end{itemize}
where $\cal{G}_x^x=\{ \, g\in \cal{G} \mid r(g)=s(g)=x\, \}$ is the isotropy group on $x$ and $e_x$ is the unit element of $\cal{G}_x^x$.
The map $\psi \colon \cal{G}A\rightarrow S$ defined by $\psi(x)=\rho(f(x)^{-1})\varphi(r(f(x)))$ for $x\in \cal{G}A$ is then $(\cal{G}, \rho)$-invariant. 
\end{proof}


\section{ME, WOE, and coupling rigidity}\label{sec-me-oe}

In Section \ref{subsec-me-woe}, we review a connection between ME and WOE in use of discrete measured groupoids.
This is originally discussed in \cite{furman-oer} in use of discrete measured equivalence relations.
A formulation of it in terms of groupoids makes it possible to establish one-to-one correspondence between couplings and (conjugacy classes of) isomorphisms defined between groupoids associated with two group actions.
In Section \ref{subsec-mecr}, we introduce coupling rigidity of discrete groups, which clarifies a significant role of Furman's representation theorem in deducing diverse superrigidity results.
We discuss some consequences of coupling rigidity of discrete groups with respect to their abstract commensurators.

\subsection{Relationship between ME and WOE}\label{subsec-me-woe}

Let $\Gamma$ and $\Lambda$ be discrete groups.
Let $(\Sigma, m)$ be a coupling of $\Gamma$ and $\Lambda$.
Given Borel subsets $X, Y\subset \Sigma$ with $\bigsqcup_{\gamma \in \Gamma}\gamma Y=\bigsqcup_{\lambda \in \Lambda}\lambda X=\Sigma$ up to $m$-null sets, we define actions $\Gamma \c X$ and $\Lambda \c Y$ as follows:
For $\gamma \in \Gamma$ and $x\in X$, there exists a unique $\alpha(\gamma, x)\in \Lambda$ such that $(\gamma, \alpha(\gamma, x))x\in X$ since $X$ is a fundamental domain for the action $\Lambda \c \Sigma$.
The map $(\gamma, x)\mapsto (\gamma, \alpha(\gamma, x))x$ then defines an action of $\Gamma$ on $X$ which is measure-preserving with respect to the restriction of $m$ to $X$.
To distinguish this action and the original action of $\Gamma$ on $\Sigma$, we use a dot for this new action, that is, we denote $(\gamma, \alpha(\gamma, x))x$ by $\gamma \cdot x$.
The map $\alpha \colon \Gamma \times X\rightarrow \Lambda$ is called the {\it ME cocycle} (associated with $X$).
It satisfies the cocycle identity
\[\alpha(\gamma_1\gamma_2, x)=\alpha(\gamma_1, \gamma_2\cdot x)\alpha(\gamma_2, x),\quad \forall \gamma_1, \gamma_2\in \Gamma,\ {\rm a.e.} \ x\in X.\]
We define an action of $\Lambda$ on $Y$ in a similar way and denote the ME cocycle associated with $Y$ by $\beta \colon \Lambda \times Y\rightarrow \Gamma$. 
Set $\cal{G}=\Gamma \ltimes X$ and $\cal{H}=\Lambda \ltimes Y$.
We can choose fundamental domains $X$, $Y$ satisfying the property in Lemma \ref{lem-int-max} below, which makes it much easier to construct an isomorphism between restrictions of $\cal{G}$ and $\cal{H}$. 

\begin{lem}[\ci{Lemma 2.27}{kida-survey}]\label{lem-int-max}
In the above notation, one can choose $X$ and $Y$ so that the intersection $Z=X\cap Y$ satisfies the following two conditions:
\begin{itemize}
\item $\Gamma \cdot Z=X$ up to null sets when $Z$ is regarded as a subset of $X$.
\item $\Lambda \cdot Z=Y$ up to null sets when $Z$ is regarded as a subset of $Y$.
\end{itemize}
\end{lem}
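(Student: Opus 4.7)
The plan is to produce $Z$ first and build $X$ and $Y$ around it. The key reduction I would exploit is that, on any fundamental domain $X$ for $\Lambda$, the $\Gamma$-dotted-orbit of $x \in X$ coincides with $(\Gamma \times \Lambda) x \cap X$, and symmetrically for $\Lambda$-dotted-orbits on a fundamental domain $Y$ for $\Gamma$; consequently both of the desired equalities $\Gamma \cdot Z = X$ and $\Lambda \cdot Z = Y$ reduce to the single requirement that $Z$ meet every $(\Gamma \times \Lambda)$-orbit in $\Sigma$.

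To produce such a $Z$, I would first fix arbitrary fundamental domains $X_{0}$ for $\Lambda$ and $Y_{0}$ for $\Gamma$. Because $X_{0}$ is a Borel transversal for the free $\Lambda$-action on $\Sigma$, the map $\psi \colon Y_{0} \to X_{0}$ sending $y$ to the unique point of $X_{0} \cap \Lambda y$ is Borel and countable-to-one, its fibres being the sets $\Lambda y \cap Y_{0}$. Applying the Lusin--Novikov selection theorem, I obtain a Borel set $Z \subset Y_{0}$ on which $\psi$ is injective and with $\psi(Z) = \psi(Y_{0})$; equivalently, $Z$ meets each $\Lambda$-orbit in $\Sigma$ intersecting $Y_{0}$ in exactly one point. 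Being contained in $Y_{0}$, $Z$ is automatically a partial $\Gamma$-transversal. Moreover $Z$ meets every $(\Gamma \times \Lambda)$-orbit $O$: since $Y_{0}$ is a fundamental domain for $\Gamma$ we have $O \cap Y_{0} \neq \emptyset$, and for any $y \in O \cap Y_{0}$ the $\Lambda$-orbit $\Lambda y$ meets $Y_{0}$ (at $y$) and is contained in $O$, so $\emptyset \neq Z \cap \Lambda y \subset Z \cap O$.

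Finally I would set $Y := Y_{0}$ and $X := Z \cup (X_{0} \setminus \Lambda Y_{0})$. Since $\Lambda Y_{0}$ is $\Lambda$-invariant, with $Z$ serving as a $\Lambda$-transversal of $\Lambda Y_{0}$ and $X_{0} \setminus \Lambda Y_{0}$ as a $\Lambda$-transversal of its complement, $X$ is a fundamental domain for $\Lambda$. Because the second summand is disjoint from $Y_{0} \supset Z$, one has $X \cap Y = Z$, and the opening reduction then yields both asserted conditions. The only non-trivial step is the Borel selection of $Z$, which rests on the smoothness of the $\Lambda$-orbit equivalence relation on $\Sigma$, itself a consequence of the existence of the Borel fundamental domain $X_{0}$; the remaining assertions are direct from the definitions.
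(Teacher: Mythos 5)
Your argument is correct, and it fills a genuine gap in the text: the paper does not prove this lemma at all but simply cites Lemma 2.27 of the survey \cite{kida-survey}, so there is no internal proof to compare against. Your two reductions are both sound: since $X$ is a fundamental domain for $\Lambda$, the set $(\{\gamma\}\times\Lambda)x$ meets $X$ in exactly one point, namely $\gamma\cdot x$, so the $\Gamma$-dotted orbit of $x$ in $X$ is precisely $(\Gamma\times\Lambda)x\cap X$ (and symmetrically for $Y$), whence both conditions are equivalent to $(\Gamma\times\Lambda)Z=\Sigma$ up to null sets. The construction of $Z$ as a partial $\Lambda$-transversal inside $Y_{0}$ meeting every $\Lambda$-orbit that intersects $Y_{0}$, and the completion $X=Z\sqcup(X_{0}\setminus\Lambda Y_{0})$, then do exactly what is needed, and the verification that $X\cap Y_{0}=Z$ is immediate from $Z\subset Y_{0}\subset\Lambda Y_{0}$. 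Two cosmetic remarks. First, all of this should be carried out after discarding the $(\Gamma\times\Lambda)$-saturation of the null sets on which $X_{0}$ and $Y_{0}$ fail to be exact fundamental domains, so that the actions are genuinely free and ``transversal'' and ``fundamental domain'' coincide; you implicitly do this, but it is worth saying. Second, the appeal to Lusin--Novikov, while perfectly legitimate, can be replaced by the elementary greedy selection: enumerate $\Lambda=\{\lambda_{1},\lambda_{2},\dots\}$ and set $Z=\bigsqcup_{n}\bigl((Y_{0}\cap\lambda_{n}X_{0})\setminus\bigcup_{m<n}\Lambda(Y_{0}\cap\lambda_{m}X_{0})\bigr)$, which is the kind of explicit construction used in the cited survey; this buys nothing logically but keeps the proof within the bare definitions.
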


We note that replacing fundamental domains for the actions $\Gamma \c \Sigma$ and $\Lambda \c \Sigma$ corresponds to exchanging the ME cocycles into ones cohomologous to them.
It follows that this replacing does not affect the problem considering the coupling $\Gamma \times \Lambda \c \Sigma$.
Let us define the groupoid homomorphisms $f\colon (\cal{G})_Z\rightarrow (\cal{H})_Z$ and $g\colon (\cal{H})_Z\rightarrow (\cal{G})_Z$ by the formulas
\[f(\gamma, x)=(\alpha(\gamma, x), x),\quad g(\lambda, y)=(\beta(\lambda, y), y)\]
for $(\gamma, x)\in (\cal{G})_Z$ and $(\lambda, y)\in (\cal{H})_Z$.

\begin{prop}[\ci{Proposition 2.29}{kida-survey}]\label{prop-group-iso}
In the above notation, we have $g\circ f={\rm id}$ and $f\circ g={\rm id}$.
\end{prop}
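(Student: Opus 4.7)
The plan is to verify both identities by exhibiting, for each element of $(\mathcal{G})_Z$, a uniqueness argument that recovers the original group element after applying $f$ then $g$, and symmetrically for $(\mathcal{H})_Z$.

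First I would check that $f$ sends $(\mathcal{G})_Z$ into $(\mathcal{H})_Z$ (and likewise $g$), since otherwise the composition does not even make sense. Take $(\gamma,x)\in(\mathcal{G})_Z$, so $x\in Z$ and $\gamma\cdot x\in Z$. The source of $f(\gamma,x)=(\alpha(\gamma,x),x)$ in $\mathcal{H}=\Lambda\ltimes Y$ is $x$, which lies in $Z\subset Y$. For the range $\alpha(\gamma,x)\cdot x$, I would unpack the definitions on $\Sigma$: the point $(\gamma,\alpha(\gamma,x))x\in\Sigma$ equals $\gamma\cdot x\in Z\subset Y$, and on the other hand it is obtained from $(e,\alpha(\gamma,x))x$ by applying $\gamma$ from the $\Gamma$-side. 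Hence $\gamma\cdot x$ lies in $Y$ and belongs to the $\Gamma$-orbit of $(e,\alpha(\gamma,x))x$. Since $Y$ is a fundamental domain for the $\Gamma$-action on $\Sigma$, that orbit contains a unique point of $Y$, so this point must be $\beta(\alpha(\gamma,x),x)\cdot x$, i.e.\ $\alpha(\gamma,x)\cdot x=\gamma\cdot x\in Z$. This shows $f(\gamma,x)\in(\mathcal{H})_Z$.

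The same calculation gives the key identity $\beta(\alpha(\gamma,x),x)=\gamma$ by uniqueness of the group element in $\Gamma$ making the equality $(\gamma,\alpha(\gamma,x))x=(\beta(\alpha(\gamma,x),x),\alpha(\gamma,x))x$ hold (or, directly, by freeness of the $\Gamma$-action on the $\Lambda$-fundamental domain $X$, once one notes that both elements of $\Gamma$ send the point $(e,\alpha(\gamma,x))x$ into $Y$). Therefore
\[
g(f(\gamma,x))=g(\alpha(\gamma,x),x)=(\beta(\alpha(\gamma,x),x),x)=(\gamma,x),
\]
which proves $g\circ f=\mathrm{id}$ on $(\mathcal{G})_Z$. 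The other composition $f\circ g=\mathrm{id}$ on $(\mathcal{H})_Z$ follows by exchanging the roles of $(\Gamma,X,\alpha)$ and $(\Lambda,Y,\beta)$ in the above argument; the hypothesis of Lemma \ref{lem-int-max} is symmetric in $X$ and $Y$, so nothing new is needed.

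I do not foresee a serious obstacle: the content is entirely bookkeeping, with the one genuine point being the observation that for $x\in Z$ the two expressions $\gamma\cdot x$ (in $X$) and $\alpha(\gamma,x)\cdot x$ (in $Y$) coincide as elements of $\Sigma$, whence the uniqueness clauses defining $\alpha$ and $\beta$ force $\beta(\alpha(\gamma,x),x)=\gamma$ and $\alpha(\beta(\lambda,y),y)=\lambda$. The mildly delicate part is simply making sure the dot actions on $X$ and $Y$ are not confused with the underlying $\Gamma\times\Lambda$-action on $\Sigma$ while this uniqueness is being invoked.
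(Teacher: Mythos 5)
Your proof is correct: the key identity $\beta(\alpha(\gamma,x),x)=\gamma$ for $(\gamma,x)\in(\mathcal{G})_Z$ follows exactly as you say from the uniqueness clause in the definition of $\beta$ (equivalently, from $\Sigma=\bigsqcup_{\gamma}\gamma Y$), once one observes that $(\gamma,\alpha(\gamma,x))x=\gamma\cdot x$ lies in $Z\subset Y$; the paper itself does not prove this proposition but cites it from the survey \cite{kida-survey}, and your argument is the standard direct verification given there. The only blemish is the notation ``$\beta(\alpha(\gamma,x),x)\cdot x$'' for what should be $(\beta(\alpha(\gamma,x),x),\alpha(\gamma,x))x=\alpha(\gamma,x)\cdot x$, which you immediately correct.
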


Conversely, if we have an isomorphism between restrictions of two groupoids associated with measure-preserving actions of groups on standard finite measure spaces, then we can construct the corresponding coupling (see Lemma 2.30 in \cite{kida-survey}).


\subsection{Coupling rigidity and Furman's representation theorem}\label{subsec-mecr}

Let $G$ be a standard Borel group and $\Gamma$, $\Lambda$ discrete groups.
Given homomorphisms $\pi \colon \Gamma \rightarrow G$ and $\rho \colon \Lambda \rightarrow G$, we denote by $(G, \pi, \rho)$ the Borel space $G$ equipped with the action of $\Gamma \times \Lambda$ defined by
\[(\gamma, \lambda)g=\pi(\gamma)g\rho(\lambda)^{-1},\quad g\in G,\ \gamma \in \Gamma,\ \lambda \in \Lambda.\]
Let $\Sigma$ be a coupling of $\Gamma$ and $\Lambda$, and let $\Phi \colon \Sigma \rightarrow S$ be a Borel map into a standard Borel space $S$ on which $\Gamma \times \Lambda$ acts.
We say that $\Phi$ is {\it almost $(\Gamma \times \Lambda)$-equivariant} if we have the equality
\[\Phi((\gamma, \lambda)x)=(\gamma, \lambda)\Phi(x),\quad \forall \gamma \in \Gamma,\ \forall \lambda \in \Lambda,\ \textrm{a.e.}\ x\in \Sigma.\]

\begin{defn}\label{defn-coup-rigid}
Let $\Gamma$ be a discrete group, $G$ a standard Borel group and $\pi \colon \Gamma \rightarrow G$ a homomorphism. We say that $\Gamma$ is {\it coupling rigid} with respect to the pair $(G, \pi)$ if
\begin{enumerate}
\item[(a)] for any self-coupling $\Sigma$ of $\Gamma$, there exists an almost $(\Gamma \times \Gamma)$-equivariant Borel map $\Phi \colon \Sigma \rightarrow (G, \pi, \pi)$; and
\item[(b)] the Dirac measure on the neutral element of $G$ is the only probability measure on $G$ that is invariant under conjugation by any element of $\pi(\Gamma)$.
\end{enumerate}
When $\pi$ is understood from the context, $\Gamma$ is simply said to be coupling rigid with respect to $G$.
\end{defn}

\begin{lem}\label{lem-uni-quo}
Let $\Gamma$ be a discrete group, $G$ a standard Borel group and $\pi \colon \Gamma \rightarrow G$ a homomorphism satisfying condition (b) in Definition \ref{defn-coup-rigid}.
Let $\Sigma$ be a self-coupling of $\Gamma$. Then the following assertions hold:
\begin{enumerate}
\item An almost $(\Gamma \times \Gamma)$-equivariant Borel map from $\Sigma$ into $(G, \pi, \pi)$ is essentially unique if it exists.
\item Let $\Gamma'$ and $\Gamma''$ be finite index subgroups of $\Gamma$. Then any almost $(\Gamma' \times \Gamma'')$-equivariant Borel map from $\Sigma$ into $(G, \pi, \pi)$ is almost $(\Gamma \times \Gamma)$-equivariant.
\end{enumerate}
\end{lem}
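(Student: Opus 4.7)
For part (i), given two almost $(\Gamma \times \Gamma)$-equivariant Borel maps $\Phi, \Psi \colon \Sigma \to (G, \pi, \pi)$, the plan is to consider the ratio $\Theta(x) = \Phi(x) \Psi(x)^{-1}$. A direct computation gives $\Theta((\gamma_1,\gamma_2)x) = \pi(\gamma_1) \Theta(x) \pi(\gamma_1)^{-1}$, so $\Theta$ is invariant under $\{e\}\times\Gamma$ and intertwines the $\Gamma\times\{e\}$-action with the $\pi$-conjugation action of $\Gamma$ on $G$. Viewing $\Theta$ as a Borel map on a fundamental domain $X$ for $\{e\}\times\Gamma$ in $\Sigma$, which has finite $m$-measure by the definition of a coupling, the dot action of $\Gamma$ on $X$ recalled in Section \ref{subsec-me-woe} is measure-preserving, and $\Theta\colon X\to G$ is equivariant with respect to this action and conjugation via $\pi$. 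Pushing forward the normalized measure $m(X)^{-1}m|_X$ via $\Theta$ then yields a probability measure on $G$ invariant under conjugation by every element of $\pi(\Gamma)$. Condition (b) in Definition \ref{defn-coup-rigid} forces this measure to be the Dirac mass at the identity, so $\Theta\equiv e$ almost everywhere on $X$; by $(\{e\}\times\Gamma)$-invariance this propagates to all of $\Sigma$, yielding $\Phi=\Psi$ almost everywhere.

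For part (ii), I would first establish a finite-index version of condition (b): if $\Gamma_0<\Gamma$ has finite index, then $\delta_e$ is the only probability measure on $G$ invariant under conjugation by $\pi(\Gamma_0)$. Indeed, averaging such a measure $\mu$ over left coset representatives $\gamma_1,\dots,\gamma_k$ of $\Gamma_0$ in $\Gamma$ produces $\nu=k^{-1}\sum_i(\ad\pi(\gamma_i))_*\mu$, and the standard permutation-of-cosets calculation shows $\nu$ is $\pi(\Gamma)$-invariant, hence equals $\delta_e$ by (b); since $\nu$ is a convex combination of probability measures whose average is a Dirac mass, every $(\ad\pi(\gamma_i))_*\mu$ must itself equal $\delta_e$, and therefore $\mu=\delta_e$. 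With this in hand, fix $(\gamma_1,\gamma_2)\in\Gamma\times\Gamma$ and set $\Psi(x)=\pi(\gamma_1)^{-1}\Phi((\gamma_1,\gamma_2)x)\pi(\gamma_2)$. A short check using the factorization $\gamma_i\delta=(\gamma_i\delta\gamma_i^{-1})\gamma_i$ shows that $\Psi$ is almost $(\Gamma_1\times\Gamma_2)$-equivariant, where $\Gamma_1=\Gamma'\cap\gamma_1^{-1}\Gamma'\gamma_1$ and $\Gamma_2=\Gamma''\cap\gamma_2^{-1}\Gamma''\gamma_2$ are both finite index in $\Gamma$. Since $\Phi$ is also $(\Gamma_1\times\Gamma_2)$-equivariant, rerunning the argument of part (i) with $\Gamma_1,\Gamma_2$ in place of the two copies of $\Gamma$, and invoking the finite-index version of (b) just proved, forces $\Psi=\Phi$ almost everywhere. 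This is precisely the desired equivariance for the pair $(\gamma_1,\gamma_2)$, and the countability of $\Gamma\times\Gamma$ yields a single conull set on which the equation holds for every such pair.

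The main substantive point is the finite-index descent of condition (b), which is not immediate from the definition; once established via the averaging trick, both parts reduce to the same pushforward argument organized around the ratio $\Phi\Psi^{-1}$. The remaining items—finite measure of fundamental domains for finite-index subgroups, measure-preservation of the associated dot actions, and Borel measurability of the constructions—are routine consequences of the framework recalled in Sections \ref{sec-groupoid} and \ref{subsec-me-woe}.
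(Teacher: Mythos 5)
Your proof of part (i) is exactly the paper's argument: form $\Phi(x)\Psi(x)^{-1}$, observe it is $(\{e\}\times\Gamma)$-invariant and conjugation-equivariant for $\Gamma\times\{e\}$, push the finite measure on a fundamental domain forward to a conjugation-invariant probability measure on $G$, and invoke condition (b) of Definition \ref{defn-coup-rigid}. For part (ii) the paper gives no argument at all, deferring to Lemma 5.8 of \cite{kida-mer}; you supply a complete self-contained proof, and it is correct. The two ingredients both check out: the averaging of a $\pi(\Gamma_0)$-conjugation-invariant probability measure over coset representatives does produce a $\pi(\Gamma)$-invariant measure (the permutation-of-cosets computation uses only that $\mu$ is invariant under the $\Gamma_0$-part of $\gamma\gamma_i=\gamma_{\sigma(i)}\delta_i$), and a convex combination of probability measures equal to $\delta_e$ forces each summand to be $\delta_e$; and the map $\Psi(x)=\pi(\gamma_1)^{-1}\Phi((\gamma_1,\gamma_2)x)\pi(\gamma_2)$ is indeed almost $(\Gamma_1\times\Gamma_2)$-equivariant for $\Gamma_1=\Gamma'\cap\gamma_1^{-1}\Gamma'\gamma_1$ and $\Gamma_2=\Gamma''\cap\gamma_2^{-1}\Gamma''\gamma_2$, so the uniqueness argument of (i), run for these finite index subgroups (noting that fundamental domains for finite index subgroups still have finite measure), gives $\Psi=\Phi$ a.e., which is the equivariance at $(\gamma_1,\gamma_2)$; countability of $\Gamma\times\Gamma$ then finishes. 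This is essentially the strategy of the cited lemma, so nothing further is needed.
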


\begin{proof}
Let $\Phi$ and $\Psi$ be almost $(\Gamma \times \Gamma)$-equivariant Borel maps from $\Sigma$ into $(G, \pi, \pi)$.
The map $x\mapsto \Phi(x)\Psi(x)^{-1}$ is invariant under the action of $\{ e\} \times \Gamma$ and thus induces a finite positive measure on $G$ which is invariant under conjugation by any element of $\pi(\Gamma)$.
By condition (b), we have $\Phi(x)=\Psi(x)$ for a.e.\ $x\in \Sigma$. 
Assertion (i) is proved.
Assertion (ii) follows from Lemma 5.8 in \cite{kida-mer}.
\end{proof}

The following theorem is a major consequence of coupling rigidity and is applied to getting information on an unknown group $\Lambda$ ME to a given group $\Gamma$.

\begin{thm}\label{thm-furman-rep}
Let $\Gamma$ be a discrete group, $G$ a standard Borel group and $\pi \colon \Gamma \rightarrow G$ a homomorphism.
Suppose that $\Gamma$ is coupling rigid with respect to $(G, \pi)$.
Let $\Lambda$ be a discrete group and $\Sigma$ a coupling of $\Gamma$ and $\Lambda$.
Then there exist
\begin{itemize}
\item a homomorphism $\rho \colon \Lambda \rightarrow G$; and
\item an almost $(\Gamma \times \Lambda)$-equivariant Borel map $\Phi \colon \Sigma \rightarrow (G, \pi, \rho)$.
\end{itemize}
Moreover, if $\ker \pi$ is finite and there is a Borel fundamental domain for the action of $\pi(\Gamma)$ on $G$ defined by left multiplication, then $\rho$ can be chosen so that $\ker \rho$ is finite.
\end{thm}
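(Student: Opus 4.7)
The plan is to reduce to coupling rigidity of $\Gamma$ by fabricating a self-coupling of $\Gamma$ out of $\Sigma$, then extracting $\Phi$ and $\rho$ from the resulting equivariant map. Concretely, form $\tilde\Sigma := \Sigma \times_\Lambda \Sigma$, the quotient of $(\Sigma \times \Sigma, m \times m)$ by the diagonal action $\lambda \cdot (x,y) := (\lambda x, \lambda y)$. The two commuting $\Gamma$-actions via the factors make $\tilde\Sigma$ into a self-coupling of $\Gamma$: if $X, Y \subset \Sigma$ are finite-measure Borel fundamental domains for $\Lambda \c \Sigma$ and $\Gamma \c \Sigma$ respectively, then the images of $Y \times X$ and $X \times Y$ in $\tilde\Sigma$ furnish finite-measure Borel fundamental domains for the two $\Gamma$-actions. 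Coupling rigidity then yields an almost $(\Gamma \times \Gamma)$-equivariant Borel map $\tilde\Phi: \tilde\Sigma \to (G, \pi, \pi)$, whose lift to $\Sigma \times \Sigma$ is a Borel map $F$ satisfying, almost everywhere,
\[
F(\gamma_1 x, \gamma_2 y) = \pi(\gamma_1) F(x, y) \pi(\gamma_2)^{-1}, \qquad F(\lambda x, \lambda y) = F(x, y).
\]

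Pick by Fubini a $y_0 \in \Sigma$ off a null set and set $\Phi(x) := F(x, y_0)$, which inherits $\Phi(\gamma x) = \pi(\gamma) \Phi(x)$ from the first identity. For $\lambda \in \Lambda$ we want the function
\[
\xi_\lambda(x) := \Phi(x)^{-1} \Phi(\lambda x) = F(x, y_0)^{-1} F(x, \lambda^{-1} y_0)
\]
(the last equality via $\Lambda$-diagonal invariance of $F$ applied to $y = \lambda^{-1} y_0$) to be essentially constant in $x$; we then set $\rho(\lambda)^{-1}$ equal to that constant, and the associativity relation $\Phi(\lambda_1 \lambda_2 x) = \Phi(\lambda_2 x) \rho(\lambda_1)^{-1} = \Phi(x) \rho(\lambda_2)^{-1}\rho(\lambda_1)^{-1}$ both forces $\rho : \Lambda \to G$ to be a homomorphism and delivers the desired almost $(\Gamma \times \Lambda)$-equivariance of $\Phi$ into $(G, \pi, \rho)$. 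A direct check using the $\Gamma$-equivariance of $F$ in its first argument shows $\xi_\lambda(\gamma x) = \xi_\lambda(x)$, so $\xi_\lambda$ already descends to a Borel map $\bar\xi_\lambda$ on the finite-measure space $\Sigma/\Gamma$.

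The crux, and the main obstacle, is the essential constancy of $\bar\xi_\lambda$, because $\Gamma \c \Sigma$ is typically not ergodic. The intended remedy is to decompose $\Sigma$ into its $(\Gamma \times \Lambda)$-ergodic components (on which $\Lambda \c \Sigma/\Gamma$ is ergodic, corresponding to ergodic components of $\tilde\Sigma$), and on each component to interpret the existence of $\rho(\lambda)^{-1}$ as the essential uniqueness of the $(\Gamma \times \Gamma)$-equivariant map into $(G, \pi, \pi)$: any two such maps on $\tilde\Sigma$ must agree almost everywhere by Lemma \ref{lem-uni-quo}(i), and condition (b) of Definition \ref{defn-coup-rigid} then pins the multiplicative correction comparing $F$ with its $\lambda$-translate down to a single $G$-element depending only on $\lambda$ (and not on $y_0$ or the component chosen). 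For the addendum on finite $\ker \rho$, assume $\ker \pi$ is finite and $\pi(\Gamma) \backslash G$ admits a Borel transversal; composing $\Phi$ with the quotient map produces a $\Gamma$-invariant Borel map $\Sigma \to \pi(\Gamma) \backslash G$ that intertwines the right $\rho(\Lambda)$-action, so that an infinite $\ker \rho$ would produce infinitely many $\lambda \in \Lambda$ acting trivially on the image on a set of positive measure in $\Sigma/\Gamma$, contradicting the existence of a finite-measure Borel fundamental domain for $\Lambda \c \Sigma$ together with the finiteness of $\ker \pi$.
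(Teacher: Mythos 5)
Your setup is sound and is in fact the paper's own construction in disguise: the diagonal quotient $\Sigma\times_{\Lambda}\Sigma$ is canonically isomorphic to the space $\Omega=(\Sigma\times\Lambda\times\Sigma)/(\Lambda\times\Lambda)$ used in the paper, via $[x,\lambda,y]\mapsto[x,\lambda y]$, and your formulas for $\Phi$ and $\rho$ agree (up to transposition) with the paper's $\Phi(x)=\Psi([x_{0},e,x])^{-1}$ and $\rho(\lambda)=\Psi([x_{0},\lambda,z])\Psi([x_{0},e,z])^{-1}$. The gap is exactly where you locate it, but your proposed remedy does not work. The ``$\lambda$-translate'' $F^{\lambda}(x,y):=F(x,\lambda^{-1}y)$ is \emph{not} a map on $\tilde{\Sigma}$: since $F$ is diagonal-$\Lambda$-invariant, one computes $F^{\lambda}(\mu x,\mu y)=F(x,\mu^{-1}\lambda^{-1}\mu y)=F^{\mu^{-1}\lambda\mu}(x,y)$, so $F^{\lambda}$ does not descend to the diagonal quotient unless $\lambda$ is central. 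Hence Lemma \ref{lem-uni-quo} (i) cannot be invoked to compare $F$ with $F^{\lambda}$; and even if it could, that lemma forces two equivariant maps into $(G,\pi,\pi)$ to be \emph{equal}, which would yield $\rho\equiv e$ -- it never produces a nontrivial ``multiplicative correction''. The ergodic-decomposition part fares no better: $\xi_{\lambda}$ is invariant under $\Gamma$ acting on $x$ but satisfies $\xi_{\lambda}(\mu x)=F(x,\mu^{-1}y_{0})^{-1}F(x,\mu^{-1}\lambda^{-1}y_{0})$, so it is not $\Lambda$-invariant, and ergodicity of $\Lambda\c\Sigma/\Gamma$ on a component gives nothing.

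What is actually needed is the statement that $K(x,y,y'):=F(x,y)^{-1}F(x,y')$ is essentially independent of $x$ (your claim is the case $y=y_{0}$, $y'=\lambda^{-1}y_{0}$). This is precisely the transpose of the paper's key technical step, namely that $F(x,y,z)=\Psi([x,e,z])\Psi([y,e,z])^{-1}$ is independent of $z$, and it is proved by a measure argument, not by uniqueness: one checks that the relevant difference function is invariant under one $\Gamma$-action and transforms by \emph{conjugation} by $\pi(\Gamma)$ under the other, pushes forward the finite measure on the quotient by the first action to obtain a finite conjugation-invariant measure on $G$, and then applies condition (b) of Definition \ref{defn-coup-rigid} to conclude that this measure is the Dirac mass at $e$. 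Without this step your $\rho(\lambda)$ is simply not defined, so the proof as written is incomplete at its central point. (Your sketch of the addendum on $\ker\rho$ is acceptable at the level of detail the paper itself gives: the set $\Phi^{-1}(Z)$, for $Z$ a fundamental domain of the left $\pi(\Gamma)$-action, has finite measure, is $\ker\rho$-invariant, and contains a positive-measure piece of some $\Lambda$-fundamental domain together with all of its $\ker\rho$-translates, which forces $|\ker\rho|<\infty$.)
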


\begin{proof}
Since this theorem is essentially proved by Furman \cite{furman-mer}, we give only a sketch of the proof.
We consider the actions of $\Gamma \times \Gamma$ and $\Lambda \times \Lambda$ on $\Sigma \times \Lambda \times \Sigma$ defined by the formulas
\begin{align*}
(\gamma_1, \gamma_2)(x, \lambda, y)&=(\gamma_1x, \lambda, \gamma_2y),\\
(\lambda_1, \lambda_2)(x, \lambda, y)&=(\lambda_1x, \lambda_1\lambda \lambda_2^{-1}, \lambda_2y),
\end{align*}
respectively, for $\gamma_1, \gamma_2\in \Gamma$, $\lambda, \lambda_1, \lambda_2\in \Lambda$ and $x, y\in \Sigma$.
We denote by $\Omega$ the quotient space of $\Sigma \times \Lambda \times \Sigma$ by the action of $\Lambda \times \Lambda$.
The action of $\Gamma \times \Gamma$ induces an action $\Gamma \times \Gamma \c \Omega$, which defines a self-coupling of $\Gamma$.
By assumption, there exists an almost $(\Gamma \times \Gamma)$-equivariant Borel map $\Psi \colon \Omega \rightarrow (G, \pi, \pi)$.
It will be shown that this $\Psi$ satisfies the equality
\[\Psi([x, \lambda, y])=\Phi(x)\rho(\lambda)\Phi(y)^{-1},\quad \forall \lambda \in \Lambda,\ \textrm{a.e.\ }(x, y)\in \Sigma \times \Sigma,\]
where $\rho$ and $\Phi$ are maps in the theorem, and $[x, \lambda, y]$ stands for the equivalence class of $(x, \lambda, y)\in \Sigma \times \Lambda \times \Sigma$ in $\Omega$.

The above expected equality inspires us to prove that the Borel map $F\colon \Sigma^3\rightarrow G$ defined by the $F(x, y, z)=\Psi([x, e, z])\Psi([y, e, z])^{-1}$ for $(x, y, z)\in \Sigma^3$ is independent of the third variable $z$.
Indeed, we can prove it by using condition (b) in Definition \ref{defn-coup-rigid}.
It follows that if we define two Borel maps $\rho \colon \Lambda \rightarrow G$, $\Phi \colon \Sigma \rightarrow G$ by setting
\begin{align*}
\rho(\lambda)&=\Psi([x_0, \lambda, z])\Psi([x_0, e, z])^{-1},\quad \lambda \in \Lambda\\
\Phi(x)&=\Psi([x_0, e, x])^{-1}, \quad x\in \Sigma
\end{align*}
with an appropriate $x_0\in \Sigma$, then $\rho$ is independent of $z$, and $\rho$ and $\Phi$ satisfy desired properties in the theorem.
The expected equality in the previous paragraph follows from uniqueness of an almost $(\Gamma \times \Gamma)$-equivariant Borel map from a self-coupling of $\Gamma$ into $(G, \pi, \pi)$.
The latter assertion of the theorem is verified along argument in the proof of Lemma 6.1 in \cite{furman-mer}. 
\end{proof}

Lemma \ref{lem-comm-rep} below gives a necessary condition for $\Gamma$ to be coupling rigid with respect to $(G, \pi)$.
This condition is formulated in terms of the commensurator of $\Gamma$ defined as follows.

\begin{defn}\label{defn-comm}
Let $\Gamma$ be a group.
We define $F(\Gamma)$ as the set of all isomorphisms between finite index subgroups of $\Gamma$.
Let us say that two elements $\phi$, $\psi$ of $F(\Gamma)$ are equivalent if there exists a finite index subgroup of $\Gamma$ on which $\phi$ and $\psi$ are equal. 
The composition of two elements $\phi \colon \Gamma_1\rightarrow \Gamma_2$, $\psi \colon \Lambda_1\rightarrow \Lambda_2$ of $F(\Gamma)$ given by $\phi \circ \psi \colon \psi^{-1}(\Gamma_1\cap \Lambda_2)\rightarrow \phi(\Lambda_2\cap \Gamma_1)$ induces the product operation on the quotient set of $F(\Gamma)$ by this equivalence relation. 
This makes it into the group called the {\it (abstract) commensurator} of $\Gamma$ and denoted by $\comm(\Gamma)$\label{abstractcomm}. 
We denote by
\[{\bf i}\colon \Gamma \rightarrow \comm(\Gamma)\label{i}\]
the homomorphism associating to each element of $\Gamma$ the conjugation by it.

Suppose that $\Gamma$ is a subgroup of a group $G$.
The {\it (relative) commensurator} of $\Gamma$ in $G$, denoted by $\comm_G(\Gamma)$\label{comm}, is defined as the subgroup of $G$ consisting of all elements $g\in G$ such that $\Gamma \cap g\Gamma g^{-1}$ is of finite index in both $\Gamma$ and $g\Gamma g^{-1}$.
\end{defn}

To write down basic properties of $\comm(\Gamma)$ and ${\bf i}$, let us introduce the following terminology.

\begin{defn}
For a group $G$ and a subgroup $\Gamma$ of $G$, we say that $G$ is {\it ICC with respect to} $\Gamma$ if for any non-neutral element $g$ of $G$, the set
\[\{\, \gamma g\gamma^{-1}\in G \mid \gamma \in \Gamma \,\}\]
consists of infinitely many elements.
If $G$ is ICC with respect to $G$ itself, then $G$ is simply said to be {\it ICC}.
\end{defn}

It is easy to check the following:

\begin{lem}\label{lem-comm}
Let $\Gamma$ be a discrete group.
Then the following assertions hold:
\begin{enumerate}
\item The homomorphism ${\bf i}\colon \Gamma \rightarrow \comm(\Gamma)$ is injective if and only if $\Gamma$ is ICC.
\item If $\Gamma$ is finitely generated, then $\comm(\Gamma)$ is countable.
\item If $\Gamma$ is ICC, then $\comm(\Gamma)$ is ICC with respect to ${\bf i}(\Gamma)$.
\end{enumerate}
\end{lem}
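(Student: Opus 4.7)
For assertion (i), the plan is to identify $\ker {\bf i}$ directly: an element $\gamma \in \Gamma$ lies in $\ker {\bf i}$ precisely when the inner automorphism by $\gamma$ restricts to the identity on some finite index subgroup of $\Gamma$, equivalently when the centralizer ${\rm Z}_{\Gamma}(\gamma)$ has finite index, equivalently (by orbit-stabilizer) when the $\Gamma$-conjugacy class of $\gamma$ is finite. Hence $\ker {\bf i} = \{e\}$ if and only if every non-identity element of $\Gamma$ has infinite conjugacy class, i.e., $\Gamma$ is ICC. For assertion (ii), I would use a double countability argument: a finitely generated group has only finitely many subgroups of any given index $n$ (each such subgroup corresponds to a transitive action of $\Gamma$ on $n$ points, determined by the images of a finite generating set in $S_{n}$), so the collection of finite index subgroups of $\Gamma$ is countable; by Schreier's formula each such subgroup is itself finitely generated, and the set of isomorphisms between two countable, finitely generated groups is at most countable, being determined by the images of a finite generating set. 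Summing over the countable collection of pairs of finite index subgroups yields countability of $\comm(\Gamma)$.

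For assertion (iii), the plan pivots on the following conjugation identity: if $c \in \comm(\Gamma)$ is represented by an isomorphism $f\colon A \to B$ between finite index subgroups of $\Gamma$ and $\gamma \in A$, then
\[c\, {\bf i}(\gamma)\, c^{-1} = {\bf i}(f(\gamma)) \quad \text{in } \comm(\Gamma).\]
I would verify this by direct computation of partial automorphisms: because $\gamma \in A$ (so that $\gamma^{-1}A\gamma = A$), the domain of $c\, {\bf i}(\gamma)\, c^{-1}$ is $B$, and on $B$ it sends $x$ to $f(\gamma f^{-1}(x) \gamma^{-1}) = f(\gamma)\, x\, f(\gamma)^{-1}$, using the homomorphism property of $f$ on $A$; this is precisely the restriction of the inner automorphism by $f(\gamma)$. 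Now suppose for contradiction that some non-trivial $c \in \comm(\Gamma)$ has finite ${\bf i}(\Gamma)$-orbit under conjugation. Then its stabilizer $S \leq \Gamma$ has finite index, so $S \cap A$ has finite index in $\Gamma$. For each $\gamma \in S \cap A$, the relation ${\bf i}(\gamma)\, c = c\, {\bf i}(\gamma)$ combined with the identity above gives ${\bf i}(f(\gamma)) = {\bf i}(\gamma)$, and by assertion (i) --- which renders ${\bf i}$ injective on the ICC group $\Gamma$ --- we conclude $f(\gamma) = \gamma$. Hence $f$ is the identity on the finite index subgroup $S \cap A$, so $c = e$ in $\comm(\Gamma)$, contradicting non-triviality.

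The principal technical obstacle I anticipate lies in setting up the conjugation identity in assertion (iii) rigorously: one must track the domain of the partial automorphism $c\, {\bf i}(\gamma)\, c^{-1}$, verify that it is a genuine homomorphism on a suitable finite index subgroup, and recognize it as the restriction of an inner automorphism of $\Gamma$. Once this identity is in hand, the ICC hypothesis on $\Gamma$ closes the argument directly and sidesteps the alternative strategy of first producing a single normal finite index subgroup of $\Gamma$ simultaneously invariant under every partial automorphism in the orbit of $c$, a route that would require more delicate gymnastics with normal cores and iterates of $f$.
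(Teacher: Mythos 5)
Your proof is correct and complete; the paper itself states this lemma without proof (``It is easy to check the subsequent lemma''), and your argument --- identifying $\ker{\bf i}$ with the set of finite-conjugacy-class elements for (i), the Hall/Schreier counting for (ii), and the conjugation identity $c\,{\bf i}(\gamma)\,c^{-1}={\bf i}(f(\gamma))$ combined with injectivity of ${\bf i}$ for (iii) --- is exactly the standard verification the author leaves to the reader.
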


\begin{lem}\label{lem-comm-rep}
Let $\Gamma$ be a discrete group, $G$ a standard Borel group and $\pi \colon \Gamma \rightarrow G$ an injective homomorphism.
Suppose that $\Gamma$ is coupling rigid with respect to $(G, \pi)$.
Then there exists an isomorphism $\bar{\pi}\colon \comm(\Gamma)\rightarrow \comm_G(\pi(\Gamma))$ such that for any isomorphism $f\colon \Gamma'\rightarrow \Gamma''$ between finite index subgroups of $\Gamma$, we have
\[\pi(f(\gamma))=\bar{\pi}([f])\pi(\gamma)\bar{\pi}([f])^{-1},\quad \forall \gamma \in \Gamma',\]
where $[f]\in \comm(\Gamma)$ is the equivalence class of $f$.
\end{lem}

\begin{proof}
For each element $g$ of $\comm_G(\pi(\Gamma))$, the isomorphism $\pi^{-1}\circ \ad g\circ \pi$ between finite index subgroups of $\Gamma$ defines an element of $\comm(\Gamma)$.
The map $g\mapsto \pi^{-1}\circ \ad g\circ \pi$ then defines the homomorphism $i\colon \comm_G(\pi(\Gamma))\rightarrow \comm(\Gamma)$.

For each isomorphism $f\colon \Gamma'\rightarrow \Gamma''$ between finite index subgroups of $\Gamma$, we define a self-coupling $\Sigma_f$ of $\Gamma$ as follows:
Let $\Sigma^0_f$ be the countable Borel space $\Gamma''$ equipped with the counting measure. We define an action $\Gamma'\times \Gamma''\c \Sigma^0_f$ by the formula
\[(\gamma_1, \gamma_2)\gamma =f(\gamma_1)\gamma \gamma_2^{-1},\quad \gamma_1\in \Gamma',\ \gamma, \gamma_2\in \Gamma''.\]
Let $\Gamma \times \Gamma \c \Sigma_f$ be the self-coupling defined by the action induced from the action $\Gamma'\times \Gamma''\c \Sigma_f^0$.

Since $\Gamma$ is coupling rigid with respect to $(G, \pi)$, there exists an essentially unique, almost $(\Gamma \times \Gamma)$-equivariant Borel map $\Phi \colon \Sigma_f\rightarrow (G, \pi, \pi)$. Let $e\in \Sigma_f^0\subset \Sigma_f$ be the neutral element and put $g=\Phi(e)^{-1}$.
For any $\gamma \in \Gamma'$, the equality $\pi(\gamma)g^{-1}=\pi(\gamma)\Phi(e)=\Phi(f(\gamma))=\Phi(e)\pi(f(\gamma))=g^{-1}\pi(f(\gamma))$ holds.
We thus have the equality
\[\pi(f(\gamma))=g\pi(\gamma)g^{-1},\quad \forall \gamma \in \Gamma'.\]
The map $\bar{\pi}\colon \comm(\Gamma)\rightarrow \comm_G(\pi(\Gamma))$ defined by $[f]\mapsto g$ in the above notation is a homomorphism, whose inverse is equal to the homomorphism $i$ constructed in the first paragraph of the proof.
\end{proof}

This lemma implies that $\comm(\Gamma)$ is the smallest group with respect to which $\Gamma$ can be coupling rigid.
When $\Gamma$ is coupling rigid with respect to $\comm(\Gamma)$, several superrigidity results for $\Gamma$ are obtained through Theorem \ref{thm-furman-rep}.
These results are essentially proved in \cite{kida-oer} and \cite{kida-mer}.
Other consequences of coupling rigidity with respect to $\comm(\Gamma)$ are presented in Section \ref{sec-add}.

\begin{prop}\label{prop-conseq-rigid}
Let $\Gamma$ be an ICC discrete group with $\comm(\Gamma)$ countable.
If $\Gamma$ is coupling rigid with respect to $\comm(\Gamma)$, then the following assertions hold:
\begin{enumerate}
\item The group $\Gamma$ is ME rigid.
\item Any ergodic f.f.m.p.\ action of $\Gamma$ is superrigid, i.e., if an ergodic f.f.m.p.\ action $\Gamma \c (X, \mu)$ is WOE to an ergodic f.f.m.p.\ action $\Lambda \c (Y, \nu)$ of a discrete group $\Lambda$, then the two actions are virtually conjugate.
\item If two aperiodic f.f.m.p.\ actions $\Gamma \c (X, \mu)$, $\Gamma \c (Y, \nu)$ are WOE, then they are conjugate.
\end{enumerate}
\end{prop}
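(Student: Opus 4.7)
I would derive all three assertions from Theorem \ref{thm-furman-rep} applied with $(G, \pi) = (\comm(\Gamma), \mathbf{i})$. The hypotheses needed are in place: Lemma \ref{lem-comm}(i) gives that $\mathbf{i}$ is injective (so $\ker \mathbf{i}$ is trivial), and the countability of $\comm(\Gamma)$ combined with freeness of left multiplication by $\mathbf{i}(\Gamma)$ provides a Borel fundamental domain for that action. Thus for any coupling $(\Sigma, m)$ of $\Gamma$ with a discrete group $\Lambda$, the theorem yields a homomorphism $\rho \colon \Lambda \rightarrow \comm(\Gamma)$ with $\ker \rho$ finite, together with an almost $(\Gamma \times \Lambda)$-equivariant Borel map $\Phi \colon \Sigma \rightarrow (\comm(\Gamma), \mathbf{i}, \rho)$.

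For assertion (i), I would analyze the pushforward $\nu = \Phi_{*} m$, a $\sigma$-finite Borel measure on $\comm(\Gamma)$ invariant under the action $(\gamma, \lambda) \cdot g = \mathbf{i}(\gamma) g \rho(\lambda)^{-1}$. On the discrete space $\comm(\Gamma)$, such invariance forces the weight $\nu(\{g\})$ to be constant on each double coset $\mathbf{i}(\Gamma) g \rho(\Lambda)$. For any Borel fundamental domain $Y_{0}$ for the left $\mathbf{i}(\Gamma)$-action on the essential range of $\Phi$, equivariance of $\Phi$ makes $\Phi^{-1}(Y_{0})$ a fundamental domain for $\Gamma \c \Sigma$, so $\nu(Y_{0}) = m(Y) < \infty$, where $Y$ is as in Definition \ref{defn-me}. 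A symmetric statement holds with $\rho(\Lambda)$ in place of $\mathbf{i}(\Gamma)$. These two finiteness conditions together force each double coset meeting the essential range to contribute finitely, which (since each orbit of $\mathbf{i}(\Gamma)$ by left multiplication is infinite) translates into $\mathbf{i}(\Gamma) \cap g\rho(\Lambda) g^{-1}$ being of finite index in both $\mathbf{i}(\Gamma)$ and $g\rho(\Lambda) g^{-1}$ for some $g$ in the essential range. Thus $\mathbf{i}(\Gamma)$ and $\rho(\Lambda)$ are commensurable in $\comm(\Gamma)$; combined with the triviality of $\ker \mathbf{i}$ and finiteness of $\ker \rho$, this produces a virtual isomorphism between $\Gamma$ and $\Lambda$.

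Assertion (ii) reduces to (i) via the correspondence between WOE and couplings recalled in \S\ref{subsec-me-woe}: the isomorphism of restricted groupoids supplied by the WOE yields a coupling, and the map $\Phi$ produced above, together with the commensurability of $\mathbf{i}(\Gamma)$ and $\rho(\Lambda)$, identifies the restrictions of the two actions to commensurate finite-index subgroups on positive-measure Borel subsets, giving virtual conjugacy. For assertion (iii), aperiodicity upgrades virtual conjugacy to honest conjugacy: since every finite-index subgroup of $\Gamma$ acts ergodically on both $(X, \mu)$ and $(Y, \nu)$, any measured isomorphism between positive-measure restrictions of the actions of a finite-index subgroup extends uniquely to a $\Gamma$-equivariant isomorphism $X \rightarrow Y$ by the standard ergodic-induction argument. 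The main obstacle is the bookkeeping in (ii): one must check that the abstract virtual isomorphism supplied by (i) is actually realized by the measurable correspondence encoded in $\Phi$ and the groupoid isomorphism underlying the WOE, so that the virtual isomorphism of groups comes with a compatible identification of dynamical systems. The cleanest way to handle this is to run the argument of (i) on the specific self-coupling arising from the WOE via Proposition \ref{prop-group-iso}, extracting a Borel isomorphism between positive-measure restrictions directly from the uniqueness in Lemma \ref{lem-uni-quo}(i).
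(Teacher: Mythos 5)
The paper does not actually prove this proposition; it defers to \cite{kida-oer} and \cite{kida-mer}, and your plan follows the same Furman-style route those references take. Part (i) is sound: what you derive by hand about the pushforward measure --- constancy of the weights along double cosets, finiteness of each weight because $\ker \mathbf{i}$ is trivial, and the resulting commensurability of $\mathbf{i}(\Gamma)$ with a conjugate of $\rho(\Lambda)$ --- is exactly Lemma \ref{lem-image-atomic} of the paper (whose atomicity hypothesis is automatic here since $\comm(\Gamma)$ is countable), so you could simply invoke it. Two small corrections to (ii): the coupling produced from the WOE via Proposition \ref{prop-group-iso} is a $(\Gamma,\Lambda)$-coupling, not a self-coupling (that terminology is only right in (iii)); and the identification of the two actions on positive-measure pieces is extracted not from the uniqueness statement of Lemma \ref{lem-uni-quo}(i) but from the fiber $\Phi^{-1}(\{e\})$, which after the normalization of Lemma \ref{lem-image-atomic} is invariant under the subgroup $\{(\gamma,\lambda):\mathbf{i}(\gamma)=\rho(\lambda)\}$ and realizes the virtual conjugacy on finite-index subgroups.

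The step that would fail as written is your mechanism for (iii). It is not true that a measured conjugacy between the restrictions to a finite-index subgroup of two aperiodic $\Gamma$-actions extends to a $\Gamma$-conjugacy: the isomorphism of finite-index subgroups implementing it need not extend to $\Gamma$, and even when it does the map need not be $\Gamma$-equivariant (twist one action by an automorphism that is inner on the subgroup but outer on $\Gamma$). There is no purely ergodic-theoretic ``induction'' that repairs this; aperiodicity must instead be used upstream, at the level of the coupling. Concretely, $\Phi$ descends to a $\Gamma$-equivariant map from $\Sigma/\Lambda$ to the countable $\Gamma$-space $\comm(\Gamma)/\rho(\Lambda)$, whose image is a finite orbit carrying an invariant probability measure; aperiodicity of $\Gamma\c\Sigma/\Lambda$ forces that orbit to be a single point, whence (after translating by an element of the support) $\mathbf{i}(\Gamma)\subset\rho(\Lambda)$, and the symmetric argument gives equality, so the ME cocycle untwists to an isomorphism and the two actions are conjugate outright. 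This is precisely the move made in the first Claim of the proof of Theorem \ref{thm-oer}. Your closing sentence --- run the argument on the specific coupling coming from the WOE --- is the right instinct; it is the atomicity-plus-aperiodicity analysis of $\Phi_{*}m$, not uniqueness of $\Phi$, that does the work.
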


The following lemma gives a sufficient condition for $\Gamma$ to be coupling rigid with respect to $\comm(\Gamma)$.

\begin{lem}\label{lem-red-comm}
Let $\Gamma$ be an ICC discrete group with $\comm(\Gamma)$ countable.
Let $(\Sigma, m)$ be a self-coupling of $\Gamma$.
Suppose that we have
\begin{itemize}
\item a countable Borel space $C$ on which $\Gamma \times \Gamma$ acts so that each of the subgroups $\Gamma \times \{ e\}$ and $\{ e\} \times \Gamma$ acts freely; and
\item an almost $(\Gamma \times \Gamma)$-equivariant Borel map $\Phi_0\colon \Sigma \rightarrow C$.
\end{itemize}
Then there exists an essentially unique, almost $(\Gamma \times \Gamma)$-equivariant Borel map from $\Sigma$ into $(\comm(\Gamma), {\bf i}, {\bf i})$.\end{lem}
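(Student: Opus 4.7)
The plan is to define the map $\Phi \colon \Sigma \to \comm(\Gamma)$ as the composition $\Phi = \psi \circ \Phi_{0}$, where $\psi \colon C \to \comm(\Gamma)$ reads off an element of $\comm(\Gamma)$ from the stabilizer of each point of $C$. Because $\Gamma \times \{ e\}$ and $\{ e\} \times \Gamma$ act freely on $C$, for every $c \in C$ both projections $p_{1}, p_{2} \colon \stab_{\Gamma \times \Gamma}(c) \to \Gamma$ are injective; writing $A_{c} = p_{1}(\stab_{\Gamma \times \Gamma}(c))$ and $B_{c} = p_{2}(\stab_{\Gamma \times \Gamma}(c))$, the map $f_{c} := p_{1} \circ p_{2}^{-1} \colon B_{c} \to A_{c}$ is a group isomorphism. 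Once we show that $A_{c}$ and $B_{c}$ have finite index in $\Gamma$ for $\Phi_{0}$-a.e.\ $c$, the isomorphism $f_{c}$ represents a well-defined element of $\comm(\Gamma)$, and we set $\psi(c) := f_{c}$. Borel measurability of $\Phi$ is then automatic because $C$ is countable.

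The main technical step will be the finite-index claim. Fix fundamental domains $X, Y \subset \Sigma$ of finite measure for the actions of $\Gamma \times \{ e\}$ and $\{ e\} \times \Gamma$, respectively. For $c$ in the essential image of $\Phi_{0}$, the preimage $\Phi_{0}^{-1}(c)$ has positive measure. Since $\Gamma \times \{ e\}$ acts freely on $C$, the $(\Gamma \times \{ e\})$-orbits inside the $(\Gamma \times \Gamma)$-orbit of $c$ are in bijection with $\Gamma / B_{c}$, and a direct fundamental-domain computation based on disjointness of the translates $(e, \lambda) \Phi_{0}^{-1}(c) = \Phi_{0}^{-1}((e, \lambda) c)$ for $\lambda \in \Gamma$ yields
\[ m \bigl( \Phi_{0}^{-1}((\Gamma \times \Gamma) c) \cap X \bigr) = [\Gamma : B_{c}] \cdot m \bigl( \Phi_{0}^{-1}(c) \bigr). \]
The left-hand side is bounded by $m(X) < \infty$, so $[\Gamma : B_{c}] < \infty$; symmetrically, using $Y$ and the free action of $\Gamma \times \{ e\}$, we obtain $[\Gamma : A_{c}] < \infty$. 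This measure-counting argument is where I expect the main obstacle to lie; once it is settled, everything else is bookkeeping.

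For $(\Gamma \times \Gamma)$-equivariance of $\Phi$, the identity $\stab_{\Gamma \times \Gamma}((\gamma_{1}, \gamma_{2}) c) = (\gamma_{1}, \gamma_{2})\, \stab_{\Gamma \times \Gamma}(c)\, (\gamma_{1}, \gamma_{2})^{-1}$ gives, by direct inspection, the equality
\[ f_{(\gamma_{1}, \gamma_{2}) c} = {\bf i}(\gamma_{1}) \circ f_{c} \circ {\bf i}(\gamma_{2})^{-1} \]
in $\comm(\Gamma)$, which is precisely the $(\Gamma \times \Gamma)$-action on $(\comm(\Gamma), {\bf i}, {\bf i})$; so almost equivariance of $\Phi$ is inherited from that of $\Phi_{0}$. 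Finally, essential uniqueness is handled by Lemma \ref{lem-uni-quo}(i), whose hypothesis (b) of Definition \ref{defn-coup-rigid} holds for the pair $(\comm(\Gamma), {\bf i})$ because $\Gamma$ is ICC and hence, by Lemma \ref{lem-comm}(iii), $\comm(\Gamma)$ is ICC with respect to ${\bf i}(\Gamma)$; since $\comm(\Gamma)$ is countable by hypothesis, the only ${\bf i}(\Gamma)$-conjugation-invariant probability measure on it is the Dirac mass at the identity.
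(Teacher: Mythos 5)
Your proof is correct and follows essentially the same route as the paper's: the isomorphism $f_{c}=p_{1}\circ p_{2}^{-1}$ you read off from $\stab_{\Gamma\times\Gamma}(c)$ is exactly the paper's $\beta_{c}=(\alpha_{c})^{-1}$, obtained there by restricting the ME cocycles of $C$ (viewed as a self-coupling via $(\Phi_{0})_{*}m$) to the stabilizer of $c$, and the finite-index, equivariance, and ICC-based uniqueness steps coincide. The only cosmetic difference is that you perform the fundamental-domain measure count on $\Sigma$ through $\Phi_{0}^{-1}$ rather than on $C$ itself.
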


\begin{proof}
Choose $(\Sigma, m)$, $C$ and $\Phi_0\colon \Sigma \rightarrow C$ in the assumption.
We equip the measure $(\Phi_0)_*m$ with $C$.
We may assume that each point of $C$ has positive measure.
The space $C$ is then a self-coupling of $\Gamma$.
To distinguish the two actions of $\Gamma$ on $\Sigma$, let us use the following notation.
For $\gamma \in \Gamma$, we set
\[L(\gamma)=(\gamma, e)\in \Gamma \times \Gamma,\quad R(\gamma)=(e, \gamma)\in \Gamma \times \Gamma,\] 
where $e$ is the neutral element of $\Gamma$.

We construct a $(\Gamma \times \Gamma)$-equivariant map $\Psi \colon C\rightarrow (\comm(\Gamma), {\bf i}, {\bf i})$.
Pick $c\in C$ and choose a fundamental domain $X$ for the action $R(\Gamma)\c C$ containing $c$.
Let $\alpha \colon \Gamma \times X\rightarrow \Gamma$ be the associated ME cocycle.
We denote by $\Gamma_c$ the stabilizer of $c$ for the action $\Gamma \c X$.
It follows that $\Gamma_c$ is a finite index subgroup of $\Gamma$ and that the restriction of $\alpha$ to $\Gamma_c\times \{ c\}$, denoted by $\alpha_c$, is a homomorphism from $\Gamma_c$ into $\Gamma$, which is independent of the choice of $X$ by the definition of $\alpha$.
Similarly, choose a fundamental domain $Y$ for the action $L(\Gamma)\c C$ containing $c$.
Let $\beta \colon \Gamma \times Y\rightarrow \Gamma$ be the ME cocycle and $\Gamma_c'$ the stabilizer of $c$ for the action $\Gamma \c Y$.
The restriction of $\beta$ to $\Gamma_c'\times \{ c \}$, denoted by $\beta_c$, is a homomorphism from $\Gamma_c'$ into $\Gamma$, which is independent of the choice of $Y$.
Both of the compositions $\alpha_c\circ \beta_c$ and $\beta_c\circ \alpha_c$ are equal to the inclusions.
It follows that $\alpha_c$ and $\beta_c$ define elements of $\comm(\Gamma)$ and that $\beta_c$ is the inverse of $\alpha_c$ in $\comm(\Gamma)$.

We define a map $\Psi \colon C\rightarrow \comm(\Gamma)$ by setting $\Psi(c)=(\alpha_c)^{-1}=\beta_c$ for each $c\in C$.
We prove that $\Psi$ is $(\Gamma \times \Gamma)$-equivariant.
Pick $\gamma \in \Gamma$ and $c\in C$.
Let $X, Y\subset C$ be fundamental domains for the actions $R(\Gamma)\c C$, $L(\Gamma)\c C$ containing $c$, respectively.
Let $\alpha \colon \Gamma \times X\rightarrow \Gamma$ and $\beta \colon \Gamma \times Y\rightarrow \Gamma$ be the associated ME cocycles.
Since $L(\gamma)Y$ is also a fundamental domain for the action $L(\Gamma)\c C$, we have the associated ME cocycle $\beta^{\gamma}\colon \Gamma \times L(\gamma)Y\rightarrow \Gamma$.
This cocycle $\beta^{\gamma}$ satisfies the equality
\[\beta^{\gamma}(\lambda, L(\gamma)y)=\gamma \beta(\lambda, y)\gamma^{-1},\quad \forall \lambda \in \Gamma,\ \textrm{a.e.\ }y\in Y\]  
because we have $L(\gamma \beta(\lambda, y)\gamma^{-1})R(\lambda)L(\gamma)y=L(\gamma)L(\beta(\lambda, y))R(\lambda)y$, which belongs to $L(\gamma)Y$.
The equality implies that $\Psi(L(\gamma)c)$ is equal to ${\bf i}(\gamma)\Psi(c)$.
The map $\Psi$ is hence equivariant under the action of $L(\Gamma)$.
The equivariance under the action of $R(\Gamma)$ can be proved in a similar way. 
Composing the map $\Phi_0$ with $\Psi$, we obtain an almost $(\Gamma \times \Gamma)$-equivariant Borel map $\Phi \colon \Sigma \rightarrow (\comm(\Gamma), {\bf i}, {\bf i})$.
Essential uniqueness of $\Phi$ follows from Lemma \ref{lem-uni-quo} (i) and Lemma \ref{lem-comm} (iii).
\end{proof}

Finally, we present examples of coupling rigidity.
The following theorem is due to Furman \cite{furman-mer}.
The proof relies on Zimmer's cocycle superrigidity theorem \cite{zim-book}.

\begin{thm}[\cite{furman-mer}]\label{thm-me-hrl}
Let $G$ be a non-compact connected simple Lie group with its center finite and its real rank at least two.
Let $\Gamma$ be a lattice in $G$.
We define $\imath \colon \Gamma \rightarrow \aut(\ad G)$ as the restriction of the natural homomorphism from $G$ into $\aut(\ad G)$.
Then $\Gamma$ is coupling rigid with respect to $(\aut(\ad G), \imath)$.
\end{thm}

It is known that the mapping class group of a non-exceptional compact orientable surface $S$ is coupling rigid with respect to the automorphism group of the complex of curves for $S$ (see \cite{kida-mer}).

\section{Reduction of self-couplings}\label{sec-red}

Let $\Gamma =\Gamma_1\ast_A\Gamma_2$ be an amalgamated free product of discrete groups, and let $T$ be the Bass-Serre tree associated with the decomposition of $\Gamma$.
In this section, we prove that $\Gamma$ is coupling rigid with respect to the simplicial automorphism group $\aut^*(T)$ of $T$ when several conditions are imposed on the subgroups $\Gamma_1$, $\Gamma_2$ and $A$.
This coupling rigidity gives us useful information on a group ME to $\Gamma$ thanks to Theorem \ref{thm-furman-rep}. 
A detailed study of such a group will be performed in Section \ref{sec-mec-arb}. 
The following collects conditions on the factor subgroups $\Gamma_1$, $\Gamma_2$ and the inclusions $A<\Gamma_1$ and $A<\Gamma_2$ so that coupling rigidity of $\Gamma$ holds. 

\medskip

\noindent {\bf Assumption $(\star)$}\label{ass-star}: For each $i=1, 2$, let $\Gamma_i$ be a discrete group and $A_i$ a subgroup of $\Gamma_i$ such that
\begin{itemize}
\item $\Gamma_i$ satisfies property (T); and
\item $|A_i|=\infty$, $[\Gamma_i: A_i]=\infty$ and ${\rm LQN}_{\Gamma_i}(A_i)=A_i$.
\end{itemize}
Let $\phi \colon A_1\rightarrow A_2$ be an isomorphism and set the amalgamated free product
\[\Gamma =\langle\, \Gamma_1, \Gamma_2\mid A_1\simeq_{\phi}A_2\,\rangle.\]
We denote by $A$ the subgroup of $\Gamma$ corresponding to $A_1\simeq_{\phi}A_2$.
Let $T$ denote the Bass-Serre tree associated with the decomposition of $\Gamma$ and $\imath \colon \Gamma \rightarrow \aut^*(T)$ the homomorphism arising from the action $\Gamma \c T$.

\medskip

Let us explain the notation and terminology in the above assumption.
For a group $\Gamma$ and a subgroup $A$ of $\Gamma$, we put
\[{\rm LQN}_{\Gamma}(A)=\{\, \gamma \in \Gamma \mid [A:\gamma A\gamma^{-1}\cap A]<\infty \, \}\label{lqn}\]
and call it the {\it left quasi-normalizer} of $A$ in $\Gamma$, which is a subsemigroup of $\Gamma$ and contains $A$.
We say that $A$ is {\it left-quasi-normalized by itself} in $\Gamma$ if the equality ${\rm LQN}_{\Gamma}(A)=A$ holds.
We denote by $\aut^*(T)$\label{aut-star} the simplicial automorphism group of $T$ equipped with the standard Borel structure associated with the pointwise convergence topology.

Let $\Gamma =\Gamma_1\ast_A\Gamma_2$ be an amalgamated free product.
We review basic properties of the Bass-Serre tree associated with $\Gamma$.
We define a simplicial graph $T$ as follows.
Let $V(T)=\Gamma/\Gamma_1\sqcup \Gamma/\Gamma_2$\label{vertex} be the set of vertices of $T$, and let $E(T)=\Gamma/A$\label{edge} be the set of edges of $T$.
For each $\gamma \in \Gamma$, the two end points of the edge $\gamma A\in E(T)$ are given by $\gamma \Gamma_1, \gamma \Gamma_2\in V(T)$.
It follows from I.4.1 in \cite{serre} that $T$ is in fact a connected tree, on which $\Gamma$ acts by left multiplication.

For each $v\in V(T)$, let ${\rm Lk}(v)$\label{link} denote the set of vertices in the link of $v$ in $T$.
Let $v_i\in V(T)$ be the vertex corresponding to $\Gamma_i\in \Gamma /\Gamma_i$ for $i=1, 2$.
The stabilizer of $v_i$ in $\Gamma$ is equal to $\Gamma_i$.
There is a $\Gamma_i$-equivariant one-to-one correspondence between elements of $\Gamma_i/A$ and elements of ${\rm Lk}(v_i)$.
In particular, if $A$ is of infinite index in $\Gamma_i$, then ${\rm Lk}(v_i)$ consists of infinitely many elements.
The equality ${\rm LQN}_{\Gamma_1}(A)=A$ holds if and only if any orbit for the action $A\c {\rm Lk}(v_1)\setminus \{ v_2\}$ consists of infinitely many elements.
The same thing holds for the equality ${\rm LQN}_{\Gamma_2}(A)=A$ and the action $A\c {\rm Lk}(v_2)\setminus \{ v_1\}$.

We introduce an orientation on $T$ as follows:
For each $\gamma \in \Gamma$, let $\gamma \Gamma_1, \gamma \Gamma_2\in V(T)$ be the origin and terminal of the edge $\gamma A\in E(T)$, respectively.
Let $\aut(T)$\label{aut} be the group of simplicial automorphisms of $T$ preserving this orientation.
The group $\aut(T)$ is then a subgroup of $\aut^*(T)$ with its index at most two, and it consists of automorphisms of $T$ without inversions.

The following lemma proves one of the requirement for coupling rigidity of $\Gamma$ with respect to $\aut^*(T)$.

\begin{lem}\label{lem-conj-inv}
On Assumption $(\star)$, the Dirac measure on the neutral element is the only probability measure on $\aut^*(T)$ that is invariant under conjugation by any element of $\imath(\Gamma)$.
\end{lem}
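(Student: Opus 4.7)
The plan is to push the hypothetical $\imath(\Gamma)$-conjugation-invariant probability measure $\mu$ on $\aut^{*}(T)$ forward to probability measures on the countable set $V(T)$ via the evaluation maps, and then to exploit the amalgamated-product structure to force each such pushforward to be a Dirac mass.

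First I would define, for each $v\in V(T)$, the measure $\mu_{v}$ on $V(T)$ by $\mu_{v}(B)=\mu(\{h\in \aut^{*}(T):hv\in B\})$. Substituting $h=\imath(\gamma)h'\imath(\gamma)^{-1}$ into this expression and using conjugation invariance of $\mu$ gives the equivariance $\mu_{\imath(\gamma)v}(B)=\mu_{v}(\imath(\gamma)^{-1}B)$ for every $\gamma\in \Gamma$ and every Borel $B\subset V(T)$. In particular, $\mu_{v_{i}}$ is $\Gamma_{i}$-invariant for $i=1,2$. Now any group-invariant probability measure on a countable set is concentrated on finite orbits, and I would pin down the finite $\Gamma_{1}$-orbits on $V(T)$ as follows: for any $w\neq v_{1}$, the intersection $\Gamma_{1}\cap \stab_{\Gamma}(w)$ is contained in the pointwise stabilizer of the first edge of the geodesic from $v_{1}$ to $w$; that edge has the form $gA$ with $g\in \Gamma_{1}$, so the stabilizer in question lies in $gAg^{-1}\subset \Gamma_{1}$, which has infinite index by the hypothesis $[\Gamma_{1}:A]=\infty$. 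Hence $\mu_{v_{1}}=\delta_{v_{1}}$, and the symmetric argument gives $\mu_{v_{2}}=\delta_{v_{2}}$. Combining these with the equivariance yields $\mu_{v}=\delta_{v}$ for every $v\in V(T)$, so $\mu$ is concentrated on $\bigcap_{v\in V(T)}\stab_{\aut^{*}(T)}(v)$.

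Finally, an element of $\aut^{*}(T)$ that fixes every vertex fixes both endpoints of every edge, and so cannot perform any inversion; it therefore fixes every edge with its orientation and equals the identity of $\aut^{*}(T)$. Consequently $\mu=\delta_{e}$. The main obstacle is the orbit analysis in the middle step, which is settled purely by the Bass-Serre description of edge stabilizers together with the infinite-index hypothesis on $A$; neither property (T) nor the LQN conditions of Assumption $(\star)$ are invoked here.
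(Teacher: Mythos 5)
Your argument is correct and is essentially the paper's own proof in pushforward language: your measure $\mu_{v}$ assigns to $u$ exactly the mass $\mu(\aut(v,u))$ of the pieces in the paper's decomposition $\aut^{*}(T)=\bigsqcup_{u}\aut(v,u)$, and both proofs conclude via the same observation that every nontrivial $\Gamma_{v}$-orbit on $V(T)$ is infinite because edge stabilizers are conjugates of $A$, which has infinite index. The only (harmless) cosmetic difference is that you derive $\mu_{v}=\delta_{v}$ for general $v$ from the base cases $v_{1},v_{2}$ by equivariance, whereas the paper argues directly for each $v$.
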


\begin{proof}
Let $\mu$ be a probability measure on $\aut^*(T)$ invariant under conjugation by any element of $\imath(\Gamma)$.
For $u, v\in V(T)$, we denote by $\aut(u, v)$ the Borel subset of $\aut^*(T)$ consisting of all $f\in \aut^*(T)$ with $f(u)=v$.
For $u\in V(T)$, we denote by $\Gamma_u$ the stabilizer of $u$ in $\Gamma$.
For any $u, v\in V(T)$ and $\gamma \in \Gamma_u$, we have the equalities
\[\aut^*(T)=\bigsqcup_{w\in V(T)}\aut(u, w),\quad \imath(\gamma)\aut(u, v)\imath(\gamma)^{-1}=\aut(u, \gamma v).\]
For any $u\in V(T)$, any orbit for the action $\Gamma_u\c V(T)$ other than $\{ u\}$ consists of infinitely many vertices because $A$ is a subgroup of infinite index in both $\Gamma_1$ and $\Gamma_2$.
For any $u\in V(T)$, the measure $\mu$ is thus supported on $\aut(u, u)$.
\end{proof}

\begin{thm}\label{thm-coup-tree}
On Assumption $(\star)$, the group $\Gamma$ is coupling rigid with respect to $(\aut^*(T), \imath)$.
\end{thm}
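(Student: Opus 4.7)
The plan is to verify condition (a) of Definition~\ref{defn-coup-rigid} for an arbitrary self-coupling $(\Sigma, m)$ of $\Gamma$, since condition (b) is already handled by Lemma~\ref{lem-conj-inv}. Given such a $\Sigma$, I would work on the groupoid side as in \S\ref{subsec-me-woe}: by Lemma~\ref{lem-int-max} pick fundamental domains $X, Y \subset \Sigma$ for $\{e\}\times \Gamma$ and $\Gamma\times\{e\}$ respectively with large intersection $Z = X\cap Y$, and let $\alpha\colon \Gamma\times X\to\Gamma$ and $\beta\colon \Gamma\times Y\to\Gamma$ be the two ME cocycles. Write $\rho = \imath\circ\alpha$ and $\sigma = \imath\circ\beta$ for their pushforwards to $\aut^{*}(T)$.

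The main input will be Theorem~\ref{thm-adams-spa}. Because each $\Gamma_{i}$ has property (T), applying that theorem to the restrictions $\rho|_{\Gamma_{i}\times X}$ and $\sigma|_{\Gamma_{i}\times Y}$ yields invariant Borel maps $\varphi_{i}\colon X\to S(T)$ and $\psi_{i}\colon Y\to S(T)$ in the sense of Definition~\ref{defn-inv-map}. Since $S(T) = V_{1}(T) \sqcup V_{2}(T) \sqcup E(T)$, the next step is to decompose $X$ according to the $\Gamma$-orbit type of $\varphi_{i}$ and to eliminate the edge and wrong-vertex-type components. On a piece of $X$ where $\varphi_{i}$ takes values in $E(T)$, the invariance relation forces $\rho|_{\Gamma_{i}}$ on that piece to take values in an edge stabilizer, i.e.\ in a conjugate of $\imath(A)$, producing an $A$-invariant vertex in ${\rm Lk}(v_{i})\setminus\{v_{3-i}\}$; a piece mapping to $V_{3-i}(T)$ similarly produces a $\Gamma_{i}$-invariant vertex in the opposite orbit. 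Both configurations violate ${\rm LQN}_{\Gamma_{i}}(A) = A$ together with $[\Gamma_{i}:A]=\infty$, so after discarding null sets $\varphi_{i}$ factors through $V_{i}(T)$, and analogously $\psi_{i}$ through $V_{i}(T)$.

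Once these orbit types are secured, I would assemble $\Phi\colon \Sigma\to\aut^{*}(T)$ as follows. Extend $\varphi_{i}$ and $\psi_{i}$ from $Z$ to all of $\Sigma$ by Lemma~\ref{lem-inv-ext} applied to the groupoids $\Gamma\ltimes X$ and $\Gamma\ltimes Y$, and use Proposition~\ref{prop-group-iso} to compare the two descriptions on $Z$. At almost every $x\in\Sigma$, the pair $(\varphi_{1}(x),\varphi_{2}(x))$ prescribes where $v_{1}$ and $v_{2}$ should be sent by a candidate $\Phi(x)$; the companion data from $\psi_{i}$ encodes the image of $\Phi(x)^{-1}$ on the same vertices and thereby pins down the injectivity. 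Since $\Gamma$ acts transitively on the oriented edges of $T$, knowing coherent images of the two endpoints of the base edge determines a simplicial automorphism uniquely, and the cocycle-equivariance of all four Adams-Spatzier outputs translates into almost $(\Gamma\times\Gamma)$-equivariance of $\Phi$.

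The main obstacle will be the orbit-type reduction of the Adams-Spatzier outputs in Step~3. Without the ${\rm LQN}$ hypothesis, the restricted cocycle $\rho|_{\Gamma_{i}}$ could legitimately take values in stabilizers of edges or wrong-type vertices, and no additional invariance would eliminate them. The precise role of the ${\rm LQN}$ assumption in $(\star)$ is to guarantee that the action of $A$ on ${\rm Lk}(v_{i})\setminus\{v_{3-i}\}$ has only infinite orbits, which is exactly what rules out the spurious stabilizer-fixed simplices and makes the entire reduction work. I expect a certain amount of bookkeeping will be needed to carry out the elimination cleanly on each of the countably many orbit pieces, but the structural content of the argument is concentrated in this one application of the ${\rm LQN}$ condition.
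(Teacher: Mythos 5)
Your skeleton — fundamental domains via Lemma \ref{lem-int-max}, ME cocycles, Adams--Spatzier applied to the property (T) vertex stabilizers, and the ${\rm LQN}$ condition as the structural input — is indeed the paper's approach, but the two middle steps where you place the content are respectively wrong and missing. The ``orbit-type reduction'' is misguided: there is no contradiction in $\varphi_{i}$ taking values in $V_{3-i}(T)$. A self-coupling built from an automorphism of $\Gamma$ swapping the two factors (possible whenever $\Gamma_{1}\cong\Gamma_{2}$ with $A$ sitting symmetrically) produces exactly this behaviour, which is why the theorem is stated for $\aut^{*}(T)$ rather than $\aut(T)$; the reduction to orientation-preserving automorphisms happens only later, in Section \ref{subsec-circ}, by passing to an index-two subgroup. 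Your claimed mechanism also does not parse: the values of $\varphi_{i}$ are transported by the cocycle $\alpha(\gamma,x)$, not fixed by $\Gamma_{i}$ or $A$ acting on $T$, so no ``$A$-invariant vertex in ${\rm Lk}(v_{i})$'' is produced. Edge-valued pieces likewise need no contradiction: since the target action is without inversions, one simply composes with the origin map $E(T)\to V(T)$ to get a vertex-valued invariant map, as the paper does.

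The more serious gap is in the assembly. Knowing where $v_{1}$ and $v_{2}$ are sent does not determine a simplicial automorphism of $T$: an automorphism is pinned down by its values on all vertices, and the invariance relation $\alpha(\gamma,x)\varphi_{i}(x)=\varphi_{i}(\gamma\cdot x)$ relates values of $\varphi_{i}$ at different points of $X$, not values of a single candidate $\Phi(x)$ at different vertices of $T$. The paper therefore runs the Adams--Spatzier argument for every vertex $v\in V(T)$ (Lemma \ref{lem-pre-stab}), obtaining a map $\varphi(x,\cdot)\colon V(T)\to V(T)$, and then must prove that this map preserves adjacency (Lemma \ref{lem-cons-varphi}). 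That adjacency step is precisely where ${\rm LQN}_{\Gamma_{i}}(A)=A$ is indispensable: any invariant map for the edge-stabilizer groupoid $\mathcal{G}_{e}$ into $V(T)$ must take values in $\partial e$, because every other $\Gamma_{e}$-orbit in $V(T)$ is infinite (Claim \ref{claim-two-ver}), and this forbids $\varphi(x,\cdot)$ from sending adjacent vertices to non-adjacent ones. You correctly identified that the ${\rm LQN}$ hypothesis controls the $A$-orbits on the link, but you deploy it against configurations that are either harmless or genuinely occur, while the place where it actually carries the proof — adjacency preservation of the full vertex map — is absent from your outline.
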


By Theorem \ref{thm-furman-rep}, we obtain the following corollary.
This is the first step to study groups ME to $\Gamma$.

\begin{cor}\label{cor-coup-tree}
On Assumption $(\star)$, let $\Lambda$ be a discrete group and $\Sigma$ a coupling of $\Gamma$ and $\Lambda$.
Then there exist a homomorphism $\rho \colon \Lambda \rightarrow \aut^*(T)$ and an almost $(\Gamma \times \Lambda)$-equivariant Borel map $\Phi \colon \Sigma \rightarrow (\aut^*(T), \imath, \rho)$.
\end{cor}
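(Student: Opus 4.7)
The plan is to obtain this corollary as an immediate consequence of Theorem \ref{thm-coup-tree} via Furman's representation theorem (Theorem \ref{thm-furman-rep}). Specifically, Theorem \ref{thm-coup-tree} asserts that under Assumption $(\star)$, the group $\Gamma$ is coupling rigid with respect to the pair $(\aut^{*}(T), \imath)$. This verifies both hypotheses needed to apply Theorem \ref{thm-furman-rep} with $(G, \pi) = (\aut^{*}(T), \imath)$: condition (a) of Definition \ref{defn-coup-rigid}, giving almost equivariant maps on self-couplings, is the content of Theorem \ref{thm-coup-tree} itself, and condition (b), demanding that the Dirac mass on the neutral element be the unique probability measure on $\aut^{*}(T)$ invariant under $\imath(\Gamma)$-conjugation, is exactly Lemma \ref{lem-conj-inv}.

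With coupling rigidity in hand, I would feed the given coupling $\Sigma$ of $\Gamma$ and $\Lambda$ directly into Theorem \ref{thm-furman-rep}. That theorem produces, as its conclusion, precisely a homomorphism $\rho \colon \Lambda \rightarrow \aut^{*}(T)$ together with an almost $(\Gamma \times \Lambda)$-equivariant Borel map $\Phi \colon \Sigma \rightarrow (\aut^{*}(T), \imath, \rho)$, which is exactly what the corollary claims. No further argument is required for the corollary itself; its proof amounts to a single invocation of the general principle that coupling rigidity extends automatically from self-couplings of $\Gamma$ to arbitrary couplings of $\Gamma$ with any discrete group.

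The main obstacle, therefore, is not located in this corollary but rather in Theorem \ref{thm-coup-tree} on which it rests entirely. That theorem is the substantive result of Section \ref{sec-red}; its proof will need to construct, for an arbitrary self-coupling $\Sigma$ of $\Gamma$, an almost $(\Gamma\times\Gamma)$-equivariant Borel map into $\aut^{*}(T)$. I would expect this to proceed by first applying Theorem \ref{thm-adams-spa} to the property (T) subgroups $\Gamma_{i}$, producing invariant Borel maps from (pieces of) $\Sigma$ into the simplex set of $T$, and then leveraging the conditions $[\Gamma_{i}:A_{i}]=\infty$ and ${\rm LQN}_{\Gamma_{i}}(A_{i})=A_{i}$ together with Lemma \ref{lem-inv-ext} to pin down a unique vertex or edge for each point and ultimately promote the simplex-valued data into a single $\aut^{*}(T)$-valued map.
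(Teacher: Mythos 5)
Your proposal is correct and matches the paper exactly: the corollary is obtained by combining Theorem \ref{thm-coup-tree} (whose condition (b) is supplied by Lemma \ref{lem-conj-inv}) with Furman's representation theorem \ref{thm-furman-rep}, applied to the given coupling of $\Gamma$ and $\Lambda$. Your remarks correctly locate the real work in Theorem \ref{thm-coup-tree}, which the paper proves via Theorem \ref{thm-general-coup-tree} along the lines you sketch.
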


Theorem \ref{thm-coup-tree} is a direct consequence of Theorem \ref{thm-general-coup-tree} below.
In the rest of this section, we fix the following notation:
Let $\Gamma =\langle\, \Gamma_1, \Gamma_2\mid A_1\simeq_{\phi}A_2\,\rangle$ be the group in Assumption $(\star)$ and denote by $A$ the subgroup of $\Gamma$ corresponding to $A_1\simeq_{\phi}A_2$.
Let $\Lambda =\langle\, \Lambda_1, \Lambda_2\mid B_1\simeq_{\chi}B_2\,\rangle$ be another amalgamated free product defined by two pairs of groups $B_1<\Lambda_1$ and $B_2<\Lambda_2$ and an isomorphism $\chi \colon B_1\rightarrow B_2$ such that they satisfy the two conditions in Assumption $(\star)$ when $\Gamma_i$ and $A_i$ are replaced with $\Lambda_i$ and $B_i$, respectively, for each $i=1, 2$.
We denote by $B$ the subgroup of $\Lambda$ corresponding to $B_1\simeq_{\chi}B_2$. Let $T_{\Gamma}$ and $T_{\Lambda}$ be the Bass-Serre trees associated with the decompositions of $\Gamma$ and $\Lambda$, respectively. 
There are natural homomorphisms $\imath \colon \Gamma \rightarrow \aut^*(T_{\Gamma})$ and $\jmath \colon \Lambda \rightarrow \aut^*(T_{\Lambda})$.
Let $\isom(T_{\Lambda}, T_{\Gamma})$ (resp.\ $\isom(T_{\Gamma}, T_{\Lambda})$) denote the set of simplicial isomorphisms from $T_{\Lambda}$ onto $T_{\Gamma}$ (resp.\ from $T_{\Gamma}$ onto $T_{\Lambda}$) equipped with the standard Borel structure associated with the pointwise convergence topology.

\begin{thm}\label{thm-general-coup-tree}
In the above notation, let $\Sigma$ be a coupling of $\Gamma$ and $\Lambda$.
Then there exists an essentially unique Borel map $\Phi \colon \Sigma \rightarrow \isom(T_{\Lambda}, T_{\Gamma})$ satisfying the equality
\[\Phi((\gamma, \lambda)x)=\imath(\gamma)\Phi(x)\jmath(\lambda)^{-1},\quad \forall \gamma \in \Gamma,\ \forall \lambda \in \Lambda,\ \textrm{a.e.\ }x\in \Sigma.\]
\end{thm}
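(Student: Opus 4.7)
The plan is to exploit the groupoid formalism of Subsection \ref{subsec-me-woe} together with the Adams--Spatzier theorem to construct the simplicial isomorphism vertex by vertex. First apply Lemma \ref{lem-int-max} to choose fundamental domains $X$ for $\Lambda \c \Sigma$ and $Y$ for $\Gamma \c \Sigma$ with $\Gamma \cdot Z = X$ and $\Lambda \cdot Z = Y$, where $Z = X \cap Y$, and let $\alpha \colon \Gamma \times X \to \Lambda$ and $\beta \colon \Lambda \times Y \to \Gamma$ be the associated ME cocycles. Producing $\Phi$ is then equivalent, after unfolding along $X$, to producing a Borel map $\phi \colon X \to \isom(T_{\Lambda}, T_{\Gamma})$ satisfying $\phi(\gamma \cdot x) = \imath(\gamma)\,\phi(x)\,\jmath(\alpha(\gamma, x))^{-1}$ for every $\gamma \in \Gamma$ and a.e.\ $x \in X$.

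To build the vertex-level data, fix $i \in \{1, 2\}$ and note that the restriction $\Gamma_i \c X$ preserves a probability measure, while $\jmath \circ \alpha|_{\Gamma_i}$ is a Borel cocycle $\Gamma_i \times X \to \aut^{*}(T_{\Lambda})$. Since $\Gamma_i$ has property (T), Theorem \ref{thm-adams-spa} provides a $\Gamma_i$-equivariant Borel map $\psi_i \colon X \to S(T_{\Lambda})$. The crux of the argument is to refine $\psi_1, \psi_2$ so that (a) each $\psi_i$ takes values in $V(T_{\Lambda})$, (b) after possibly relabelling, $\psi_1(x)$ and $\psi_2(x)$ lie in different $\Lambda$-orbits on $V(T_{\Lambda})$, and (c) $\psi_1(x)$ and $\psi_2(x)$ are adjacent in $T_{\Lambda}$ for a.e.\ $x$. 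Each refinement is forced by a careful analysis of the stabilizers in $\jmath(\Lambda)$ of vertices, edges, and adjacent pairs of vertices, exploiting the hypotheses $|A_i| = \infty$, $[\Gamma_i : A_i] = \infty$ and ${\rm LQN}_{\Gamma_i}(A_i) = A_i$ together with their counterparts for $B_j < \Lambda_j$.

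Once (a)--(c) are in hand, set $\phi^{\vee}(x)(v_i) := \psi_i(x)$ on the base vertices $v_1, v_2 \in V(T_{\Gamma})$ and extend $\Gamma$-equivariantly along the two orbits $\Gamma v_1, \Gamma v_2 \subset V(T_{\Gamma})$; the extension is Borel by Lemma \ref{lem-inv-ext}, since $\psi_i$ is invariant under the stabilizer $\Gamma_i$ of $v_i$. Condition (c) then ensures that edges of $T_{\Gamma}$ are sent to edges of $T_{\Lambda}$, so that $\phi^{\vee}(x)$ is a simplicial map $T_{\Gamma} \to T_{\Lambda}$. Running the symmetric construction with $\beta$ and the $\Lambda_j$ yields a simplicial map $\phi^{\wedge}(y) \colon T_{\Lambda} \to T_{\Gamma}$. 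Transporting $\phi^{\wedge}$ from $Y$ to $X$ through the groupoid isomorphism of Proposition \ref{prop-group-iso}, the compositions $\phi^{\vee}(x) \circ \phi^{\wedge}(x)$ and $\phi^{\wedge}(x) \circ \phi^{\vee}(x)$ give almost equivariant Borel self-maps of the trees, and Lemma \ref{lem-conj-inv} forces them to be the identity a.e.; hence $\phi(x) := \phi^{\vee}(x)^{-1}$ supplies the desired simplicial isomorphism $T_{\Lambda} \to T_{\Gamma}$. Essential uniqueness of $\phi$ follows along the lines of Lemma \ref{lem-uni-quo}(i) by combining any two candidates into a single $\Gamma$-invariant map and invoking Lemma \ref{lem-conj-inv}.

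The main obstacle will be step (c), the adjacency of $\psi_1(x)$ and $\psi_2(x)$. The edge stabilizer $A = \Gamma_1 \cap \Gamma_2$ need not have property (T), so one cannot directly apply Adams--Spatzier to extract an $A$-invariant choice of edge. Instead, adjacency has to be extracted from the interplay of the $\Gamma_i$-invariances of $\psi_1, \psi_2$ with the ${\rm LQN}$ hypotheses on the $\Lambda$-side: one must show that if $\psi_1(x)$ and $\psi_2(x)$ were not adjacent, the common stabilizer of the pair in $\jmath(\Lambda)$ would intersect $\jmath(\alpha(A, x))$ in a subgroup too small to be compatible with ${\rm LQN}_{\Lambda_j}(B_j) = B_j$ and with $A$ being infinite and of infinite index in each $\Gamma_i$. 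This orbit-counting and stabilizer analysis of the $B_j$-actions on the links in $T_{\Lambda}$ is the principal technical content of the section.
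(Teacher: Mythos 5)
Your overall architecture coincides with the paper's: fundamental domains chosen via Lemma \ref{lem-int-max}, Adams--Spatzier applied to the property (T) factors to produce vertex-valued invariant maps, a symmetric construction in the reverse direction to force bijectivity, and essential uniqueness via Lemma \ref{lem-conj-inv}. However, the step you yourself flag as the crux --- adjacency of $\psi_{1}(x)$ and $\psi_{2}(x)$ --- is left as a gesture rather than an argument, and the mechanism you sketch points at the wrong side of the coupling. The paper's adjacency proof does not run through the stabilizers in $\jmath(\Lambda)$ of the pair $(\psi_1(x),\psi_2(x))$ and the condition ${\rm LQN}_{\Lambda_j}(B_j)=B_j$; it runs through the $\Gamma$-side. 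Concretely: if $u_{1}=\varphi(x,v_{1})$ and $u_{2}=\varphi(x,v_{2})$ were not adjacent, pick an interior vertex $u$ of the geodesic joining them. Then $(\cal{H}_{u_{1}})_{W}\cap(\cal{H}_{u_{2}})_{W}\subset(\cal{H}_{u})_{W}$, and --- using the reverse-direction vertex correspondence obtained by applying Adams--Spatzier to the $\Lambda_{j}$ (this is where property (T) of the $\Lambda$-factors enters, not just of the $\Gamma$-factors) --- one pulls this back to an inclusion $(\cal{G}_{e})_{W}\subset(\cal{G}_{v})_{W}$ for some $v\in V(T_{\Gamma})$ distinct from $v_{1},v_{2}$. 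That inclusion is then killed by the observation (Claim \ref{claim-two-ver}) that any $\rho$-invariant map for $\cal{G}_{e}$ into $V(T_{\Gamma})$ must land in $\{v_{1},v_{2}\}$, because ${\rm LQN}_{\Gamma_{i}}(A)=A$ forces every $\Gamma_{e}$-orbit on $V(T_{\Gamma})\setminus\{v_{1},v_{2}\}$ to be infinite, incompatible with a finite invariant measure. Without this geodesic-interpolation step your plan (c) does not close.

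A second, lesser, gap: your bijectivity argument composes $\phi^{\vee}(x)$ with $\phi^{\wedge}(x)$ and invokes Lemma \ref{lem-conj-inv}, but before you know bijectivity these compositions are only simplicial \emph{endomorphisms} of the trees, not elements of $\aut^{*}(T)$, so the lemma as stated does not apply; you would need to extend its orbit-counting argument to conjugation-invariant measures on the space of all simplicial self-maps (which does work, but must be said). The paper avoids this by establishing the two-sided groupoid identity $f((\cal{G}_{v})_{Z_{n}})=(\cal{H}_{u_{n}})_{Z_{n}}$ directly (Lemma \ref{lem-pre-stab}), using Claim \ref{claim-dis-ver} --- no inclusion $(\cal{G}_{v})_{W}\subset(\cal{G}_{v'})_{W}$ can hold for distinct vertices, again by an infinite-orbit/finite-measure argument --- applied to both $f$ and $f^{-1}$; injectivity and surjectivity of $\varphi(x,\cdot)$ then come for free. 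I would recommend adopting that route rather than repairing the composition argument.
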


\begin{proof}
Let us fix the notation.
Choose fundamental domains $X\subset \Sigma$ for the action $\Lambda \c \Sigma$ and $Y\subset \Sigma$ for the action $\Gamma \c \Sigma$ so that the intersection $Z=X\cap Y$ satisfies the two equalities $\Gamma \cdot Z=X$ and $\Lambda \cdot Z=Y$ up to null sets (see Lemma \ref{lem-int-max}).
Let $\alpha \colon \Gamma \times X\rightarrow \Lambda$ and $\beta \colon \Lambda \times Y\rightarrow \Gamma$ be the associated ME cocycles.
We put $\cal{G}=\Gamma \ltimes X$ and $\cal{H}=\Lambda \ltimes Y$.
The identity map on $Z$ defines the groupoid isomorphism $f\colon (\cal{G})_Z\rightarrow (\cal{H})_Z$ by Proposition \ref{prop-group-iso}.
For each $s\in V(T_{\Gamma})\cup E(T_{\Gamma})$, let $\Gamma_s$ denote the stabilizer of $s$ in $\Gamma$ and put $\cal{G}_s=\Gamma_s\ltimes X$.
Define $\Lambda_t$ and $\cal{H}_t$ for each $t\in V(T_{\Lambda})\cup E(T_{\Lambda})$ in a similar way.
The following proof is in part based on the author's argument in \cite{kida-mer}.
The main task is to investigate what kinds of subgroupoids of $\cal{G}$ and $\cal{H}$ are sent to each other by $f$.
We first show that the theorem is deduced from the following three lemmas.

\begin{lem}\label{lem-pre-stab}
For each $v\in V(T_{\Gamma})$, there exist a countable Borel partition $Z=\bigsqcup_nZ_n$ and $u_n\in V(T_{\Lambda})$ with
\[f((\cal{G}_v)_{Z_n})=(\cal{H}_{u_n})_{Z_n},\quad \forall n.\]
\end{lem}

\begin{lem}\label{lem-cons-varphi} 
We define a Borel map $\varphi \colon Z\times V(T_{\Gamma})\rightarrow V(T_{\Lambda})$ by putting $\varphi(x, v)=u_n$ if $x\in Z_n$ in the notation of Lemma \ref{lem-pre-stab}.
Then for a.e.\ $x\in Z$, the map $\varphi(x, \cdot)\colon V(T_{\Gamma})\rightarrow V(T_{\Lambda})$ defines a simplicial isomorphism from $T_{\Gamma}$ onto $T_{\Lambda}$.
We denote the isomorphism $\varphi(x,\cdot)$ by $\varphi(x)$. 
\end{lem}

\begin{lem}\label{lem-eq-varphi}
The map $\varphi \colon Z\rightarrow \isom(T_{\Gamma}, T_{\Lambda})$ constructed in Lemma \ref{lem-cons-varphi} satisfies the equality
\[\varphi(\gamma \cdot x)\imath(\gamma)\varphi(x)^{-1}=\jmath \circ \alpha(\gamma, x)\]
for any $\gamma \in \Gamma$ and a.e.\ $x\in Z$ with $\gamma \cdot x\in Z$.
\end{lem}

Assuming these three lemmas, we prove the theorem.
Note that $\Sigma$ is identified with the space $X\times \Lambda$ as a measure space on which $\Gamma \times \Lambda$ acts, where the action $\Gamma \times \Lambda \c X\times \Lambda$ is defined by the formula
\[(\gamma, \lambda)(x, \lambda')=(\gamma \cdot x, \alpha(\gamma, x)\lambda'\lambda^{-1}),\quad \gamma \in \Gamma,\ \lambda, \lambda'\in \Lambda,\ x\in X.\]
We define a Borel map $\Phi$ from the subset $Z\times \{ e\}$ of $\Sigma =X\times \Lambda$ into $\isom(T_{\Lambda}, T_{\Gamma})$, which will be extended to a map from the whole space $\Sigma$ so that it is almost $(\Gamma \times \Lambda)$-equivariant.
For each $x\in Z$, we put $\Phi(x, e)=\varphi(x)^{-1}$.
Choose $\gamma_1, \gamma_2\in \Gamma$, $\lambda_1, \lambda_2\in \Lambda$ and $x_1, x_2\in Z$ ($\subset X$) such that we have $\gamma_1\cdot x_1, \gamma_2\cdot x_2\in Z$ and the equality
\[(\gamma_1, \lambda_1)(x_1, e)=(\gamma_2, \lambda_2)(x_2, e)\]
holds in $\Sigma$.
To extend $\Phi$ to a map defined on the whole space $\Sigma$, it is enough to check that the equality $\imath(\gamma_1)\varphi(x_1)^{-1}\jmath(\lambda_1)^{-1}=\imath(\gamma_2)\varphi(x_2)^{-1}\jmath(\lambda_2)^{-1}$ holds.
The above equality is equivalent to the equality
\[(\gamma_1\cdot x_1, \alpha(\gamma_1, x_1)\lambda_1^{-1})=(\gamma_2\cdot x_2, \alpha(\gamma_2, x_2)\lambda_2^{-1}).\]
We have
\begin{align*}
\imath(\gamma_1)\varphi(x_1)^{-1}\jmath(\lambda_1)^{-1}&=\imath(\gamma_1)\imath(\gamma_1)^{-1}\varphi(\gamma_1\cdot x_1)^{-1}\jmath \circ \alpha(\gamma_1, x_1)\jmath(\lambda_1)^{-1}\\
&=\varphi(\gamma_2\cdot x_2)^{-1}\jmath \circ \alpha(\gamma_2, x_2)\jmath(\lambda_2)^{-1}\\
&=\imath(\gamma_2)\imath(\gamma_2)^{-1}\varphi(\gamma_2\cdot x_2)^{-1}\jmath \circ \alpha(\gamma_2, x_2)\jmath(\lambda_2)^{-1}\\
&=\imath(\gamma_2)\varphi(x_2)^{-1}\jmath(\lambda_2)^{-1},
\end{align*}  
where the first and fourth equalities hold by Lemma \ref{lem-eq-varphi}.
We thus obtain an almost $(\Gamma \times \Lambda)$-equivariant Borel map $\Phi \colon \Sigma \rightarrow \isom(T_{\Lambda}, T_{\Gamma})$.

If $\Phi_1$ and $\Phi_2$ are almost $(\Gamma \times \Lambda)$-equivariant Borel maps from $\Sigma$ into $\isom(T_{\Lambda}, T_{\Gamma})$, then the map $\Phi_0\colon \Sigma \rightarrow \aut^*(T_{\Gamma})$ defined by the formula $\Phi_0(x)=\Phi_1(x)\Phi_2(x)^{-1}$ for $x\in \Sigma$ satisfies the equality
\[\Phi_0(\gamma x)=\imath(\gamma)\Phi_0(x)\imath(\gamma)^{-1},\quad \Phi_0(\lambda x)=\Phi_0(x)\]
for any $\gamma \in \Gamma$, $\lambda \in \Lambda$ and a.e.\ $x\in \Sigma$. 
The map $\Phi_0$ thus induces an almost $\Gamma$-equivariant Borel map from $\Sigma /\Lambda$ into $\aut^*(T_{\Gamma})$, where the action of $\Gamma$ on $\aut^*(T_{\Gamma})$ is defined by conjugation through $\imath$.
By Lemma \ref{lem-conj-inv}, $\Phi_0$ is the essentially constant map whose value is the neutral element.
It turns out that $\Phi_1$ and $\Phi_2$ are essentially equal.
\end{proof}

\begin{proof}[Proof of Lemma \ref{lem-pre-stab}]
We first prove the following:

\begin{claim}\label{claim-dis-ver}
If $v, v'\in V(T_{\Gamma})$ are distinct vertices, then for any Borel subset $W\subset Z$ of positive measure, the inclusion $(\cal{G}_v)_W\subset (\cal{G}_{v'})_W$ cannot occur.
\end{claim}

\begin{proof}
Assume that the inclusion occurs. We may assume that $v$ and $v'$ are adjacent in $T_{\Gamma}$.
We denote by $e\in E(T)$ the edge connecting $v$ and $v'$, and denote by $\Gamma_e$ the stabilizer of $e$ in $\Gamma$.
Let $\rho \colon \cal{G}\rightarrow \Gamma$ be the cocycle defined by $\rho(\gamma, x)=\gamma$ for $\gamma \in \Gamma$ and $x\in X$.
The inclusion $(\cal{G}_v)_W\subset (\cal{G}_{v'})_W$ implies that we have $\rho(\gamma, x)\in \Gamma_e$ for a.e.\ $(\gamma, x)\in (\cal{G}_v)_W$.
By Lemma \ref{lem-inv-ext}, there exists a $(\cal{G}_v, \rho)$-invariant Borel map $\psi \colon \Gamma_v\cdot W\rightarrow {\rm Lk}(v)$ with $\psi(x)=v'$ for a.e.\ $x\in W$, where ${\rm Lk}(v)$ is the set of vertices in the link of $v$ in $T_{\Gamma}$.
Note that the action $\Gamma_v\c {\rm Lk}(v)$ is transitive and that ${\rm Lk}(v)$ consists of infinitely many elements.
On the other hand, there is a finite invariant measure on $\Gamma_v\cdot W$. This is a contradiction.
\end{proof}

Pick $v\in V(T_{\Gamma})$.
By Theorem \ref{thm-adams-spa}, there exists a $(\cal{G}_v, \alpha)$-invariant Borel map from $X$ into $V(T_{\Lambda})\cup E(T_{\Lambda})$ because $\Gamma_v$ is a conjugate of either $\Gamma_1$ or $\Gamma_2$ and thus satisfies property (T).
Since $\Lambda$ acts on $T_{\Lambda}$ without inversions, we can construct a $(\cal{G}_v, \alpha)$-invariant Borel map $\varphi_v\colon X\rightarrow V(T_{\Lambda})$.

For each $v\in V(T_{\Gamma})$, let $Z=\bigsqcup_nZ_n$ be a countable Borel partition such that $\varphi_v$ is essentially constant on $Z_n$ for each $n$, and let $u_n\in V(T_{\Lambda})$ be the value of $\varphi_v$ on $Z_n$.
The $(\cal{G}_v, \alpha)$-invariance of $\varphi_v$ implies that the inclusion $f((\cal{G}_v)_{Z_n})\subset (\cal{H}_{u_n})_{Z_n}$ holds for each $n$.
Applying the same argument for $f^{-1}$ in place of $f$ and applying Claim \ref{claim-dis-ver}, we obtain the equality $f((\cal{G}_v)_{Z_n})= (\cal{H}_{u_n})_{Z_n}$ for each $n$ after taking a finer countable Borel partition $Z=\bigsqcup_nZ_n$.
\end{proof}

\begin{proof}[\it Proof of Lemma \ref{lem-cons-varphi}]
For a.e.\ $x\in Z$, the map $\varphi(x, \cdot)$ defines a bijection from $V(T_{\Gamma})$ onto $V(T_{\Lambda})$.
To prove that $\varphi(x,\cdot)$ defines a simplicial isomorphism from $T_{\Gamma}$ onto $T_{\Lambda}$, we first show the following:

\begin{claim}\label{claim-two-ver}
Let $v_1, v_2\in V(T_{\Gamma})$ be distinct and adjacent vertices and let $e\in E(T)$ be the edge connecting them. Let $\rho \colon \cal{G}\rightarrow \Gamma$ be the cocycle defined by $\rho(\gamma, x)=\gamma$ for $\gamma \in \Gamma$ and $x\in X$.
Then for any Borel subset $W\subset Z$ of positive measure, any $(\cal{G}_e, \rho)$-invariant Borel map $\psi \colon W\rightarrow V(T_{\Gamma})$ is valued in the set $\{ v_1, v_2\}$.
\end{claim}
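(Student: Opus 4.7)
The plan is to argue by contradiction. Assuming the preimage $W_v := \psi^{-1}(v)$ has positive measure for some $v \in V(T_\Gamma) \setminus \{v_1, v_2\}$, I will produce from $\psi|_{W_v}$ a nonzero finite $\Gamma_e$-invariant Borel measure supported on a single infinite $\Gamma_e$-orbit in $V(T_\Gamma)$, which is impossible because an invariant measure on a transitive set must assign equal mass to every point.

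The main step, and where I expect the main obstacle to lie, is a combinatorial claim about the Bass-Serre tree: for every $v \in V(T_\Gamma) \setminus \{v_1, v_2\}$, the $\Gamma_e$-orbit of $v$ is infinite. The stabilizer $\Gamma_e$ is a conjugate of $A$, so after conjugating I may assume $v_1 = \Gamma_1$, $v_2 = \Gamma_2$, and $\Gamma_e = A$. The unique geodesic in $T_\Gamma$ from $v$ to $e$ enters $e$ through $v_1$ or $v_2$; without loss of generality through $v_1$. Its second-to-last vertex $w$ lies in ${\rm Lk}(v_1) \setminus \{v_2\}$, and the hypothesis ${\rm LQN}_{\Gamma_1}(A) = A$ is precisely the assertion that the $A$-orbit of $w$ is infinite. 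Since $A$ fixes $v_1$ and simplicial automorphisms of a tree preserve unique geodesics, the assignment $a \cdot v \mapsto a \cdot w$ is a well-defined $A$-equivariant surjection $A \cdot v \to A \cdot w$, so $|A \cdot v| = \infty$. This is the nonroutine step: it fuses the LQN hypothesis, the Bass-Serre encoding of $\Gamma$, and the tree-geometric fact that geodesics are preserved by automorphisms. Everything afterwards is formal groupoid measure theory.

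Granting this, I will apply Lemma \ref{lem-inv-ext} to extend $\psi|_{W_v}$ to a $\rho$-invariant Borel map $\bar\psi : \mathcal{G}_e W_v \to V(T_\Gamma)$ defined on the $\Gamma_e$-saturation of $W_v$ in $X$, which sits inside the finite measure space $X$. Groupoid equivariance of $\bar\psi$ forces its image to lie in the single orbit $\Gamma_e \cdot v$. Pushing forward the finite measure $\mu|_{\mathcal{G}_e W_v}$ under $\bar\psi$ then yields a Borel measure on $V(T_\Gamma)$ of total mass at least $\mu(W_v) > 0$, which is $\Gamma_e$-invariant because $\mu$ is $\Gamma_e$-invariant on $X$ and $\bar\psi$ is almost equivariant. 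On the infinite transitive $\Gamma_e$-set $\Gamma_e \cdot v$, invariance forces every point to carry the same mass, and finiteness of the total mass then forces this common mass to be zero, contradicting positivity. Hence $\psi$ must be a.e.\ valued in $\{v_1, v_2\}$.
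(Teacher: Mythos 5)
Your proposal is correct and follows essentially the same route as the paper's proof: restrict to a positive-measure set where $\psi$ takes a value $v\notin\{v_1,v_2\}$, extend via Lemma \ref{lem-inv-ext} to a $\rho$-invariant map on the $\Gamma_e$-saturation valued in the orbit $\Gamma_e v$, show that orbit is infinite using ${\rm LQN}_{\Gamma_i}(A)=A$, and contradict finiteness of the invariant measure. Your only addition is to spell out the tree-geometric step (projecting along the geodesic to a vertex of ${\rm Lk}(v_1)\setminus\{v_2\}$) that the paper leaves implicit, and that step is carried out correctly.
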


\begin{proof}
If there were a Borel subset $W'\subset W$ of positive measure on which $\psi$ takes the value $v\in V(T_{\Gamma})$ distinct from $v_1$ and $v_2$, then there exists a $(\cal{G}_e, \rho)$-invariant Borel map $\psi'\colon \Gamma_e\cdot W'\rightarrow \Gamma_ev$.
The equality ${\rm LQN}_{\Gamma_i}(A)=A$ for each $i=1, 2$ implies that $\Gamma_ev$ consists of infinitely many elements.
This is a contradiction.
\end{proof}

Let $v_1, v_2\in V(T_{\Gamma})$ be distinct and adjacent vertices, and let $e\in E(T)$ be the edge connecting them.
Pick a Borel subset $W\subset Z$ of positive measure and $u_1, u_2\in V(T_{\Lambda})$ with $\varphi(x, v_1)=u_1$ and $\varphi(x, v_2)=u_2$ for a.e.\ $x\in W$.
Assuming that $u_1$ and $u_2$ are not adjacent, we deduce a contradiction.
We may assume that the equalities $f((\cal{G}_{v_1})_W)=(\cal{G}_{u_1})_W$ and $f((\cal{G}_{v_2})_W)=(\cal{G}_{u_2})_W$ hold.
Choose a vertex $u\in V(T_{\Lambda})$ in the geodesic between $u_1$ and $u_2$ distinct from $u_1$ and $u_2$.
Taking a Borel subset of positive measure smaller than $W$, we may assume that the map $W\ni x\mapsto \varphi(x, \cdot)^{-1}(u)\in V(T_{\Gamma})$ is constant on $W$ with its value $v\in V(T_{\Gamma})$ and may assume $f((\cal{G}_v)_W)=(\cal{H}_u)_W$. The inclusion
\[f((\cal{G}_e)_W)=f((\cal{G}_{v_1})_W\cap (\cal{G}_{v_2})_W)=(\cal{H}_{u_1})_W\cap (\cal{H}_{u_2})_W\subset (\cal{H}_u)_W\]
then holds.
We thus have $(\cal{G}_e)_W\subset (\cal{G}_v)_W$.
This contradicts Claim \ref{claim-two-ver}.
\end{proof}

\begin{proof}[Proof of Lemma \ref{lem-eq-varphi}]
Pick $v\in V(T_{\Gamma})$.
It is enough to show the equality
\[(\varphi(\gamma \cdot x)\imath(\gamma))(v)=(\jmath \circ \alpha(\gamma, x)\varphi(x))(v),\quad \textrm{a.e.\ }x\in W\]
for any $\gamma \in \Gamma$ and any Borel subset $W\subset Z$ of positive measure such that
\begin{itemize}
\item $\gamma \cdot W\subset Z$;
\item the map $x\mapsto (\varphi(x))(v)\in V(T_{\Lambda})$ is constant on $W$ with its value $u_1$;
\item the map $x\mapsto (\varphi(\gamma \cdot x)\imath(\gamma))(v)\in V(T_{\Lambda})$ is constant on $W$ with its value $u_2$;
\item we have $f((\cal{G}_v)_W)=(\cal{H}_{u_1})_W$ and $f((\cal{G}_{\gamma v})_{\gamma \cdot W})=(\cal{H}_{u_2})_{\gamma \cdot W}$; and
\item the map $x\mapsto \alpha(\gamma, x)\in \Lambda$ is constant on $W$ with its value $\lambda$.
\end{itemize}
For any $(\delta, x)\in (\cal{G}_{\gamma v})_{\gamma \cdot W}$, apply $\alpha$ to the equality
\[(\delta, x)=(\gamma, (\gamma^{-1}\delta)\cdot x)(\gamma^{-1}\delta \gamma, \gamma^{-1}\cdot x)(\gamma^{-1}, x).\]
By assumption, the equalities
\[\alpha(\gamma, (\gamma^{-1}\delta)\cdot x)=\lambda,\quad \alpha(\gamma^{-1}, x)=\alpha(\gamma, \gamma^{-1}\cdot x)^{-1}=\lambda^{-1}\]
hold.
Since $(\gamma^{-1}\delta \gamma, \gamma^{-1}\cdot x)\in (\cal{G}_v)_W$, the inclusion $(\cal{H}_{u_2})_{\gamma \cdot W}\subset (\cal{H}_{\lambda u_1})_{\gamma \cdot W}$ holds, and thus $u_2=\lambda u_1$ by Claim \ref{claim-dis-ver}.
The desired equality follows.
\end{proof}


\section{Couplings with unknown groups}\label{sec-mec-arb}

Let $\Gamma =\Gamma_1\ast_A\Gamma_2$ be the group in Assumption $(\star)$, and let $\imath \colon \Gamma \rightarrow \aut^*(T)$ be the homomorphism associated with the action of $\Gamma$ on the Bass-Serre tree $T$.
If $\Lambda$ is a discrete group and $\Sigma$ is a coupling of $\Gamma$ and $\Lambda$, then there exist a homomorphism $\rho \colon \Lambda \rightarrow \aut^*(T)$ and an almost $(\Gamma \times \Lambda)$-equivariant Borel map $\Phi \colon \Sigma \rightarrow (\aut^*(T), \imath, \rho)$ by Corollary \ref{cor-coup-tree}.
In this section, we study the structure of $\Lambda$ by using $\rho$ and $\Phi$, relying on the Bass-Serre theory.

To apply this theory to the action of $\Lambda$ on $T$ via $\rho$, each element of $\Lambda$ should act on $T$ without inversions.
In Section \ref{subsec-circ}, we explain how this difficulty is avoided.
Section \ref{subsec-pat} is devoted to fundamental observations on small couplings inside $\Sigma$ each of which is associated with a simplex of $T$ and is a coupling of the stabilizers of that simplex in $\Gamma$ and in $\Lambda$.
We obtain information on the action of $\Lambda$ on $T$ through these small couplings.
As a consequence, Theorem \ref{thm-lqn-str} is proved.
Section \ref{subsec-quo} is preliminaries for studying the case where $\Gamma_1$ and $\Gamma_2$ are rigid in a sense of measure equivalence.
If they are lattices in higher rank simple Lie groups, then the observation there is a crucial step for the proof of superrigidity results stated in Theorems \ref{thm-oe-rigidity} and \ref{thm-mer}.
In Section \ref{subsec-coco-cri}, we present sufficient conditions for the action of $\Lambda$ on $T$ to be locally cofinite and to be cocompact when $\Gamma_1$ and $\Gamma_2$ are coupling rigid with respect to some standard Borel groups.
These conditions are formulated in terms of those standard Borel groups and the image of $\Gamma_1$ and $\Gamma_2$ in them.

\subsection{Reduction from $\aut^*(T)$ to $\aut(T)$}\label{subsec-circ}

Let $\Gamma =\Gamma_1\ast_A\Gamma_2$ be the group in Assumption $(\star)$.
Let $\imath \colon \Gamma \rightarrow \aut^*(T)$ be the homomorphism associated with the action of $\Gamma$ on the Bass-Serre tree $T$.
We denote by $\aut(T)$ the subgroup of $\aut^*(T)$ consisting of orientation-preserving automorphisms of $T$, which is of index two.
The image $\imath(\Gamma)$ is contained in $\aut(T)$.
Let $\Lambda$ be a discrete group, and let $(\Sigma, m)$ be a coupling of $\Gamma$ and $\Lambda$.
There then exist a homomorphism $\rho \colon \Lambda \rightarrow \aut^*(T)$ and an almost $(\Gamma \times \Lambda)$-equivariant Borel map $\Phi \colon \Sigma \rightarrow (\aut^*(T), \imath, \rho)$ by Corollary \ref{cor-coup-tree}.

We may assume that the Borel subset $\Phi^{-1}(\aut(T))$ of $\Sigma$ has positive measure by replacing $\rho$ and $\Phi$ as follows:
If $\Phi^{-1}(\aut(T))$ is of measure zero, then choose $f\in \aut^*(T)\setminus \aut(T)$ and define a homomorphism $\rho_f\colon \Lambda \rightarrow \aut^*(T)$ and a Borel map $\Phi_f\colon \Sigma \rightarrow \aut^*(T)$ by the formulas
\[\rho_f(\lambda)=f\rho(\lambda)f^{-1},\quad \Phi_f(x)=\Phi(x)f^{-1}\]
for $\lambda \in \Lambda$ and $x\in \Sigma$.
The map $\Phi_f\colon \Sigma \rightarrow (\aut^*(T), \imath, \rho_f)$ is then almost $(\Gamma \times \Lambda)$-equivariant, and $\Phi_f^{-1}(\aut(T))$ has positive measure.

On the assumption that $\Phi^{-1}(\aut(T))$ has positive measure, we set
\[\Lambda_+=\rho^{-1}(\rho(\Lambda)\cap \aut(T)),\quad \Sigma_+=\Phi^{-1}(\aut(T)).\]
It follows that $[\Lambda :\Lambda_+]\leq 2$ and that $\Sigma_+$ is a coupling of $\Gamma$ and $\Lambda_+$.
More specifically, the action $\Gamma \times \Lambda \c \Sigma$ is induced from the action $\Gamma \times \Lambda_+\c \Sigma_+$.
It is therefore enough to study the latter action to know the structure of the original coupling $\Sigma$.
Note that both $\rho(\Lambda_+)$ and $\Phi(\Sigma_+)$ are contained in $\aut(T)$ and that the map $\Phi \colon \Sigma_+\rightarrow (\aut(T), \imath, \rho)$ is almost $(\Gamma \times \Lambda_+)$-equivariant.

Since $\Lambda_+$ acts on $T$ via $\rho$ without inversions, we can apply the Bass-Serre theory to this action to know the structure of $\Lambda_+$.
In the next subsection, we give useful information on the stabilizers of simplices of $T$ and the quotient graph for the action of $\Lambda_+$ on $T$, which will be investigated in the following general setting.

\medskip

\noindent {\bf Assumption $(\circ)$}\label{ass-circ}: We set the notation as follows:
\begin{itemize}
\item Let $\Gamma =\Gamma_1\ast_A\Gamma_2$ be an amalgamated free product of discrete groups.
\item Let $T$ denote the Bass-Serre tree associated with the decomposition of $\Gamma$.
\item Let $\imath \colon \Gamma \rightarrow \aut^*(T)$ be the homomorphism associated with the action of $\Gamma$ on $T$.
\item We orient $T$ so that $\gamma \Gamma_1\in V(T)$ is the origin of the edge $\gamma A\in E(T)$ for each $\gamma \in \Gamma$. Let $\aut(T)$ denote the group of simplicial automorphisms of $T$ preserving this orientation.
\end{itemize}
Suppose that we have
\begin{itemize}
\item a discrete group $\Lambda$ and a coupling $(\Sigma, m)$ of $\Gamma$ and $\Lambda$;
\item a homomorphism $\rho \colon \Lambda \rightarrow \aut(T)$; and
\item an almost $(\Gamma \times \Lambda)$-equivariant Borel map $\Phi \colon \Sigma \rightarrow (\aut(T), \imath, \rho)$.
\end{itemize}


\subsection{Small couplings}\label{subsec-pat}

On Assumption $(\circ)$, for each $s\in V(T)\cup E(T)$, we set 
\[\stab(s)=\{\, \varphi \in \aut(T)\mid \varphi(s)=s\, \},\]\label{stab}
which is a Borel subgroup of $\aut(T)$, and put
\[\Sigma_s=\Phi^{-1}(\stab(s)),\quad \Gamma_s=\imath^{-1}(\imath(\Gamma)\cap \stab(s)),\quad \Lambda_s=\rho^{-1}(\rho(\Lambda)\cap \stab(s)).\]
It follows that $\Sigma_s$ is a $(\Gamma_s\times \Lambda_s)$-invariant Borel subset of $\Sigma$.
The first important observation of this subsection is that $\Sigma_s$ is a coupling of $\Gamma_s$ and $\Lambda_s$.
Since $\Gamma_s$ is a conjugate of one of the subgroups $\Gamma_1$, $\Gamma_2$ and $A$ of $\Gamma$, it brings valuable information on the stabilizer $\Lambda_s$.
For each $i=1, 2$, let us denote by $V_i(T)$\label{vit} the subset $\Gamma /\Gamma_i$ of $V(T)=\Gamma /\Gamma_1\sqcup \Gamma /\Gamma_2$.

\begin{lem}\label{lem-pos-stab}
On Assumption $(\circ)$, let $S$ be one of $V_1(T)$, $V_2(T)$ and $E(T)$ and pick $s, s'\in S$.
We put
\[\aut(s, s')=\{\, f\in \aut(T)\mid f(s)=s'\, \},\quad \Sigma_s^{s'}=\Phi^{-1}(\aut(s, s')).\]
Then we have $m(\Sigma_s^{s'})>0$.
In particular, we have $m(\Sigma_s)>0$.
\end{lem}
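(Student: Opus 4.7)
\textbf{Proof plan for Lemma \ref{lem-pos-stab}.} The strategy is to show that $\Sigma$ decomposes as a $\Gamma$-equivariant disjoint union indexed by $S$, and then use $\Gamma$-invariance of $m$ together with transitivity of $\Gamma$ on $S$ to conclude that all pieces have the same (nonzero) measure.

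First I would record the following orientation observation: since every edge of $T$ is oriented from a vertex of $V_1(T)$ to a vertex of $V_2(T)$, a vertex in $V_1(T)$ is always an origin and never a terminal of an incident edge, and symmetrically for $V_2(T)$. Consequently any orientation-preserving simplicial automorphism of $T$ must satisfy $f(V_1(T))=V_1(T)$, $f(V_2(T))=V_2(T)$ and $f(E(T))=E(T)$. In particular, for $s\in S$ and for every $f\in \aut(T)$ we have $f(s)\in S$, so the ``evaluation at $s$'' map $\aut(T)\ni f\mapsto f(s)\in S$ is well defined.

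Next I would compose this evaluation with $\Phi$. For a.e.\ $x\in \Sigma$, the element $\Phi(x)(s)$ lies in $S$, hence we obtain the decomposition
\[
\Sigma =\bigsqcup_{s'\in S}\Sigma_{s}^{s'}\quad \text{up to null sets}.
\]
The almost $(\Gamma \times \Lambda)$-equivariance of $\Phi$ gives, for each $\gamma \in \Gamma$ and for a.e.\ $x\in \Sigma_{s}^{s'}$, the identity $\Phi(\gamma x)(s)=\imath(\gamma)\Phi(x)(s)=\gamma s'$, so $\gamma \Sigma_{s}^{s'}=\Sigma_{s}^{\gamma s'}$ up to null sets. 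Since $m$ is $\Gamma$-invariant, this yields $m(\Sigma_{s}^{s'})=m(\Sigma_{s}^{\gamma s'})$ for every $\gamma \in \Gamma$.

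Finally, since $S$ is one of $\Gamma /\Gamma_{1}$, $\Gamma /\Gamma_{2}$, $\Gamma /A$, the group $\Gamma$ acts transitively on $S$ via $\imath$. Therefore the common value $m(\Sigma_{s}^{s'})$ is the same for every $s'\in S$. If this value were zero, countable additivity over the countable set $S$ would give $m(\Sigma)=0$, contradicting the fact that $\Sigma$ is a coupling. Hence $m(\Sigma_{s}^{s'})>0$ for every $s'\in S$, and the ``in particular'' assertion follows by taking $s'=s$. I do not expect any serious obstacle here; the only point that needs care is the orientation observation ensuring that $\Phi(x)(s)$ stays inside $S$, which is exactly what makes the restriction to $\aut(T)$ in Section \ref{subsec-circ} useful.
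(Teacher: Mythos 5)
Your proof is correct and follows essentially the same route as the paper's: both rest on the identity $\imath(\gamma)\aut(s,s')=\aut(s,\gamma s')$, transitivity of $\Gamma$ on $S$, and countable additivity, the only cosmetic difference being that you partition $\Sigma$ over $S$ while the paper covers $\aut(T)$ by the translates $\imath(\gamma)\aut(s,s')$ and pulls back through $\Phi$. Your explicit remark that orientation-preservation forces $f(S)=S$ is a point the paper leaves implicit, and it is correctly placed.
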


\begin{proof}
Since the action $\Gamma \c S$ is transitive and since for each $\gamma \in \Gamma$, the equality $\imath(\gamma)\aut(s, s')=\aut(s, \gamma s')$ holds, we have $\aut(T)=\bigcup_{\gamma \in \Gamma}\imath(\gamma)\aut(s, s')$.
The lemma thus follows.
\end{proof}

\begin{lem}\label{lem-small-coup}
On Assumption $(\circ)$, we pick $s\in V(T)\cup E(T)$.
Then the following assertions hold:
\begin{enumerate}
\item If $Y\subset \Sigma_s$ is a fundamental domain for the action $\Gamma_s\c \Sigma_s$, then $Y$ is also a fundamental domain for the action $\Gamma \c \Sigma$.
In particular, $m(Y)$ is finite.
\item If $X\subset \Sigma_s$ is a fundamental domain for the action $\Lambda_s\c \Sigma_s$, then $X$ is contained in a fundamental domain for the action $\Lambda \c \Sigma$.
In particular, $m(X)$ is finite.
\item $(\Sigma_s, m|_{\Sigma_s})$ is a coupling of $\Gamma_s$ and $\Lambda_s$.
\end{enumerate}
\end{lem}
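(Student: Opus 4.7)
The plan is to leverage the $(\Gamma \times \Lambda)$-equivariance of $\Phi$ together with two structural features of the oriented Bass-Serre tree: the group $\Gamma$ acts transitively on each of $V_1(T)$, $V_2(T)$ and $E(T)$, and each $\Phi(x)\in \aut(T)$ preserves these three types because it preserves the orientation. I will prove (i) and (ii) by direct orbit computations, then deduce (iii) by constructing Borel fundamental domains for $\Gamma_s\c \Sigma_s$ and $\Lambda_s\c \Sigma_s$ and feeding them into (i) and (ii) to obtain finiteness.

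For (i), given $Y\subset \Sigma_s$ a fundamental domain for $\Gamma_s\c \Sigma_s$, I will first check essential disjointness of $\{\gamma Y\}_{\gamma \in \Gamma}$: if $\gamma_1 Y\cap \gamma_2 Y$ has positive measure and $x$ lies in the intersection, then $\gamma_1^{-1}x,\gamma_2^{-1}x\in Y\subset \Sigma_s$, so $\imath(\gamma_i)^{-1}(\Phi(x)(s))=s$ for $i=1,2$, forcing $\gamma_2^{-1}\gamma_1\in \Gamma_s$; since $Y$ is a $\Gamma_s$-fundamental domain, this yields $\gamma_1=\gamma_2$. For coverage, given $x\in \Sigma$, orientation-preservation of $\Phi(x)$ places $\Phi(x)(s)$ in the same $\Gamma$-orbit as $s$, providing $\gamma_0\in \Gamma$ with $\imath(\gamma_0)(s)=\Phi(x)(s)$, hence $\gamma_0^{-1}x\in \Sigma_s$, and some $\gamma_1\in \Gamma_s$ then sends $\gamma_0^{-1}x$ into $Y$. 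Finiteness of $m(Y)$ is immediate from $\Sigma$ being a coupling.

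For (ii), I will show that $\{\lambda X\}_{\lambda \in \Lambda}$ is essentially disjoint. If $\lambda X\cap X$ has positive measure for some non-identity $\lambda$, take $x$ in the intersection; both $x$ and $\lambda^{-1}x$ lie in $X\subset \Sigma_s$, and equivariance yields $\Phi(\lambda^{-1}x)=\Phi(x)\rho(\lambda)$. Evaluating at $s$ and using $\Phi(x)(s)=s$ together with invertibility of $\Phi(x)$ gives $\rho(\lambda)(s)=s$, i.e.\ $\lambda \in \Lambda_s$; the hypothesis on $X$ then forces $\lambda =e$, a contradiction. Thus the $\Lambda$-translates of $X$ are essentially disjoint, so $X$ lies in a fundamental domain for $\Lambda \c \Sigma$, and finiteness of $m(X)$ follows.

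For (iii), standard selection for the countable Borel action $\Gamma_s\c \Sigma_s$ on a standard Borel space produces a Borel fundamental domain $Y_s$, which has finite measure by (i); analogously a Borel fundamental domain $X_s$ for $\Lambda_s\c \Sigma_s$ has finite measure by (ii). Combined with positivity of $m(\Sigma_s)$ from Lemma \ref{lem-pos-stab}, this exhibits $(\Sigma_s,m|_{\Sigma_s})$ as a coupling of $\Gamma_s$ and $\Lambda_s$. I expect the main subtlety to be the rigorous use of orbit-type preservation in the coverage step of (i); this is precisely why the reduction from $\aut^{*}(T)$ to $\aut(T)$ carried out in Section \ref{subsec-circ} is indispensable, since without orientation-preservation $\Phi(x)$ could swap $V_1(T)$ and $V_2(T)$ and the transitivity-of-$\Gamma$ argument would fail.
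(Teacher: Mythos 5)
Your proof is correct and follows essentially the same route as the paper's: essential disjointness of translates via the stabilizer computation $\gamma\in\Gamma_{s}$ (resp.\ $\lambda\in\Lambda_{s}$), coverage in (i) via the decomposition $\aut(T)=\bigcup_{\gamma\in\Gamma}\imath(\gamma)\stab(s)$ coming from transitivity of $\Gamma$ on each of $V_{1}(T)$, $V_{2}(T)$, $E(T)$ and orientation-preservation, and (iii) as a consequence of (i), (ii) and Lemma \ref{lem-pos-stab}. The only difference is that you spell out the coverage step and the Borel selection of fundamental domains, which the paper leaves implicit.
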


\begin{proof}
If $\gamma \in \Gamma$ satisfies $m(\gamma Y\triangle Y)>0$, then there exist $x, y\in Y$ with $\gamma x=y$, $\Phi(x)\in \stab(s)$ and $\imath(\gamma)\Phi(x)=\Phi(y)\in \stab(s)$.
We thus have $\gamma \in \Gamma_s$.
It follows that $\gamma$ is neutral because $Y$ is a fundamental domain for the action $\Gamma_s\c \Sigma_s$.
The equality $\Gamma Y=\Sigma$ holds because we have $\aut(T)=\bigcup_{\gamma \in \Gamma}\imath(\gamma)\stab(s)$.
Assertion (i) is proved.

Similarly, we can show that if $\lambda \in \Lambda$ satisfies $m(\lambda X\triangle X)>0$, then $\lambda$ is neutral.
It follows that $X$ is contained in a fundamental domain for the action $\Lambda \c \Sigma$.
Assertion (ii) is proved.
Assertion (iii) follows from assertions (i) and (ii).
\end{proof}

\begin{lem}\label{lem-trans-erg}
On Assumption $(\circ)$, let $S$ be one of $V_1(T)$, $V_2(T)$ and $E(T)$, and pick $s\in S$.
Then the action $\rho(\Lambda)\c S$ is transitive if and only if there exists a fundamental domain for the action $\Lambda \c \Sigma$ contained in $\Sigma_s$. 
This is the case if the action $\Gamma_s\c \Sigma /\Lambda$ is ergodic.
\end{lem}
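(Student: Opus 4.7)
The plan is to turn the almost equivariant Borel map $\Phi \colon \Sigma \to (\aut(T), \imath, \rho)$ into a $\Lambda$-equivariant, $\Gamma_s$-invariant Borel map $\sigma \colon \Sigma \to S$ by setting $\sigma(x) = \Phi(x)^{-1}(s)$. First I would verify that $\sigma$ takes values in $S$, using that orientation-preserving automorphisms in $\aut(T)$ preserve each of $V_{1}(T)$, $V_{2}(T)$ and $E(T)$, and then check from the equivariance formula that $\sigma(\lambda x) = \rho(\lambda)\sigma(x)$ for $\lambda \in \Lambda$ and $\sigma(\gamma x) = \sigma(x)$ for $\gamma \in \Gamma_s$. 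Two observations are central: $\sigma^{-1}(s) = \Sigma_s$, and $\sigma^{-1}(s') = \Phi^{-1}(\aut(s', s))$ has positive $m$-measure for every $s' \in S$ by Lemma \ref{lem-pos-stab}.

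For the equivalence, suppose first that a fundamental domain $F \subset \Sigma_s$ for $\Lambda \c \Sigma$ exists. Since $\sigma \equiv s$ on $F$, equivariance gives $\sigma \equiv \rho(\lambda)s$ on $\lambda F$, and $\Sigma = \bigsqcup_\lambda \lambda F$ implies $\sigma^{-1}(s') = \bigsqcup_{\lambda : \rho(\lambda)s = s'} \lambda F$ up to null sets; the positive $m$-measure of $\sigma^{-1}(s')$ then forces some such $\lambda$ to exist, proving transitivity. Conversely, assuming transitivity, Lemma \ref{lem-small-coup}(iii) identifies $\Sigma_s$ as a coupling of $\Gamma_s$ and $\Lambda_s$, so I pick a fundamental domain $Y \subset \Sigma_s$ for $\Lambda_s \c \Sigma_s$ and verify that $Y$ is a fundamental domain for $\Lambda \c \Sigma$. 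For covering, any $x \in \Sigma$ can be translated into $\Sigma_s$ by some $\lambda \in \Lambda$ via transitivity, and then into $Y$ by some $\mu \in \Lambda_s$. For disjointness, a positive-measure overlap $m(\lambda Y \cap Y) > 0$ forces $\rho(\lambda)s = s$, hence $\lambda \in \Lambda_s$, hence $\lambda = e$ by the fundamental-domain property of $Y$ in $\Sigma_s$.

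For the final implication, the plan is a descent argument. The $\Lambda$-equivariance and $\Gamma_s$-invariance of $\sigma$ give an induced $\Gamma_s$-invariant Borel map $\bar\sigma \colon \Sigma/\Lambda \to \rho(\Lambda)\backslash S$, and ergodicity of $\Gamma_s \c \Sigma/\Lambda$ forces $\bar\sigma$ to be essentially constant on a single $\rho(\Lambda)$-orbit. To conclude that this orbit is all of $S$, I need each orbit $O \in \rho(\Lambda)\backslash S$ to have positive measure under $\bar\sigma$. The main obstacle is precisely this positive-measure projection step: the fibers $\sigma^{-1}(s')$ are only $\Lambda_{s'}$-invariant, not $\Lambda$-invariant, so one must argue that a positive-$m$-measure subset of $\Sigma$ has positive-measure image in the finite-measure quotient $\Sigma/\Lambda$. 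This is handled by noting that if, for a fundamental domain $F$ of $\Lambda \c \Sigma$, the intersection $\Lambda \cdot \sigma^{-1}(s') \cap F$ were null, then by $\Lambda$-invariance of $m$ the entire saturation $\Lambda \cdot \sigma^{-1}(s')$ would be null, contradicting the positive measure of $\sigma^{-1}(s')$ from Lemma \ref{lem-pos-stab}.
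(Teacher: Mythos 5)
Your proof is correct and follows essentially the same route as the paper: both directions of the equivalence rest on Lemma \ref{lem-pos-stab} and on the fact that a fundamental domain for $\Lambda_{s}\curvearrowright \Sigma_{s}$ serves as one for $\Lambda \curvearrowright \Sigma$ under transitivity, and your ergodicity step is the paper's argument repackaged, since the $\Gamma_{s}$-invariant partition of $\Sigma/\Lambda$ you get from $\bar\sigma$ is exactly the paper's decomposition $X=X_{1}\sqcup X_{2}$ refined over $\rho(\Lambda)$-orbits. The map $\sigma(x)=\Phi(x)^{-1}(s)$ is a clean way to organize the bookkeeping, and you correctly flag the one measure-theoretic subtlety (positive-measure sets have positive-measure saturation, hence positive-measure image in $\Sigma/\Lambda$).
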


\begin{proof}
Suppose that the action $\rho(\Lambda)\c S$ is transitive.
The equality $\aut(T)=\bigcup_{\lambda \in \Lambda}\stab(s)\rho(\lambda)^{-1}$ then holds.
Any fundamental domain for the action $\Lambda_s\c \Sigma_s$ is thus a fundamental domain for the action $\Lambda \c \Sigma$.
It follows that there exists a fundamental domain for the action $\Lambda \c \Sigma$ contained in $\Sigma_s$.

Conversely, if there exists a fundamental domain for the action $\Lambda \c \Sigma$ contained in $\Sigma_s$, then by Lemma \ref{lem-pos-stab}, for each $s'\in S$, there exists $\lambda \in \Lambda$ with $m(\lambda \Sigma_s\cap \Sigma_{s'}^s)>0$.
We thus have $\rho(\lambda)(s)=s'$.

Suppose that the action $\Gamma_s\c \Sigma /\Lambda$ is ergodic, and let $X\subset \Sigma$ be a fundamental domain for the action $\Lambda \c \Sigma$ which contains a fundamental domain $X_1$ for the action $\Lambda_s\c \Sigma_s$.
Note that $X_1=X\cap \Sigma_s$, and put $X_2=X\setminus X_1$.
We claim that $m((\Gamma_s\times \Lambda)X_1\triangle X_2)=0$.
Otherwise, there would exist $\gamma \in \Gamma_s$ and $\lambda \in \Lambda$ such that $m((\gamma, \lambda)X_1\triangle X_2)>0$.
It follows from $\gamma X_1\subset \Sigma_s$ that $m(\lambda \Sigma_s\triangle X_2)>0$.
This is a contradiction because $m(\Lambda X_1\triangle \Lambda X_2)=0$ and $\Lambda_sX_1=\Sigma_s$.
The claim implies
\[m((\Gamma_s\times \Lambda)X_1\triangle (\Gamma_s\times \Lambda)X_2)=0.\]
It follows that both $X_1$ and $X_2$ are invariant under the action $\Gamma_s\c \Sigma /\Lambda$ when $X$ is identified with $\Sigma /\Lambda$.
Since the action $\Gamma_s\c \Sigma /\Lambda$ is ergodic, we have $m(X_2)=0$.
We thus have $X=X_1$, which is contained in $\Sigma_s$.
\end{proof}

If the action $A\c \Sigma /\Lambda$ is ergodic, then so is the action $\Gamma_s\c \Sigma /\Lambda$ for any $s\in S$ and any $S\in \{ V_1(T), V_2(T), E(T)\}$.
In this case, for any $e\in E(T)$, a fundamental domain for the action $\Lambda_e\c \Sigma_e$ is in fact a fundamental domain for either of the actions $\Lambda \c \Sigma$ and $\Lambda_v\c \Sigma_v$ for any $v\in V(T)$ with $v\in \partial e$, where $\partial e$ denotes the boundary of the edge $e$.
There exists a fundamental domain for the action $\Gamma \c \Sigma$ contained in $\Sigma_e$ for any $e\in E(T)$ by Lemma \ref{lem-small-coup} (i), which is also a fundamental domain for the action $\Gamma_v\c \Sigma_v$ for any $v\in V(T)$ with $v\in \partial e$.
We therefore obtain the following description of the structure of $\Lambda$ via the Bass-Serre theory.

\begin{cor}\label{cor-factor-me}
On Assumption $(\circ)$, let $v_i\in V_i(T)$, $e\in E(T)$ be the simplices of $T$ corresponding to the cosets containing the neutral element, and put $\Lambda_i=\Lambda_{v_i}$ for $i=1, 2$ and $B=\Lambda_e$.
If the action $A\c \Sigma /\Lambda$ is ergodic, then $\Lambda$ is isomorphic to the amalgamated free product $\Lambda_1\ast_B\Lambda_2$ with $\Gamma_i\sim_{ME}\Lambda_i$ for $i=1, 2$ and $A\sim_{ME}B$. Moreover, their couplings have the same coupling constant.
\end{cor}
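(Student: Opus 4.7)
My plan is to combine the small-coupling lemmas from Section \ref{subsec-pat} with Bass-Serre theory applied to the action $\Lambda \c T$ via $\rho$. The step requiring the most care will be the identification of the quotient graph $T/\rho(\Lambda)$ as a segment with two vertices and one edge; everything else reduces to bookkeeping with fundamental domains.

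First I would promote the ergodicity hypothesis on $A \c \Sigma/\Lambda$ to orbit information on $T$. As the text preceding the corollary observes, each stabilizer $\Gamma_s$ with $s \in V_1(T) \cup V_2(T) \cup E(T)$ contains a conjugate of $A$ (since the $\Gamma$- and $\Lambda$-actions on $\Sigma$ commute), and so $\Gamma_s \c \Sigma/\Lambda$ is ergodic. Lemma \ref{lem-trans-erg} then yields transitivity of $\rho(\Lambda)$ on each of $V_1(T)$, $V_2(T)$, and $E(T)$. Because $\rho(\Lambda) \subset \aut(T)$ preserves the orientation of $T$, it preserves the partition $V(T) = V_1(T) \sqcup V_2(T)$, and the three transitivity statements force $T/\rho(\Lambda)$ to be a segment with two vertices (images of $v_1, v_2$) joined by a single edge (image of $e$). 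Since $\Lambda$ acts on $T$ without inversions by Assumption $(\circ)$, the Bass-Serre theorem produces the amalgamated decomposition
\[\Lambda = \Lambda_{v_1} *_{\Lambda_e} \Lambda_{v_2} = \Lambda_1 *_B \Lambda_2,\]
with vertex and edge groups the $\Lambda$-stabilizers of the chosen lifts.

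For the ME assertions I would invoke Lemma \ref{lem-small-coup}(iii) directly: the subsets $\Sigma_{v_i}$ and $\Sigma_e$ are already couplings of the pairs $(\Gamma_i, \Lambda_i)$ and $(A, B)$ respectively, so $\Gamma_i \sim_{\rm ME} \Lambda_i$ and $A \sim_{\rm ME} B$. For the coupling constants, the text preceding the corollary has already observed that, under the ergodicity hypothesis, a fundamental domain $X_s \subset \Sigma_s$ for $\Lambda_s \c \Sigma_s$ is automatically a fundamental domain for $\Lambda \c \Sigma$ (via Lemma \ref{lem-trans-erg}), and a fundamental domain $Y_s \subset \Sigma_s$ for $\Gamma_s \c \Sigma_s$ is automatically a fundamental domain for $\Gamma \c \Sigma$ (via Lemma \ref{lem-small-coup}(i)). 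Hence $m(X_s)/m(Y_s)$ coincides with the coupling constant of $\Sigma$ and is in particular independent of $s \in \{v_1, v_2, e\}$, which is precisely the final assertion.
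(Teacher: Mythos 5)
Your argument is correct and is essentially the paper's own: ergodicity of $A\c \Sigma/\Lambda$ gives ergodicity of every $\Gamma_{s}\c \Sigma/\Lambda$, Lemma \ref{lem-trans-erg} then gives transitivity of $\rho(\Lambda)$ on each of $V_{1}(T)$, $V_{2}(T)$ and $E(T)$, Bass--Serre theory applied to the segment quotient yields the splitting, and Lemmas \ref{lem-small-coup} and \ref{lem-trans-erg} supply the ME statements and the equality of coupling constants via the shared fundamental domains in $\Sigma_{e}$. (One small misplacement: that each $\Gamma_{s}$ contains a conjugate of $A$ is pure Bass--Serre group theory, while the commutation of the two actions is what lets you transport ergodicity from $A$ to its conjugates acting on $\Sigma/\Lambda$; this does not affect the proof.)
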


The following corollary is obtained by combining Corollaries \ref{cor-coup-tree}, \ref{cor-factor-me} and the argument in Section \ref{subsec-circ}.
It proves assertion (ii) of Theorem \ref{thm-lqn-str}.
Assertion (i) of the theorem follows from Lemma \ref{lem-small-coup}.

\begin{cor}
On Assumption $(\star)$, suppose that we have a discrete group $\Lambda$ and a coupling $\Sigma$ of the group $\Gamma =\Gamma_1\ast_A\Gamma_2$ and $\Lambda$ such that the action $A\c \Sigma /\Lambda$ is ergodic.
Then there exists a subgroup $\Lambda^{+}$ of $\Lambda$ with its index at most two such that $\Lambda^{+}$ is decomposed as an amalgamated free product $\Lambda_1\ast_B\Lambda_2$, where $\Gamma_i\sim_{ME}\Lambda_i$ for $i=1, 2$ and $A\sim_{ME}B$ hold, and their couplings have the same coupling constant.
\end{cor}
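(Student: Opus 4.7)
The plan is to assemble Corollaries \ref{cor-coup-tree} and \ref{cor-factor-me} together with the orientation-reduction of Section \ref{subsec-circ}. First, by Corollary \ref{cor-coup-tree} applied to $(\Sigma,m)$, one obtains a homomorphism $\rho \colon \Lambda \rightarrow \aut^{*}(T)$ and an almost $(\Gamma \times \Lambda)$-equivariant Borel map $\Phi \colon \Sigma \rightarrow (\aut^{*}(T), \imath, \rho)$.

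Next, I would run the reduction of Section \ref{subsec-circ}: after conjugating $(\rho, \Phi)$ by a fixed orientation-reversing element of $\aut^{*}(T)$ if necessary, the set $\Sigma_{+} := \Phi^{-1}(\aut(T))$ has positive measure. Setting $\Lambda_{+} := \rho^{-1}(\rho(\Lambda) \cap \aut(T))$, the subgroup $\Lambda_{+}$ has index at most two in $\Lambda$, the space $\Sigma_{+}$ is a coupling of $\Gamma$ and $\Lambda_{+}$, and the restrictions $\rho|_{\Lambda_{+}}$ and $\Phi|_{\Sigma_{+}}$ take values in $\aut(T)$. Thus the data $(\Sigma_{+}, \Gamma, \Lambda_{+}, \rho|_{\Lambda_{+}}, \Phi|_{\Sigma_{+}})$ satisfy Assumption $(\circ)$.

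The only step needing attention is the verification that the ergodicity hypothesis of Corollary \ref{cor-factor-me} survives this passage, i.e.\ that $A \c \Sigma_{+}/\Lambda_{+}$ is ergodic. For this, pick $\lambda_{0} \in \Lambda \setminus \Lambda_{+}$ (if any); then $\rho(\lambda_{0}) \in \aut^{*}(T) \setminus \aut(T)$, so $\Phi(\lambda_{0}^{-1}\Sigma_{+}) \subset \aut^{*}(T) \setminus \aut(T)$ and consequently $\Sigma = \Sigma_{+} \sqcup \lambda_{0}^{-1}\Sigma_{+}$ up to null sets. Since $\Lambda = \Lambda_{+} \sqcup \lambda_{0}^{-1}\Lambda_{+}$, any fundamental domain $F \subset \Sigma_{+}$ for $\Lambda_{+} \c \Sigma_{+}$ is simultaneously a fundamental domain for $\Lambda \c \Sigma$, and the induced identifications $\Sigma_{+}/\Lambda_{+} \cong F \cong \Sigma/\Lambda$ are $\Gamma$-equivariant. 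Hence the ergodicity of $A \c \Sigma/\Lambda$ transfers to $A \c \Sigma_{+}/\Lambda_{+}$.

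Finally, applying Corollary \ref{cor-factor-me} to the coupling $(\Sigma_{+}, m|_{\Sigma_{+}})$ of $\Gamma$ and $\Lambda_{+}$ yields the decomposition $\Lambda_{+} = \Lambda_{1} \ast_{B} \Lambda_{2}$ with $\Gamma_{i} \sim_{\mathrm{ME}} \Lambda_{i}$ for $i=1,2$ and $A \sim_{\mathrm{ME}} B$, together with the equality of coupling constants; relabelling $\Lambda_{+}$ as $\Lambda^{+}$ yields the statement. The ergodicity-transfer is the only substantive point; everything else is a direct citation.
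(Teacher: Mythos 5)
Your proposal is correct and follows essentially the same route as the paper, which simply cites Corollary \ref{cor-coup-tree}, the reduction of Section \ref{subsec-circ}, and Corollary \ref{cor-factor-me}. Your explicit verification that $\Sigma/\Lambda \cong \Sigma_{+}/\Lambda_{+}$ as $\Gamma$-spaces (so that ergodicity of $A \c \Sigma/\Lambda$ passes to $A \c \Sigma_{+}/\Lambda_{+}$) is exactly the point the paper leaves implicit in the remark that the action $\Gamma\times\Lambda \c \Sigma$ is induced from $\Gamma\times\Lambda_{+} \c \Sigma_{+}$, and your argument for it is sound.
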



\subsection{Quotients of couplings with countable images}\label{subsec-quo}

The following lemma is a brief observation on equivariant quotient maps of couplings with their images countable, which will be applied many times in the sequel.
We say that a positive measure on a standard Borel space $\Omega$ is {\it atomic} if there is a countable subset of $\Omega$ whose complement is of measure zero.

\begin{lem}\label{lem-image-atomic}
Let $\Gamma$ and $\Lambda$ be discrete groups, and let $(\Sigma, m)$ be a coupling of $\Gamma$ and $\Lambda$. 
Suppose that we have
\begin{itemize}
\item a standard Borel group $G$;
\item homomorphisms $\pi \colon \Gamma \rightarrow G$ and $\rho \colon \Lambda \rightarrow G$ with $\ker \pi$ finite; and
\item an almost $(\Gamma \times \Lambda)$-equivariant Borel map $\Phi \colon \Sigma \rightarrow (G, \pi, \rho)$ such that the measure $\Phi_*m$ on $G$ is atomic.
\end{itemize}
Pick $g\in G$ with $\Phi_*m(\{ g\})>0$, and define a homomorphism $\rho_g\colon \Lambda \rightarrow G$ and a Borel map $\Phi_g\colon \Sigma \rightarrow G$ by
\[\rho_g(\lambda)=g\rho(\lambda)g^{-1},\quad \Phi_g(x)=\Phi(x)g^{-1}\]
for $\lambda \in \Lambda$ and $x\in \Sigma$.
Then the following assertions hold:
\begin{enumerate}
\item The map $\Phi_g\colon \Sigma \rightarrow (G, \pi, \rho_g)$ is almost $(\Gamma \times \Lambda)$-equivariant.
\item The support of the measure $(\Phi_g)_*m$ on $G$ contains the neutral element of $G$ and is contained in $\comm_{G}(\pi(\Gamma))$.
\item $\ker \rho =\ker \rho_g$ is finite, and $\pi(\Gamma)$ and $\rho_g(\Lambda)$ are commensurable in $G$, that is, the subgroup $\pi(\Gamma)\cap \rho_g(\Lambda)$ is of finite index in both $\pi(\Gamma)$ and $\rho_g(\Lambda)$.
\end{enumerate}
\end{lem}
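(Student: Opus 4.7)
The plan is to verify the three assertions in turn, using pushforwards of $m$ to the quotients $\Sigma/\Gamma$ and $\Sigma/\Lambda$ as the central tool. Assertion (i) would be a direct calculation: using $g\rho(\lambda)^{-1}g^{-1}=\rho_g(\lambda)^{-1}$,
\[
\Phi_{g}((\gamma,\lambda)x)=\Phi((\gamma,\lambda)x)\,g^{-1}=\pi(\gamma)\Phi(x)\rho(\lambda)^{-1}g^{-1}=\pi(\gamma)\Phi_{g}(x)\rho_{g}(\lambda)^{-1}.
\]
This makes $\mu_{g}:=(\Phi_{g})_{*}m$ a $(\Gamma\times\Lambda)$-invariant measure on $G$ under the left-right action, and the neutral element lies in its support since $\Phi_{g}^{-1}(\{e\})=\Phi^{-1}(\{g\})$ has positive measure by choice of $g$.

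The key technical step, from which all remaining conclusions flow, is to establish the pair of identities valid for every $h\in G$:
\[
\mu_{g}(\{h\})=|\ker\pi|\cdot\tilde{\mu}(\{\pi(\Gamma)h\})=|\ker\rho_{g}|\cdot\bar{\mu}(\{h\rho_{g}(\Lambda)\}),
\]
where $\tilde{\mu}$ is the pushforward to $\pi(\Gamma)\backslash G$ via the descent $\Sigma/\Gamma\to\pi(\Gamma)\backslash G$ of $\Phi_{g}$, and $\bar{\mu}$ is the analogous pushforward to $G/\rho_{g}(\Lambda)$. Both $\tilde{\mu},\bar{\mu}$ are finite measures of total mass $m(Y),m(X)$ for fundamental domains $Y,X$ of $\Gamma,\Lambda$. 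I would derive the first identity by partitioning $\Sigma=\bigsqcup_{\gamma}\gamma Y$, observing
\[
m(\Phi_{g}^{-1}(\{h\})\cap\gamma Y)=m(\Phi_{g}^{-1}(\{\pi(\gamma^{-1})h\})\cap Y)
\]
via $m$-invariance, and grouping the sum over $\gamma\in\Gamma$ by the $|\ker\pi|$-to-one map $\gamma\mapsto\pi(\gamma^{-1})h$ onto $\pi(\Gamma)h$. The second identity is symmetric, using $\Sigma=\bigsqcup_{\lambda}\lambda X$ and the $|\ker\rho_{g}|$-to-one collapse $\lambda\mapsto h\rho_{g}(\lambda)^{-1}$.

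With those identities in hand, assertion (iii) follows quickly. Since $|\ker\pi|<\infty$ by hypothesis and $\tilde{\mu}$ is finite, the first identity forces $\mu_{g}(\{h\})<\infty$ at every atom. The second identity then forces $|\ker\rho_{g}|<\infty$, because $\mu_{g}(\{h\})$ is positive and finite while $\bar{\mu}(\{h\rho_{g}(\Lambda)\})$ is also finite. Together with the obvious $\ker\rho=\ker\rho_{g}$, this gives the finiteness assertion. For commensurability, $\bar{\mu}$ is $\pi(\Gamma)$-invariant and finite, and $[e]\in G/\rho_{g}(\Lambda)$ is a positive finite atom, so the $\pi(\Gamma)$-orbit of $[e]$ must be finite; computing the stabilizer as $\pi(\Gamma)\cap\rho_{g}(\Lambda)$ gives $[\pi(\Gamma):\pi(\Gamma)\cap\rho_{g}(\Lambda)]<\infty$, and the symmetric argument on $\pi(\Gamma)\backslash G$ with $\rho_{g}(\Lambda)$ acting yields the reverse index finite.

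For the remaining part of (ii), the same orbit-finiteness argument applied to an arbitrary atom $[h]\in G/\rho_{g}(\Lambda)$ yields commensurability of $\pi(\Gamma)$ with $h\rho_{g}(\Lambda)h^{-1}$. Conjugating the commensurability from (iii) by $h$ gives commensurability of $h\pi(\Gamma)h^{-1}$ with $h\rho_{g}(\Lambda)h^{-1}$, and transitivity of commensurability yields $h\in\comm_{G}(\pi(\Gamma))$. The main subtlety is establishing the orbit-decomposition identities cleanly, accounting correctly for the $|\ker\pi|$- and $|\ker\rho_{g}|$-to-one collapses; once that bookkeeping is done, the whole lemma reduces to finite-measure considerations on the quotient spaces.
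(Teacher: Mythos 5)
Your proof is correct and follows essentially the same route as the paper: finiteness of atoms of $(\Phi_g)_*m$ via the $|\ker\pi|$-to-one counting over a fundamental domain, finiteness of $\ker\rho_g$ by the symmetric count, and commensurability from the finiteness of $\pi(\Gamma)$-orbits (resp.\ $\rho_g(\Lambda)$-orbits) for the finite invariant measures induced on $G/\rho_g(\Lambda)$ (resp.\ $\pi(\Gamma)\backslash G$), concluding with transitivity of commensurability since the neutral element lies in the support. The paper phrases step (b) by restricting to a single orbit $\pi(\Gamma)s\rho_g(\Lambda)$ and viewing it as a coupling of $\pi(\Gamma)$ and $\rho_g(\Lambda)$, but this is the same argument as your quotient-measure formulation.
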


\begin{proof}
Put $M=(\Phi_g)_*m$.
Assertion (i) and positivity of $M(\{ e\})$ are clear.
Let $S\subset G$ be the subset consisting of all $h\in G$ with $M(\{ h\})>0$.
It follows from the finiteness of $\ker \pi$ that $0<M(\{ s\})<\infty$ for any $s\in S$ and that $\ker \rho$ is finite.
For each $s\in S$, the subset $G_s=\pi(\Gamma)s\rho_g(\Lambda)$ is invariant for the action $\Gamma \times \Lambda \c (G, \pi, \rho_g)$, and is a coupling of $\pi(\Gamma)$ and $\rho_g(\Lambda)$.
Since the action $\pi(\Gamma)\c G_s/\rho_g(\Lambda)$ defined by left multiplication admits a finite invariant measure, the stabilizer $\pi(\Gamma)\cap s\rho_g(\Lambda)s^{-1}$ of $s$ for the action is a finite index subgroup of $\pi(\Gamma)$.
Similarly, $\rho_g(\Lambda)\cap s^{-1}\pi(\Gamma)s$ is a finite index subgroup of $\rho_g(\Lambda)$.
It follows that $\pi(\Gamma)$ and $s\rho_g(\Lambda)s^{-1}$ are commensurable in $G$. 
Since $S$ contains the neutral element of $G$, $\pi(\Gamma)$ and $\rho_g(\Lambda)$ are commensurable in $G$, and $S$ is contained in $\comm_{G}(\pi(\Gamma))$.
\end{proof}

We recall a fundamental fact on couplings of lattices in higher rank simple Lie groups.
Aassertion (i) follows from Lemma 6.1 in \cite{furman-mer} and Zimmer's argument in \cite{zim-trans} (see Section 7 in \cite{furman-mer} for details).
Assertion (ii) is proved in Lemma 4.6 of \cite{furman-mer}.

\begin{thm}\label{thm-hrl-measure}
Let $G$ be a non-compact connected simple Lie group with its center finite and its real rank at least two, and let $\Gamma$ be a lattice in $G$.
Suppose that we have a discrete group $\Lambda$, a coupling $(\Sigma, m)$ of $\Gamma$ and $\Lambda$, a homomorphism $\rho \colon \Lambda \rightarrow \aut(\ad G)$, and an almost $(\Gamma \times \Lambda)$-equivariant Borel map $\Phi \colon \Sigma \rightarrow (\aut(\ad G), \pi, \rho)$, where $\pi$ is the natural homomorphism from $G$ into $\aut(\ad G)$.
Then the following assertions hold:
\begin{enumerate}
\item The group $\ker \rho$ is finite, and $\rho(\Lambda)$ is a lattice in $\aut(\ad G)$.
\item The measure $\Phi_*m$ is a linear combination of the Haar measure on cosets of $\ad G$ in $\aut(\ad G)$ and atomic measures.
If the action $\Gamma \times \Lambda \c (\Sigma, m)$ is ergodic, then $\Phi_*m$ is either the Haar measure on some cosets of $\ad G$ in $\aut(\ad G)$ or atomic.
\end{enumerate}
\end{thm}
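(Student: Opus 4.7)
The plan is to follow Furman's strategy from \cite{furman-mer}, which rests on Zimmer's cocycle superrigidity theorem for higher-rank lattices. To set things up, I would fix a fundamental domain $X \subset \Sigma$ for the action $\Lambda \c \Sigma$ and extract the associated ME cocycle $\alpha \colon \Gamma \times X \to \Lambda$. The composition $\sigma = \rho \circ \alpha \colon \Gamma \times X \to \aut(\ad G)$ is a measurable cocycle, and the almost $(\Gamma \times \Lambda)$-equivariance of $\Phi$ translates, upon identifying $\Sigma$ with $X \times \Lambda$, into the coboundary relation
\[\pi(\gamma) \Phi(x) = \Phi(\gamma \cdot x) \sigma(\gamma, x), \quad \gamma \in \Gamma, \ \text{a.e.}\ x \in X.\]
The analysis of $\Phi$ and of $\Phi_{*}m$ is thereby reduced to the analysis of the cocycle $\sigma$.

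I would first prove assertion (ii). After passing to ergodic components of $(\Sigma, m)$ for the $(\Gamma \times \Lambda)$-action, it suffices to treat the ergodic case and show that $\Phi_{*}m$ is either atomic or Haar on a single coset of $\ad G$ in $\aut(\ad G)$. I would apply Zimmer's cocycle superrigidity theorem to $\sigma$: since $\Gamma$ is a lattice in the non-compact simple real-rank-at-least-two Lie group $G$ and the target $\aut(\ad G)$ is a real algebraic group containing $\ad G$ with finite index, the theorem produces a homomorphism $\theta \colon \Gamma \to \aut(\ad G)$ together with a measurable $\psi \colon X \to \aut(\ad G)$ that trivializes $\sigma$ modulo a cocycle into a compact subgroup centralizing $\theta(\Gamma)$. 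Margulis's rigidity then forces $\theta$ to agree with $\pi$ up to composition with an element of the finite outer automorphism group $\aut(\ad G)/\ad G$. Re-inserting this into the coboundary relation, one finds that on each ergodic component $\Phi$ is, off a null set, of the form $x \mapsto \psi(x) h$ for some $h \in \aut(\ad G)$; pushing $m$ forward then yields either an atomic distribution (when $\psi$ has essentially discrete image) or the Haar measure on a single coset of $\ad G$ (when $\psi$ has essentially continuous image), and summing over ergodic components gives the decomposition asserted in (ii).

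Assertion (i) then follows from (ii). When the atomic part of $\Phi_{*}m$ is non-trivial, Lemma \ref{lem-image-atomic} immediately gives the finiteness of $\ker \rho$ and the commensurability of $\pi(\Gamma)$ and $\rho(\Lambda)$ in $\aut(\ad G)$; since $\pi(\Gamma)$ is already a lattice (using that $\Gamma$ is a lattice in $G$, that $\ker \pi$ is the finite center of $G$, and that $[\aut(\ad G):\ad G]<\infty$), so is $\rho(\Lambda)$. When $\Phi_{*}m$ is instead a combination of Haar measures on cosets of $\ad G$, the coupling condition forces the pushforward to $\pi(\Gamma) \backslash \aut(\ad G) / \rho(\Lambda)$ to have finite total mass; together with the left-$\pi(\Gamma)$ and right-$\rho(\Lambda)$ invariance, this forces $\rho(\Lambda)$ to be a discrete subgroup of finite covolume, i.e., a lattice in $\aut(\ad G)$, and forces $\ker \rho$ to be finite.

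The hard part will be the invocation of Zimmer's cocycle superrigidity theorem and the identification of the resulting homomorphism $\theta$ with $\pi$ up to an outer automorphism. In particular, one must rule out the possibility that the compact-valued remainder cocycle contributes a non-trivial continuous distribution on $\aut(\ad G)$ beyond Haar on cosets of $\ad G$; this uses the simplicity and non-compactness of $G$ together with the triviality of the centralizer of $\pi(\Gamma)$ in $\aut(\ad G)$ modulo center. Once this identification is in hand, the passage back to the measure $\Phi_{*}m$ is essentially bookkeeping.
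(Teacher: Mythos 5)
The paper does not actually prove Theorem \ref{thm-hrl-measure}: it records it as a known fact, attributing assertion (i) to Lemma 6.1 of \cite{furman-mer} combined with Zimmer's argument in \cite{zim-trans}, and assertion (ii) to Lemma 4.6 of \cite{furman-mer}. Your proposal is therefore a reconstruction of those external proofs, and while the overall strategy (Zimmer's cocycle superrigidity in Furman's framework) is the right one, the sketch has gaps at precisely the points where the cited results do the real work.

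The main gap is the dichotomy in (ii). After invoking cocycle superrigidity you conclude that $\Phi_{*}m$ is atomic when $\psi$ has ``essentially discrete image'' and is Haar on a coset of $\ad G$ when $\psi$ has ``essentially continuous image.'' The second implication is not an argument: a measurable map with non-discrete essential image can push a finite measure forward to essentially anything. What is actually required is a classification of $\sigma$-finite measures on $\aut(\ad G)$ that are invariant under left translation by $\pi(\Gamma)$ and right translation by $\rho(\Lambda)$ and restrict to finite measures on fundamental domains; showing that the non-atomic ergodic pieces must be Haar on cosets of $\ad G$ is the substance of Furman's Lemma 4.6 (via the closure of $\rho(\Lambda)$, normality/simplicity of $\ad G$, and Moore-type ergodicity), and it does not follow from the form $\Phi(x)=\psi(x)h$ alone. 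Relatedly, cocycle superrigidity in the form you use it --- a homomorphism $\theta$ agreeing with $\pi$ up to a finite outer twist plus a compact remainder --- applies only after one knows the algebraic hull of $\sigma=\rho\circ\alpha$ is all of $\aut(\ad G)$ up to finite index; if the hull is a proper algebraic subgroup one must argue separately (and this is where the atomic alternative really comes from). A second gap is in your deduction of (i) in the Haar case: finite covolume of $\rho(\Lambda)$ does not make it a lattice --- one must prove $\rho(\Lambda)$ is discrete and $\ker\rho$ finite, which is exactly the content of Zimmer's transversal argument in \cite{zim-trans} that the paper cites. (In the atomic case your appeal to Lemma \ref{lem-image-atomic} is correct.)
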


On Assumption $(\circ)$, fix $v\in V(T)$ and suppose that $\Gamma_v$ is isomorphic to a lattice in the Lie group $G$ in Theorem \ref{thm-hrl-measure}.
In the rest of this subsection, we investigate the map $\Phi_v$ from the small coupling $(\Sigma_v, m|_{\Sigma_v})$ of $\Gamma_v$ and $\Lambda_v$ into $\aut(\ad G)$ obtained by Theorems \ref{thm-furman-rep} and \ref{thm-me-hrl}.
We will prove that the measure $(\Phi_v)_*(m|_{\Sigma_v})$ is atomic as an application of the general fact in Theorem \ref{thm-hrl-measure} and of Moore's celebrated theorem on unitary representations of Lie groups \cite{moore}.
This leads to superrigidity of amalgamated free products of higher rank lattices, which is not satisfied by higher rank lattices themselves.

Moore's theorem is applied in the proof of Proposition \ref{prop-moore}, where ``irreducibility'' of the ME cocycle associated with the standard coupling $\ad G$ of two lattices in $\ad G$ is established.
On the other hand, since the coupling $\Sigma_v$ contains the smaller coupling $\Sigma_e$, where $e\in E(T)$ is an edge with $v\in \partial e$, the ME cocycle associated with the coupling $\Sigma_v$ is ``reducible''.
It turns out that $(\Phi_v)_*(m|_{\Sigma_v})$ is not the Haar measure.
This idea will be demonstrated precisely in Corollary \ref{cor-moore}.

\begin{prop}\label{prop-moore}
Let $G$ be a non-compact connected simple Lie group with its center finite, and let $\Gamma$ and $\Lambda$ be lattices in $G$.
Suppose that we have
\begin{itemize}
\item a coupling $(\Sigma, m)$ of $\Gamma$ and $\Lambda$; and
\item an almost $(\Gamma \times \Lambda)$-equivariant Borel map $\Phi \colon \Sigma \rightarrow (\ad G, \pi, \pi)$ with $\Phi_*m$ the Haar measure on $\ad G$,
\end{itemize}
where $\pi$ is the natural homomorphism from $G$ into $\ad G$.
Choose a fundamental domain $X\subset \Sigma$ for the action $\Lambda \c \Sigma$, and let $\alpha \colon \Gamma \times X\rightarrow \Lambda$ be the associated ME cocycle.

Then for any infinite subgroup $A$ of $\Gamma$, any infinite index subgroup $B$ of $\Lambda$ and any Borel subset $Z$ of $X$ of positive measure, it is impossible that the inclusion $\alpha((A\ltimes X)_Z)\subset B$ holds up to null sets.
\end{prop}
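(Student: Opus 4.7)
The plan is to argue by contradiction via a double application of Moore's theorem on vanishing of matrix coefficients for unitary representations of $G$. Assume for contradiction that there exist an infinite subgroup $A<\Gamma$, an infinite-index subgroup $B<\Lambda$, and a Borel subset $Z\subset X$ of positive measure with $\alpha((A\ltimes X)_Z)\subset B$ up to null sets. Passing to $\ad G$, I may assume $G$ has trivial center, so $\pi$ is the identity and $\Gamma,\Lambda\subset G$. The map $\Phi$ then descends to a $\Gamma$-equivariant measurable map $\bar\Phi\colon X\to G/\Lambda$ by $x\mapsto\Phi(x)\Lambda$, where $\Gamma$ acts on $G/\Lambda$ by left multiplication. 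Since $\Phi_*m$ is Haar and $X$ is a fundamental domain for $\Lambda\c\Sigma$, $\bar\Phi$ is essentially a measure-preserving bijection onto $G/\Lambda$ (after rescaling by a positive constant, and possibly after passing to an ergodic component). A further projection $\bar{\bar\Phi}\colon X\to G/B$, $x\mapsto\Phi(x)B$, is a measurable section of the infinite covering $G/B\to G/\Lambda$ and is therefore injective on a conull subset of $X$.

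The decisive step is to translate the hypothesis. From the equivariance identity $\Phi(a\cdot z)=a\,\Phi(z)\,\alpha(a,z)^{-1}$, the condition $\alpha(a,z)\in B$ is equivalent to the equality $a\,\Phi(z)B=\Phi(a\cdot z)B$ in $G/B$. Setting $\tilde Z:=\bar{\bar\Phi}(Z)\subset G/B$, the hypothesis yields that for a.e.\ $(a,z)\in A\times Z$ with $a\cdot z\in Z$ one has $a\cdot\bar{\bar\Phi}(z)=\bar{\bar\Phi}(a\cdot z)\in\tilde Z$. Combined with the injectivity of $\bar{\bar\Phi}$ this gives the inclusion $\bar{\bar\Phi}(Z\cap a\cdot Z)\subset a\tilde Z\cap\tilde Z$, and hence
\[\mu_{G/B}(a\tilde Z\cap\tilde Z)\geq c\cdot m_X(Z\cap a\cdot Z)\]
for a positive constant $c$ and every $a\in A$.

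Now I apply Moore's theorem twice. Since $A$ is infinite and $\Gamma\subset G$ is discrete, I can choose $a_n\in A$ leaving every compact subset of $G$. The Haar measure on $G/B$ is infinite (because $B$ has infinite covolume in $G$), so $L^2(G/B)$ carries no non-zero $G$-invariant vector, and Moore's theorem yields $\mu_{G/B}(a_n\tilde Z\cap\tilde Z)=\langle a_n\chi_{\tilde Z},\chi_{\tilde Z}\rangle\to 0$; the displayed inequality then forces $m_X(Z\cap a_n\cdot Z)\to 0$. On the other hand, applying Moore's theorem to $L^2_0(G/\Lambda)$ (finite Haar measure, only the constants $G$-invariant) and transporting via the $\Gamma$-equivariant measure isomorphism $\bar\Phi$, I obtain $m_X(Z\cap a_n\cdot Z)\to m_X(Z)^2/m_X(X)>0$, a contradiction. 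The main technical obstacle is the decisive step: confirming that $\bar{\bar\Phi}$ is essentially injective on a conull set and correctly matching $a\tilde Z\cap\tilde Z$ in $G/B$ with $Z\cap a\cdot Z$ in $X$ via the cocycle identity; once this is done, the double application of Moore's theorem is routine.
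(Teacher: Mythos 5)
The decisive step is not just an obstacle to be confirmed --- it is false. The map $\bar\Phi\colon X\rightarrow G/\Lambda$ is in general not essentially injective, and neither rescaling nor passing to an ergodic component repairs this: the hypothesis that $\Phi_{*}m$ is Haar controls only the pushforward, while the coupling may be a non-trivial extension of the standard one. Concretely, let $\Gamma \times \Lambda \c (Z_{0}, \zeta)$ be any p.m.p.\ action on a probability space (a mixing one makes the coupling ergodic), put $\Sigma =G\times Z_{0}$ with the product action and $\Phi =$ projection onto $G$; then $\Phi_{*}m$ is Haar, $X=X_{0}\times Z_{0}$ is a fundamental domain for $\Lambda$ (with $X_{0}$ a fundamental domain for the right $\Lambda$-action on $G$), and $\bar\Phi$ has fibers isomorphic to $(Z_{0}, \zeta)$. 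This breaks your second application of Moore's theorem irreparably: mixing of $\Gamma \c G/\Lambda$ does not lift to the extension $\Gamma \c X$, so you cannot conclude $m(Z\cap a_{n}\cdot Z)\rightarrow m(Z)^{2}/m(X)$. (Your first use of injectivity is harmless: since $X$ meets each $B$-orbit at most once, it extends to a fundamental domain $F$ for $B\c \Sigma$ with $(\Phi_{0}\circ p)_{*}(m|_{F})$ equal to the Haar-induced measure $\mu_{G/B}$, whence $(\bar{\bar\Phi})_{*}(m|_{Z})\leq \mu_{G/B}$ and the inequality $\mu_{G/B}(a\tilde Z\cap \tilde Z)\geq m(Z\cap a\cdot Z)$ follows without injectivity. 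A further, fixable, issue on that side is that $\tilde Z$ need not have finite $\mu_{G/B}$-measure, so $\chi_{\tilde Z}$ need not lie in $L^{2}$; one should work with the Radon--Nikodym density instead.)

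The paper's proof avoids any lower bound on correlations and uses Moore's theorem only once. It first replaces $X$ by a cohomologous fundamental domain so that $Z$ becomes $A$-invariant; the hypothesis $\alpha((A\ltimes X)_{Z})\subset B$ is exactly what makes the required twisting map $\varphi \colon X\rightarrow \Lambda$ available. Once $Z$ is $A$-invariant, $p_{*}(m|_{Z})$ is an $A$-invariant finite measure on $\Sigma /B$ whose pushforward to $G/B$ has density $f$ with $0\leq f\leq 1$ relative to the infinite Haar-induced measure; hence $f$ is a non-zero $A$-invariant vector in $L^{2}(G/B)$, contradicting Moore's theorem since $L^{2}(G/B)$ has no non-zero $G$-invariant vectors. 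To salvage your scheme you would have to establish mixing of $\Gamma \c X$ relative to $G/\Lambda$, which is not available; the invariant-vector argument is the correct replacement.
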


\begin{proof}
We may assume that the center of $G$ is trivial and may identify $G$ and $\ad G$. 
Suppose that there are $A$, $B$ and $Z$ as in the statement, satisfying the inclusion $\alpha((A\ltimes X)_Z)\subset B$.
We first prove that we may assume that $Z$ is $A$-invariant by replacing $X$ appropriately.
Take a countable Borel partition $A\cdot Z=Z\sqcup (\bigsqcup_{n\in \mathbb{Z}_{>0}}Z_n)$ of the saturation $A\cdot Z$ such that for each $n\in \mathbb{Z}_{>0}$,
\begin{itemize}
\item there exists $a_n\in A$ with $Z_n\subset a_n\cdot Z$; and
\item the map $\alpha(a_n, \cdot)$ is constant on $a_n^{-1}\cdot Z_n$ with the value $\lambda_n\in \Lambda$. 
\end{itemize}
We set $Z_0=Z$, $a_0=e\in A$ and $\lambda_0=e\in \Lambda$. Define a Borel map $\varphi \colon X\rightarrow \Lambda$ by
\begin{align*}
\varphi(x)=
\begin{cases}
\lambda_n^{-1} & \textrm{if}\ x\in Z_n \ \textrm{and} \ n\in \mathbb{Z}_{\geq 0},\\
e & \textrm{if}\ x\in X\setminus (A\cdot Z),
\end{cases}
\end{align*}
and define a Borel cocycle $\alpha_{\varphi}\colon \Gamma \times X\rightarrow \Lambda$ by $\alpha_{\varphi}(\gamma, x)=\varphi(\gamma \cdot x)\alpha(\gamma, x)\varphi(x)^{-1}$ for $\gamma \in \Gamma$ and $x\in X$.
Pick $a\in A$ and $x\in A\cdot Z$.
We choose $n, m\in \mathbb{Z}_{\geq 0}$ with $x\in Z_n$ and $a\cdot x\in Z_m$.
We then have
\begin{align*}
\alpha_{\varphi}(a, x)&=\varphi(a\cdot x)\alpha(a, x)\varphi(x)^{-1}\\
&=\varphi(a\cdot x)\alpha(a_m, (a_m^{-1}a)\cdot x)\alpha(a_m^{-1}aa_n, a_n^{-1}\cdot x)\alpha(a_n^{-1}, x)\varphi(x)^{-1}\\
&=\lambda_m^{-1}\lambda_m\alpha(a_m^{-1}aa_n, a_n^{-1}\cdot x)\lambda_n^{-1}\lambda_n\\
&=\alpha(a_m^{-1}aa_n, a_n^{-1}\cdot x)\in B.
\end{align*}
The inclusion $\alpha_{\varphi}(A\ltimes (A\cdot Z))\subset B$ thus holds.
We define the Borel subset
\[X_{\varphi}=\{\, \varphi(x)x\in \Sigma \mid x\in X\, \}\]
of $\Sigma$, which is a fundamental domain for the action $\Lambda \c \Sigma$.
The associated ME cocycle $\alpha'\colon \Gamma \times X_{\varphi}\rightarrow \Lambda$ satisfies the equality $\alpha'(\gamma, \varphi(x)x)=\alpha_{\varphi}(\gamma, x)$ for any $\gamma \in \Gamma$ and a.e.\ $x\in X$ because we have
\[(\gamma, \alpha_{\varphi}(\gamma, x))\varphi(x)x=(\gamma, \varphi(\gamma \cdot x)\alpha(\gamma, x))x=\varphi(\gamma \cdot x)(\gamma \cdot x)\in X_{\varphi}\]  
for any $\gamma \in \Gamma$ and a.e.\ $x\in X$.
We may therefore assume that $Z$ is $A$-invariant after replacing $X$ with $X_{\varphi}$.

The map $\Phi \colon \Sigma \rightarrow (G, \pi, \pi)$ induces a $\Gamma$-equivariant Borel map $\Phi_0\colon \Sigma /B\rightarrow G/B$, where $\Gamma$ acts on $G/B$ by left multiplication.
Let $p\colon \Sigma \rightarrow \Sigma /B$ be the canonical projection, and choose a fundamental domain $F$ for the action $B\c \Sigma$ containing $Z$.
We denote by $m_0$ and $m_1$ the restrictions of $m$ to $Z$ and $F$, respectively, and put $M_0=(\Phi_0\circ p)_*m_0$ and $M_1=(\Phi_0\circ p)_*m_1$.
By assumption, $M_1$ is the measure on $G/B$ induced by the Haar measure on $G$.
There exists an $L^{1}$-function $f$ on $G/B$ with respect to $M_1$ such that $0\leq f(x)\leq 1$ for $M_1$-a.e.\ $x\in G/B$ and
\[M_0(S)=\int_S f(x)\,dM_1(x)\]
for any Borel subset $S$ of $G/B$ since $m_0(W)\leq m_1(W)$ for any Borel subset $W$ of $\Sigma$.
The function $f$ is also $L^{2}$ with respect to $M_1$ because of its boundedness. 
Since $p_*m_0$ is $A$-invariant, so is the non-zero vector $f\in L^{2}(G/B, M_1)$. 
On the other hand, Moore's theorem (see \cite{moore} or Theorem 2.2.19 in \cite{zim-book}) tells us that there exists no non-zero $A$-invariant vector for any unitary representation of $G$ without non-zero $G$-invariant vectors.
This is a contradiction.
\end{proof}

\begin{cor}\label{cor-moore}
On Assumption $(\circ)$, fix $i=1, 2$.
We denote by $v_i\in V_i(T)=\Gamma /\Gamma_i$ the vertex corresponding to the coset containing the neutral element, and put $\Sigma_i=\Sigma_{v_i}$ and $\Lambda_i=\Lambda_{v_i}$.
Suppose that
\begin{itemize}
\item $\Gamma_i$ is a lattice in a non-compact connected simple Lie group $G$ with its center finite and its real rank at least two; and
\item $A$ is an infinite and infinite index subgroup of $\Gamma_i$.
\end{itemize}
Let $\rho_i\colon \Lambda_i\rightarrow \aut (\ad G)$ be a homomorphism and $\Phi_i\colon \Sigma_i\rightarrow (\aut(\ad G), \pi, \rho_i)$ an almost $(\Gamma_i \times \Lambda_i)$-equivariant Borel map, where $\pi$ is the natural homomorphism from $G$ into $\aut(\ad G)$.
Then the measure $(\Phi_i)_*(m|_{\Sigma_i})$ on $\aut (\ad G)$ is atomic. 
\end{cor}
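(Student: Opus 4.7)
The plan is to assume toward a contradiction that $(\Phi_i)_*(m|_{\Sigma_i})$ has a nonzero Haar component, reduce to a sub-coupling of $\Gamma_i$ with a lattice in $\ad G$ whose push-forward is Haar, and then invoke Proposition \ref{prop-moore} through the small sub-coupling $\Sigma_e$. By Theorem \ref{thm-hrl-measure}(ii), the non-atomic part of $(\Phi_i)_*(m|_{\Sigma_i})$ is concentrated on cosets of $\ad G$; fixing a coset $C$ of positive Haar mass and replacing $(\Phi_i,\rho_i)$ by $(\Phi_i(\cdot)g_0^{-1},\,g_0\rho_i(\cdot)g_0^{-1})$ for $g_0 \in C$ as in Lemma \ref{lem-image-atomic}, I may assume $C=\ad G$. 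Setting $\Lambda_i':=\rho_i^{-1}(\ad G)$ (finite index in $\Lambda_i$ since $[\aut(\ad G):\ad G]<\infty$) and $W:=\Phi_i^{-1}(\ad G)$, the set $W$ is $(\Gamma_i\times\Lambda_i')$-invariant. After discarding the $\Phi_i$-preimage of the atomic part and quotienting by the finite group $\ker\rho_i$, I obtain an induced coupling $\tilde W$ of $\Gamma_i$ and $\rho_i(\Lambda_i')$ whose structure map $\tilde\Phi_i\colon\tilde W\to\ad G$ has Haar push-forward; by Theorem \ref{thm-hrl-measure}(i), $\rho_i(\Lambda_i')$ is then a lattice in $\ad G$.

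Next I locate a large enough piece of $\Sigma_e$ inside $W$. Take $e \in E(T)$ adjacent to $v_i$ with $\Gamma_e=A$. Lemma \ref{lem-small-coup}(i) gives that any $A$-fundamental domain $Y_e$ of $\Sigma_e$ is a $\Gamma$-fundamental domain of $\Sigma$, and a short check using $\stab(e)\subset\stab(v_i)$ shows $Y_e$ is also a $\Gamma_i$-fundamental domain of $\Sigma_i$, so $\Sigma_i=\bigcup_{\gamma\in\Gamma_i/A}\gamma\Sigma_e$. Since $W$ is $\Gamma_i$-invariant and $\Gamma_i$ preserves $m$, the countable cover $W=\bigcup_\gamma(W\cap\gamma\Sigma_e)$ forces $m(W\cap\Sigma_e)>0$. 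For $x\in W\cap\Sigma_e$ and $\lambda\in\Lambda_i'$, one checks $\lambda x\in\Sigma_e$ iff $\rho(\lambda)\in\stab(e)$, i.e.\ $\lambda\in\Lambda_e':=\Lambda_e\cap\Lambda_i'$. I therefore choose a $\Lambda_i'$-fundamental domain $X$ of $W$ such that $X\cap\Sigma_e$ is a $\Lambda_e'$-fundamental domain of $W\cap\Sigma_e$. Since $A$ preserves both $\Sigma_e$ and $W$, the ME cocycle $\alpha\colon\Gamma_i\times X\to\Lambda_i'$ then satisfies $\alpha((A\ltimes X)_{X\cap\Sigma_e})\subset\Lambda_e'$.

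The main obstacle is to verify $[\Lambda_i':\Lambda_e']=\infty$, without which $\rho_i(\Lambda_e')$ may fail to be of infinite index in the lattice $\rho_i(\Lambda_i')$. If this index were finite, $\Lambda_e$ would be of finite index in $\Lambda_i$, giving $\Lambda_e\sim_{\rm ME}\Lambda_i$; chaining with $A\sim_{\rm ME}\Lambda_e$ from $\Sigma_e$ and $\Lambda_i\sim_{\rm ME}\Gamma_i$ from $\Sigma_i$ would yield $A\sim_{\rm ME}\Gamma_i$. Furman's ME rigidity for higher rank lattices (Theorems \ref{thm-me-hrl} and \ref{thm-furman-rep}) would then force $A$ to be virtually isomorphic to a lattice in $G$, and matching this virtual lattice structure with the concrete discrete embedding $A\subset\Gamma_i\subset G$ via Margulis superrigidity would make $A$ commensurable with $\Gamma_i$, contradicting $[\Gamma_i:A]=\infty$. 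Granted the infinite index, applying Proposition \ref{prop-moore} to $\tilde W$ with the lattices $\Gamma_i$ and $\rho_i(\Lambda_i')$ in $\ad G$, the infinite subgroup $A\subset\Gamma_i$, the infinite-index subgroup $B=\rho_i(\Lambda_e')$, and $Z$ the image of $X\cap\Sigma_e$ in the $\rho_i(\Lambda_i')$-fundamental domain of $\tilde W$, produces the required contradiction via the cocycle inclusion above, establishing atomicity of $(\Phi_i)_*(m|_{\Sigma_i})$.
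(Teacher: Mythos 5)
Your overall strategy is the same as the paper's: reduce via Theorem \ref{thm-hrl-measure} (ii) to the case where the push-forward has a Haar component, and then contradict Proposition \ref{prop-moore} using the cocycle inclusion $\alpha\bigl((A\ltimes X)_{X\cap \Sigma_{e}}\bigr)\subset \Lambda_{e}$ coming from the small coupling $\Sigma_{e}\subset \Sigma_{i}$. The only structural difference is cosmetic: the paper passes to the ergodic decomposition of $\Gamma_{i}\times \Lambda_{i}\c \Sigma_{i}$ and rules out the Haar alternative component by component, whereas you extract the Haar part of the push-forward directly on the (possibly non-ergodic) coupling and restrict to $W=\Phi_{i}^{-1}(\ad G)$ and $\Lambda_{i}'=\rho_{i}^{-1}(\ad G)$. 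Both routes work, and your verifications that $m(W\cap \Sigma_{e})>0$, that a $\Lambda_{i}'$-fundamental domain can be chosen compatibly with a $\Lambda_{e}'$-fundamental domain of $W\cap\Sigma_{e}$, and that the cocycle inclusion holds, are all correct.

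The one genuine weakness is your treatment of the step $[\Lambda_{i}':\Lambda_{e}']=\infty$. Your proposed argument routes through Furman's ME rigidity to conclude that $A$ is virtually isomorphic to a lattice in $G$, and then asserts that Margulis superrigidity "matches" this with the embedding $A<\Gamma_{i}<G$. That last assertion is where the real work lies and it is left undone: one must first show that the Zariski closure of $A$ in $G$ is all of $G$ (a priori $A$ could be a thin subgroup whose closure is a proper algebraic subgroup, and superrigidity in its basic form needs Zariski-dense unbounded image), and one must handle the finite kernels and centers before concluding that $A$ itself is a lattice in $G$, contradicting $[\Gamma_{i}:A]=\infty$. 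This can be pushed through, but it is heavy machinery for a statement that has an elementary proof, which is what the paper relies on when it asserts the infinite index in one line. Namely: $[\Lambda_{i}:\Lambda_{e}]$ equals the cardinality $k$ of the orbit $\rho(\Lambda_{i})e$ in ${\rm Lk}(v_{i})$; if $k<\infty$, set $Q=\bigsqcup_{e'\in \rho(\Lambda_{i})e}\Phi^{-1}(\{f\in\aut(T)\mid f(e')=e\})$, which is an $(A\times\Lambda_{i})$-invariant subset of $\Sigma_{i}$ whose $\Lambda_{i}$-fundamental domain has measure $m(X\cap\Sigma_{e})>0$. Each translate $\gamma Q$ with $\gamma A\in\Gamma_{i}/A$ again has $\Lambda_{i}$-fundamental domain of measure $m(X\cap\Sigma_{e})$, while for fixed $e'$ the sets $\Phi^{-1}(\{f\mid f(e')=\gamma e\})$ are pairwise disjoint as $\gamma A$ varies; summing over $\Gamma_{i}/A$ gives $[\Gamma_{i}:A]\cdot m(X\cap\Sigma_{e})\leq k\cdot m(X)<\infty$, contradicting $[\Gamma_{i}:A]=\infty$. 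I recommend replacing your superrigidity detour by this counting argument (or at least by a complete version of the superrigidity argument); as written, that step is a sketch rather than a proof.
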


\begin{proof}
We first note that $\ker \rho_i$ is finite and $\rho_i(\Lambda_i)$ is a lattice in $\aut(\ad G)$ by Theorem \ref{thm-hrl-measure} (i).
Let $p\colon \Sigma_i\rightarrow C$ be the ergodic decomposition for the action $\Gamma_i\times \Lambda_i\c \Sigma_i$. Put $\Sigma_i^c=p^{-1}(c)$ for $c\in C$ and let $m|_{\Sigma_i}=\int_{C}m^cd\xi(c)$ be the disintegration with respect to $p$.
Theorem \ref{thm-hrl-measure} (ii) shows that for $\xi$-a.e.\ $c\in C$, the measure $(\Phi_i)_*m^c$ on $\aut (\ad G)$ is either atomic or equal to the Haar measure on some cosets of $\ad G$ in $\aut(\ad G)$.
If $(\Phi_i)_*m^c$ is atomic for $\xi$-a.e.\ $c\in C$, then there exists $g\in \aut(\ad G)$ such that $\pi(\Gamma_i)$ and $g\rho_i(\Lambda_i)g^{-1}$ are commensurable in $\aut(\ad G)$ by Lemma \ref{lem-image-atomic}.
It follows that for $\xi$-a.e.\ $c\in C$, the measure $(\Phi_i)_*m^c$ is supported on $\comm_{\aut(\ad G)}(\pi(\Gamma_i))g$.
To prove the corollary, it is therefore enough to prove that $(\Phi_i)_*m^c$ is atomic for $\xi$-a.e.\ $c\in C$.

Let $e\in E(T)=\Gamma /A$ be the edge corresponding to the neutral element and put $\Sigma_e^c=\Sigma_e\cap \Sigma_i^c$ for $c\in C$.
Choose a fundamental domain $X$ for the action $\Lambda_i\c \Sigma_i$ such that $X\cap \Sigma_e$ is a fundamental domain for the action $\Lambda_e\c \Sigma_e$. 
For $\xi$-a.e.\ $c\in C$, the action $\Gamma_i\times \Lambda_i\c (\Sigma_i^c, m^c)$ defines a coupling and the ME cocycle $\alpha^c\colon \Gamma_i\times (X\cap \Sigma_i^c)\rightarrow \Lambda_i$ satisfies the inclusion $\alpha^c(A\times (X\cap \Sigma^c_e))\subset \Lambda_e$ because we have $(A\times \Lambda_e)(X\cap \Sigma_e^c)=\Sigma^c_e$.
Since $A$ is an infinite index subgroup of $\Gamma_i$, $\Lambda_e$ is an infinite index subgroup of $\Lambda_i$.
By Proposition \ref{prop-moore}, for $\xi$-a.e.\ $c\in C$, $(\Phi_i)_*m^c$ is not the Haar measure on some cosets of $\ad G$ in $\aut(\ad G)$.
\end{proof}


\subsection{Finiteness properties of actions on trees}\label{subsec-coco-cri}

The aim of this subsection is to describe sufficient conditions for the action of $\Lambda$ on the tree $T$ to be locally cofinite and to be cocompact in the following setting.

\medskip

\noindent {\bf Assumption $(\bullet)$}\label{ass-bullet}: In the notation in Assumption $(\circ)$, we furthermore suppose the following conditions (a) and (b):
\begin{enumerate}
\item[(a)] For each $i=1, 2$, $\Gamma_i$ is coupling rigid with respect to $(G_i, \pi_i)$, where $G_i$ is a standard Borel group and $\pi_i\colon \Gamma_i\rightarrow G_i$ is a homomorphism such that $\ker \pi_i$ is finite and the action $\pi_i(\Gamma_i)\c G_i$ defined by left multiplication admits a Borel fundamental domain. 
\end{enumerate}
For each $v\in V(T)$, we put
\[\Sigma_v=\Phi^{-1}(\stab(v)),\quad \Gamma_v=\imath^{-1}(\imath(\Gamma)\cap \stab(v)),\quad \Lambda_v=\rho^{-1}(\rho(\Lambda)\cap \stab(v)).\]
If $v$ is in $V_i(T)$, then put $G_v=G_i$.
Let $\pi_v\colon \Gamma_v\rightarrow G_v$ be the homomorphism defined by $\pi_v(\gamma)=\pi_i(\gamma_v\gamma \gamma_v^{-1})$ for $\gamma \in \Gamma_v$, where $\gamma_v\in \Gamma$ is chosen so that $\gamma_v\Gamma_v\gamma_v^{-1}=\Gamma_i$.
By Theorem \ref{thm-furman-rep}, there exist
\begin{itemize}
\item a homomorphism $\rho_v\colon \Lambda_v\rightarrow G_v$ with $\ker \rho_v$ finite; and
\item an almost $(\Gamma_v\times \Lambda_v)$-equivariant Borel map $\Phi_v\colon \Sigma_v\rightarrow (G_v, \pi_v, \rho_v)$.
\end{itemize}
We suppose that
\begin{enumerate}
\item[(b)] for any $v\in V(T)$, the measure $(\Phi_v)_*(m|_{\Sigma_v})$ on $G_v$ is atomic.
\end{enumerate}

\medskip

The following lemma will be used to understand relationship between $\pi_v(\Gamma_e)$ and $\rho_v(\Lambda_e)$ when $v$ is an end point of an edge $e$ of $T$.

\begin{lem}\label{lem-support}
On Assumption $(\bullet)$, pick $v\in V(T)$ and let $e\in E(T)$ be an edge with $v\in \partial e$.
When $s$ is either $v$ or $e$, we denote by $\Phi_v(\Sigma_s)$ the support of the measure $(\Phi_v)_*(m|_{\Sigma_s})$ on $G_v$.
Suppose that $\Phi_v(\Sigma_v)$ contains the neutral element of $G_v$.
Then the following assertions hold:
\begin{enumerate}
\item For any $g\in \Phi_v(\Sigma_e)$, we have the inclusion
\[\Phi_v(\Sigma_e)\subset \comm_{G_v}(\pi_v(\Gamma_e))g\cap \comm_{G_v}(\pi_v(\Gamma_v)).\]
\item Let $i\in \{ 1, 2\}$ be the index with $v\in V_i(T)$.
If the group $A$ is proper in $\Gamma_i$ and satisfies the equality ${\rm LQN}_{\Gamma_i}(A)=A$, then for any $g\in \Phi_v(\Sigma_e)$, the following inclusion holds up to null sets: 
\[\Phi_v^{-1}({\rm LQN}_{G_v}(\pi_v(\Gamma_e))g)\subset \Sigma_e.\]
\item On the assumption in assertion (ii), $\ker \rho_v$ is contained in $\Lambda_e$. 
\end{enumerate}
\end{lem}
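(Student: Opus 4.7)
My plan is to derive all three assertions from Lemma~\ref{lem-image-atomic}, combined with the $\Gamma_v$-equivariant bijection between $\Gamma_v/\Gamma_e$ and the link ${\rm Lk}(v)$ of $v$ in $T$, together with the left-quasi-normalizer hypothesis. For assertion~(i), I would apply Lemma~\ref{lem-image-atomic} twice. Applied to the coupling $(\Sigma_v, m|_{\Sigma_v})$ of $\Gamma_v$ and $\Lambda_v$ via the map $\Phi_v$, with the atom at the neutral element of $G_v$ (which lies in the support by hypothesis), the lemma yields $\Phi_v(\Sigma_v) \subset \comm_{G_v}(\pi_v(\Gamma_v))$. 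Since $\Phi_v|_{\Sigma_e}$ is an almost $(\Gamma_e \times \Lambda_e)$-equivariant Borel map from the sub-coupling $(\Sigma_e, m|_{\Sigma_e})$ into $(G_v, \pi_v|_{\Gamma_e}, \rho_v|_{\Lambda_e})$ with atomic push-forward, a second application with any atom $g \in \Phi_v(\Sigma_e)$ gives $\Phi_v(\Sigma_e) \subset \comm_{G_v}(\pi_v(\Gamma_e))g$; intersecting yields~(i).

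For assertion~(ii), I would first establish the measurable decomposition $\Sigma_v = \bigsqcup_{\gamma\Gamma_e \in \Gamma_v/\Gamma_e}\gamma\Sigma_e$ up to null sets. The key point is that for $z \in \Sigma_v$, $\Phi(z)$ fixes $v$ and preserves the orientation of $T$ by Assumption~$(\circ)$, so $\Phi(z)(e)$ is a well-defined edge of ${\rm Lk}(v)$; canonically identifying ${\rm Lk}(v)$ with $\Gamma_v/\Gamma_e$ and using $\Phi(\gamma z) = \imath(\gamma)\Phi(z)$, one checks that $\{z \in \Sigma_v : \Phi(z)(e) = \gamma e\} = \gamma\Sigma_e$, each piece having positive measure by Lemma~\ref{lem-pos-stab}. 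Suppose the inclusion in~(ii) failed. Then, by atomicity of $\Phi_v$, I would find $\gamma_0 \in \Gamma_v \setminus \Gamma_e$, an element $h' \in {\rm LQN}_{G_v}(\pi_v(\Gamma_e))$, and a positive-measure set $E \subset \gamma_0 \Sigma_e$ on which $\Phi_v$ is constantly $h'g$; writing $E = \gamma_0 E_0$ with $E_0 \subset \Sigma_e$, assertion~(i) and a further localization let me assume $\Phi_v \equiv h_0 g$ on $E_0$ with $h_0 \in \comm_{G_v}(\pi_v(\Gamma_e))$, and the equivariance of $\Phi_v$ forces $\pi_v(\gamma_0) = h'h_0^{-1}$.

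The main obstacle is showing that $\pi_v(\gamma_0) \in {\rm LQN}_{G_v}(\pi_v(\Gamma_e))$, i.e.\ that ${\rm LQN}_{G_v}(\pi_v(\Gamma_e)) \cdot \comm_{G_v}(\pi_v(\Gamma_e)) \subset {\rm LQN}_{G_v}(\pi_v(\Gamma_e))$. I would verify this by setting $K_0 = h_0^{-1}\pi_v(\Gamma_e) h_0 \cap \pi_v(\Gamma_e)$, a finite-index subgroup of $\pi_v(\Gamma_e)$, and observing that $h' K_0 (h')^{-1}$ lies in $(h' h_0^{-1})\pi_v(\Gamma_e)(h' h_0^{-1})^{-1}$ and meets $\pi_v(\Gamma_e)$ in a finite-index subgroup of $\pi_v(\Gamma_e)$ by the LQN property of $h'$. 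The finiteness of $\ker\pi_v$ and the standard bound $[HF \cap L : H \cap L] \leq |F|$ for a finite subgroup $F$ then translate $\pi_v(\gamma_0) \in {\rm LQN}_{G_v}(\pi_v(\Gamma_e))$ to $[\Gamma_e : \gamma_0 \Gamma_e \gamma_0^{-1} \cap \Gamma_e] < \infty$, i.e.\ $\gamma_0 \in {\rm LQN}_{\Gamma_v}(\Gamma_e)$. Since $(\Gamma_v, \Gamma_e)$ is a $\Gamma$-conjugate of $(\Gamma_i, A_i)$, the hypothesis ${\rm LQN}_{\Gamma_i}(A_i) = A_i$ gives ${\rm LQN}_{\Gamma_v}(\Gamma_e) = \Gamma_e$, contradicting $\gamma_0 \notin \Gamma_e$.

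Finally, assertion~(iii) is a quick consequence of~(ii). For any atom $g \in \Phi_v(\Sigma_e)$, the neutral element of $G_v$ lies in ${\rm LQN}_{G_v}(\pi_v(\Gamma_e))$, so~(ii) gives $\Phi_v^{-1}(g) \subset \Sigma_e$ up to null sets, and this preimage has positive measure. For $\lambda \in \ker\rho_v$, the equivariance of $\Phi_v$ gives $\Phi_v(\lambda x) = \Phi_v(x)\rho_v(\lambda)^{-1} = \Phi_v(x)$, so $\lambda$ preserves $\Phi_v^{-1}(g)$; picking $x$ in the resulting positive-measure subset of $\Sigma_e$, one has $x, \lambda x \in \Sigma_e$, whence $\Phi(\lambda x) = \Phi(x)\rho(\lambda)^{-1} \in \stab(e)$ combined with $\Phi(x) \in \stab(e)$ forces $\rho(\lambda) \in \stab(e)$, i.e.\ $\lambda \in \Lambda_e$.
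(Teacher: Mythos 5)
Your proof is correct; assertion (i) is handled exactly as in the paper (two applications of Lemma \ref{lem-image-atomic}), but your argument for the key assertion (ii) is genuinely different. The paper normalizes so that $g$ is the neutral element, fixes an atom $h\in{\rm LQN}_{G_v}(\pi_v(\Gamma_e))$ in the image, and runs a dynamical argument: using the commensurability of $\pi_v(\Gamma_e)$ and $\rho_v(\Lambda_e)$ it produces a finite-index subgroup $\Delta\le\Gamma_e$ and elements $\lambda_\gamma\in\Lambda_e$ with $\rho_v(\lambda_\gamma)=h^{-1}\pi_v(\gamma)h$, so that the pairs $(\gamma,\lambda_\gamma)$ preserve the finite-measure set $\Phi_v^{-1}(\{h\})$; decomposing that set according to where $\Phi$ sends the second endpoint $u_0$ of $e$, the pieces over ${\rm Lk}(v)\setminus\{u_0\}$ lie in infinite $\Delta$-orbits (this is where ${\rm LQN}_{\Gamma_v}(\Gamma_e)=\Gamma_e$ enters) and are therefore null. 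You use the same underlying decomposition of $\Sigma_v$ over $\Gamma_v/\Gamma_e$, but replace the dynamical step by an algebraic one: the semigroup identity ${\rm LQN}_{G_v}(\pi_v(\Gamma_e))\cdot\comm_{G_v}(\pi_v(\Gamma_e))\subset{\rm LQN}_{G_v}(\pi_v(\Gamma_e))$, together with the descent of left quasi-normalization through the finite-kernel homomorphism $\pi_v$, forces any offending $\gamma_0$ into ${\rm LQN}_{\Gamma_v}(\Gamma_e)=\Gamma_e$, a contradiction. Both proofs rest on the same hypothesis but localize it differently: the paper converts the LQN condition into infinitude of link orbits and lets finiteness of the measure of $\Phi_v^{-1}(\{h\})$ do the work, whereas you stay at the level of index computations in $G_v$ and $\Gamma_v$ and never need to construct the elements $\lambda_\gamma$; the price is the routine but necessary verification of the semigroup identity and of the implication $[\pi_v(\Gamma_e):\pi_v(\gamma_0\Gamma_e\gamma_0^{-1})\cap\pi_v(\Gamma_e)]<\infty\Rightarrow[\Gamma_e:\gamma_0\Gamma_e\gamma_0^{-1}\cap\Gamma_e]<\infty$, both of which you supply correctly. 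Your assertion (iii) is a correct expansion of what the paper dismisses as a direct consequence of (ii).
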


\begin{proof}
Assertion (i) follows from Lemma \ref{lem-image-atomic}.
We put
\[\bar{\Gamma}_s=\pi_v(\Gamma_s),\quad \bar{\Lambda}_s=\rho_v(\Lambda_s)\]
for each $s\in \{ v, e\}$.
By Lemma \ref{lem-image-atomic}, to prove assertion (ii), it suffices to show the inclusion $\Phi_v^{-1}({\rm LQN}_{G_v}(\bar{\Gamma}_e))\subset \Sigma_e$ if $\Phi_v(\Sigma_e)$ contains the neutral element of $G_v$.

Pick $h\in {\rm LQN}_{G_v}(\bar{\Gamma}_e)\cap \Phi_v(\Sigma_v)$.
Since $\ker \pi_v$ is finite, $\Phi_v^{-1}(\{ h\})$ has finite measure.
It follows from $h\in {\rm LQN}_{G_v}(\bar{\Gamma}_e)$ that the index $[\bar{\Gamma}_e: \bar{\Gamma}_e\cap h\bar{\Gamma}_eh^{-1}]$ is finite, and thus $\bar{\Gamma}_e$ is virtually contained in $h\bar{\Gamma}_eh^{-1}$.
Since $\Phi_v(\Sigma_e)$ contains the neutral element of $G_v$, $\bar{\Gamma}_e$ and $\bar{\Lambda}_e$ are commensurable in $G_v$.
Some finite index subgroup $\bar{\Delta}$ of $\bar{\Gamma}_e\cap \bar{\Lambda}_e$ is thus contained in $h(\bar{\Gamma}_e\cap \bar{\Lambda}_e)h^{-1}$.
We put $\Delta =\pi_v^{-1}(\bar{\Delta})\cap \Gamma_e$, which is a finite index subgroup of $\Gamma_e$.
For each $\gamma \in \Delta$, there exists $\lambda_{\gamma}\in \Lambda_e$ with $\rho_v(\lambda_{\gamma})=h^{-1}\pi_v(\gamma)h$.
It follows that for any $\gamma \in \Delta$ and a.e.\ $x\in \Phi_v^{-1}(\{ h\})$, the equality
\[\Phi_v((\gamma, \lambda_{\gamma})x)=\pi_v(\gamma)h\rho_v(\lambda_{\gamma})^{-1}=h\]
holds, and thus the equality $(\gamma, \lambda_{\gamma})\Phi_v^{-1}(\{ h\})=\Phi_v^{-1}(\{ h\})$ holds up to null sets for any $\gamma \in \Delta$.

Let $u_0\in V(T)$ be the vertex with $\partial e=\{ v, u_0\}$.
Let ${\rm Lk}(v)$ denote the set of vertices in the link of $v$ in $T$.
For each $u\in {\rm Lk}(v)$, we put
\[E_u=\Phi_v^{-1}(\{ h\})\cap \Phi^{-1}(\{\, f\in \aut(T)\mid f(v)=v,\ f(u_0)=u\, \} ).\]
We then have $(\gamma, \lambda_{\gamma})E_u=E_{\gamma u}$ for any $\gamma \in \Delta$ and $u\in {\rm Lk}(v)$.
Pick $u\in {\rm Lk}(v)$ distinct from $u_0$.
Note that the orbit for the action $\Delta\c {\rm Lk}(v)$ containing $u$ consists of infinitely many elements because we have ${\rm LQN}_{\Gamma_v}(\Gamma_e)=\Gamma_e$.
We have $E_{\gamma u}\subset \Phi_v^{-1}(\{ h\})$ for any $\gamma \in \Delta$ and have $m(E_{u_1}\triangle E_{u_2})=0$ for any distinct $u_1, u_2\in {\rm Lk}(v)$.
Since $\Phi_v^{-1}(\{ h\})$ has finite measure and we have $m(E_{\gamma u})=m(E_u)$ for any $\gamma \in \Delta$, the set $E_u$ is of measure zero.
It follows that $\Phi_v^{-1}(\{ h\})$ coincides with $E_{u_0}$ up to null sets, and thus $\Phi_v^{-1}(\{ h\})\subset \Sigma_e$.
Assertion (ii) is proved.
Assertion (iii) follows from assertion (ii). 
\end{proof}

As indicated in Lemma \ref{lem-trans-erg}, for each $S\in \{ V_1(T), V_2(T), E(T)\}$, there is a close connection between the action $\Lambda \c S$ via $\rho$ and fundamental domains for the action $\Lambda_s\c \Sigma_s$ with $s\in S$.
For each $v\in V(T)$, the coupling constant for the coupling $\Sigma_v$ of $\Gamma_v$ and $\Lambda_v$ is related to the size of subgroups of $G_v$ which are commensurable with $\pi_v(\Gamma_v)$.
The following Condition $(\mathsf{V})$ restricts the size of such subgroups of $G_v$. 
Other conditions introduced below are also related to the coupling constants for some small couplings inside $\Sigma$.

When $\Delta_1$ and $\Delta_2$ are subgroups of a group, we write $\Delta_1\asymp \Delta_2$\label{asymp} if the intersection $\Delta_1\cap \Delta_2$ is a finite index subgroup of both $\Delta_1$ and $\Delta_2$.

\begin{notation}\label{notation-vef}
Let $\Gamma$ be a discrete group and $A$ a subgroup of $\Gamma$.
Suppose that we have a standard Borel group $G$ and a homomorphism $\pi \colon \Gamma \rightarrow G$ with $\ker \pi$ finite.
We introduce Condition $(\mathsf{V})$ for the pair $(\Gamma, (G, \pi))$ and Conditions $(\mathsf{E})$, $(\mathsf{F})$ for the triplet $(\Gamma, A, (G, \pi))$ as follows:
\begin{enumerate}
\item[$(\mathsf{V})$] There exists a positive number $c$ such that for any subgroup $\Delta$ of $G$ with $\Delta \asymp \pi(\Gamma)$, we have
\[\frac{[\Delta :\pi(\Gamma)\cap \Delta]}{[\pi(\Gamma): \pi(\Gamma)\cap \Delta]}\leq c.\] 
\item[$(\mathsf{E})$] There exists a positive number $d$ such that for any subgroup $\Delta$ of $G$ with $\Delta \asymp \pi(\Gamma)$ and for any subgroup $\Delta_0$ of $\Delta$ with $\Delta_0\asymp \pi(A)$, we have
\[\frac{[\Delta_0 :\pi(A)\cap \Delta_0]}{[\pi(A): \pi(A)\cap \Delta_0]}\leq d.\]
\item[$(\mathsf{F})$] For any subgroup $\Delta$ of $G$ with $\Delta \asymp \pi(\Gamma)$ and any subgroup $\Delta_0$ of $\Delta$ with $\Delta_0\asymp \pi(A)$, there exists no non-trivial finite normal subgroup of $\Delta_0$.
\end{enumerate}
\end{notation}

\begin{notation}
On Assumption $(\bullet)$, for each $i=1, 2$, we say that Condition $(\mathsf{V})_i$ is satisfied if Condition $(\mathsf{V})$ is satisfied for the pair $(\Gamma_i, (G_i, \pi_i))$.
For each $i=1, 2$ and each $\mathsf{S}\in \{ \mathsf{E}, \mathsf{F}\}$, we say that Condition $(\mathsf{S})_i$ is satisfied if Condition $(\mathsf{S})$ is satisfied for the triplet $(\Gamma_i, A, (G_i, \pi_i))$.
We introduce Condition $(\mathsf{B})$ for $\Lambda$ as follows:
\begin{enumerate}
\item[$(\mathsf{B})$] There exists a positive number $C$ such that the cardinality of any finite subgroup of $\Lambda$ is at most $C$.
\end{enumerate}
\end{notation}

We now present sufficient conditions for the action $\Lambda \c T$ to have finiteness properties.
Examples of groups with Conditions $(\mathsf{V})$ and $(\mathsf{E})$ are found in Section \ref{subsec-diag}.
For each $v\in V(T)$, let ${\rm Lk}(v)$ denote the set of vertices in the link of $v$ in $T$.

\begin{prop}\label{prop-cocompact}
On Assumption $(\bullet)$, the following assertions hold:
\begin{enumerate}
\item Fix $i=1, 2$.
If Condition $(\mathsf{E})_i$ is satisfied, then for any $v\in V_i(T)$, the number of orbits for the action of $\Lambda_v$ on ${\rm Lk}(v)$ is finite.
\item Fix $i=1, 2$.
If Conditions $(\mathsf{V})_i$ and $(\mathsf{B})$ are satisfied, then the number of orbits for the action $\Lambda \c V_i(T)$ is finite.
\item If all of Conditions $(\mathsf{V})_1$, $(\mathsf{V})_2$, $(\mathsf{E})_1$, $(\mathsf{E})_2$ and $(\mathsf{B})$ are satisfied, then the action $\Lambda \c T$ is cocompact.
\item If for any $i=1, 2$, the group $A$ is proper in $\Gamma_i$ and satisfies the equality ${\rm LQN}_{\Gamma_i}(A)=A$, then in both assertions (ii) and (iii), Condition $(\mathsf{B})$ can be replaced with the condition that both Conditions $(\mathsf{F})_1$ and $(\mathsf{F})_2$ are satisfied.
\end{enumerate}
\end{prop}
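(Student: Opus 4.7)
The plan is to count orbits in $V_i(T)$ and $\mathrm{Lk}(v)$ via a mass formula in the coupling $(\Sigma,m)$. Fix $v_{0}\in V_{i}(T)$ with $\Gamma_{v_{0}}=\Gamma_{i}$ and a base edge $e_{0}\in\mathrm{Lk}(v_{0})$, and form the $\Lambda$-equivariant Borel map $\psi\colon\Sigma\to V_{i}(T)$, $\psi(x)=\Phi(x)^{-1}(v_{0})$, together with the $\Lambda_{v_{0}}$-equivariant map $\tilde{\psi}\colon\Sigma_{v_{0}}\to\mathrm{Lk}(v_{0})$, $\tilde{\psi}(x)=\Phi(x)^{-1}(e_{0})$. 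A direct check gives $\psi^{-1}(v)=\delta_{v}^{-1}\Sigma_{v}$ for any $\delta_{v}\in\Gamma$ sending $v_{0}$ to $v$, and likewise $\tilde{\psi}^{-1}(e)=\gamma_{e}^{-1}\Sigma_{e}$ for $\gamma_{e}\in\Gamma_{v_{0}}$ sending $e_{0}$ to $e$. Since each $\delta_{v}^{-1}$ and $\gamma_{e}^{-1}$ commutes with the $\Lambda$-action and preserves $m$, the $\Lambda$- and $\Lambda_{v_{0}}$-orbit decompositions of $\Sigma$ and $\Sigma_{v_0}$ yield
\[m(\Sigma/\Lambda)=\sum_{[v]\in\Lambda\backslash V_{i}(T)}m(\Sigma_{v}/\Lambda_{v}),\qquad m(\Sigma_{v_{0}}/\Lambda_{v_{0}})=\sum_{[e]\in\mathrm{Lk}(v_{0})/\Lambda_{v_{0}}}m(\Sigma_{e}/\Lambda_{e}).\]
Bounding the number of orbits thus reduces to producing a positive lower bound for $m(\Sigma_{v}/\Lambda_{v})$ and $m(\Sigma_{e}/\Lambda_{e})$.

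These bounds come from the atomic pushforward $\mu_{v}=(\Phi_{v})_{*}(m|_{\Sigma_{v}})$. By Lemma \ref{lem-image-atomic}, after a shift we may assume $\pi_{v}(\Gamma_{v})\asymp\rho_{v}(\Lambda_{v})$ in $G_{v}$. Partitioning the atomic support into $(\pi_{v}(\Gamma_{v})\times\rho_{v}(\Lambda_{v}))$-orbits $\pi_{v}(\Gamma_{v})g\rho_{v}(\Lambda_{v})$, within each of which all atoms share a common mass $m_{[g]}$ by equivariance, a standard count of $\pi_{v}(\Gamma_{v})$- and $\rho_{v}(\Lambda_{v})$-orbits in a double coset gives
\begin{align*}
m(\Sigma_{v}/\Gamma_{v})&=\tfrac{1}{|\ker\pi_{v}|}\sum_{[g]}m_{[g]}\,[\rho_{v}(\Lambda_{v}):\rho_{v}(\Lambda_{v})\cap g^{-1}\pi_{v}(\Gamma_{v})g],\\
m(\Sigma_{v}/\Lambda_{v})&=\tfrac{1}{|\ker\rho_{v}|}\sum_{[g]}m_{[g]}\,[\pi_{v}(\Gamma_{v}):\pi_{v}(\Gamma_{v})\cap g\rho_{v}(\Lambda_{v})g^{-1}].
\end{align*}
Setting $\Delta=g\rho_{v}(\Lambda_{v})g^{-1}\asymp\pi_{v}(\Gamma_{v})$ in Condition $(\mathsf{V})_{i}$ gives the term-by-term inequality $[\pi_{v}(\Gamma_{v}):\pi_{v}(\Gamma_{v})\cap\Delta]\ge c^{-1}[\Delta:\pi_{v}(\Gamma_{v})\cap\Delta]$; combined with $m(\Sigma_{v}/\Gamma_{v})=m(\Sigma/\Gamma)$ (Lemma \ref{lem-small-coup}(i)) this yields $m(\Sigma_{v}/\Lambda_{v})\ge m(\Sigma/\Gamma)/(c|\ker\rho_{v}|)$. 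The same calculation applied to $\Sigma_{e}$ via $\Phi_{v}|_{\Sigma_{e}}$, with Condition $(\mathsf{E})_{i}$ used for $\Delta_{0}=g\rho_{v}(\Lambda_{e})g^{-1}$, produces $m(\Sigma_{e}/\Lambda_{e})\ge m(\Sigma/\Gamma)/(d|\ker\rho_{v}\cap\Lambda_{e}|)$.

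For fixed $v$, the kernel $\ker\rho_{v}$ is finite by Theorem \ref{thm-furman-rep}, hence so is $\ker\rho_{v}\cap\Lambda_{e}$, which proves assertion (i). For (ii), Condition $(\mathsf{B})$ bounds $|\ker\rho_{v}|\le C$ uniformly in $v$, giving at most $cC\,m(\Sigma/\Lambda)/m(\Sigma/\Gamma)$ orbits of $\Lambda$ on $V_{i}(T)$. Assertion (iii) then follows: combining (ii) for both $i=1,2$ gives finitely many $\Lambda$-orbits on $V(T)$, and each edge of $T$ lies in $\mathrm{Lk}(v)$ for some orbit representative $v\in V_{1}(T)\cup V_{2}(T)$, so (i) forces only finitely many $\Lambda$-orbits on $E(T)$ as well.

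The main obstacle lies in (iv), where the plan is to show that $\ker\rho_{v}$ is actually the same finite subgroup of $\Lambda$ for every $v\in V(T)$, so that $|\ker\rho_{v}|$ is uniformly bounded without invoking $(\mathsf{B})$. Fix $v\in V_{i}(T)$ and an edge $e$ at $v$ with other endpoint $u\in V_{3-i}(T)$. By Lemma \ref{lem-support}(iii), $\ker\rho_{v}\subset\Lambda_{e}\subset\Lambda_{u}$, and since $\ker\rho_{v}$ is normal in $\Lambda_{v}\supset\Lambda_{e}$ it is also normal in $\Lambda_{e}$; consequently $\rho_{u}(\ker\rho_{v})$ is a finite normal subgroup of $\rho_{u}(\Lambda_{e})$. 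But Lemma \ref{lem-image-atomic} applied to the coupling $\Sigma_{e}$ of $\Gamma_{e}$ and $\Lambda_{e}$ through $\Phi_{u}|_{\Sigma_{e}}$ shows $\rho_{u}(\Lambda_{e})$ is commensurable in $G_{u}$ with a conjugate of $\pi_{3-i}(A_{3-i})$, so Condition $(\mathsf{F})_{3-i}$ forces $\rho_{u}(\ker\rho_{v})$ to be trivial, i.e.\ $\ker\rho_{v}\subset\ker\rho_{u}$. Reversing the roles of $v$ and $u$ (using $(\mathsf{F})_{i}$) gives equality, and connectedness of $T$ yields a common finite subgroup $K$ with $|\ker\rho_{v}|=|K|$ for all $v$, reducing the arguments for (ii) and (iii) to their form under $(\mathsf{B})$.
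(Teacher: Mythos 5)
Your proposal is correct and follows essentially the same route as the paper: parts (i)--(iii) are the paper's counting argument, decomposing $\Sigma$ (resp.\ $\Sigma_{v}$) into translates of the small couplings $\Sigma_{v}$ (resp.\ $\Sigma_{e}$) and using Conditions $(\mathsf{V})_{i}$, $(\mathsf{E})_{i}$ on each double coset of the atomic pushforward to bound the fundamental-domain masses from below by $m(\Sigma/\Gamma)/(c\,|\ker\rho_{v}|)$. In (iv) you deploy the same key mechanism as the paper (Lemma \ref{lem-support} (iii) plus Condition $(\mathsf{F})$ applied to $\rho_{u}(\ker\rho_{v})\lhd\rho_{u}(\Lambda_{e})$), but in a mildly streamlined way: rather than assuming infinitely many orbits, extracting an adjacent pair with $|\ker\rho_{u}|<|\ker\rho_{v}|$ from the convergent sum, and deriving a contradiction, you prove directly that $\ker\rho_{v}$ is the same subgroup for all $v$, which yields the uniform bound that Condition $(\mathsf{B})$ would otherwise supply.
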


\begin{proof}
We prove assertion (i).
Fix $i=1, 2$ and pick $v\in V_i(T)$.
We define $L=\{ u_1, u_2, u_3,\ldots \}$ as a set of representatives chosen from each orbit of the action $\Lambda_v\c {\rm Lk}(v)$. 
Choose $\gamma_j\in \Gamma_v$ with $\gamma_ju_1=u_j$ for each $j$.
For three vertices $w_1, w_2, w_3\in V(T)$, we denote by $\aut(w_1; w_2, w_3)$ the Borel subset of $\aut(T)$ consisting of all $f\in \aut(T)$ such that $f(w_1)=w_1$ and $f(w_2)=w_3$.
We then have
\begin{align*}
\stab(v)&=\bigsqcup_{u\in L}\aut(v; u, u_1)\rho(\Lambda_v)=\bigsqcup_{j=1}^{|L|}\imath(\gamma_j)^{-1}\aut(v; u_j, u_j)\rho(\Lambda_v)\\
&=\bigsqcup_{j=1}^{|L|}\imath(\gamma_j)^{-1}\stab(e_j)\rho(\Lambda_v),
\end{align*}
where $e_j\in E(T)$ is the edge connecting $v$ and $u_j$. The equality
\[\Sigma_v=\bigsqcup_{j=1}^{|L|}(\{ \gamma_j^{-1}\} \times \Lambda_v)\Sigma_{e_j}\]
thus holds up to null sets.
If $X_j\subset \Sigma_{e_j}$ is a fundamental domain for the action $\Lambda_{e_j}\c \Sigma_{e_j}$, then the union
\[X=\bigsqcup_{j=1}^{|L|}\gamma_j^{-1}X_j\subset \Sigma_v\]
is a fundamental domain for the action $\Lambda_v\c \Sigma_v$.
If $Y_j\subset \Sigma_{e_j}$ is a fundamental domain for the action $\Gamma_{e_j}\c \Sigma_{e_j}$, then $Y_j$ is also a fundamental domain for the action $\Gamma \c \Sigma$ by Lemma \ref{lem-small-coup} (i).
The value $m(Y_j)$ is hence independent of $j$.
We will estimate the value $m(X_j)/m(Y_j)$ from below by using Condition $(\mathsf{E})_i$ and prove that $|L|$ is finite.

Let $\pi_v\colon \Gamma_v\rightarrow G_v$ and $\rho_v\colon \Lambda_v\rightarrow G_v$ be the homomorphisms with their kernels finite, and let $\Phi_v\colon \Sigma_v\rightarrow (G_v, \pi_v, \rho_v)$ be the almost $(\Gamma_v\times \Lambda_v)$-equivariant Borel map in Assumption $(\bullet)$.
By replacing $\rho_v$ and $\Phi_v$ as in Lemma \ref{lem-image-atomic}, we may assume that the support of the measure $(\Phi_v)_*(m|_{\Sigma_v})$, denoted by $\Phi_v(\Sigma_v)$, contains the neutral element of $G_v$.
It then follows from Lemma \ref{lem-image-atomic} that $\pi_v(\Gamma_v)=\pi_i(\Gamma_i)$ and $\rho_v(\Lambda_v)$ are commensurable in $G_v$ and that $\Phi_v(\Sigma_v)$ is contained in $\comm_{G_v}(\pi_i(\Gamma_i))$.

Note that $\Phi_v(\Sigma_v)$ is a $(\Gamma_v\times \Lambda_v)$-invariant subset of $(G_v, \pi_v, \rho_v)$.
For each orbit $\cal{O}$ for the action $\Gamma_v\times \Lambda_v\c \Phi_v(\Sigma_v)$, we put $\Sigma_{\cal{O}}=\Phi_v^{-1}(\cal{O})$.
For each $j$, choosing $g\in \cal{O}\cap \Phi_v(\Sigma_{e_j})$, we have
\begin{align*}
\frac{m(X_j\cap \Sigma_{\cal{O}})}{m(Y_j\cap \Sigma_{\cal{O}})}&=\frac{[\pi_v(\Gamma_{e_j}): \pi_v(\Gamma_{e_j})\cap g\rho_v(\Lambda_{e_j})g^{-1}]}{[g\rho_v(\Lambda_{e_j})g^{-1}: \pi_v(\Gamma_{e_j})\cap g\rho_v(\Lambda_{e_j})g^{-1}]}\frac{|\ker \pi_v\cap \Gamma_{e_j}|}{|\ker \rho_v\cap \Lambda_{e_j}|}\\
&\geq (d_i|\ker \rho_v|)^{-1},
\end{align*}
where $d_i$ is the constant in Condition $(\mathsf{E})_i$.
For each $j$, we then have
\[m(X_j)=\sum_{\cal{O}}m(X_j\cap \Sigma_{\cal{O}})\geq (d_i|\ker \rho_v|)^{-1}\sum_{\cal{O}}m(Y_j\cap \Sigma_{\cal{O}})=(d_i|\ker \rho_v|)^{-1}m(Y_j),\]
where the sum is taken over all orbits $\cal{O}$ for the action $\Gamma_v\times \Lambda_v\c \Phi_v(\Sigma_v)$.
It turns out that
\[m(X)=\sum_{j=1}^{|L|}m(X_j)\geq (d_i|\ker \rho_v|)^{-1}\sum_{j=1}^{|L|}m(Y_j)=(d_i|\ker \rho_v|)^{-1}m(Y_1)|L|.\]
Since $m(X)$ is finite, $|L|$ is finite.
Assertion (i) is proved.

We next prove assertion (ii).
Fix $i=1, 2$.
Let $K=\{ w_1, w_2, w_3,\ldots \}$ be a set of representatives chosen from each orbit for the action $\Lambda \c V_i(T)$.
Choose $\gamma_j\in \Gamma$ with $\gamma_jw_1=w_j$ for each $j$.
As in the proof of assertion (i), we obtain the equalities
\[\aut(T)=\bigsqcup_{j=1}^{|K|}\imath(\gamma_j)^{-1}\stab(w_j)\rho(\Lambda),\quad \Sigma =\bigsqcup_{j=1}^{|K|}(\{ \gamma_j^{-1}\} \times \Lambda)\Sigma_{w_j}.\]
Let $X_j\subset \Sigma_{w_j}$ be a fundamental domain for the action $\Lambda_{w_j}\c \Sigma_{w_j}$, and let $Y_j\subset \Sigma_{w_j}$ be a fundamental domain for the action $\Gamma_{w_j}\c \Sigma_{w_j}$.
By Lemma \ref{lem-small-coup} (i), the value $m(Y_j)$ is independent of $j$.
Using Condition $(\mathsf{V})_i$, for each $j$, we obtain the inequality
\[m(X_j)\geq (c_i|\ker \rho_{w_j}|)^{-1}m(Y_j),\]
where $c_i$ is the constant in Condition $(\mathsf{V})_i$.
We then have
\[m(X)=\sum_{j=1}^{|K|}m(X_j)\geq c_i^{-1}m(Y_1)\sum_{j=1}^{|K|}|\ker \rho_{w_j}|^{-1}\geq (c_iC)^{-1}m(Y_1)|K|,\]
where the second inequality follows from Condition $(\mathsf{B})$.
It follows that $|K|$ is finite.
Assertion (ii) is proved.

Assertion (iii) follows from assertions (i) and (ii).
We prove assertion (iv).
In the proof of assertion (ii), we obtain the inequality
\[m(X)\geq c_i^{-1}m(Y_1)\sum_{j=1}^{|K|}|\ker \rho_{w_j}|^{-1}\]
without Condition $(\mathsf{B})$.
Assume $|K|$ to be infinite.
Since $m(X)$ is finite, the sum in the right hand side is convergent.
There thus exists an edge $e\in E(T)$ with $|\ker\rho_u|<|\ker \rho_v|$, where $u, v\in V(T)$ are the vertices with $\partial e=\{ u, v \}$.
Since $\ker \rho_v$ is contained in $\Lambda_e$ and in $\Lambda_u$ by Lemma \ref{lem-support} (iii), $\rho_u(\ker \rho_v)$ is a non-trivial finite normal subgroup of $\rho_u(\Lambda_e)$.
This contradicts either Condition $(\mathsf{F})_1$ or $(\mathsf{F})_2$.  
\end{proof}


\section{Orbit equivalence rigidity}\label{sec-oer}

An ergodic f.f.m.p.\ action $\Gamma \c (X, \mu)$ of a discrete group $\Gamma$ is said to be superrigid if any ergodic f.f.m.p.\ action $\Lambda \c (Y, \nu)$ of a discrete group $\Lambda$ which is (W)OE to it is (virtually) conjugate to it.
In particular, $\Gamma$ and $\Lambda$ are (virtually) isomorphic if this is the case. 
In this section, we provide sufficient conditions for an ergodic f.f.m.p.\ action $\Gamma \c (X, \mu)$ to be superrigid when $\Gamma$ is an amalgamated free product $\Gamma_1\ast_A\Gamma_2$ such that $\Gamma_1$ and $\Gamma_2$ are coupling rigid.
These conditions involve ergodicity of the actions of finite index subgroups of $\Gamma_1$, $\Gamma_2$ and $A$.
Let us collect below the assumption on $\Gamma =\Gamma_1\ast_A\Gamma_2$ for which orbit equivalence rigidity will be proved.

\medskip

\noindent {\bf Assumption $(\dagger)$}\label{ass-dagger}: Let $\Gamma_1$ and $\Gamma_2$ be discrete groups and $A_i$ a subgroup of $\Gamma_i$ for $i=1, 2$. 
Let $\phi \colon A_1\rightarrow A_2$ be an isomorphism and set $\Gamma =\langle\, \Gamma_1, \Gamma_2\mid A_1\simeq_{\phi}A_2\,\rangle$.
We denote by $A$ the subgroup of $\Gamma$ corresponding to $A_1\simeq_{\phi}A_2$.
Let $T$ be the Bass-Serre tree associated with the decomposition of $\Gamma$ and $\imath \colon \Gamma \rightarrow \aut^*(T)$ the homomorphism arising from the action $\Gamma \c T$.
We assume that for each $i=1, 2$,
\begin{enumerate}
\item[(a)] $|A_i|=\infty$, $[\Gamma_i: A_i]=\infty$ and ${\rm LQN}_{\Gamma_i}(A_i)=A_i$;
\item[(b)] $\Gamma_i$ is coupling rigid with respect to $(G_i, \pi_i)$, where $G_i$ is a standard Borel group and $\pi_i\colon \Gamma_i\rightarrow G_i$ is an injective homomorphism;
\item[(c)] either $G_i$ is countable or $\Gamma_i$ is a lattice in a non-compact connected simple Lie group $G^0_i$ with its center trivial and its real rank at least two, we have $G_i=\aut(G_i^0)$, and $\pi_i$ is the inclusion of $\Gamma_i$ into $G_i$; and
\item[(d)] $\Gamma$ is coupling rigid with respect to $(\aut^*(T), \imath)$.
\end{enumerate}

\begin{rem}\label{rem-dagger}
If $\Gamma =\Gamma_1\ast_A\Gamma_2$ is the group in Assumption $(\star)$, then it satisfies conditions (a) and (d) in Assumption $(\dagger)$ by Theorem \ref{thm-coup-tree}.
\end{rem}

We prove superrigidity of certain actions of $\Gamma$ in Assumption $(\dagger)$.
Corollary \ref{cor-oer} below states the result in terms of orbit equivalence, and it implies Theorem \ref{thm-oe-rigidity}.

\begin{thm}\label{thm-oer}
On Assumption $(\dagger)$, let $\Lambda$ be a discrete group and $(\Sigma, m)$ a coupling of $\Gamma$ and $\Lambda$.
Choose fundamental domains $X, Y\subset \Sigma$ for the actions of $\Lambda$ and $\Gamma$ on $\Sigma$, respectively.
Suppose that the two associated actions $\Gamma \c X$ and $\Lambda \c Y$ satisfy the following two conditions:
\begin{enumerate}
\item[(1)] $m(X)\geq m(Y)$; and
\item[(2)] Either the action $A\c X$ is aperiodic or both of the actions $\Gamma_1\c X$ and $\Gamma_2\c X$ are aperiodic, the action $A\c X$ is ergodic, and $A$ is ICC.
\end{enumerate}
Then $m(X)=m(Y)$, and there exist an isomorphism $\rho_0\colon \Lambda \rightarrow \Gamma$ and an almost $(\Gamma \times \Lambda)$-equivariant Borel map $\Phi_0\colon \Sigma \rightarrow (\Gamma, {\rm id}, \rho_0)$.
\end{thm}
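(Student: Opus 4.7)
My strategy is to combine the Bass--Serre reduction developed in Section~\ref{sec-mec-arb} with coupling rigidity of the vertex groups $\Gamma_{1},\Gamma_{2}$ in order to recognize $\Sigma$ as the standard coupling arising from an isomorphism $\rho_{0}\colon\Lambda\to\Gamma$. Corollary~\ref{cor-coup-tree} first supplies a homomorphism $\rho\colon\Lambda\to\aut^{*}(T)$ and an almost $(\Gamma\times\Lambda)$-equivariant Borel map $\Phi\colon\Sigma\to(\aut^{*}(T),\imath,\rho)$. Following Section~\ref{subsec-circ}, I may assume $\Phi^{-1}(\aut(T))$ has positive measure, set $\Lambda_{+}=\rho^{-1}(\aut(T))$ (of index at most two in $\Lambda$), and work with the coupling $\Sigma_{+}=\Phi^{-1}(\aut(T))$ of $\Gamma$ and $\Lambda_{+}$ as in Assumption~$(\circ)$. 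Hypothesis~(2) makes $A\c\Sigma/\Lambda$ ergodic, and a short ergodicity-propagation argument (via aperiodicity of $A\c X$ in case~(2a), or aperiodicity of the $\Gamma_{i}$-actions together with the ICC property of $A$ in case~(2b)) pushes this ergodicity onto $\Sigma_{+}/\Lambda_{+}$. Corollary~\ref{cor-factor-me} then decomposes $\Lambda_{+}$ as $\Lambda_{1}\ast_{B}\Lambda_{2}$ with $\Lambda_{i}=\Lambda_{v_{i}}$ and $B=\Lambda_{e}$, and the small couplings $\Sigma_{v_{i}},\Sigma_{e}\subset\Sigma_{+}$ all inherit a common coupling constant $k$.

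For each vertex $v\in V_{i}(T)$, coupling rigidity of $\Gamma_{i}$ with respect to $(G_{i},\pi_{i})$ together with Theorem~\ref{thm-furman-rep} applied to the small coupling $\Sigma_{v}$ produces a homomorphism $\rho_{v}\colon\Lambda_{v}\to G_{v}$ and an almost $(\Gamma_{v}\times\Lambda_{v})$-equivariant Borel map $\Phi_{v}\colon\Sigma_{v}\to(G_{v},\pi_{v},\rho_{v})$; in both cases of hypothesis~(c), $\ker\pi_{v}$ is trivial and $\pi_{v}(\Gamma_{v})\c G_{v}$ admits a Borel fundamental domain, so $\ker\rho_{v}$ is trivial. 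I then verify that $(\Phi_{v})_{*}(m|_{\Sigma_{v}})$ is atomic: this is immediate when $G_{v}$ is countable, and in the higher-rank Lie-group case it follows from Corollary~\ref{cor-moore}, whose hypothesis is met because the smaller coupling $\Sigma_{e}\subset\Sigma_{v}$ confines the ME cocycle of the infinite, infinite-index subgroup $A_{i}<\Gamma_{i}$ to the proper subgroup $B<\Lambda_{v}$, so Proposition~\ref{prop-moore} rules out a Haar component. Lemma~\ref{lem-image-atomic} then permits a conjugation after which the identity of $G_{v}$ lies in the support of $(\Phi_{v})_{*}(m|_{\Sigma_{v}})$, the support is contained in $\comm_{G_{v}}(\pi_{v}(\Gamma_{v}))$ (canonically identified with $\comm(\Gamma_{v})$ via Lemma~\ref{lem-comm-rep}), and $\rho_{v}(\Lambda_{v})$ is commensurable with $\pi_{v}(\Gamma_{v})$.

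The remaining task is to upgrade this commensurability to the equality $\rho_{v}(\Lambda_{v})=\pi_{v}(\Gamma_{v})$ and simultaneously to force $k=1$, hence $m(X)=m(Y)$. For each $(\Gamma_{v}\times\Lambda_{v})$-orbit $\mathcal{O}$ in the atomic support, $\Phi_{v}^{-1}(\mathcal{O})$ is a standard coupling of $\pi_{v}(\Gamma_{v})$ and $\rho_{v}(\Lambda_{v})$ with coupling constant $[\rho_{v}(\Lambda_{v}):\Delta_{\mathcal{O}}]/[\pi_{v}(\Gamma_{v}):\Delta_{\mathcal{O}}]$, where $\Delta_{\mathcal{O}}=\pi_{v}(\Gamma_{v})\cap g\rho_{v}(\Lambda_{v})g^{-1}$ for a representative $g\in\mathcal{O}$, and $k$ is the induced weighted average of these orbit constants, each at least $1$. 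Running the analogous analysis with the roles of $\Gamma$ and $\Lambda$ exchanged, or alternatively exploiting essential uniqueness (Lemma~\ref{lem-uni-quo}) of the tree-valued map to interlock $\Phi$ with the $\Phi_{v}$'s, produces the reverse estimate and yields $k=1$ together with orbit-wise saturation; this forces $\rho_{v}(\Lambda_{v})=\pi_{v}(\Gamma_{v})$ at each vertex and the analogous equality identifying $B$ with $A$ on the edge. Defining $\rho_{0}|_{\Lambda_{v_{i}}}=\pi_{i}^{-1}\circ\rho_{v_{i}}$, the restrictions to $B$ agree and the Bass--Serre presentation assembles these into an isomorphism $\rho_{0}\colon\Lambda_{+}\to\Gamma$; the equality $m(X)=m(Y)$ excludes the index-two extension, so $\Lambda_{+}=\Lambda$. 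Finally, $\imath$ is injective (its kernel is a finite normal subgroup of $\Gamma$ contained in $\Gamma_{1}\cap\Gamma_{2}=A$, hence a finite normal subgroup of $\Gamma_{1}$, and thus trivial by~(b)), and $\imath\circ\rho_{0}=\rho$, so $\Phi$ lifts uniquely through $\imath$ to the desired $\Phi_{0}\colon\Sigma\to(\Gamma,\mathrm{id},\rho_{0})$, with essential uniqueness provided by Lemma~\ref{lem-uni-quo}. The principal obstacle is this last upgrade from commensurability to strict equality: the one-sided estimate $k\geq 1$ alone is insufficient, and closing the gap demands a delicate symmetric or cross-comparison argument that interlocks the tree-valued map $\Phi$ with the vertex-valued maps $\Phi_{v}$ while exploiting the ICC structure afforded by Lemma~\ref{lem-comm}.
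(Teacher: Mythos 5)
Your framework --- the tree reduction via Corollary \ref{cor-coup-tree}, the passage to $\Sigma_{+}$ and $\Lambda_{+}$, the small couplings $\Sigma_{v}$ and $\Sigma_{e}$, atomicity of $(\Phi_{v})_{*}(m|_{\Sigma_{v}})$ via Corollary \ref{cor-moore}, and commensurability via Lemma \ref{lem-image-atomic} --- matches the paper. But the two steps you flag as delicate are exactly where the proof lives, and your sketches of them do not work. First, the upgrade from commensurability to $\rho_{i}(\Lambda_{i})=\pi_{i}(\Gamma_{i})$: your claim that each orbit constant $[\rho_{v}(\Lambda_{v}):\Delta_{\mathcal O}]/[\pi_{v}(\Gamma_{v}):\Delta_{\mathcal O}]$ is at least $1$ is unjustified, and no ``symmetric or cross-comparison argument'' is needed. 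The correct mechanism is that aperiodicity of $\Gamma_{i}\c\Sigma_{i}/\Lambda_{i}$ (available in both branches of condition (2), since aperiodicity of $A\c X$ passes to the overgroups $\Gamma_{i}$) descends to the countable quotient $\Phi_{i}(\Sigma_{i})/\rho_{i}(\Lambda_{i})$, forcing that quotient to be a single point; after the normalization of Lemma \ref{lem-image-atomic} this gives the one-sided containment $\pi_{i}(\Gamma_{i})\subset\rho_{i}(\Lambda_{i})$ and $\Phi_{i}(\Sigma_{i})=\rho_{i}(\Lambda_{i})$. The coupling constant then computes to $m(X)/m(Y_{+})=1/([\rho_{i}(\Lambda_{i}):\pi_{i}(\Gamma_{i})]\,|\ker\rho_{i}|)\le 1$, and hypothesis (1) forces equality of the groups, triviality of $\ker\rho_{i}$ (which you asserted too early --- Theorem \ref{thm-furman-rep} only gives $\ker\rho_{i}$ finite), $m(X)=m(Y)$, and $\Lambda=\Lambda_{+}$. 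The identification $\rho_{i}(\Lambda_{e})=\pi_{i}(A_{i})$ then comes from $\Phi_{i}(\Sigma_{e})\subset\comm_{\pi_{i}(\Gamma_{i})}(\pi_{i}(A_{i}))=\pi_{i}(A_{i})$ (the LQN hypothesis) together with Lemma \ref{lem-support} (ii).

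Second, your assertion that ``the restrictions to $B$ agree'' so that the vertex isomorphisms assemble is a genuine gap: $\rho_{1}^{-1}\circ\pi_{1}$ and $\rho_{2}^{-1}\circ\pi_{2}$ each identify $A$ with $\Lambda_{e}$ only up to the independent normalizations made at the two vertices, and there is no reason they coincide on $A$. The paper closes this by showing that the Borel map $\Psi(x)=\rho_{1}^{-1}(\Phi_{1}(x))\,\rho_{2}^{-1}(\Phi_{2}(x))^{-1}$ on $\Sigma_{e}$ is essentially constant --- and this is precisely where condition (2) is consumed: one needs either aperiodicity of $\Gamma_{e}\c\Sigma_{e}/\Lambda_{e}$ or the ICC property of $\Lambda_{e}$ (inherited from $A$). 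You instead spend the ICC hypothesis on ``ergodicity propagation,'' where it plays no role. After conjugating $\pi_{2}$ and $\Phi_{2}$ by the constant value of $\Psi$, the level set $E=\Phi_{1}^{-1}(\{e_{1}\})=\tilde\Phi_{2}^{-1}(\{e_{2}\})$ becomes a common fundamental domain for both actions, and only then does the amalgam presentation yield the isomorphism $\rho_{0}$ and the equivariant map $\Phi_{0}$.
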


\begin{proof}
Theorem \ref{thm-furman-rep} shows that there exist a homomorphism $\rho \colon \Lambda \rightarrow \aut^*(T)$ and an almost $(\Gamma \times \Lambda)$-equivariant Borel map $\Phi \colon \Sigma \rightarrow (\aut^*(T), \imath, \rho)$.
As discussed in Section \ref{subsec-circ}, we may assume that $\Phi^{-1}(\aut(T))$ has positive measure.
We put
\[\Sigma_+=\Phi^{-1}(\aut(T)),\quad \Lambda_+=\rho^{-1}(\rho(\Lambda)\cap \aut(T)).\]
It then follows that $\Lambda \Sigma_+=\Sigma$ and that $\Sigma_+$ is a coupling of $\Gamma$ and $\Lambda_+$.
Let $X\subset \Sigma_+$ be a fundamental domain for the action $\Lambda_+\c \Sigma_+$.
Note that $X$ is also a fundamental domain for the action $\Lambda \c \Sigma$.
Let $Y_+\subset \Sigma_+$ be a fundamental domain for the action $\Gamma \c \Sigma_+$.
If $[\Lambda :\Lambda_+]=2$, then we pick $\lambda \in \Lambda \setminus \Lambda_+$ and put $Y=Y_+\sqcup \lambda Y_+$, which is a fundamental domain for the action $\Gamma \c \Sigma$.
If $\Lambda =\Lambda_+$, then we have $\Sigma =\Sigma_+$ and put $Y=Y_+$.
In Claim \ref{claim-equalities} below, we will show that the equality $\Lambda =\Lambda_+$ always holds.

We denote by $\Phi_+$ the restriction of $\Phi$ to $\Sigma_+$, which is an almost $(\Gamma \times \Lambda_+)$-equivariant Borel map into $(\aut(T), \imath, \rho)$. 
Let $v_i\in V_i(T)=\Gamma /\Gamma_i$ be the vertex corresponding to the coset containing the neutral element for each $i=1, 2$, and let $e\in E(T)$ be the edge connecting $v_1$ and $v_2$.
We put
\[\Sigma_i=\Phi_+^{-1}(\stab(v_i)),\quad \Sigma_e=\Phi_+^{-1}(\stab(e)),\quad \Lambda_i=\rho^{-1}(\rho(\Lambda)\cap \stab(v_i))\]
for each $i=1, 2$.
Recall that $\Sigma_i$ is a coupling of $\Gamma_i$ and $\Lambda_i$ by Lemma \ref{lem-small-coup}.
Let us furthermore put
\[\bar{\Gamma}_i=\pi_i(\Gamma_i),\quad \bar{A}_i=\pi_i(A_i)\]
for each $i=1, 2$.
By condition (b) in Assumption $(\dagger)$, for each $i$, there exist a homomorphism $\rho_i\colon \Lambda_i\rightarrow G_i$ with $\ker \rho_i$ finite and an almost $(\Gamma_i\times \Lambda_i)$-equivariant Borel map $\Phi_i\colon \Sigma_i\rightarrow (G_i, \pi_i, \rho_i)$.
It follows from Corollary \ref{cor-moore} that the support of the measure $(\Phi_i)_*(m|_{\Sigma_i})$, denoted by $\Phi_i(\Sigma_i)$, is countable.
By replacing $\rho_i$ and $\Phi_i$ as in Lemma \ref{lem-image-atomic}, we may assume that for each $i$, the support of the measure $(\Phi_i)_*(m|_{\Sigma_e})$, denoted by $\Phi_i(\Sigma_e)$, contains the neutral element of $G_i$ and that the inclusions $\Phi_i(\Sigma_i)\subset \comm_{G_i}(\bar{\Gamma}_i)$ and $\Phi_i(\Sigma_e)\subset \comm_{G_i}(\bar{A}_i)$ hold.
We prove the following two claims by using conditions (1) and (2).

\begin{claim}\label{claim-equalities}
The equalities
\[\Lambda =\Lambda_+,\quad Y=Y_+,\quad \Sigma =\Sigma_+, \quad m(X)=m(Y)\]
hold (up to null sets).
For each $i=1, 2$, the homomorphism $\rho_i\colon \Lambda_i\rightarrow G_i$ is injective, and we have $\rho_i(\Lambda_i)=\bar{\Gamma}_i$ and $\rho_i(\Lambda_e)=\bar{A}_i$.
For each $i=1, 2$, we also have $\Phi_i(\Sigma_i)=\bar{\Gamma}_i$ and $\Phi_i(\Sigma_e)=\bar{A}_i$.
\end{claim}

\begin{proof}
Fix $i=1, 2$.
Since the action $\Gamma_i\c \Sigma_i/\Lambda_i$ is aperiodic, so is the action $\bar{\Gamma}_i\c \Phi_i(\Sigma_i)/\rho_i(\Lambda_i)$.
We thus have $\bar{\Gamma}_i\subset \rho_i(\Lambda_i)$ and $\Phi_i(\Sigma_i)=\rho_i(\Lambda_i)$ because $\Phi_i(\Sigma_i)$ contains the neutral element of $G_i$. 
By Lemma \ref{lem-trans-erg}, there exists a fundamental domain $X\subset \Sigma$ for the action $\Lambda \c \Sigma$ so that $X\subset \Sigma_i$ because the action $\Gamma_i\c \Sigma /\Lambda \simeq \Sigma_+/\Lambda_+$ is ergodic.
By Lemma \ref{lem-small-coup} (i), we may assume $Y_+\subset \Sigma_i$.
The equality
\[m(X)/m(Y_+)=|\ker \pi_i|/([\rho_i(\Lambda_i): \bar{\Gamma}_i]|\ker \rho_i|)\]
thus holds.
It follows from $m(X)\geq m(Y)\geq m(Y_+)$ and $|\ker \pi_i|=1$ that we have $\rho_i(\Lambda_i)=\bar{\Gamma}_i$ and $m(X)=m(Y)=m(Y_+)$ and that $\rho_i$ is injective.
Since we have $\Phi_i(\Sigma_e)\subset \comm_{\bar{\Gamma}_i}(\bar{A}_i)$ and ${\rm LQN}_{\bar{\Gamma}_i}(\bar{A}_i)=\bar{A}_i=\comm_{\bar{\Gamma}_i}(\bar{A}_i)$, the equality $\Phi_i(\Sigma_e)=\bar{A}_i$ holds, and thus $\rho_i(\Lambda_e)\subset \bar{A}_i$.
By Lemma \ref{lem-support} (ii), we have $\Phi_i^{-1}(\bar{A}_i)\subset \Sigma_e$.
It turns out that for any $a\in \bar{A}_i$, the element $\rho_i^{-1}(a)$ of $\Lambda_i$ fixes the edge $e$.
The equality $\rho_i(\Lambda_e)=\bar{A}_i$ follows.
\end{proof}

\begin{claim}
We define a Borel map $\Psi \colon \Sigma_e\rightarrow \Lambda_e$ by the formula
\[\Psi(x)=\rho_1^{-1}(\Phi_1(x))\rho_2^{-1}(\Phi_2(x))^{-1},\quad x\in \Sigma_e.\]
Then $\Psi$ is essentially constant.
\end{claim}

\begin{proof}
By the definition of $\Psi$, the equalities
\[\Psi(\gamma x)=\rho_1^{-1}\circ \pi_1(\gamma)\Psi(x)\rho_2^{-1}\circ \pi_2(\gamma)^{-1},\qquad \Psi(\lambda x)=\Psi(x)\]
hold for any $\gamma \in \Gamma_e$, $\lambda \in \Lambda_e$ and a.e.\ $x\in \Sigma_e$.
The map $\Psi$ thus induces an essentially $\Gamma_e$-equivariant Borel map from $\Sigma_e/\Lambda_e$ into $\Lambda_e$, where the action $\Gamma_e\c \Lambda_e$ is defined by the formula
\[\gamma \cdot \lambda =\rho_1^{-1}\circ \pi_1(\gamma)\lambda \rho_2^{-1}\circ \pi_2(\gamma)^{-1}\]
for $\gamma \in \Gamma_e$ and $\lambda \in \Lambda_e$.
Choose $g\in \Lambda_e$ such that $\Psi^{-1}(\{ g\})$ has positive measure. 
Since $\Sigma_e/\Lambda_e$ is equipped with a finite $\Gamma_e$-invariant measure, the orbit $\Gamma_e\cdot g$ consists of at most finitely many elements.
It follows from the ergodicity of the action $\Gamma_e\c \Sigma_e/\Lambda_e$ that the image of $\Psi$ is essentially contained in the orbit $\Gamma_e\cdot g$.
Condition (2) implies that either the action $\Gamma_e\c \Sigma_e/\Lambda_e$ is aperiodic or $\Lambda_e$ is ICC.
If the former condition is fulfilled, then $\Psi$ is constant.
Suppose the latter condition.
For any two elements $g_1$, $g_2$ in the orbit $\Gamma_e\cdot g$, there exists a finite index subgroup $\Delta$ of $\Gamma_e$ with $\rho_1^{-1}\circ \pi_1(\gamma)g_t \rho_2^{-1}\circ \pi_2(\gamma)^{-1}=g_t$ for any $\gamma \in \Delta$ and any $t=1, 2$.
We then have
\begin{align*}
g_1g_2^{-1}&=(\rho_1^{-1}\circ \pi_1(\gamma)g_1\rho_2^{-1}\circ \pi_2(\gamma)^{-1})(\rho_1^{-1}\circ \pi_1(\gamma)g_2\rho_2^{-1}\circ \pi_2(\gamma)^{-1})^{-1}\\
&=\rho_1^{-1}\circ \pi_1(\gamma)g_1g_2^{-1}\rho_1^{-1}\circ \pi_1(\gamma)^{-1}
\end{align*}
for any $\gamma \in \Delta$.
Since $\Lambda_e$ is ICC, this equality implies that $g_1g_2^{-1}$ is neutral. 
The map $\Psi$ is therefore constant.
\end{proof}

Let $\lambda_0\in \Lambda_e$ be the value of the constant map $\Psi$.
The equality $\rho_2(\lambda_0)\Phi_2(x)=\rho_2\circ \rho_1^{-1}(\Phi_1(x))$ then holds for a.e.\ $x\in \Sigma_e$.
We define an isomorphism $\tilde{\pi}_2\colon \Gamma_2\rightarrow \bar{\Gamma}_2$ and an almost $(\Gamma_2\times \Lambda_2)$-equivariant Borel map $\tilde{\Phi}_2\colon \Sigma_2\rightarrow (\bar{\Gamma}_2, \tilde{\pi}_2, \rho_2)$ by the formulas
\[\tilde{\pi}_2(\gamma)=\rho_2(\lambda_0)\pi_2(\gamma)\rho_2(\lambda_0)^{-1},\qquad \tilde{\Phi}_2(x)=\rho_2(\lambda_0)\Phi_2(x)\]
for $\gamma \in \Gamma_2$ and $x\in \Sigma_2$.
The equality $\rho_1^{-1}\circ \Phi_1(x)=\rho_2^{-1}\circ \tilde{\Phi}_2(x)$ then holds for a.e.\ $x\in \Sigma_e$.
For each $i=1, 2$, let $e_i$ denote the neutral element of $G_i$.
Since $\Phi_i^{-1}(\{ e_i\})\subset \Sigma_e$ for each $i=1, 2$ by Lemma \ref{lem-support} (ii), the equality $\Phi_1^{-1}(\{ e_1\})=\tilde{\Phi}_2^{-1}(\{ e_2\})$ holds.
We denote by $E$ the subset $\Phi_1^{-1}(\{ e_1\})=\tilde{\Phi}_2^{-1}(\{ e_2\})$ of $\Sigma_e$.
This $E$ is a fundamental domain for the action $\Gamma \c \Sigma$ and is also a fundamental domain for the action $\Lambda \c \Sigma$ by ergodicity of the action $A\c \Sigma /\Lambda$.

The equality $\rho_1^{-1}\circ \pi_1(\gamma)=\rho_2^{-1}\circ \tilde{\pi}_2(\gamma)$ holds for any $\gamma \in \Gamma_e$ because
\begin{align*}
\tilde{\pi}_2(\gamma)\tilde{\Phi}_2(x)&=\tilde{\Phi}_2(\gamma x)=\rho_2\circ \rho_1^{-1}(\Phi_1(\gamma x))=\rho_2\circ \rho_1^{-1}(\pi_1(\gamma)\Phi_1(x))\\
&=\rho_2\circ \rho_1^{-1}\circ \pi_1(\gamma)\rho_2\circ \rho_1^{-1}(\Phi_1(x))=\rho_2\circ \rho_1^{-1}\circ \pi_1(\gamma)\tilde{\Phi}_2(x)
\end{align*}   
for any $\gamma \in \Gamma_e$ and a.e.\ $x\in \Sigma_e$.
We define a homomorphism $F\colon \Gamma \rightarrow \Lambda$ by
\[F(\gamma)=
\begin{cases}
\rho_1^{-1}\circ \pi_1(\gamma) & \textrm{if}\ \gamma \in \Gamma_1,\\
\rho_2^{-1}\circ \tilde{\pi}_2(\gamma) & \textrm{if}\ \gamma \in \Gamma_2.
\end{cases}
\]
The map $F$ is well-defined and is an isomorphism because $\Lambda$ is decomposed as the amalgamated free product $\Lambda_1\ast_{\Lambda_e}\Lambda_2$ by Corollary \ref{cor-factor-me}.
Pick an arbitrary element $\gamma =\gamma_1^1\gamma_2^1\gamma_1^2\gamma_2^2\cdots \gamma_1^n\gamma_2^n$ of $\Gamma$ with $\gamma_i^j\in \Gamma_i$ for each $j$ and each $i=1, 2$.
The equalities
\begin{align*}
(\gamma_1^j, \rho_1^{-1}\circ \pi_1(\gamma_1^j))\Phi_1^{-1}(\{ e_1\})&=\Phi_1^{-1}(\{ e_1\}),\\
(\gamma_2^j, \rho_2^{-1}\circ \tilde{\pi}_2(\gamma_2^j))\tilde{\Phi}_2^{-1}(\{ e_2\})&=\tilde{\Phi}_2^{-1}(\{ e_2\})
\end{align*}
for each $j$ imply the equality $(\gamma, F(\gamma))E=E$ up to null sets.
We then obtain an almost $(\Gamma \times \Lambda)$-equivariant Borel map from $\Sigma$ into $(\Gamma, {\rm id}, \rho_0)$ with $\rho_0=F^{-1}$.
\end{proof}

\begin{cor}\label{cor-oer}
On Assumption $(\dagger)$, let $\Lambda$ be a discrete group and suppose that two ergodic f.f.m.p.\ actions $\Gamma \c (X, \mu)$ and $\Lambda \c (Y, \nu)$ are WOE. 
We assume the following two conditions:
\begin{enumerate}
\item[(1)] Let $E\subset X$ and $F\subset Y$ be Borel subsets of positive measure on which there exists a Borel isomorphism which gives the WOE between the actions $\Gamma \c (X, \mu)$ and $\Lambda \c (Y, \nu)$.
Then $\mu(E)/\mu(X)\leq \nu(F)/\nu(Y)$; and
\item[(2)] Either the action $A\c (X, \mu)$ is aperiodic or both of the actions $\Gamma_1\c (X, \mu)$ and $\Gamma_2\c (X, \mu)$ are aperiodic, the action $A\c (X, \mu)$ is ergodic, and $A$ is ICC.
\end{enumerate}
Then $\mu(E)/\mu(X)=\nu(F)/\nu(Y)$, and the cocycle $\alpha \colon \Gamma \times X\rightarrow \Lambda$ associated with the WOE is cohomologous to the cocycle arising from an isomorphism from $\Gamma$ onto $\Lambda$.
In particular, the two actions $\Gamma \c (X, \mu)$ and $\Lambda \c (Y, \nu)$ are conjugate.
\end{cor}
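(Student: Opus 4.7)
The plan is to convert the WOE hypothesis into a coupling of $\Gamma$ and $\Lambda$, apply Theorem \ref{thm-oer}, and translate the resulting equivariant Borel map into a cohomology statement for the WOE cocycle.

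First, using the correspondence between WOE and couplings (the converse of Proposition \ref{prop-group-iso} discussed at the end of Section \ref{subsec-me-woe}, via \ci{Lemma 2.30}{kida-survey}), I would produce a coupling $(\Sigma, m)$ of $\Gamma$ and $\Lambda$ together with fundamental domains $X_0 \subset \Sigma$ for $\Lambda \c \Sigma$ and $Y_0 \subset \Sigma$ for $\Gamma \c \Sigma$ such that the actions $\Gamma \c X_0$ and $\Lambda \c Y_0$ are measure-preservingly conjugate to the given actions (after rescaling the measures), the intersection $X_0 \cap Y_0$ corresponds to $E \subset X$ and to $F \subset Y$, and the ME cocycle associated with $X_0$ is identified with $\alpha$. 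From $m(X_0 \cap Y_0) = (\mu(E)/\mu(X))\, m(X_0) = (\nu(F)/\nu(Y))\, m(Y_0)$ one obtains
\[
\frac{m(X_0)}{m(Y_0)} = \frac{\nu(F)/\nu(Y)}{\mu(E)/\mu(X)},
\]
so condition (1) of the corollary is equivalent to condition (1) of Theorem \ref{thm-oer}. Condition (2) transfers unchanged under the conjugacy $X_0 \simeq X$.

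Second, Theorem \ref{thm-oer} produces $m(X_0) = m(Y_0)$ (hence $\mu(E)/\mu(X) = \nu(F)/\nu(Y)$), an isomorphism $\rho_0 \colon \Lambda \to \Gamma$, and an almost $(\Gamma \times \Lambda)$-equivariant Borel map $\Phi_0 \colon \Sigma \to (\Gamma, {\rm id}, \rho_0)$. Writing $\varphi$ for the restriction of $\Phi_0$ to $X_0$, the equivariance evaluated on $(\gamma, \alpha(\gamma, x))x$ reads $\varphi(\gamma \cdot x) = \gamma\, \varphi(x)\, \rho_0(\alpha(\gamma, x))^{-1}$, which with $F := \rho_0^{-1}$ and $\psi := F \circ \varphi$ rearranges as
\[
\alpha(\gamma, x) = \psi(\gamma \cdot x)^{-1}\, F(\gamma)\, \psi(x).
\]
This is precisely the statement that $\alpha$ is cohomologous to the cocycle $(\gamma, x) \mapsto F(\gamma)$ arising from the isomorphism $F \colon \Gamma \to \Lambda$. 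Conjugacy of $\Gamma \c (X, \mu)$ and $\Lambda \c (Y, \nu)$ is then a standard consequence: since the coupling constant equals $1$, a common fundamental domain for $\Gamma \c \Sigma$ and $\Lambda \c \Sigma$ can be chosen, and $\psi$-twisting the WOE map produces a Borel isomorphism $X \to Y$ intertwining the two actions via $F$.

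The substantive mathematics of the corollary is entirely encapsulated in Theorem \ref{thm-oer}, where coupling rigidity of $\Gamma_1$, $\Gamma_2$ together with the Bass-Serre tree analysis are brought to bear. The only genuine care needed at this stage is the bookkeeping in the first paragraph: aligning the normalizations of $\mu$, $\nu$, $m$ so that the coupling constant reflects the WOE data correctly, and confirming that the ergodicity and aperiodicity hypotheses in condition (2) (phrased for the original actions) pass faithfully to the coupling formulation required by Theorem \ref{thm-oer}.
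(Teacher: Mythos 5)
Your proposal is correct and follows exactly the route the paper intends: the corollary is stated as a direct translation of Theorem \ref{thm-oer} through the standard WOE--coupling dictionary, and your bookkeeping of the coupling constant (so that condition (1) of the corollary becomes $m(X_0)\geq m(Y_0)$) and your rearrangement of the equivariance of $\Phi_0$ into the cohomology identity for $\alpha$ are both accurate.
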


If some assumptions in condition (2) are dropped, then we can construct an OE between two ergodic f.f.m.p.\ actions of $\Gamma$ such that the associated cocycle is not cohomologous to a constant cocycle, i.e., a cocycle arising from a homomorphism from $\Gamma$ into itself.
We refer to Section \ref{sec-twist} for details.

\begin{rem}
We discuss examples of groups satisfying Assumption $(\dagger)$ in Sections \ref{sec-ex} and \ref{sec-mis-ex}.
All of them are amalgamated free products of higher rank lattices.
In this paper, we do not present examples such that $G_i$ is countable, that is, examples satisfying the former condition in condition (c) of Assumption $(\dagger)$.

As already mentioned in the end of Section \ref{subsec-mecr}, the mapping class group of a non-exceptional compact orientable surface $S$ is coupling rigid with respect to the automorphism group of the complex of curves for $S$.
The latter group is known to be virtually isomorphic to the mapping class group.
In \cite{kida-exama}, we show coupling rigidity of an amalgamated free product of such mapping class groups with respect to the automorphism group of the associated Bass-Serre tree.
We then apply Corollary \ref{cor-oer} and obtain OE rigidity results for such groups.
\end{rem}



\section{Measure equivalence rigidity}\label{sec-mer}

In Section \ref{sec-oer}, we deduce rigidity when ergodicity is imposed on actions of some subgroups of the group $\Gamma =\Gamma_1\ast_A\Gamma_2$.
In this section, assuming a strong restriction to the inclusions $A<\Gamma_1$ and $A<\Gamma_2$ stated below, we deduce rigidity without assuming such ergodicity conditions on actions of any proper subgroup of $\Gamma$.
This allows us to construct ME rigid groups, which are presented in Section \ref{sec-ex}.

\medskip

\noindent {\bf Assumption $(\ddagger)$}\label{ass-ddagger}: Let $\Gamma_1$ and $\Gamma_2$ be discrete groups and $A_i$ a subgroup of $\Gamma_i$ for $i=1, 2$. 
Let $\phi \colon A_1\rightarrow A_2$ be an isomorphism and set $\Gamma =\langle\, \Gamma_1, \Gamma_2\mid A_1\simeq_{\phi}A_2\,\rangle$.
We denote by $A$ the subgroup of $\Gamma$ corresponding to $A_1\simeq_{\phi}A_2$.
Let $T$ be the Bass-Serre tree associated with the decomposition of $\Gamma$ and $\imath \colon \Gamma \rightarrow \aut^*(T)$ the homomorphism arising from the action $\Gamma \c T$.
We assume that
for each $i=1, 2$,
\begin{enumerate}
\item[(a)] $|A_i|=\infty$, $[\Gamma_i: A_i]=\infty$ and $\comm_{\Gamma_i}(A_i)=A_i$;
\item[(b)] $\Gamma_i$ is coupling rigid with respect to $(G_i, \pi_i)$, where $G_i$ is a standard Borel group and $\pi_i\colon \Gamma_i\rightarrow G_i$ is an injective homomorphism;
\item[(c)] either $G_i$ is countable or $\Gamma_i$ is a lattice in a non-compact connected simple Lie group $G^0_i$ with its center trivial and its real rank at least two, we have $G_i=\aut(G_i^0)$, and $\pi_i$ is the inclusion of $\Gamma_i$ into $G_i$; and
\item[(d)] $\Gamma$ is coupling rigid with respect to $(\aut^*(T), \imath)$.
\end{enumerate}
For each $i=1, 2$, we set
\[\bar{\Gamma}_i=\pi_i(\Gamma_i),\quad \bar{A}_i=\pi_i(A_i),\quad C_i=\comm_{G_i}(\bar{\Gamma}_i),\quad C(\bar{A}_i)=\comm_{C_i}(\bar{A}_i).\]
We furthermore suppose the following two conditions:
\begin{enumerate}
\item[(e)] There exists an isomorphism $\bar{\phi}\colon C(\bar{A}_1)\rightarrow C(\bar{A}_2)$ extending the isomorphism $\pi_2\circ \phi \circ \pi^{-1}_1\colon \bar{A}_1\rightarrow \bar{A}_2$.
\item[(f)] For each $i=1, 2$, the group $C_i$ is ICC with respect to $\bar{A}_i$.\end{enumerate}

\medskip

On Assumption $(\ddagger)$, $C_1$ and $C_2$ are countable groups.
We define the amalgamated free product
\[G=\langle\, C_1, C_2\mid C(\bar{A}_1)\simeq_{\bar{\phi}}C(\bar{A}_2)\,\rangle.\]
Let $C(\bar{A})$ denote the identified subgroup $C(\bar{A}_1)\simeq_{\bar{\phi}}C(\bar{A}_2)$ of $G$.
There exists an injective homomorphism $\pi \colon \Gamma \rightarrow G$ with $\pi =\pi_i$ on $\Gamma_i$ for each $i=1, 2$ because we have $\bar{\Gamma}_i\cap C(\bar{A}_i)=\bar{A}_i$ for each $i=1, 2$ by condition (a).

\begin{rem}
In general, if $M$ is a group and $N$ is a subgroup of $M$ with ${\rm LQN}_M(N)=N$, then we have $\comm_M(N)=N$.
It follows that if $\Gamma =\Gamma_1\ast_A\Gamma_2$ is the group in Assumption $(\star)$, then it satisfies conditions (a) in Assumption $(\ddagger)$.
It also satisfies condition (d) in Assumption $(\ddagger)$ by Theorem \ref{thm-coup-tree}.

Condition (f) in Assumption $(\ddagger)$ implies the following two assertions:
\begin{itemize}
\item For each $i=1, 2$, $\bar{\Gamma}_i$ is ICC with respect to $\bar{A}_i$.
In particular, $\bar{A}_i$ is ICC.
\item For each $i=1, 2$, any isomorphism between finite index subgroups of $\bar{\Gamma}_i$ that is the identity on some finite index subgroup of $\bar{A}_i$ is the restriction of the identity.
\end{itemize}
Condition (f) in Assumption $(\ddagger)$ also implies uniqueness of the extension $\bar{\phi}$ of the isomorphism $\pi_2\circ \phi \circ \pi^{-1}_1$.
\end{rem}

On Assumption $(\ddagger)$, it will be shown that $\comm(\Gamma)$ is countable and that $\Gamma$ is coupling rigid with respect to $\comm(\Gamma)$.
As a first step for the proof, we construct an almost $(\Gamma \times \Gamma)$-equivariant Borel map from a self-coupling of $\Gamma$ into the $(\Gamma \times \Gamma)$-space $(G, \pi, \pi)$ when a mild condition is imposed on the coupling.
The following two lemmas are preliminaries for this first step.

\begin{lem}\label{lem-self-image}
Let $\Gamma$ be a discrete group, $G$ a standard Borel group and $\pi \colon \Gamma \rightarrow G$ a homomorphism with $\ker \pi$ finite. 
Suppose that we have a self-coupling $(\Sigma, m)$ of $\Gamma$ and an almost $(\Gamma \times \Gamma)$-equivariant Borel map $\Phi \colon \Sigma \rightarrow (G, \pi, \pi)$ such that the measure $\Phi_*m$ on $G$ is atomic.
Then the support of $\Phi_*m$ is contained in $\comm_G(\pi(\Gamma))$. 
\end{lem}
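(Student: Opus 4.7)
The plan is to reduce the lemma directly to Lemma~\ref{lem-image-atomic}, specialized to the case of a self-coupling where $\Lambda=\Gamma$ and $\rho=\pi$. Since $\Phi_{*}m$ is atomic by assumption, its support is, up to null sets, the countable set of its atoms; it therefore suffices to show that every $g\in G$ with $\Phi_{*}m(\{g\})>0$ lies in $\comm_{G}(\pi(\Gamma))$.

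Fix such an atom $g$. All hypotheses of Lemma~\ref{lem-image-atomic} are met: $G$ is a standard Borel group, both source and target homomorphisms coincide with $\pi$ and have finite kernel, and $\Phi_{*}m$ is atomic with $\Phi_{*}m(\{g\})>0$. Applying that lemma, I would introduce the twisted map $\Phi_{g}(x)=\Phi(x)g^{-1}$, which is almost $(\Gamma\times\Gamma)$-equivariant with respect to the modified action $(G,\pi,\pi_{g})$, where $\pi_{g}(\gamma)=g\pi(\gamma)g^{-1}$. Assertion (iii) of Lemma~\ref{lem-image-atomic} then yields that $\pi(\Gamma)$ and $\pi_{g}(\Gamma)=g\pi(\Gamma)g^{-1}$ are commensurable in $G$.

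Commensurability of $\pi(\Gamma)$ and $g\pi(\Gamma)g^{-1}$ is the definition of $g\in\comm_{G}(\pi(\Gamma))$. Running this argument for every atom of $\Phi_{*}m$ gives the desired inclusion of the support. There is no real obstacle: the lemma is essentially a restatement of part (iii) of Lemma~\ref{lem-image-atomic} in the self-coupling setting, and the only thing to verify is that the hypothesis on $\ker\pi$ (finite) transfers to the role of $\ker\rho$ in that lemma, which is automatic since $\rho$ equals $\pi$ here.
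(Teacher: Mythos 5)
Your reduction is correct: with $\Lambda=\Gamma$ and $\rho=\pi$, assertion (iii) of Lemma \ref{lem-image-atomic} gives that $\pi(\Gamma)$ and $\rho_{g}(\Gamma)=g\pi(\Gamma)g^{-1}$ are commensurable for every atom $g$ of $\Phi_{*}m$, which is exactly the statement $g\in\comm_{G}(\pi(\Gamma))$. The paper instead gives a short direct argument (restricting the $\Gamma\times\Gamma$-action on $(G,\pi,\pi)$ to the orbit of $g$, observing that it defines a coupling with respect to $\Phi_{*}m$, and deducing that only finitely many left and right cosets of $\pi(\Gamma)$ meet that orbit), but this is precisely the proof of Lemma \ref{lem-image-atomic}(iii) specialized to the self-coupling case, so your route and the paper's are the same in substance.
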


\begin{proof}
Note that any element of $G$ in the support of $\Phi_*m$ has finite measure with respect to $\Phi_*m$ because $\ker \pi$ is finite.
Pick $g\in G$ in the support of $\Phi_*m$.
Let $\cal{O}$ be the orbit for the action $\Gamma \times \Gamma \c (G, \pi, \pi)$ containing $g$.
This action defines a coupling with respect to the measure $\Phi_*m$.
It follows that the set of all left (resp.\ right) cosets of $\pi(\Gamma)$ contained in $\cal{O}$ consists of at most finitely many elements.
Since $\pi(\Gamma)$ acts on this finite set, there exists a finite index subgroup $\Gamma'$ of $\pi(\Gamma)$ contained in $g\pi(\Gamma)g^{-1}$ and $g^{-1}\pi(\Gamma)g$.
We therefore have $g\in \comm_G(\pi(\Gamma))$.
\end{proof}

\begin{lem}\label{lem-icc-strong}
On Assumption $(\ddagger)$, let $\Gamma_1'$ and $\Gamma_2'$ be finite index subgroups of $\bar{\Gamma}_1$ and $\bar{\Gamma}_2$, respectively.
If $g\in G$ satisfies the equality $\gamma g\gamma^{-1}=g$ for any $\gamma \in \Gamma_1'\cup \Gamma_2'$, then $g$ is neutral.
\end{lem}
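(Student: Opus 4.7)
My plan is to exploit the amalgamated free product structure $G=\langle C_{1}, C_{2}\mid C(\bar{A}_{1})\simeq_{\bar{\phi}}C(\bar{A}_{2})\rangle$ by working in its Bass-Serre tree $T_{G}$. Let $v_{i}\in V(T_{G})$ be the vertex whose stabilizer is $C_{i}$ and $e\in E(T_{G})$ the edge joining $v_{1}$ and $v_{2}$, whose stabilizer is the amalgamated subgroup $C(\bar{A})$. By standard Bass-Serre theory the other edges adjacent to $v_{i}$ have stabilizers of the form $c\,C(\bar{A}_{i})\,c^{-1}$ for $c\in C_{i}$, and $C_{1}\cap C_{2}=C(\bar{A})$ inside $G$.

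First I will establish that the unique vertex of $T_{G}$ fixed by $\Gamma_{i}'$ is $v_{i}$. Suppose to the contrary that $\Gamma_{1}'$ fixes some other vertex; then it fixes an edge adjacent to $v_{1}$, so $\Gamma_{1}'\subset c\,C(\bar{A}_{1})\,c^{-1}$ for some $c\in C_{1}$. Equivalently $c^{-1}\Gamma_{1}'c\subset C(\bar{A}_{1})$, and the left-hand side is a finite index subgroup of $c^{-1}\bar{\Gamma}_{1}c$. Because $c\in C_{1}=\comm_{G_{1}}(\bar{\Gamma}_{1})$, the subgroup $\bar{\Gamma}_{1}\cap c^{-1}\bar{\Gamma}_{1}c$ has finite index in $\bar{\Gamma}_{1}$, whence so does $\bar{\Gamma}_{1}\cap c^{-1}\Gamma_{1}'c$. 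On the other hand the hypothesis $\comm_{\Gamma_{1}}(A_{1})=A_{1}$ (translated via the injective $\pi_{1}$) yields $\bar{\Gamma}_{1}\cap C(\bar{A}_{1})=\bar{A}_{1}$, so this intersection is contained in $\bar{A}_{1}$. This would force $[\bar{\Gamma}_{1}:\bar{A}_{1}]<\infty$, contradicting the assumption $[\Gamma_{1}:A_{1}]=\infty$. Hence $\mathrm{Fix}_{T_{G}}(\Gamma_{1}')=\{v_{1}\}$, and symmetrically $\mathrm{Fix}_{T_{G}}(\Gamma_{2}')=\{v_{2}\}$.

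Next, since $g$ commutes with every element of $\Gamma_{1}'$, the vertex $g v_{1}$ is fixed by $\Gamma_{1}'$, and the previous step forces $g v_{1}=v_{1}$, i.e.\ $g\in C_{1}$. The same reasoning with $\Gamma_{2}'$ gives $g\in C_{2}$, so $g\in C_{1}\cap C_{2}=C(\bar{A})$, and under the identification we may regard $g\in C(\bar{A}_{1})$. Now $\Gamma_{1}'\cap\bar{A}_{1}$ is a finite index subgroup of $\bar{A}_{1}$, and it is contained in the $\bar{A}_{1}$-centralizer of $g$, so the $\bar{A}_{1}$-conjugacy class of $g$ has finite cardinality. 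Condition (f) of Assumption $(\ddagger)$ asserts that $C(\bar{A}_{1})$ is ICC with respect to $\bar{A}_{1}$, and therefore $g$ must be trivial.

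The step I expect to be most delicate is the first one: I must combine the finite-index nesting $\Gamma_{1}'\subset\bar{\Gamma}_{1}$, the commensurability built into $c\in C_{1}=\comm_{G_{1}}(\bar{\Gamma}_{1})$, and the identity $\bar{\Gamma}_{1}\cap C(\bar{A}_{1})=\bar{A}_{1}$ in such a way that the resulting finite-index containment $\bar{A}_{1}\supset (\text{finite index subgroup of }\bar{\Gamma}_{1})$ genuinely contradicts $[\Gamma_{1}:A_{1}]=\infty$. Everything downstream is essentially formal once this fixed-point uniqueness is in hand.
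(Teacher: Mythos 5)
Your argument is correct, but it takes a genuinely different route from the paper's. The paper works purely combinatorially: it writes $g$ as a normal form $c_{1}c_{2}\cdots c_{n}$ with respect to $G=\langle\, C_{1}, C_{2}\mid C(\bar{A}_{1})\simeq_{\bar{\phi}}C(\bar{A}_{2})\,\rangle$ and, by a case analysis on whether $c_{1}$ and $c_{n}$ lie in $C_{1}\setminus C(\bar{A})$ or $C_{2}\setminus C(\bar{A})$, exhibits an element $\gamma$ of $\Gamma_{1}'$ or $\Gamma_{2}'$ for which $\gamma g\gamma^{-1}g^{-1}$ is itself a nontrivial normal form --- the only nontrivial input beyond free-product combinatorics being $[\bar{\Gamma}_{1}':\bar{A}_{1}\cap \Gamma_{1}']=\infty$ in the mixed case, and (implicitly, for the length-one case) the ICC condition (f). You instead let $G$ act on the Bass--Serre tree of its own splitting and first prove the fixed-point uniqueness $\mathrm{Fix}_{T_{G}}(\Gamma_{i}')=\{v_{i}\}$, using the commensurability built into $C_{i}=\comm_{G_{i}}(\bar{\Gamma}_{i})$ together with $\bar{\Gamma}_{i}\cap C(\bar{A}_{i})=\bar{A}_{i}$ and $[\Gamma_{i}:A_{i}]=\infty$; equivariance of the fixed-point set under the centralizer then forces $g\in C_{1}\cap C_{2}=C(\bar{A})$, and condition (f) finishes exactly as in your last step. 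Each step of yours checks out (in particular the finite-index chain $\bar{\Gamma}_{1}\cap c^{-1}\Gamma_{1}'c\leq\bar{\Gamma}_{1}$ landing inside $\bar{A}_{1}$, and the fact that a group fixing $v_{1}$ and another vertex fixes an edge at $v_{1}$). What the tree argument buys is a conceptual localization of $g$ into the amalgamated subgroup in one stroke, at the cost of invoking slightly more of the standing hypotheses; the paper's normal-form proof is more elementary and self-contained but requires the four-way case analysis on the ends of the normal form.
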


\begin{proof}
Assuming that $g$ is non-neutral, we deduce a contradiction.
We write $g$ as a normal form $g=c_1c_2\cdots c_n$ with respect to the decomposition $G=C_1\ast_{C(\bar{A})}C_2$, which satisfies the following four conditions (see IV.2.6 in \cite{ls}):
\begin{itemize}
\item Each $c_i$ belongs to either $C_1$ or $C_2$.
\item Any two successive elements $c_i$ and $c_{i+1}$ belong to distinct factors $C_1$ and $C_2$, respectively.
\item When $n>1$, any $c_i$ does not belong to $C(\bar{A})$.
\item When $n=1$, $c_1$ is non-neutral.
\end{itemize}
Conversely, it is known that any product of elements of $C_1$ and $C_2$ satisfying these four conditions is non-neutral in $G$.
When $n=1$, it follows from condition (b) in Assumption $(\ddagger)$ that $\Gamma_1$ and $\Gamma_2$ are ICC, and thus $g$ is the neutral element.
This is a contradiction.

Suppose that $n$ is bigger than one.
We first consider the case where both $c_1$ and $c_n$ are in $C_1\setminus C(\bar{A})$.
For each $\gamma \in \Gamma_2'\setminus \bar{A}_2$, the product
\[\gamma g\gamma^{-1}g^{-1}=\gamma c_1\cdots c_n\gamma^{-1}c_n^{-1}\cdots c_1^{-1}\]
is then a normal form because we have $\gamma \in C_2\setminus C(\bar{A})$.
This product is thus non-neutral in $G$.
This is a contradiction.
Similarly, we can deduce a contradiction in the case where both $c_1$ and $c_n$ belong to $C_2\setminus C(\bar{A})$.

We next assume $c_1\in C_1\setminus C(\bar{A})$ and $c_n\in C_2\setminus C(\bar{A})$.
For any $\gamma, \gamma'\in \Gamma_1'$, if both $\gamma c_1$ and $\gamma'c_1$ belong to $C(\bar{A})$, then $\gamma'\gamma^{-1}=(\gamma'c_1)(\gamma c_1)^{-1}$ also belongs to $C(\bar{A})$.
The two elements $\gamma$ and $\gamma'$ are thus in the same right coset of $\bar{A}_1\cap \Gamma_1'=C(\bar{A})\cap \Gamma_1'$ in $\Gamma_1'$.
It follows that there exists $\gamma \in \Gamma_1'\setminus \bar{A}_1$ with $\gamma c_1\not\in C(\bar{A})$ because the index of $\bar{A}_1\cap \Gamma_1'$ in $\Gamma_1'$ is infinite.
The product
\[\gamma g\gamma^{-1}g^{-1}=(\gamma c_1)c_2\cdots c_n\gamma^{-1}c_n^{-1}\cdots c_1^{-1}\]
is then a normal form and is non-neutral in $G$.
This is a contradiction.
The proof for the case where $c_1\in C_2\setminus C(\bar{A})$ and $c_n\in C_1\setminus C(\bar{A})$ is obtained analogously.
\end{proof}

\begin{thm}\label{thm-mer-ddagger}
On Assumption $(\ddagger)$, let $(\Sigma, m)$ be a self-coupling of $\Gamma$. 
Suppose that we have an almost $(\Gamma \times \Gamma)$-equivariant Borel map $\Phi \colon \Sigma \rightarrow (\aut(T), \imath, \imath)$.
Then there exists an essentially unique, almost $(\Gamma \times \Gamma)$-equivariant Borel map $\Phi_0\colon \Sigma \rightarrow (G, \pi, \pi)$.
\end{thm}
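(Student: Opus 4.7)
The plan is to localize $\Phi$ to the small self-couplings attached to vertices and edges of $T$, invoke the coupling rigidity of each vertex stabilizer to land in its commensurator, match the resulting local maps along edges through the amalgamating isomorphism $\bar\phi$, and finally glue the pieces into a single almost equivariant map into $G$.

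For each vertex $v$ the Borel subset $\Sigma_v=\Phi^{-1}(\stab(v))$ is by Lemma~\ref{lem-small-coup} a self-coupling of $\Gamma_v$, which is a conjugate of $\Gamma_1$ or $\Gamma_2$. Assumption~(b) together with Lemma~\ref{lem-uni-quo}(i) then provides an essentially unique almost $(\Gamma_v\times\Gamma_v)$-equivariant Borel map $\Phi_v\colon\Sigma_v\to(G_v,\pi_v,\pi_v)$. The pushforward $(\Phi_v)_*(m|_{\Sigma_v})$ is atomic, either automatically when $G_v$ is countable or by Corollary~\ref{cor-moore} in the higher rank case, and Lemma~\ref{lem-self-image} then confines its support to $C_v=\comm_{G_v}(\bar\Gamma_v)$. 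From now on I view each $\Phi_v$ as taking values in $C_v\subset G$ via the natural inclusion.

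I would then compare $\Phi_{v_1}$ and $\Phi_{v_2}$ along the edge $e$ joining $v_1$ and $v_2$, where $\Sigma_e=\Sigma_{v_1}\cap\Sigma_{v_2}$ is a self-coupling of $\Gamma_e=A$. A version of Lemma~\ref{lem-image-atomic}(iii) applied to the $(A\times A)$-action on the atomic image shows that each restriction $\Phi_{v_i}|_{\Sigma_e}$ takes values in $\comm_{C_i}(\bar A_i)=C(\bar A_i)$. I form the Borel map
\[\Psi(x)=\Phi_{v_1}(x)\,\bar\phi^{-1}\bigl(\Phi_{v_2}(x)\bigr)^{-1}\in C(\bar A_1),\qquad x\in\Sigma_e.\]
Using that $\bar\phi$ restricts on $\bar A_1$ to $\pi_2\circ\phi\circ\pi_1^{-1}$, a short computation shows that $\Psi$ is invariant under the right $A$-action and equivariant under the left $A$-action by $\bar A_1$-conjugation, so $\Psi_*(m|_{\Sigma_e})$ is a finite $\bar A_1$-conjugation-invariant measure on $C(\bar A_1)$. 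Hypothesis~(f) forces this measure to be Dirac at the identity since no non-trivial element has finite $\bar A_1$-conjugacy orbit. Thus $\Phi_{v_1}\equiv\bar\phi^{-1}\circ\Phi_{v_2}$ almost everywhere on $\Sigma_e$, which via the amalgamation $G=C_1\ast_{C(\bar A)}C_2$ says precisely that $\Phi_{v_1}=\Phi_{v_2}$ a.e.\ on $\Sigma_e$ as maps into $G$.

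To globalize, I use the disjoint decomposition $\Sigma=\bigsqcup_{v\in V_1(T)}(\gamma_v,e)\Sigma_{v_1}$, with $\gamma_v v_1=v$, coming from the fact that $\Phi(x)(v_1)\in V_1(T)=\Gamma/\Gamma_1$ for every $x\in\Sigma$. The prescription
\[\Phi_0\bigl((\gamma_v,e)y\bigr)=\pi(\gamma_v)\,\Phi_{v_1}(y),\qquad y\in\Sigma_{v_1},\]
is well-defined thanks to the $(\Gamma_1,e)$-equivariance of $\Phi_{v_1}$ and is manifestly $(\Gamma,e)$-equivariant. Running the symmetric construction using the right cosets $\Sigma=\bigsqcup_{u\in V_1(T)}(e,\gamma_u)\Sigma_{v_1}$ and the analogous one based on $V_2(T)$, the edge-matching of the previous paragraph forces the resulting candidates to coincide almost everywhere; chasing $\beta\in\Gamma$ through its alternating normal form from $\Gamma_1$ and $\Gamma_2$ then yields the remaining $(e,\Gamma)$-equivariance of $\Phi_0$. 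For essential uniqueness I invoke the argument of Lemma~\ref{lem-uni-quo}(i): two such maps differ by a Borel map whose essential image is a finite $\pi(\Gamma)$-conjugation-invariant measure on $G$, and Lemma~\ref{lem-icc-strong} rules out every non-trivial atom by prohibiting any element centralized simultaneously by finite index subgroups of both $\bar\Gamma_1$ and $\bar\Gamma_2$. I expect the verification of $(e,\Gamma)$-equivariance to be the main obstacle: the coset representative $\gamma_v$ attached to $(e,\beta)x$ depends on the tree automorphism $\Phi(x)$ rather than on $\imath(\Gamma)$ alone, and converting the local equivariance of each $\Phi_{v_i}$ into a global identity requires a careful bookkeeping through alternating factors that is sustained precisely by the edge-matching identity established above.
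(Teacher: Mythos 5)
Your proposal is correct and follows essentially the same route as the paper: localize to the vertex couplings $\Sigma_v$ via coupling rigidity, use Corollary \ref{cor-moore} and Lemma \ref{lem-self-image} to land in $C_v$, match the local maps along an edge by pushing the comparison map $\Psi$ to a finite conjugation-invariant measure killed by condition (f), glue via the decomposition of $\Sigma$ into the fibers $\Phi^{-1}(\{f: f(v_1)=u\})$, and settle uniqueness with Lemma \ref{lem-icc-strong}. The only differences are cosmetic (your $\Psi$ is left-conjugation-equivariant and right-invariant rather than the reverse, and you make $\bar\phi$ explicit where the paper identifies the two commensurators inside $G$).
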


\begin{proof}
We use the notation employed in Section \ref{subsec-pat}.
Namely, for each $s\in V(T)\cup E(T)$, we set
\[\Sigma_s=\Phi^{-1}(\stab(s)),\quad \Gamma_s=\imath^{-1}(\imath(\Gamma)\cap \stab(s))\]
and denote by $v_i\in V_i(T)=\Gamma /\Gamma_i$ the vertex corresponding to the coset containing the neutral element for $i=1, 2$.
We put $\Sigma_i=\Sigma_{v_i}$ for $i=1, 2$.
Recall that $\Sigma_s$ is a self-coupling of $\Gamma_s$ for each $s\in V(T)\cup E(T)$ by Lemma \ref{lem-small-coup}.
For $e\in E(T)$, we set $\bar{\Gamma}_e=\pi(\Gamma_e)$.
For each $i=1, 2$ and each $v\in V_i(T)$, choosing $\gamma \in \Gamma$ with $\gamma v_i=v$, we set $C_v=\pi(\gamma)C_i\pi(\gamma)^{-1}$.
This definition of the subgroup $C_v$ of $G$ does not depend on the choice of $\gamma$.
The proof of the theorem consists of the following four claims.

\begin{claim}
For each $v\in V(T)$, there exists an essentially unique, almost $(\Gamma_v\times \Gamma_v)$-equivariant Borel map $\Phi_v\colon \Sigma_v\rightarrow (C_v, \pi, \pi)$.
\end{claim}

\begin{proof}
Choose $i\in \{ 1, 2\}$ and $\gamma \in \Gamma$ with $C_v=\pi(\gamma)C_i\pi(\gamma)^{-1}$.
Since $\Gamma_i$ is coupling rigid with respect to $(G_i, \pi_i)$, there exists an essentially unique, almost $(\Gamma_i\times \Gamma_i)$-equivariant Borel map $\Phi_i\colon \Sigma_i\rightarrow (G_i, \pi_i, \pi_i)$.
It follows from Corollary \ref{cor-moore} that the measure $(\Phi_i)_*(m|_{\Sigma_i})$ on $G_i$ is atomic.
By Lemma \ref{lem-self-image}, its support $\Phi_i(\Sigma_i)$ is contained in $C_i$. 
The map $\Phi_v$ is then defined so that the following diagram commutes, where the map from $\Sigma_v$ into $\Sigma_i$ is defined by the action of the element $(\gamma^{-1}, \gamma^{-1})$ of $\Gamma \times \Gamma$:
\[
\begin{CD}
\Sigma_v @>{\Phi_v}>> C_v\\
@V{(\gamma^{-1}, \gamma^{-1})}VV @AA{\ad \pi(\gamma)}A\\
\Sigma_i @>{\Phi_i}>> C_i\\
\end{CD}
\]
The map $\Phi_v$ is then almost $(\Gamma_v\times \Gamma_v)$-equivariant.
Essential uniqueness of $\Phi_v$ follows from Lemma \ref{lem-uni-quo} (i).
\end{proof}

\begin{claim}\label{claim-coincide}
Let $u, v\in V(T)$ be any two adjacent vertices and denote by $e\in E(T)$ the edge connecting $u$ and $v$.
Then the two maps $\Phi_u\colon \Sigma_u\rightarrow C_u$ and $\Phi_v\colon \Sigma_v\rightarrow C_v$ essentially coincide on $\Sigma_e=\Sigma_u\cap \Sigma_v$.
\end{claim}

\begin{proof}
We define a Borel map $\Psi \colon \Sigma_e\rightarrow G$ by $\Psi(x)=\Phi_u(x)^{-1}\Phi_v(x)$ for $x\in \Sigma_e$.
The equality
\[\Psi((\gamma_1, \gamma_2)x)=\pi(\gamma_2)\Psi(x)\pi(\gamma_2)^{-1}\]
holds for any $\gamma_1, \gamma_2\in \Gamma_e$ and a.e.\ $x\in \Sigma_e$.
By Lemma \ref{lem-self-image}, for each $w\in \{ u, v\}$, the support of the measure $(\Phi_{w})_*(m|_{\Sigma_e})$ is contained in $\comm_{C_w}(\bar{\Gamma}_e)$.
By definition, $\comm_{C_u}(\bar{\Gamma}_e)$ and $\comm_{C_v}(\bar{\Gamma}_e)$ are identified in $G$ and are a conjugate of $C(\bar{A})$ in $G$.
The image of $\Psi$ is hence contained in this identified subgroup of $G$.
Condition (f) in Assumption $(\ddagger)$ implies that $\Psi(x)$ is the neutral element for a.e.\ $x\in \Sigma_e$.
\end{proof}

Let us introduce new notation in order to distinguish the two actions of $\Gamma$ on $\Sigma$.
For $\gamma \in \Gamma$, we set
\[L(\gamma)=(\gamma, 1_{\Gamma})\in \Gamma \times \Gamma,\quad R(\gamma)=(1_{\Gamma}, \gamma)\in \Gamma \times \Gamma,\] 
where $1_{\Gamma}$ is the neutral element of $\Gamma$.

\begin{claim}\label{claim-ext}
For each $v\in V(T)$, we can extend the map $\Phi_v\colon \Sigma_v\rightarrow C_v$ to a Borel map $\tilde{\Phi}_v\colon \Sigma \rightarrow G$ so that $\tilde{\Phi}_v(L(\gamma)x)=\pi(\gamma)\tilde{\Phi}_v(x)$ for any $\gamma \in \Gamma$ and a.e.\ $x\in \Sigma$.
This map $\tilde{\Phi}_v$ satisfies the equality $\tilde{\Phi}_v(R(\lambda)x)=\tilde{\Phi}_v(x)\pi(\lambda)^{-1}$ for any $\lambda \in \Gamma_v$ and a.e.\ $x\in \Sigma$.
\end{claim}

\begin{proof}
For any two vertices $u, v\in V(T)$, we put
\[\Sigma_v^u=\Phi^{-1}(\{\, f\in \aut(T)\mid f(v)=u\, \}).\]
For each $v\in V(T)$, the equality $\Sigma =\bigsqcup_{u\in V(T)}\Sigma_v^u$ holds up to null sets.

Fix any vertex $v\in V(T)$.
Choose any $\gamma_1, \gamma_2\in \Gamma$ and any Borel subsets $Z_1, Z_2\subset\Sigma_v$ of positive measure with $L(\gamma_1)Z_1=L(\gamma_2)Z_2\subset \Sigma_v^u$ for some $u\in V(T)$. 
Pick $\gamma \in \Gamma$ with $\gamma v=u$.
Both $\gamma^{-1}\gamma_1$ and $\gamma^{-1}\gamma_2$ then belong to $\Gamma_v$.
Since the map $\Phi_v\colon \Sigma_v\rightarrow (G, \pi, \pi)$ is almost $(\Gamma_v\times \Gamma_v)$-equivariant, the equality
\begin{align*}
\pi(\gamma_1)\Phi_v(x_1)=&\pi(\gamma)\pi(\gamma^{-1}\gamma_1)\Phi_v(x_1)=\pi(\gamma)\Phi_v(L(\gamma^{-1}\gamma_1)x_1)\\
=&\pi(\gamma)\Phi_v(L(\gamma^{-1}\gamma_2)x_2)=\pi(\gamma)\pi(\gamma^{-1}\gamma_2)\Phi_v(x_2)=\pi(\gamma_2)\Phi_v(x_2)
\end{align*} 
holds for a.e.\ $x_1\in Z_1$ and $x_2\in Z_2$ with $L(\gamma_1)x_1=L(\gamma_2)x_2$.
The former assertion of the claim follows.

Choose any $\lambda \in \Gamma_v$ and any Borel subset $W\subset \Sigma$ of positive measure such that $R(\lambda)W$ is contained in $\Sigma_v^u$ for some $u\in V(T)$.
Pick $\gamma \in \Gamma$ with $\gamma v=u$.
We then have $L(\gamma^{-1})R(\lambda)x\in \Sigma_v$ for a.e.\ $x\in W$.
Since $\Phi_v$ is almost $(\Gamma_v\times \Gamma_v)$-equivariant, the equality\begin{align*}
\tilde{\Phi}_v(R(\lambda)x)&=\pi(\gamma)\Phi_v(L(\gamma^{-1})R(\lambda)x)=\pi(\gamma)\Phi_v(L(\gamma^{-1})x)\pi(\lambda)^{-1}\\
&=\tilde{\Phi}_v(x)\pi(\lambda)^{-1}
\end{align*}
holds for a.e.\ $x\in W$.
The latter assertion of the claim follows.
\end{proof}

\begin{claim}
For any two vertices $u, v\in V(T)$, the two maps $\tilde{\Phi}_u, \tilde{\Phi}_v\colon \Sigma \rightarrow G$ are essentially equal.
\end{claim}

\begin{proof}
We may assume that $u$ and $v$ are adjacent vertices.
Let $e\in E(T)$ be the edge connecting $u$ and $v$.
Choose any $\gamma_1, \gamma_2\in \Gamma$ and any Borel subsets $Z_1\subset \Sigma_u$ and $Z_2\subset \Sigma_v$ of positive measure with $L(\gamma_1)Z_1=L(\gamma_2)Z_2$ and $Z_1\subset \Sigma_v^w$ for some $w\in V(T)$, where we use the same notation as in the proof of Claim \ref{claim-ext}.
Since $w$ is in the link of $u$, there exists $\gamma \in \Gamma_u$ with $\gamma v=w$.
It follows from $L(\gamma^{-1})Z_1=L(\gamma^{-1}\gamma_1^{-1}\gamma_2)Z_2\subset \Sigma_e$ that $\gamma^{-1}\gamma_1^{-1}\gamma_2$ belongs to $\Gamma_v$. 
By Claim \ref{claim-coincide}, the equality
\begin{align*}
\pi(\gamma_1)\Phi_u(x_1)&=\pi(\gamma_1\gamma)\Phi_u(L(\gamma^{-1})x_1)=\pi(\gamma_1\gamma)\Phi_v(L(\gamma^{-1})x_1)\\
&=\pi(\gamma_1\gamma)\Phi_v(L(\gamma^{-1}\gamma_1^{-1}\gamma_2)x_2)=\pi(\gamma_1\gamma)\pi(\gamma^{-1}\gamma_1^{-1}\gamma_2)\Phi_v(x_2)\\
&=\pi(\gamma_2)\Phi_v(x_2)
\end{align*}
holds for a.e.\ $x_1\in Z_1$ and $x_2\in Z_2$ with $L(\gamma_1)x_1=L(\gamma_2)x_2$.
The claim follows.
\end{proof}

We denote by $\Phi_0\colon \Sigma \rightarrow G$ the common map $\tilde{\Phi}_v$ for all $v\in V(T)$.
The map $\Phi_0\colon \Sigma \rightarrow (G, \pi, \pi)$ is almost $(\Gamma \times \Gamma)$-equivariant because $\Gamma$ is generated by $\Gamma_u$ and $\Gamma_v$ for any adjacent vertices $u, v\in V(T)$.
Essential uniqueness of $\Phi_0$ follows from Lemmas \ref{lem-uni-quo} and \ref{lem-icc-strong}.
\end{proof}

\begin{lem}\label{lem-comm-count}
On Assumption $(\ddagger)$, we have a subgroup of $\comm(\Gamma)$ of index at most two, denoted by $\comm^+(\Gamma)$, and an injective homomorphism $\bar{\pi}\colon \comm^+(\Gamma) \rightarrow G$ such that for any isomorphism $f\colon \Gamma'\rightarrow \Gamma''$ between finite index subgroups of $\Gamma$ with its equivalence class $[f]$ in $\comm^+(\Gamma)$, we have
\[\pi(f(\gamma))=\bar{\pi}([f])\pi(\gamma)\bar{\pi}([f])^{-1},\quad \forall \gamma \in \Gamma'.\]
In particular, $\comm(\Gamma)$ is countable.
\end{lem}

\begin{proof}
By Lemma \ref{lem-comm-rep}, we have an injective homomorphism $\bar{\imath}\colon \comm(\Gamma)\rightarrow \aut^*(T)$ with the equality
\[\imath(f(\gamma))=\bar{\imath}([f])\imath(\gamma)\bar{\imath}([f])^{-1},\quad \forall \gamma \in \Gamma'\]
for any isomorphism $f\colon \Gamma'\rightarrow \Gamma''$ between finite index subgroups of $\Gamma$.
We set $\comm^+(\Gamma)=\bar{\imath}^{-1}(\aut(T))$.
For each $[f]\in \comm^+(\Gamma)$, we define the self-coupling $\Sigma_f$ of $\Gamma$ as in the proof of Lemma \ref{lem-comm-rep}.
The image of the unique, almost $(\Gamma \times \Gamma)$-equivariant Borel map from $\Sigma_f$ into $(\aut^*(T), \imath, \imath)$ is then contained in $\aut(T)$.
By Theorem \ref{thm-mer-ddagger}, there exists an essentially unique, almost $(\Gamma \times \Gamma)$-equivariant Borel map from $\Sigma_f$ into $(G, \pi, \pi)$.
Following the proof of Lemma \ref{lem-comm-rep}, we obtain a homomorphism $\bar{\pi}\colon \comm^+(\Gamma)\rightarrow G$ satisfying the desired equality. 
Since $\pi \colon \Gamma \rightarrow G$ is injective, so is $\bar{\pi}$.  
\end{proof}

\begin{thm}\label{thm-ddagger-comm-rigid}
On Assumption $(\ddagger)$, the group $\Gamma$ is coupling rigid with respect to $\comm(\Gamma)$ and is thus ME rigid.
\end{thm}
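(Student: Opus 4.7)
The plan is to verify the two defining conditions of coupling rigidity (Definition~\ref{defn-coup-rigid}) for $\Gamma$ with respect to the pair $(\comm(\Gamma),{\bf i})$, after which ME rigidity will drop out of Proposition~\ref{prop-conseq-rigid}(i); recall that $\comm(\Gamma)$ is already known to be countable by Theorem~\ref{thm-comm-count}. Condition (b) of Definition~\ref{defn-coup-rigid} will follow from Lemma~\ref{lem-comm}(iii) once we note that $\Gamma$ is ICC, which comes from condition~$(\ddagger)$(f) forcing each $\bar{A}_{i}$ to be ICC, together with the triviality of $\ker \imath$ and the structure of the amalgamated free product over a proper infinite subgroup; every nontrivial element of $\comm(\Gamma)$ then has an infinite ${\bf i}(\Gamma)$-conjugacy orbit, excluding any invariant probability measure other than the Dirac at the identity.

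The substantive task is condition~(a): given any self-coupling $(\Sigma,m)$ of $\Gamma$, I need to build an almost $(\Gamma \times \Gamma)$-equivariant Borel map $\Sigma \to (\comm(\Gamma),{\bf i},{\bf i})$. I would begin by invoking Assumption~$(\ddagger)$(d) to obtain an almost $(\Gamma \times \Gamma)$-equivariant Borel map $\Phi \colon \Sigma \to (\aut^{*}(T),\imath,\imath)$. Since $\aut(T)$ is a normal subgroup of $\aut^{*}(T)$ of index two and contains $\imath(\Gamma)$, the Borel sets $\Sigma_{+}=\Phi^{-1}(\aut(T))$ and $\Sigma_{-}=\Sigma \setminus \Sigma_{+}$ are both $(\Gamma \times \Gamma)$-invariant and partition $\Sigma$. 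On $\Sigma_{+}$, Theorem~\ref{thm-mer-ddagger} applies directly and supplies an almost $(\Gamma \times \Gamma)$-equivariant Borel map $\Phi_{0}^{+}\colon \Sigma_{+}\to (G,\pi,\pi)$. The group $G$ is countable and $\pi$ is injective, so both factor actions on $G$ via $\pi$ are free; Lemma~\ref{lem-red-comm} therefore promotes $\Phi_{0}^{+}$ to an almost $(\Gamma \times \Gamma)$-equivariant Borel map $\Sigma_{+}\to (\comm(\Gamma),{\bf i},{\bf i})$ whose image sits in $\comm^{+}(\Gamma)$ (cf.\ Theorem~\ref{thm-comm-count}).

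The hard part will be treating $\Sigma_{-}$, where $\Phi$ takes orientation-reversing values so that Theorem~\ref{thm-mer-ddagger} and Lemma~\ref{lem-red-comm} do not apply verbatim. My approach will be to fix any $\sigma \in \aut^{*}(T)\setminus \aut(T)$ and consider $\Phi'(x):=\Phi(x)\sigma^{-1}$ on $\Sigma_{-}$; this lands in $\aut(T)$ but only satisfies the twisted equivariance $\Phi'((\gamma_{1},\gamma_{2})x)=\imath(\gamma_{1})\Phi'(x)\imath^{\sigma}(\gamma_{2})^{-1}$, where $\imath^{\sigma}:=\ad(\sigma)\circ \imath$ is another faithful orientation-preserving $\Gamma$-action on $T$. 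Regarding $\Sigma_{-}$ as a coupling of $(\Gamma,\imath)$ with $(\Gamma,\imath^{\sigma})$, the proof of Theorem~\ref{thm-mer-ddagger} carries over: the atomicity of stabilizer-level quotients (Corollary~\ref{cor-moore}), the commensurator localization of their supports (Lemma~\ref{lem-image-atomic}), the vertex-by-vertex agreement of the local equivariant maps (Lemma~\ref{lem-support}) and the uniqueness at the ICC level (Lemma~\ref{lem-icc-strong}) all survive the twist. The upshot will be an almost equivariant Borel map $\Sigma_{-}\to \aut^{*}(T)$ whose image lies in the single coset of $\comm_{\aut^{*}(T)}(\imath(\Gamma))$ containing $\sigma$, that is, in $\bar{\imath}(\comm(\Gamma)\setminus \comm^{+}(\Gamma))$. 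A second application of Lemma~\ref{lem-red-comm} converts this into the desired almost $(\Gamma \times \Gamma)$-equivariant Borel map $\Sigma_{-}\to (\comm(\Gamma),{\bf i},{\bf i})$. Gluing the two halves over $\Sigma=\Sigma_{+}\sqcup \Sigma_{-}$ establishes condition (a), and Proposition~\ref{prop-conseq-rigid}(i) then yields ME rigidity.
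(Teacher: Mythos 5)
Your handling of condition (b) and of $\Sigma_{+}$ matches the paper: $\Gamma$ is ICC (via Lemma \ref{lem-icc-strong}), so Lemma \ref{lem-comm} (iii) gives (b), and on $\Sigma_{+}$ Theorem \ref{thm-mer-ddagger} followed by Lemma \ref{lem-red-comm} produces the required map. The gap is in your treatment of $\Sigma_{-}$. Writing $\Phi'(x)=\Phi(x)\sigma^{-1}$ with $\sigma$ orientation-reversing does land in $\aut(T)$, but the resulting object is a coupling of $(\Gamma,\imath)$ with $(\Gamma,\imath^{\sigma})$, and since $\sigma$ swaps $V_{1}(T)$ and $V_{2}(T)$, the left and right stabilizers of a vertex $v\in V_{1}(T)$ are conjugates of \emph{different} factors: $\Gamma_{v}^{(1)}$ is conjugate to $\Gamma_{1}$ while $(\imath^{\sigma})^{-1}(\stab(v))\cap\Gamma$ is conjugate to $\Gamma_{2}$. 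The small coupling $\Phi'^{-1}(\stab(v))$ is therefore a coupling of $\Gamma_{1}$ with $\Gamma_{2}$, not a self-coupling of either factor. The first and decisive step of the proof of Theorem \ref{thm-mer-ddagger} — existence and essential uniqueness of a local map into $(C_{v},\pi,\pi)$, which comes from coupling rigidity of $\Gamma_{i}$ with respect to $(G_{i},\pi_{i})$ — does not apply to such mixed couplings ($\Gamma_{1}$ and $\Gamma_{2}$ need not even be isomorphic), and the gluing across edges in Claim \ref{claim-coincide} relies on the identification $\bar{\phi}$ of $C(\bar{A}_{1})$ with $C(\bar{A}_{2})$ inside $G$, which would have to be re-derived in the twisted setting. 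Your assertion that the argument "carries over," and that the image lands in $\bar{\imath}(\comm(\Gamma)\setminus\comm^{+}(\Gamma))$ (a set you have not shown to be nonempty), is precisely where the work lies; as written it is a missing proof, not a routine adaptation.

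The paper avoids the twist entirely by a composition trick. Assuming $\Sigma=\Sigma_{-}$ (the mixed case reduces to the two pure cases on invariant pieces), form the self-coupling $\Omega=(\Sigma\times\Gamma\times\Sigma)/(\Gamma\times\Gamma)$ as in the proof of Theorem \ref{thm-furman-rep}; the induced map $[x,\lambda,y]\mapsto\Phi(x)\imath(\lambda)\Phi(y)^{-1}$ is almost $(\Gamma\times\Gamma)$-equivariant with values in $\aut(T)$, because a product of two orientation-reversing automorphisms preserves orientation. Theorem \ref{thm-mer-ddagger} then applies to $\Omega$ with the \emph{untwisted} equivariance, and the deconvolution argument from the proof of Theorem \ref{thm-furman-rep} recovers a homomorphism $\rho\colon\Gamma\rightarrow G$ (with trivial kernel by Lemma \ref{lem-icc-strong}) and an almost equivariant map $\Sigma\rightarrow(G,\pi,\rho)$, to which Lemma \ref{lem-red-comm} applies. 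You should either adopt this route or supply a full twisted analogue of Theorem \ref{thm-mer-ddagger}; the latter is not contained in your proposal.
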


\begin{proof}
We note that $\comm(\Gamma)$ is countable by Lemma \ref{lem-comm-count}.
Let $(\Sigma, m)$ be a self-coupling of $\Gamma$.
By Theorem \ref{thm-coup-tree}, there exists an almost $(\Gamma \times \Gamma)$-equivariant Borel map $\Phi \colon \Sigma \rightarrow (\aut^*(T), \imath, \imath)$.
If $\Phi^{-1}(\aut(T))$ has positive measure, then applying Theorem \ref{thm-mer-ddagger} and Lemma \ref{lem-red-comm}, we obtain an almost $(\Gamma \times \Gamma)$-equivariant Borel map from $\Phi^{-1}(\aut(T))$ into $(\comm(\Gamma), {\bf i}, {\bf i})$.

In what follows, we assume that $\Phi^{-1}(\aut(T))$ is of measure zero.
It is enough to construct an almost $(\Gamma \times \Gamma)$-equivariant Borel map from $\Sigma$ into $(\comm(\Gamma), {\bf i}, {\bf i})$ in this case.
To distinguish the two actions of $\Gamma$ on $\Sigma$, we use the following notation employed in the proof of Theorem \ref{thm-mer-ddagger}:
For each $\gamma \in \Gamma$, we set
\[L(\gamma)=(\gamma, 1_{\Gamma})\in \Gamma \times \Gamma,\quad R(\gamma)=(1_{\Gamma}, \gamma)\in \Gamma \times \Gamma,\] 
where $1_{\Gamma}$ is the neutral element of $\Gamma$.
Let us consider the two actions of $\Gamma \times \Gamma$ on $\Sigma \times \Gamma \times \Sigma$ defined by the formulas
\begin{align*}
(\gamma_1, \gamma_2)(x, \lambda, y)&=(L(\gamma_1)x, \lambda, L(\gamma_2)y),\\
(\lambda_1, \lambda_2)(x, \lambda, y)&=(R(\lambda_1)x, \lambda_1\lambda \lambda_2^{-1}, R(\lambda_2)y),
\end{align*}
respectively, for $\gamma_1, \gamma_2, \lambda, \lambda_1, \lambda_2\in \Gamma$ and $x, y\in \Sigma$.
We denote by $\Omega$ the quotient space of $\Sigma \times \Gamma \times \Sigma$ by the second action of $\Gamma \times \Gamma$.
The first action of $\Gamma \times \Gamma$ induces an action $\Gamma \times \Gamma \c \Omega$, which defines a self-coupling of $\Gamma$.
The map
\[\Sigma \times \Gamma \times \Sigma \ni (x, \lambda, y)\mapsto \Phi(x)\imath(\lambda)\Phi(y)^{-1}\in \aut^*(T)\]
induces an almost $(\Gamma \times \Gamma)$-equivariant Borel map from $\Omega$ into $(\aut^*(T), \imath, \imath)$, whose image is contained in $\aut(T)$ because for a.e.\ $z\in \Sigma$, $\Phi(z)$ belongs to $\aut^*(T)\setminus \aut(T)$.
By Theorem \ref{thm-mer-ddagger}, there exists an almost $(\Gamma \times \Gamma)$-equivariant Borel map from $\Omega$ into $(G, \pi, \pi)$.
The proof of Theorem \ref{thm-furman-rep} shows that there exist a homomorphism $\rho \colon \Gamma \rightarrow G$ with $\ker \rho$ finite and an almost $(\Gamma \times \Gamma)$-equivariant Borel map from $\Sigma$ into $(G, \pi, \rho)$.
Since $\Gamma$ is ICC by Lemma \ref{lem-icc-strong}, the kernel of $\rho$ is trivial.
It follows from Lemma \ref{lem-red-comm} that there exists an almost $(\Gamma \times \Gamma)$-equivariant Borel map from $\Sigma$ into $(\comm(\Gamma), {\bf i}, {\bf i})$.
\end{proof}

\begin{rem}\label{rem-comm}
On Assumption $(\star)$, let $\Lambda$ be a discrete group and $(\Sigma, m)$ a coupling of $\Gamma$ and $\Lambda$.
Suppose that we have a countable Borel space $C$ on which $\Gamma \times \Lambda$ acts so that each action of $\Gamma$ and $\Lambda$ is free, and an almost $(\Gamma \times \Lambda)$-equivariant Borel map $\Phi_0\colon \Sigma \rightarrow C$.
By Corollary \ref{cor-coup-tree}, there exist a homomorphism $\rho \colon \Lambda \rightarrow \aut^*(T)$ and an almost $(\Gamma \times \Lambda)$-equivariant Borel map $\Phi \colon \Sigma \rightarrow (\aut^*(T), \imath, \rho)$. 
We claim that the measure $\Phi_*m$ on $\aut^*(T)$ is atomic. 
If $\ker \iota$ is furthermore finite, then the claim implies that $\ker \rho$ is finite and that $\imath(\Gamma)$ and some conjugate of $\rho(\Lambda)$ in $\aut^*(T)$ are commensurable.

Choose $c\in C$ such that $\Phi_0^{-1}(\{ c\})$ has positive measure.
We put $X_c=\Phi_0^{-1}(\{ c\})$.
If $F$ is a fundamental domain for the action $\Lambda \c C$ containing $c$, then $\Phi_0^{-1}(F)$ is a fundamental domain for the action $\Lambda \c \Sigma$.
It follows that $X_c$ is invariant under the stabilizer $\Gamma_c$ of $c$ for the action $\Gamma \c F$, which is a finite index subgroup of $\Gamma$.
We define a Borel map $\Phi_c\colon X_c\times X_c\rightarrow \aut^*(T)$ by $\Phi_c(x, y)=\Phi(x)\Phi(y)^{-1}$ for $(x, y)\in X_c\times X_c$.
For any $\gamma \in \Gamma_c$ and a.e.\ $(x, y)\in X_c\times X_c$, the equality $\Phi_c(\gamma \cdot x, \gamma \cdot y)=\imath(\gamma)\Phi_c(x, y)\imath(\gamma)^{-1}$ holds.
By Lemma \ref{lem-conj-inv}, $\Phi_c$ is essentially constant, and its value is the neutral element of $\aut^*(T)$.
The map $\Phi$ is therefore essentially constant on $X_c$.
It follows that the measure $\Phi_*m$ on $\aut^*(T)$ is atomic.
Our claim in the last paragraph is therefore obtained.
\end{rem}



\section{Twisted actions of amalgamated free products}\label{sec-twist}

The universality of amalgamated free products enables us to find examples of them or their actions which do not satisfy a variety of rigidity discussed in Sections \ref{sec-oer} and \ref{sec-mer}.
In this section, we give a method to construct such examples by twisting the action of one factor subgroup when an ergodic f.f.m.p.\ action $\Gamma \c (X, \mu)$ of an amalgamated free product $\Gamma =\Gamma_1\ast_A\Gamma_2$ is given.
More precisely, we twist the action $\Gamma_2\c (X, \mu)$ by inner conjugation in a sense of orbit equivalence, keeping the action $\Gamma_1\c (X, \mu)$ fixed.
The original action and the new twisted action of $\Gamma$ are then OE with respect to the identity map on $X$.

\subsection{Twisted actions}\label{subsec-twist}

Let $\Gamma =\Gamma_1\ast_A\Gamma_2$ be an amalgamated free product of discrete groups.
Suppose that we have a f.f.m.p.\ action $\Gamma \c (X, \mu)$ on a standard probability space $(X, \mu)$ and a Borel map $\varphi \colon X\rightarrow A$ satisfying the following two conditions:
\begin{enumerate}
\item[(a)] The map $X\ni x\mapsto \varphi(x)^{-1}x\in X$ defines a Borel isomorphism between conull Borel subsets of $X$.
We denote by $f_{\varphi}$ this isomorphism.
\item[(b)] The following equality holds:
\[\varphi(ax)a\varphi(x)^{-1}=a,\quad \forall a\in A,\ \textrm{a.e.\ }x\in X.\]
\end{enumerate}
Let $(Y, \nu)$ be a copy of $(X, \mu)$.
We construct a f.f.m.p.\ action of $\Gamma$ on $(Y, \nu)$ by twisting the action of $\Gamma_2$ as follows: Let $I\colon X\rightarrow Y$ be the identity map.
Define an action of $\Gamma$ on $Y$ by the formulas
\[\gamma_1y=I(\gamma_1I^{-1}(y)),\quad \gamma_2y=I\circ f_{\varphi}(\gamma_2(I\circ f_{\varphi})^{-1}(y))\]
for $\gamma_1\in \Gamma_1$, $\gamma_2\in \Gamma_2$ and $y\in Y$. 
Since the equality
\[af_{\varphi}(x)=a\varphi(x)^{-1}x=\varphi(ax)^{-1}ax=f_{\varphi}(ax)\]
holds for any $a\in A$ and a.e.\ $x\in X$, this action $\Gamma \c (Y, \nu)$ is well-defined.
This action is essentially free because $\varphi$ is valued in $A$.

The map $I\colon X\rightarrow Y$ gives OE between the two actions $\Gamma \c X$ and $\Gamma \c Y$.
The associated OE cocycle $\alpha \colon \Gamma \times X\rightarrow \Gamma$ is given by
\[\alpha(\gamma_1, x)=\gamma_1,\quad \alpha(\gamma_2, x)=\varphi(\gamma_2x)\gamma_2\varphi(x)^{-1}\] 
for any $\gamma_1\in \Gamma_1$, $\gamma_2\in \Gamma_2$ and a.e.\ $x\in X$.

\subsection{Groups which are not coupling rigid}

We keep the same notation as in Section \ref{subsec-twist}.
Let $\bar{\alpha}\colon \Gamma \times X\rightarrow \comm(\Gamma)$ be the composition of $\alpha$ with the natural homomorphism ${\bf i}\colon \Gamma \rightarrow \comm(\Gamma)$.

\begin{lem}\label{lem-varphi-const}
Suppose that $\comm(\Gamma)$ is countable, $\Gamma$ is ICC with respect to $\Gamma_1$, and the action $\Gamma_2\c X$ is aperiodic.
If there exists a Borel map $\psi \colon X\rightarrow \comm(\Gamma)$ with the equality
\[\psi(\gamma x)\bar{\alpha}(\gamma, x)\psi(x)^{-1}=\gamma,\quad \forall \gamma \in \Gamma,\ \textrm{a.e.}\ x\in X,\]
then $\varphi$ is essentially constant.
\end{lem}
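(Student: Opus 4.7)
My plan is to first restrict the hypothesized equation $\psi(\gamma x)\bar{\alpha}(\gamma,x)\psi(x)^{-1}={\bf i}(\gamma)$ to the two factors. For $\gamma_1\in\Gamma_1$, $\bar{\alpha}(\gamma_1,x)={\bf i}(\gamma_1)$ and the equation becomes $\psi(\gamma_1x)={\bf i}(\gamma_1)\psi(x){\bf i}(\gamma_1)^{-1}$, i.e., $\psi$ is ${\bf i}(\Gamma_1)$-conjugation-equivariant. For $\gamma_2\in\Gamma_2$, using the explicit form of $\bar{\alpha}$ together with condition (b), I would set $\psi'(x):=\psi(x)\,{\bf i}(\varphi(x))$ and rearrange to get $\psi'(\gamma_2x)={\bf i}(\gamma_2)\psi'(x){\bf i}(\gamma_2)^{-1}$.

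Next I would apply aperiodicity of $\Gamma_2\c X$ to force $\psi'$ to be essentially constant. By ergodicity, $\psi'(x)$ takes values a.e.\ in a single ${\bf i}(\Gamma_2)$-conjugation orbit $\cal{O}_0\subseteq\comm(\Gamma)$. If $|\cal{O}_0|=\infty$, the countably many level sets of $\psi'$ would be $\Gamma_2$-permuted with equal positive measure, violating $\mu(X)<\infty$; if $2\leq|\cal{O}_0|<\infty$, the finite-index stabilizer of any $g\in\cal{O}_0$ would act ergodically on $X$ by aperiodicity, so the level set $(\psi')^{-1}(g)$, of measure strictly between $0$ and $\mu(X)$, would have to be null or conull --- again impossible. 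Hence $\cal{O}_0=\{g_0\}$ and $\psi(x)=g_0\,{\bf i}(\varphi(x))^{-1}$.

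Substituting this into the ${\bf i}(\Gamma_1)$-equivariance and rearranging yields $g_0\,{\bf i}(\varphi(\gamma_1x)^{-1}\gamma_1\varphi(x))={\bf i}(\gamma_1)\,g_0$, whose right-hand side is independent of $x$; hence $K(\gamma_1):=\varphi(\gamma_1x)^{-1}\gamma_1\varphi(x)\in\Gamma_1$ is essentially independent of $x$, and a short calculation (invoking (b)) shows that $K\colon\Gamma_1\to\Gamma_1$ is an injective group homomorphism with $K|_A=\mathrm{id}_A$. The rewritten relation $\varphi(\gamma_1x)=\gamma_1\varphi(x)K(\gamma_1)^{-1}$ then implies, for a.e.\ $x,y\in X$ and every $\gamma_1\in\Gamma_1$,
\[\varphi(\gamma_1x)\varphi(\gamma_1y)^{-1}=\gamma_1\bigl(\varphi(x)\varphi(y)^{-1}\bigr)\gamma_1^{-1}\in A,\]
so the full ${\bf i}(\Gamma_1)$-conjugation orbit of $u:=\varphi(x)\varphi(y)^{-1}\in\Gamma$ is confined to $A$.

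Finally I would use ICC of $\Gamma$ with respect to $\Gamma_1$ to conclude. The pushforward $\varphi_*\mu$ is a finite $\Gamma_1$-invariant probability measure on the countable set $A$, for the twisted action $\gamma_1\cdot a:=\gamma_1aK(\gamma_1)^{-1}$; a pigeonhole argument rules out any infinite twisted orbit in its support. ICC forces $Z_\Gamma(\Gamma_1)=\{e\}$, which together with the fixed-point equation $K=\mathrm{Ad}(c^{-1})$ characterizing twisted-$\Gamma_1$-fixed points in $A$ pins down any support element uniquely; the support must then reduce to a singleton and $\varphi$ is essentially constant. The hardest part of this plan, I expect, will be the very last step: upgrading ``only finite twisted orbits contribute to $\varphi_*\mu$'' to ``the support is a singleton'', which requires careful use of injectivity and the $A$-fixing property of $K$ alongside ICC.
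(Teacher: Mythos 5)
Your argument is correct, but it rearranges the proof differently from the paper's. The paper first pins down $\psi$ itself: ergodicity of the full action $\Gamma \c X$ confines the essential values of $\psi$ to a single coset $g_{0}{\bf i}(\Gamma)$; the $\Gamma_{1}$-part of the cocycle identity, $\psi(\gamma_{1}x)={\bf i}(\gamma_{1})\psi(x){\bf i}(\gamma_{1})^{-1}$, shows every essential value centralizes a finite-index subgroup of $\Gamma_{1}$, and ICC with respect to $\Gamma_{1}$ then forces $\psi \equiv g_{0}$; after that the $\Gamma_{2}$-part reads $g_{0}{\bf i}(\varphi(\gamma_{2}x))={\bf i}(\gamma_{2})g_{0}{\bf i}(\varphi(x)){\bf i}(\gamma_{2})^{-1}$ and aperiodicity of $\Gamma_{2}\c X$ finishes in one line, by exactly the ``equivariant map into a conjugation action is constant'' argument you apply to $\psi'$. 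You instead make the product $\psi'=\psi\cdot{\bf i}(\varphi)$ constant first (using only $\Gamma_{2}$-aperiodicity) and must then disentangle $\varphi$ via the twisted action $\gamma_{1}\cdot a=\gamma_{1}aK(\gamma_{1})^{-1}$; this costs you the endgame you rightly flag as delicate, but it does close: two atoms $a, b$ of $\varphi_{*}\mu$ each force $K$ to agree with conjugation by $a^{-1}$ (resp.\ $b^{-1}$) on a finite-index subgroup of $\Gamma_{1}$, so $ab^{-1}$ centralizes a finite-index subgroup of $\Gamma_{1}$, hence has finite $\Gamma_{1}$-conjugation orbit and is trivial by ICC with respect to $\Gamma_{1}$. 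One precision: what you need there is triviality of the \emph{virtual} centralizer of $\Gamma_{1}$ in $\Gamma$, not merely ${\rm Z}_{\Gamma}(\Gamma_{1})=\{e\}$, since the twisted stabilizers are only finite-index subgroups; that stronger statement is exactly what ICC with respect to $\Gamma_{1}$ provides. The paper's ordering is shorter because the ICC step is applied directly to $\psi$, where the untwisted conjugation equivariance makes the comparison of two essential values immediate, whereas your ordering trades that for a cleaner first step that never invokes ergodicity of the full $\Gamma$-action.
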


\begin{proof}
Since the action $\Gamma \c X$ is aperiodic, the equality $\gamma \psi(x)=\psi(\gamma x)\bar{\alpha}(\gamma, x)$ for any $\gamma \in \Gamma$ and a.e.\ $x\in X$ implies that there exists $g_0\in \comm(\Gamma)$ with $\psi(X)\subset g_0\Gamma$ for a.e.\ $x\in X$, where $\psi(X)$ denotes the support of the measure $\psi_*\mu$ on $\comm(\Gamma)$.
We may assume $g_0\in \psi(X)$. 

We claim that the equality $\psi(X)=\{ g_0\}$ holds.
Pick $\gamma \in \Gamma$ with $g_0\gamma \subset \psi(X)$.
Since the equality $\psi(\gamma_1x)=\gamma_1\psi(x)\gamma_1^{-1}$ holds for any $\gamma_1\in \Gamma_1$ and a.e.\ $x\in X$, each $g\in \psi(X)$ centralizes a finite index subgroup of $\Gamma_1$.
We thus have a finite index subgroup $\Delta$ of $\Gamma_1$ contained in the centralizers of $g_0$ and $g_0\gamma$.
For each $\gamma_1\in \Delta$, the equality $g_0\gamma =\gamma_1g_0\gamma \gamma_1^{-1}=g_0\gamma_1\gamma \gamma_1^{-1}$ implies that $\Delta$ is contained in the centralizer of $\gamma$.
Since $\Gamma$ is ICC with respect to $\Gamma_1$, the element $\gamma$ is neutral.
The claim follows.

We therefore have the equality $g_0\varphi(\gamma_2x)=\gamma_2g_0\varphi(x)\gamma_2^{-1}$ for any $\gamma_2\in \Gamma_2$ and a.e.\ $x\in X$.
Aperiodicity of the action $\Gamma_2\c X$ implies that $\varphi$ is essentially constant.
\end{proof}

We give a sufficient condition for $\Gamma$ to satisfy the ICC assumption in Lemma \ref{lem-varphi-const}.

\begin{lem}\label{lem-ama-icc}
Let $\Gamma =\Gamma_1\ast_A\Gamma_2$ be an amalgamated free product of discrete groups such that $[\Gamma_1:A]=\infty$ and $\Gamma_1$ is ICC.
Then $\Gamma$ is ICC with respect to $\Gamma_1$.
\end{lem}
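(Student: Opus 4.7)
The plan is to prove the stronger statement that for every non-trivial $g \in \Gamma$, the centralizer $C_{\Gamma_1}(g)$ has infinite index in $\Gamma_1$, which is equivalent to the orbit $\{\gamma g \gamma^{-1} : \gamma \in \Gamma_1\}$ being infinite. If $g \in \Gamma_1$ this is immediate from the ICC hypothesis on $\Gamma_1$, so the main task is the case $g \in \Gamma \setminus \Gamma_1$.

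First, I would invoke the action of $\Gamma$ on the Bass-Serre tree $T$ associated with the decomposition $\Gamma = \Gamma_1 \ast_A \Gamma_2$. Let $v_1 \in V_1(T)$ denote the vertex whose $\Gamma$-stabilizer is $\Gamma_1$; since $g \notin \Gamma_1$, we have $g\cdot v_1 \neq v_1$. Let $F \subset T$ denote either the fixed-point subtree $\mathrm{Fix}(g)$ (if $g$ is elliptic) or the translation axis of $g$ (if $g$ is hyperbolic); in either case $F$ is nonempty. Any $h \in C_{\Gamma_1}(g)$ fixes $v_1$ and preserves $F$, the latter because it commutes with $g$.

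The key claim is that $C_{\Gamma_1}(g)$ fixes some edge adjacent to $v_1$. When $v_1 \notin F$, let $p$ be the nearest point of $F$ to $v_1$; then $h$ fixes both $v_1$ and $p$, hence fixes the geodesic between them, and in particular fixes the edge at $v_1$ lying on that geodesic. The only delicate subcase is that of $g$ hyperbolic with $v_1$ lying on the axis $L$: any such $h$ acts on $L$ as an isometry fixing $v_1$, so either as the identity or as the reflection at $v_1$. The reflection option can be excluded because it would conjugate the nontrivial translation $g|_L$ to its inverse, contradicting $hgh^{-1} = g$; hence $h$ fixes $L$ pointwise and in particular fixes the two edges of $L$ at $v_1$.

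Finally, I would use the $\Gamma_1$-equivariant bijection $\Gamma_1/A \to \mathrm{Lk}(v_1)$ recalled just before Lemma~\ref{lem-conj-inv}: the $\Gamma_1$-stabilizer of any edge adjacent to $v_1$ is a $\Gamma_1$-conjugate of $A$, and hence has index $[\Gamma_1:A] = \infty$ in $\Gamma_1$. Therefore $C_{\Gamma_1}(g)$ is contained in a subgroup of infinite index in $\Gamma_1$, completing the proof. The only genuine subtlety is the reflection subcase addressed above; the rest is routine tree geometry, carried out entirely within the hypotheses $[\Gamma_1:A]=\infty$ and $\Gamma_1$ ICC.
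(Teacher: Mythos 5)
Your proof is correct, and it takes a genuinely different (though related) route from the paper's. You prove directly that for $g\notin\Gamma_1$ the centralizer ${\rm C}_{\Gamma_1}(g)$ fixes an edge at $v_1$, using the elliptic/hyperbolic dichotomy for the action of $g$ on $T$, the nearest-point projection to ${\rm Fix}(g)$ or to the axis, and the exclusion of the reflection in the case $v_1\in L$; you then conclude because edge stabilizers at $v_1$ are $\Gamma_1$-conjugates of $A$ and hence of infinite index. The paper instead avoids all dynamics of $g$ on $T$: it pushes the conjugation orbit forward along the quotient map $q\colon\Gamma\rightarrow\Gamma/\Gamma_1=V_1(T)$, using the identity $q(\gamma g\gamma^{-1})=\gamma gv_1$ for $\gamma\in\Gamma_1$; equivalently, ${\rm C}_{\Gamma_1}(g)\subset{\rm Stab}_{\Gamma_1}(gv_1)$, and every $\Gamma_1$-orbit on $V_1(T)\setminus\{v_1\}$ is infinite because the stabilizer of such a vertex lies in the stabilizer of the first edge of the geodesic from $v_1$, again a conjugate of $A$. (The paper phrases this in terms of invariant probability measures, which is equivalent to the infinite-orbit formulation of ICC used in your argument.) Both proofs ultimately rest on the same input, namely $[\Gamma_1:A]=\infty$ via the identification of ${\rm Lk}(v_1)$ with $\Gamma_1/A$; the paper's version is shorter because it needs no case analysis, while yours is a self-contained piece of tree geometry whose only delicate point — the reflection subcase — you handle correctly. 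One cosmetic remark: you never explicitly rule out the case where $g$ is elliptic with $v_1\in{\rm Fix}(g)$, but this is immediate from $gv_1\neq v_1$, which you note at the outset.
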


\begin{proof}
Let $T$ be the Bass-Serre tree associated with the decomposition $\Gamma =\Gamma_1\ast_A\Gamma_2$.
We denote by $V_1(T)=\Gamma /\Gamma_1$ the set of vertices corresponding to left cosets of $\Gamma_1$ in $\Gamma$.
Recall that $\Gamma_1$ acts on $T$ so that $\Gamma_1$ fixes the vertex $v_1\in V_1(T)$ corresponding to the coset containing the neutral element and acts transitively on ${\rm Lk}(v_1)$, the set of vertices in the link of $v_1$ in $T$.
Since $A$ is of infinite index in $\Gamma_1$, ${\rm Lk}(v_1)$ consists of infinitely many elements.
Each orbit for the action $\Gamma_1\c V_1(T)$ hence consists of either the single vertex $v_1$ or infinitely many elements. 

If $\mu$ is a probability measure on $\Gamma$ which is invariant under conjugation by any element of $\Gamma_1$, then consider the natural quotient map $q\colon \Gamma \rightarrow \Gamma /\Gamma_1=V_1(T)$.
For any $g\in \Gamma$ and any $\gamma \in \Gamma_1$, we have $q(\gamma g\gamma^{-1})=\gamma gv_1$.
The observation on orbits for the action $\Gamma_1\c V_1(T)$ implies that the support of $\mu$ is contained in $\Gamma_1$.
Since $\Gamma_1$ is ICC, $\mu$ is the Dirac measure on the neutral element.
\end{proof}

If $A$ is of infinite index in $\Gamma_2$ and is not ICC, then using co-induced actions introduced in \cite{gab-survey}, we can construct an ergodic f.f.m.p action $\Gamma \c X$ and a Borel map $\varphi \colon X\rightarrow A$ such that
\begin{itemize}
\item the action $\Gamma_2\c X$ is aperiodic; and
\item $\varphi$ fulfills conditions (a) and (b) stated in Section \ref{subsec-twist} and is not essentially constant.
\end{itemize}
Lemma \ref{lem-varphi-const} then implies the following:

\begin{prop}\label{prop-not-coup}
Let $\Gamma =\Gamma_1\ast_A\Gamma_2$ be an amalgamated free product of discrete groups such that $[\Gamma_1: A]=\infty$, $[\Gamma_2: A]=\infty$, $\Gamma_1$ is ICC, and $A$ is not ICC.
Then $\Gamma$ is not coupling rigid with respect to $\comm(\Gamma)$.
\end{prop}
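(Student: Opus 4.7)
The plan is to produce a concrete counterexample to coupling rigidity via the twisting procedure of Section~\ref{sec-twist}. The non-ICC hypothesis on $A$ provides a non-trivial element $a_0\in A$ whose $A$-conjugacy class is finite; set $N=Z_A(a_0)$, a finite-index subgroup of $A$, and pick coset representatives $a_1=e,a_2,\dots,a_n$ of $A/N$. Assuming first that $n\geq 2$, I would build a f.f.m.p.\ action $\Gamma\c(X,\mu)$ whose restriction $\Gamma_2\c(X,\mu)$ is aperiodic and which carries an $A$-equivariant Borel factor map onto $A/N$; pulling back the distinguished coset gives an $N$-invariant Borel subset $E\subset X$ with $X=\bigsqcup_{i=1}^n a_iE$. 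The formula $\varphi|_{a_iE}\equiv a_ia_0a_i^{-1}$ then defines a non-constant Borel map $\varphi\colon X\to A$ that satisfies $\varphi(ax)=a\varphi(x)a^{-1}$ (using $N=Z_A(a_0)$), and on each $a_iE$ the map $f_\varphi$ restricts to the $\Gamma$-action by the element $a_ia_0^{-1}a_i^{-1}$, so $f_\varphi$ is a Borel automorphism. In the residual case $n=1$, i.e.\ $a_0\in Z(A)$, the same scheme works with $\varphi$ instead taking values in $\{e,a_0\}$ on an $A$-invariant Borel set of intermediate measure.

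Section~\ref{subsec-twist} next turns the pair $(\Gamma\c X,\varphi)$ into a twisted f.f.m.p.\ action $\Gamma\c(Y,\nu)$, OE to $\Gamma\c X$ via the identity map, with associated cocycle $\alpha\colon\Gamma\times X\to\Gamma$ as displayed at the end of Section~\ref{subsec-twist}. Suppose, for contradiction, that $\Gamma$ were coupling rigid with respect to $\comm(\Gamma)$. The OE packages into a self-coupling $\Sigma$ of $\Gamma$ via the correspondence of Section~\ref{subsec-me-woe}, and the almost $(\Gamma\times\Gamma)$-equivariant Borel map $\Sigma\to(\comm(\Gamma),{\bf i},{\bf i})$ furnished by coupling rigidity unpacks, after absorbing any residual element of $\comm(\Gamma)$ into a left translation on the map, into a Borel map $\psi\colon X\to\comm(\Gamma)$ satisfying $\psi(\gamma x)\bar\alpha(\gamma,x)\psi(x)^{-1}=\gamma$ for every $\gamma\in\Gamma$ and a.e.\ $x$.

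Lemma~\ref{lem-ama-icc}, applied to $[\Gamma_1:A]=\infty$ and $\Gamma_1$ ICC, shows that $\Gamma$ is ICC with respect to $\Gamma_1$; together with the aperiodicity of $\Gamma_2\c X$, the hypotheses of Lemma~\ref{lem-varphi-const} are all satisfied. That lemma now forces $\varphi$ to be essentially constant, contradicting the choice made in the first paragraph.

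The main obstacle is the construction in the first paragraph: one must arrange $\Gamma_2\c X$ to be aperiodic while forcing $A$ to admit a non-trivial finite factor onto $A/N$ (or, in the central case, a non-trivial invariant Borel subset). The tension is that aperiodicity of $\Gamma_2$ tends to propagate to $A$, whereas we need precisely the opposite on the relevant finite factor; a natural candidate is a finite $A$-equivariant extension of a generalized Bernoulli action of $\Gamma$, with the decoupling between $\Gamma_2$-aperiodicity and $A$-periodicity achieved via the choice of base.
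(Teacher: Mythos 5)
Your argument is correct and is essentially the paper's own proof: the paper derives the proposition from Lemma \ref{lem-varphi-const} and Lemma \ref{lem-ama-icc} exactly as you do, and simply asserts that a suitable pair $(\Gamma \c X,\varphi)$ "can easily be found," whereas you supply the explicit (and correct) construction of $\varphi$ from a finite $A$-conjugacy class. The one step you flag as an obstacle is not one: take $X$ to be the product of a Bernoulli action of $\Gamma$ with the co-induced action ${\rm CInd}_{A}^{\Gamma}(A/N)$ (in the central case, with the generalized Bernoulli action over $\Gamma/A$); evaluation at the coset $eA$ gives the $A$-equivariant factor onto $A/N$, while every finite index subgroup of $\Gamma_{2}$ acts ergodically on the co-induced space because $[\Gamma_{2}:A]=\infty$ forces all of its orbits on $\Gamma/A$ to be infinite (a finite orbit would put a finite index subgroup of $\Gamma_{2}$ inside a conjugate of $A$), so aperiodicity of $\Gamma_{2}\c X$ follows from weak mixing of the Bernoulli factor.
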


\section{Examples of ME rigid groups}\label{sec-ex}

We provide two kinds of subgroups $A$ of $SL(n, \mathbb{Z})$ with $n\geq 3$ such that (the quotient by the center of) the amalgamated free product $SL(n, \mathbb{Z})\ast_ASL(n, \mathbb{Z})$ fulfills all conditions in Assumption $(\ddagger)$, and thus it is ME rigid.
The simplest examples of them are presented in Theorem \ref{thm-mer-ex}.

The following classical theorem describes all automorphisms of $(P)SL(n, \mathbb{R})$ and will be applied repeatedly throughout this section.
The reader should consult \cite{dieudonne}, \cite{hua} and references therein for details and more general results.

\begin{thm}\label{thm-dieudonne}
Let $n$ be a positive integer.
Then the following assertions hold:
\begin{enumerate}
\item For any automorphism $f$ of $SL(n, \mathbb{R})$, there exists $g_0\in GL(n, \mathbb{R})$ such that either
\begin{itemize}
\item $f(g)=g_0gg_0^{-1}$ for any $g\in SL(n, \mathbb{R})$; or
\item $f(g)=g_0({}^t\! g)^{-1}g_0^{-1}$ for any $g\in SL(n, \mathbb{R})$,
\end{itemize}
where ${}^t\! g$ denotes the transpose of a matrix $g\in SL(n, \mathbb{R})$.
\item Any automorphism of $PSL(n, \mathbb{R})$ is induced by a unique automorphism of $SL(n, \mathbb{R})$.
\end{enumerate}
\end{thm}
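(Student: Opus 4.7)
The plan is to appeal to the classical proofs in \cite{dieudonne} and \cite{hua}, so only a sketch of the approach is given here.

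For (i), the strategy proceeds in two stages. First, one shows that any abstract group automorphism $f$ of $SL(n,\mathbb{R})$ respects the algebraic structure. The key input is that $SL(n,\mathbb{R})$ is generated by the elementary transvections $I+tE_{ij}$ (with $i\neq j$, $t\in \mathbb{R}$), and the commutator relations $[I+sE_{ij},I+tE_{jk}]=I+st\,E_{ik}$ for distinct $i,j,k$, together with the commutativity of $I+sE_{ij}$ and $I+tE_{ij}$, force $f$ to permute these one-parameter root subgroups in a tightly constrained way. By Chevalley's commutator relations, this permutation corresponds to a symmetry of the Dynkin diagram of type $A_{n-1}$, whose automorphism group has order two for $n\geq 3$ and is trivial for $n\leq 2$. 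Second, once it is known that $f$ permutes root subgroups compatibly with such a diagram symmetry, one identifies $f$ with an inner automorphism possibly composed with $g\mapsto ({}^t g)^{-1}$. The inner piece is implemented by some $g_{0}\in GL(n,\mathbb{R})$, since the subgroup of $\aut(SL(n,\mathbb{R}))$ acting by conjugation on $SL(n,\mathbb{R})$ is exactly $PGL(n,\mathbb{R})$, and any representative may be lifted to $GL(n,\mathbb{R})$.

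For (ii), observe first that the center $Z$ of $SL(n,\mathbb{R})$ is characteristic, so every automorphism descends to $PSL(n,\mathbb{R})$. The descent map $\aut(SL(n,\mathbb{R}))\to \aut(PSL(n,\mathbb{R}))$ is injective: if $f$ induces the identity on $PSL(n,\mathbb{R})$, then $f(g)=z(g)g$ for some homomorphism $z\colon SL(n,\mathbb{R})\to Z$ into a finite abelian group, which must be trivial because $SL(n,\mathbb{R})$ is perfect for $n\geq 2$. For surjectivity, given $\bar{f}\in \aut(PSL(n,\mathbb{R}))$, the classification analogous to (i), proved at the level of $PSL(n,\mathbb{R})$ by the same transvection argument, writes $\bar{f}$ as conjugation by some $\bar{g}_{0}\in PGL(n,\mathbb{R})$ possibly followed by the transpose-inverse; lifting $\bar{g}_{0}$ to any $g_{0}\in GL(n,\mathbb{R})$ then produces an automorphism of $SL(n,\mathbb{R})$ that descends to $\bar{f}$, and uniqueness of this lift is the injectivity already shown.

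The principal difficulty is the first step of (i): establishing that an abstract automorphism must preserve the structure of root subgroups. This requires exploiting the purely algebraic identities satisfied by transvections to recover the underlying Lie-theoretic data and thereby rule out exotic, possibly discontinuous automorphisms. Once this step is in hand, the remainder amounts to standard Lie-theoretic bookkeeping with the root system of type $A_{n-1}$ and its diagram automorphisms.
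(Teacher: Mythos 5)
The paper offers no proof of this theorem: it is quoted as a classical result with a pointer to \cite{dieudonne} and \cite{hua}, so your decision to defer to those references and give only a sketch matches what the paper itself does, and your outline (transvections and their commutator relations, the order-two diagram symmetry of $A_{n-1}$, descent to $PSL(n,\mathbb{R})$ via the characteristic center together with perfectness of $SL(n,\mathbb{R})$) is a fair summary of the classical route.

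There is, however, one genuinely missing idea, and it is the only place where the field $\mathbb{R}$ actually enters. The commutator calculus pins down how $f$ permutes the root subgroups $\{I+tE_{ij}\}_{t\in\mathbb{R}}$, but it does not by itself determine how $f$ acts \emph{within} each such subgroup: a priori $f(I+tE_{ij})=I+\sigma(t)E_{i'j'}$ for some additive bijection $\sigma$ of $\mathbb{R}$, and the relations $[I+sE_{ij},I+tE_{jk}]=I+st\,E_{ik}$ only upgrade $\sigma$, after normalization, to a field automorphism of $\mathbb{R}$. The theorem as stated is true only because $\mathbb{R}$ admits no nontrivial field automorphisms (such an automorphism maps squares to squares, hence preserves the order, and fixes $\mathbb{Q}$, so it is the identity). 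Your sketch never invokes this; as written it would apply verbatim to $SL(n,\mathbb{C})$, where the conclusion is false, since entrywise complex conjugation is an automorphism that is neither conjugation by an element of $GL(n,\mathbb{C})$ nor such a conjugation composed with $g\mapsto({}^t\! g)^{-1}$. You should insert, between the diagram-symmetry analysis and the identification of $f$ with an inner automorphism possibly composed with transpose-inverse, the step that the induced parameter map on each root subgroup is a field automorphism of $\mathbb{R}$ and hence the identity. The remainder of the outline, including the treatment of $n\le 2$ and the injectivity and surjectivity of the descent map $\aut(SL(n,\mathbb{R}))\to\aut(PSL(n,\mathbb{R}))$, is sound.
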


We note that any automorphism of $SL(2, \mathbb{R})$ is the conjugation by an element of $GL(2, \mathbb{R})$ because we have
\[\left(
\begin{array}{cc}
0 & 1\\
-1 & 0\\
\end{array}\right)a\left(
\begin{array}{cc}
0 & 1\\
-1 & 0\\
\end{array}\right)^{-1}=({}^t\! a)^{-1},\quad \forall a\in SL(2, \mathbb{R}).
\]


\subsection{Tensor products}\label{subsec-ten}

In this subsection, we mean by a vector space a finite-dimensional one defined over the field $\mathbb{R}$ of real numbers unless otherwise stated.
When $V$ is a finite-dimensional vector space defined over $\mathbb{R}$, we denote by $GL(V)$ the group of all linear automorphisms of $V$ over $\mathbb{R}$, and by $SL(V)$ the subgroup of $GL(V)$ consisting of all linear automorphisms whose determinants are equal to one.
We denote by $I_V$ the identity map on $V$.

Let $V_1,\ldots, V_n$ be vector spaces and put $V=V_1\otimes \cdots \otimes V_n$.
We define a homomorphism $T\colon GL(V_1)\times \cdots \times GL(V_n)\rightarrow GL(V)$ by the formula
\[(g_1,\ldots, g_n)\mapsto g_1\otimes \cdots \otimes g_n\]
for $(g_1,\ldots, g_n)\in GL(V_1)\times \cdots \times GL(V_n)$, where $g_1\otimes \cdots \otimes g_n$ acts on $V$ by
\[(g_1\otimes \cdots \otimes g_n)(v_1\otimes \cdots \otimes v_n)=g_1v_1\otimes \cdots \otimes g_nv_n\]
for $v_1\otimes \cdots \otimes v_n\in V$.
Given a subgroup $G_i$ of $GL(V_i)$ for each $i$, let $G_1\otimes \cdots \otimes G_n$ denote the image $T(G_1\times \cdots \times G_n)$.
Note that the image of $SL(V_1)\otimes \cdots \otimes SL(V_n)$ in $PSL(V)$ is isomorphic to $PSL(V_1)\times \cdots \times PSL(V_n)$.
The following three lemmas will be used to compute the centralizer and normalizer in $GL(V)$ of the subgroup $SL(V_1)\otimes \cdots \otimes SL(V_n)$.

\begin{lem}\label{lem-tensor-cent}
Let $V_1$ and $V_2$ be vector spaces and put $V=V_1\otimes V_2$.
If $G$ is a subgroup of $SL(V_2)$ and if $g_0\in GL(V)$ commutes any element of $SL(V_1)\otimes G$, then $g_0$ is written as $g_0=I_{V_1}\otimes z$ for some $z \in {\rm Z}_{GL(V_2)}(G)$.
\end{lem}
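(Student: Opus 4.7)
The plan is to reduce the lemma to a straightforward application of Schur's lemma for the real representation of $SL(V_1)$ on $V_1$, and then use the remaining commutation hypothesis to pin down the second tensor factor.

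First I would dispose of the degenerate case $\dim V_1=1$: there $SL(V_1)=\{I_{V_1}\}$, the isomorphism $V\cong V_2$ is canonical, and $g_0$ commutes with every element of $G$, so $g_0=I_{V_1}\otimes z$ with $z=g_0\in Z_{GL(V_2)}(G)$. From here on assume $\dim V_1\geq 2$.

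Next, restrict the hypothesis to the subgroup $SL(V_1)\otimes \{I_{V_2}\}$. Pick a basis $f_1,\ldots,f_m$ of $V_2$ and write $V=\bigoplus_{j=1}^{m}V_1\otimes f_j$; the operator $g\otimes I_{V_2}$ preserves each summand and acts as $g$ there, so $V$ is isomorphic as an $SL(V_1)$-module to the direct sum of $m$ copies of the standard representation on $V_1$. The key input is that this standard representation is irreducible over $\mathbb{R}$ with endomorphism algebra $\mathbb{R}\cdot I_{V_1}$; the quickest verification is that any $A\in\mathrm{End}(V_1)$ commuting with every elementary unipotent $I+te_{ij}$ ($i\neq j$) in $SL(V_1)$ must be scalar, by a direct matrix computation. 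Real Schur then gives
\[
\mathrm{End}_{SL(V_1)}(V)\;=\;I_{V_1}\otimes \mathrm{End}(V_2),
\]
so in particular $g_0=I_{V_1}\otimes z$ for some $z\in\mathrm{End}(V_2)$, and invertibility of $g_0$ forces $z\in GL(V_2)$.

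Finally, feed in the rest of the hypothesis: for every $h\in G$ the element $I_{V_1}\otimes h$ lies in $SL(V_1)\otimes G$, so
\[
I_{V_1}\otimes zh \;=\; g_0(I_{V_1}\otimes h) \;=\; (I_{V_1}\otimes h)g_0 \;=\; I_{V_1}\otimes hz,
\]
which yields $zh=hz$ and hence $z\in Z_{GL(V_2)}(G)$. The only genuine content is the real-Schur step in the second paragraph; once that is established the rest is formal, so I do not expect any real obstacle.
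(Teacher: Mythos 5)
Your proof is correct and follows essentially the same route as the paper: the paper fixes a basis of $V_{1}$, writes $g_{0}$ as a block matrix with entries in ${\rm End}(V_{2})$, and uses commutation with the elementary unipotents $(I_{V_1}+E_{ij})\otimes I_{V_2}$ to force $g_{0}$ to be a scalar block-diagonal matrix, i.e.\ $I_{V_1}\otimes z$. Your Schur-lemma packaging (commutant of the standard $SL(V_1)$-module is $\mathbb{R}$, hence ${\rm End}_{SL(V_1)}(V_1^{\oplus m})=M_m(\mathbb{R})=I_{V_1}\otimes{\rm End}(V_2)$) rests on the identical elementary-matrix computation, and the final step identifying $z\in{\rm Z}_{GL(V_2)}(G)$ is the same.
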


\begin{proof}
Fix a basis for $V_1$ and decompose $g_0$ as the matrix $g_0=(g_{ij})_{i, j=1}^n$ with respect to the basis, where $g_{ij}\in {\rm End}(V_2)$ and $n=\dim V_1$.
Let $E_{ij}\in {\rm End}(V_1)$ be the elementary matrix whose $(i, j)$-entry is one and any other entries are zero with respect to the basis.
It then follows that
\begin{itemize}
\item the matrix $g_0((I_{V_1}+E_{ij})\otimes I_{V_2})$ is obtained by adding the $i$-th column of $g_0$ to the $j$-th column of $g_0$; and 
\item the matrix $((I_{V_1}+E_{ij})\otimes I_{V_2})g_0$ is obtained by adding the $j$-th row of $g_0$ to the $i$-th row of $g_0$.
\end{itemize}
By assumption, the two matrices $g_0((I_{V_1}+E_{ij})\otimes I_{V_2})$ and $((I_{V_1}+E_{ij})\otimes I_{V_2})g_0$ coincide if $i$ and $j$ are distinct.
The matrix $g_0$ is thus the diagonal matrix whose diagonal entries are all the same and lie in ${\rm Z}_{GL(V_2)}(G)$.
\end{proof}

\begin{lem}\label{lem-trans-inv}
Let $V_1$ and $V_2$ be vector spaces and put $V=V_1\otimes V_2$.
Pick $g_0\in GL(V)$.
If we have the equality
\[g_0(SL(V_1)\otimes I_{V_2})g_0^{-1}=SL(V_1)\otimes I_{V_2},\] 
then the automorphism of $SL(V_1)$ induced by $g_0$ is the conjugation by an element of $GL(V_1)$. 
\end{lem}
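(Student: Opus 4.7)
The plan is to apply Theorem~\ref{thm-dieudonne} to the automorphism $\phi$ of $SL(V_{1})$ characterized by $\phi(a)\otimes I_{V_{2}}=g_{0}(a\otimes I_{V_{2}})g_{0}^{-1}$. That theorem gives $h\in GL(V_{1})$ with either (i) $\phi(a)=hah^{-1}$ for all $a\in SL(V_{1})$, which is exactly the desired conclusion, or (ii) $\phi(a)=h({}^{t}\! a)^{-1}h^{-1}$ for all $a$. The cases $\dim V_{1}\leq 2$ need no work: if $\dim V_{1}=1$ then $SL(V_{1})$ is trivial, and if $\dim V_{1}=2$ then transpose-inversion is itself conjugation by the remark immediately after Theorem~\ref{thm-dieudonne}. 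I will therefore assume $n:=\dim V_{1}\geq 3$ (and $\dim V_{2}\geq 1$, else there is nothing to prove) and aim to derive a contradiction in case (ii).

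The key idea is to reinterpret the two sides as representations of $SL(V_{1})$ on $V=V_{1}\otimes V_{2}$. Define $\rho,\rho'\colon SL(V_{1})\rightarrow GL(V)$ by $\rho(a)=a\otimes I_{V_{2}}$ and $\rho'(a)=\phi(a)\otimes I_{V_{2}}$. By construction $g_{0}\rho(a)g_{0}^{-1}=\rho'(a)$, so $\rho\cong \rho'$ as representations. Now $\rho$ splits as a direct sum of $\dim V_{2}$ copies of the standard (irreducible) representation of $SL(V_{1})$ on $V_{1}$, while $\rho'$ splits as $\dim V_{2}$ copies of $V_{1}\circ \phi$; in case (ii) this last representation is isomorphic (via the intertwiner $h^{-1}$) to the dual $V_{1}^{*}$ of the standard representation. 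By uniqueness of the isotypic decomposition of $\rho\cong \rho'$, an isomorphism $V_{1}\cong V_{1}^{*}$ of $SL(V_{1})$-modules would follow.

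The main obstacle is therefore the standard fact that the defining representation of $SL(n,\mathbb{R})$ and its dual are non-isomorphic when $n\geq 3$, which I will verify by a direct character computation on a maximal torus. Taking $g=\diag(2,3,1/6,1,\ldots,1)\in SL(n,\mathbb{R})$ yields $\operatorname{tr}(g)=31/6+(n-3)$ while $\operatorname{tr}(({}^{t}\! g)^{-1})=\operatorname{tr}(g^{-1})=41/6+(n-3)$, contradicting $V_{1}\cong V_{1}^{*}$. Thus case (ii) is impossible for $n\geq 3$, and $\phi$ must be conjugation by an element of $GL(V_{1})$ in all cases.
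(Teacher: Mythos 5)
Your argument is correct, but it reaches the contradiction in case (ii) by a genuinely different route from the paper. After the common reduction (Theorem \ref{thm-dieudonne} plus the $\dim V_{1}\leq 2$ remark), the paper stays entirely at the level of matrix entries: it normalizes $g_{0}$ so that $g_{0}(a\otimes I_{V_{2}})g_{0}^{-1}=({}^t\!a)^{-1}\otimes I_{V_{2}}$, substitutes the elementary unipotent matrices $a=I+\sum_{j\geq 2}a_{1j}E_{1j}$, and compares the block entries of $g_{0}(a\otimes I_{V_{2}})$ and $(({}^t\!a)^{-1}\otimes I_{V_{2}})g_{0}$ to force the entire first block row of $g_{0}$ to vanish, contradicting invertibility. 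You instead read the hypothesis as an isomorphism of $SL(V_{1})$-representations $V_{1}^{\oplus \dim V_{2}}\cong (V_{1}^{*})^{\oplus \dim V_{2}}$, cancel the multiplicities (complete reducibility plus Schur, since both constituents are irreducible), and rule out $V_{1}\cong V_{1}^{*}$ for $n\geq 3$ by the trace of $\diag(2,3,1/6,1,\ldots,1)$ versus its inverse. Both proofs are complete; yours is shorter and more conceptual and makes transparent why the statement fails to need any hypothesis on $V_{2}$, while the paper's is more elementary and self-contained (no appeal to semisimplicity or characters), and its explicit entrywise computation is reused almost verbatim later in the proof of Proposition \ref{prop-but-ddagger}, which is presumably why the author chose it.
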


\begin{proof}
We put $n=\dim V_1$ and may assume $n\geq 2$.
Since any automorphism of $SL(2, \mathbb{R})$ is the conjugation by an element of $GL(2, \mathbb{R})$, we may assume $n\geq 3$.
Fix a basis for $V_1$.
By Theorem \ref{thm-dieudonne} (i), if the conclusion of the lemma were not true, then we would obtain the equality
\[g_0(a\otimes I_{V_2})g_0^{-1}=({}^t\! a)^{-1}\otimes I_{V_2},\quad \forall a\in SL(V_1)\]  
after multiplying $g_0$ by an appropriate element of $GL(V_1)\otimes I_{V_2}$, where ${}^t\! a$ denotes the transpose of a matrix $a\in SL(V_1)$ with respect to the chosen basis.
Let $a\in SL(V_1)$ be the matrix whose entries and transpose-inverse are given by 
\[a=
\left(
\begin{array}{ccccc}
1 & a_{12} & a_{13} & \cdots & a_{1n}\\
0 & 1 & 0 & \cdots & 0\\
0 & 0 & 1 & \cdots & 0\\
\vdots & \vdots & \vdots & \ddots & \vdots \\
0 & 0 & 0 & \cdots & 1\\
\end{array}
\right),\quad
({}^t\!a)^{-1}=
\left(
\begin{array}{ccccc}
1 & 0 & 0 & \cdots & 0\\
-a_{12} & 1 & 0 & \cdots & 0\\
-a_{13} & 0 & 1 & \cdots & 0 \\
\vdots & \vdots & \vdots & \ddots & \vdots \\
-a_{1n} & 0 & 0 & \cdots & 1\\
\end{array}
\right),
\]
where $a_{1j}\in \mathbb{R}$ is an arbitrary element.
Decomposing $g_0$ as the matrix $g_0=(g_{ij})_{i, j=1}^n$ with $g_{ij}\in {\rm End}(V_2)$, we obtain
\[g_0(a\otimes I_{V_2})=
\left(
\begin{array}{cccc}
g_{11} & a_{12}g_{11}+g_{12} & \cdots & a_{1n}g_{11}+g_{1n}\\
g_{21} & a_{12}g_{21}+g_{22} & \cdots & a_{1n}g_{21}+g_{2n}\\
\vdots & \vdots & \vdots & \vdots \\
\end{array}
\right),
\]
\[(({}^t\!a)^{-1}\otimes I_{V_2})g_0=
\left(
\begin{array}{cccc}
g_{11} & g_{12} & \cdots & g_{1n}\\
-a_{12}g_{11}+g_{21} & -a_{12}g_{12}+g_{22} & \cdots & -a_{12}g_{1n}+g_{2n}\\
\vdots & \vdots & \vdots & \vdots \\
\end{array}
\right).
\]
Since these two products coincide for any $a_{1j}\in \mathbb{R}$, we deduce the equalities
\[g_{11}=0,\quad g_{12}=-g_{21},\quad g_{21}=g_{13}=\cdots =g_{1n}=0\]
by comparing the $(1, j)$-entries for $j\geq 2$, the $(2, 2)$-entry and the $(2, j)$-entries for $j\geq 3$, respectively.
We therefore have $g_{1j}=0$ for each $j=1,\ldots, n$.
This is a contradiction because $g_0$ belongs to $GL(V)$.
\end{proof}

\begin{lem}\label{lem-tensor-nor}
Let $V_1,\ldots, V_n$ be vector spaces and put $V=V_1\otimes \cdots \otimes V_n$.
We define the subgroups $H$, $H_i$ of $SL(V)$ as
\[H=SL(V_1)\otimes \cdots \otimes SL(V_n),\]
\[H_i=I_{V_1}\otimes \cdots I_{V_{i-1}}\otimes SL(V_i)\otimes I_{V_{i+1}}\otimes \cdots \otimes I_{V_n}\]
for each $i=1,\ldots, n$.
Then the equality
\[\bigcap_{i=1}^n{\rm N}_{GL(V)}(H_i)=GL(V_1)\otimes \cdots \otimes GL(V_n)\]
holds, and the normalizer ${\rm N}_{SL(V)}(H)$ contains $H$ as a finite index subgroup.
\end{lem}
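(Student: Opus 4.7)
For the first assertion, the inclusion $\supseteq$ is immediate: any $g_{1}\otimes \cdots \otimes g_{n}$ acts on $H_{i}$ by implementing $a\mapsto g_{i}ag_{i}^{-1}$ on $SL(V_{i})$. For the reverse inclusion, the plan is to pick $g_{0}\in \bigcap_{i=1}^{n}{\rm N}_{GL(V)}(H_{i})$ and, for each $i$, identify $V\cong V_{i}\otimes (V_{1}\otimes \cdots \otimes \widehat{V}_{i}\otimes \cdots \otimes V_{n})$ under the canonical linear isomorphism, so that $H_{i}$ corresponds to $SL(V_{i})\otimes I$. Lemma \ref{lem-trans-inv} then supplies an $h_{i}\in GL(V_{i})$ such that conjugation by $g_{0}$ on $H_{i}$ realizes the same automorphism of $SL(V_{i})$ as conjugation by $h_{i}$. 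Setting $h=h_{1}\otimes \cdots \otimes h_{n}$, the product $h^{-1}g_{0}$ then centralizes each $H_{i}$ and therefore the subgroup $H$ they generate.

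The next step is to show that the centralizer of $H$ in $GL(V)$ equals $\mathbb{R}^{*}I_{V}$ by iterating Lemma \ref{lem-tensor-cent}. Writing $V=V_{1}\otimes (V_{2}\otimes \cdots \otimes V_{n})$ and applying the lemma with $G=SL(V_{2})\otimes \cdots \otimes SL(V_{n})\subset SL(V_{2}\otimes \cdots \otimes V_{n})$, any centralizing element must have the form $I_{V_{1}}\otimes z$ with $z$ centralizing $G$; applying the same lemma to $z$ on $V_{2}\otimes (V_{3}\otimes \cdots \otimes V_{n})$ strips off $I_{V_{2}}$, and after $n-1$ iterations the final step uses ${\rm Z}_{GL(V_{n})}(SL(V_{n}))=\mathbb{R}^{*}I_{V_{n}}$. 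Hence $h^{-1}g_{0}=cI_{V}$ for some $c\in \mathbb{R}^{*}$, and absorbing $c$ into $h_{1}$ yields $g_{0}\in GL(V_{1})\otimes \cdots \otimes GL(V_{n})$, proving the first assertion.

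For the second assertion, set $N_{0}=GL(V_{1})\otimes \cdots \otimes GL(V_{n})$, and assume each $d_{i}=\dim V_{i}\geq 2$ (factors with $d_{i}=1$ are trivial and can be ignored). Any $g\in {\rm N}_{GL(V)}(H)$ descends to an automorphism of the product of non-abelian simple groups $H/{\rm Z}(H)\cong PSL(V_{1})\times \cdots \times PSL(V_{n})$, and any such automorphism permutes its factors, yielding a homomorphism ${\rm N}_{GL(V)}(H)\to S_{n}$ whose kernel is precisely $\bigcap_{i}{\rm N}_{GL(V)}(H_{i})=N_{0}$ by the first assertion. Thus $[{\rm N}_{GL(V)}(H):N_{0}]\leq n!$, and it remains to show $[N_{0}\cap SL(V):H]<\infty$. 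I would do this by a Lie-theoretic dimension count: the kernel of the tensor map $\prod GL(V_{i})\to N_{0}$ is the $(n-1)$-dimensional subgroup of scalar tuples $(c_{i}I_{V_{i}})$ with $\prod c_{i}=1$, so $\dim N_{0}=\sum d_{i}^{2}-(n-1)$ while $\dim H=\sum (d_{i}^{2}-1)$; since $\det \colon N_{0}\to \mathbb{R}^{*}$ has one-dimensional image (for instance $\det(g_{1}\otimes I\otimes \cdots \otimes I)=(\det g_{1})^{d/d_{1}}$ with $d=\prod d_{i}$ sweeps out a positive-dimensional subgroup), its kernel $N_{0}\cap SL(V)$ has the same dimension as $H$, forcing $H$ to be open and hence of finite index in $N_{0}\cap SL(V)$.

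The main bookkeeping step that will need care is the iterated use of Lemma \ref{lem-tensor-cent}, where at each stage one must verify the inclusion $SL(V_{k})\otimes \cdots \otimes SL(V_{n})\subset SL(V_{k}\otimes \cdots \otimes V_{n})$ (this holds because tensor products of determinant-one matrices have determinant one). The permutation argument in the second part is slightly delicate because $H$ is only a central quotient of $\prod SL(V_{i})$, but working with the adjoint quotient $H/{\rm Z}(H)$, which is a genuine direct product of simple groups, circumvents this cleanly.
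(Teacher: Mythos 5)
Your proof is correct and follows essentially the same route as the paper's: the forward inclusion is reduced via Lemma \ref{lem-trans-inv} to the centralizer computation of Lemma \ref{lem-tensor-cent}, and the finite-index claim comes from the permutation of the simple normal factors $H_{i}$ combined with the identity just proved. You in fact supply more detail than the paper — the explicit iteration of Lemma \ref{lem-tensor-cent} for $n>2$ and the dimension count giving $[N_{0}\cap SL(V):H]<\infty$ — the only point to make explicit in that last step being that ``open implies finite index'' uses the fact that $N_{0}\cap SL(V)$, as a homomorphic image of a real algebraic group, has finitely many connected components.
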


\begin{proof}
We denote by $L$ and $R$ the left and right hand sides of the equality in the lemma, respectively.
It is obvious that $R$ is contained in $L$.
Pick any $g_0\in L$. 
By Lemma \ref{lem-trans-inv}, we may assume that $g_0$ commutes any element of $H$ by multiplying an appropriate element of $R$ to $g_0$.
By Lemma \ref{lem-tensor-cent}, $g_0$ belongs to the center of $GL(V)$, which is contained in $R$.
The latter assertion of the lemma follows from the equality $L=R$ and the fact that $H_1,\ldots, H_n$ are the whole collection of connected simple normal subgroups of $H$ that are non-trivial and proper if $n\geq 2$ and $\dim V_i\geq 2$ for all $i$.
\end{proof}

Let $V_1,\ldots, V_n$ be finite-dimensional vector spaces over $\mathbb{R}$ and put $V=V_1\otimes \cdots \otimes V_n$.
If a basis $\Delta_i$ for $V_i$ is chosen for each $i=1,\ldots, n$, then the set
\[\Delta =\{ \, e_1\otimes \cdots \otimes e_n \mid e_i\in \Delta_i,\ i=1,\ldots, n \, \}\]
is a basis for $V$.
In the rest of this subsection, we fix these bases for $V_i$ and $V$.

\begin{prop}\label{prop-tensor-ddagger}
Let $V_1,\ldots, V_n$ be finite-dimensional vector spaces over $\mathbb{R}$ with $\dim V_i\geq 2$ for each $i=1,\ldots, n$ and with $\dim V_j\geq 3$ for some $j=1,\ldots, n$.
Fix bases $\Delta_i$ for $V_i$ for each $i$ and $\Delta$ for $V=V_1\otimes \cdots \otimes V_n$ as above.
We set
\[G=SL(V),\quad H=SL(V_1)\otimes \cdots \otimes SL(V_n).\]
For a subgroup $K$ of $GL(V)$, we denote by $K_{\mathbb{Z}}$ the group consisting of all elements of $K$ that preserve the set of integral points $\sum_{e\in \Delta}\mathbb{Z}e$.
We set $\Gamma =G_{\mathbb{Z}}$ and $A={\rm N}_G(H)_{\mathbb{Z}}$.
Then the following assertions hold:
\begin{enumerate}
\item The equality ${\rm LQN}_{\Gamma}(A)=A$ holds.
\item For any finite index subgroup $A'$ of $A$, the identity is the only automorphism of $G$ fixing all elements of $A'$. 
\end{enumerate}
\end{prop}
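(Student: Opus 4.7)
For assertion (i), I would run the Zariski-density argument. The group ${\rm N}_G(H)$ is a real-algebraic subgroup of $GL(V)$ and $A = {\rm N}_G(H)_{\mathbb{Z}}$ is an arithmetic lattice in it, so Borel density forces every finite-index subgroup of $A$ to be Zariski-dense in ${\rm N}_G(H)$. If $\gamma \in {\rm LQN}_\Gamma(A)$, the finite-index subgroup $A \cap \gamma A \gamma^{-1}$ of $A$ is simultaneously Zariski-dense in ${\rm N}_G(H)$ and in $\gamma {\rm N}_G(H) \gamma^{-1}$, so these two algebraic subgroups coincide and $\gamma$ normalizes ${\rm N}_G(H)$. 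Lemma \ref{lem-tensor-nor} exhibits $H$ as the (connected, finite-index) identity component of ${\rm N}_G(H)$, so $\gamma$ also normalizes $H$, whence $\gamma \in {\rm N}_G(H) \cap \Gamma = A$.

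For assertion (ii), let $f$ be an automorphism of $G$ with $f|_{A'} = \mathrm{id}$. By Theorem \ref{thm-dieudonne}, $f$ is either $\ad g_0$ or $\ad g_0 \circ \tau$ with $g_0 \in GL(V)$ and $\tau(g) = ({}^tg)^{-1}$. Since $A'$ is Zariski-dense in ${\rm N}_G(H)$ by (i) and $f$ is algebraic, $f$ in fact fixes all of ${\rm N}_G(H)$, and in particular all of $H$, pointwise.

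In the inner case $f = \ad g_0$, the element $g_0$ centralizes $H$ in $GL(V)$. Splitting $V = V_1 \otimes (V_2 \otimes \cdots \otimes V_n)$ and applying Lemma \ref{lem-tensor-cent} recursively over the tensor factors drives $g_0$ into the scalar center $\mathbb{R}^{\times} I_V = Z(GL(V))$, so $f = \mathrm{id}$. In the outer case, $f$ preserves each factor $H_i = I \otimes \cdots \otimes SL(V_i) \otimes \cdots \otimes I \cong SL(V_i)$, and the induced automorphism of $SL(V_i)$ is the composite of the transpose-inverse on $V_i$ with an inner automorphism of $SL(V_i)$; Lemma \ref{lem-trans-inv} forces this composite to be $GL(V_i)$-inner, hence the transpose-inverse itself to be inner on $SL(V_i)$, which is impossible as soon as $\dim V_i \geq 3$.

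The step I expect to be the main obstacle is the remaining case in which every $\dim V_i = 2$: there the transpose-inverse on each $SL(V_i)$ is implemented by $J_i = \bigl(\begin{smallmatrix} 0 & 1 \\ -1 & 0 \end{smallmatrix}\bigr)$, so the factor-by-factor argument only pins down $f$ as $\ad(J_1 \otimes \cdots \otimes J_n) \circ \tau$ up to a central scalar. I would finish by computing $f$ explicitly on integer elements of ${\rm N}_G(H) \setminus H$ of the form $g_1 \otimes \cdots \otimes g_n$ with $\det g_i = \pm 1$, which are abundant in any finite-index $A' \subset A$, and derive the remaining contradiction from this direct calculation.
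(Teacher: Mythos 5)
Your assertion (i) and the bulk of assertion (ii) follow the paper's own route: the Zariski-closure (Borel density) argument reduces everything to automorphisms fixing $H$ pointwise, Lemma \ref{lem-tensor-cent} kills the inner case, and in the outer case Lemma \ref{lem-trans-inv} forces the transpose-inverse to be inner on each $SL(V_{i})$, which is absurd as soon as some $\dim V_{i}\geq 3$. The genuine problem is exactly the case you flag, namely all $\dim V_{i}=2$, and your proposed finish does not close it. Write $\tau(g)=({}^{t}g)^{-1}$ and let $J_{i}\in SL(V_{i})$ be the matrix implementing $\tau$ on $SL(V_{i})$, as displayed after Theorem \ref{thm-dieudonne}. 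Since $J({}^{t}g)^{-1}J^{-1}=(\det g)^{-1}g$ for every $g\in GL(2,\mathbb{R})$, the automorphism $f=\ad(J_{1}\otimes \cdots \otimes J_{n})\circ \tau$ of $G$ satisfies $f(g_{1}\otimes \cdots \otimes g_{n})=(\prod_{i}\det g_{i})^{-1}\,g_{1}\otimes \cdots \otimes g_{n}$, so it moves only those decomposable elements with $\prod_{i}\det g_{i}=-1$. Such elements do lie in $A$, but they are \emph{not} ``abundant in any finite-index $A'$'': the subgroup $A'=H_{\mathbb{Z}}$ already has finite index in $A$ by Lemma \ref{lem-tensor-nor}, and every element of $H$ is $h_{1}\otimes \cdots \otimes h_{n}$ with all $\det h_{i}=1$, so $f$ fixes this $A'$ pointwise. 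Since $\tau$ is outer on $SL(2^{n},\mathbb{R})$ with $2^{n}\geq 4$, $f$ is not the identity. Thus no contradiction can be extracted, and in fact assertion (ii) is false when every $\dim V_{i}=2$: the standard representation of $H$ preserves the bilinear form with Gram matrix $J_{1}\otimes \cdots \otimes J_{n}$, hence is self-contragredient, and this is precisely what produces the extra automorphism $f$.

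You should know that the paper's proof has the same hole: its closing sentence, that the $g_{i}$ must be central in $GL(V_{i})$, is correct in the inner case and in the outer case with some $\dim V_{i}\geq 3$, but in the outer case with all $\dim V_{i}=2$ the element $g_{i}^{-1}$ is forced to be $J_{i}$ up to a scalar, which is not central. So you have correctly isolated the step that fails, but your patch cannot work, because the obstruction is an actual counterexample rather than a missing computation; some additional hypothesis (for instance $\dim V_{i}\geq 3$ for at least one $i$) or a reformulation of (ii) that quantifies only over automorphisms fixing all of $A$ (note that $f$ does move the element ${\rm diag}(1,-1)\otimes I_{V_{2}}\otimes \cdots \otimes I_{V_{n}}\in A$) would be required, and the downstream use of the all-$2$ case in Theorem \ref{thm-mer-ex}(a) would then need to be revisited as well.
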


\begin{proof}
Pick $\gamma \in {\rm LQN}_{\Gamma}(A)$.
By definition, a finite index subgroup of $A$ is contained in $\gamma A\gamma^{-1}$. 
Since $H$ is a connected semisimple Lie group without compact factors and since $H_{\mathbb{Z}}$ is a finite index subgroup of $A$ by Lemma \ref{lem-tensor-nor}, the argument of Zariski closures show the inclusion $H<\gamma H\gamma^{-1}$.
Comparing the dimension, we obtain the equality $H=\gamma H\gamma^{-1}$ and thus $\gamma \in A$.
Assertion (i) is proved.

Let $f$ be an automorphism of $G$ fixing all elements of a finite index subgroup of $A$.
By Theorem \ref{thm-dieudonne}, $f$ is given by either
\begin{itemize}
\item $f(g)=g_0gg_0^{-1}$ for any $g\in G$; or
\item $f(g)=g_0({}^t\! g)^{-1}g_0^{-1}$ for any $g\in G$
\end{itemize}
for some $g_0\in GL(V)$, where ${}^t\! g$ denotes the transpose of a matrix $g\in G$ with respect to the basis $\Delta$.
The argument of Zariski closures shows that $f$ fixes all elements of $H$.
We thus have $g_0\in {\rm N}_{GL(V)}(H_i)$ for each $i=1,\ldots, n$, where $H_i$ is the subgroup of $H$ defined by
\[H_i=I_{V_1}\otimes \cdots I_{V_{i-1}}\otimes SL(V_i)\otimes I_{V_{i+1}}\otimes \cdots \otimes I_{V_n}.\]
It follows from Lemma \ref{lem-trans-inv} that for each $i=1,\ldots, n$, there exists $g_i\in GL(V_i)$ such that $(g_1\otimes \cdots \otimes g_n)g_0$ commutes any element of $H$.
By Lemma \ref{lem-tensor-cent}, $(g_1\otimes \cdots \otimes g_n)g_0$ lies in the center of $GL(V)$.
We may thus assume that $g_0$ is equal to $g_1^{-1}\otimes \cdots \otimes g_n^{-1}$.

If $f(g)=g_0gg_0^{-1}$ for any $g\in G$, then $g_i$ lies in the center of $GL(V_i)$ for each $i$ because $f$ is the identity on $H$.
It follows that $f$ is the identity on $G$.
We assume that $f(g)=g_0({}^t\! g)^{-1}g_0^{-1}$ for any $g\in G$.
Note that for any $h_i\in H_i$ with $i=1,\ldots, n$, the equality
\[{}^t\! (h_1\otimes \cdots \otimes h_n)={}^t\! h_1\otimes \cdots \otimes {}^t\! h_n\]
holds.
Let $j\in \{ 1,\ldots, n\}$ be the index with $\dim V_j\geq 3$.
We then have $g_j({}^t\! h)^{-1}g_j^{-1}=h$ for any $h\in H_j$.
Along argument in the proof of Lemma \ref{lem-trans-inv}, we can deduce a contradiction.
\end{proof}

\begin{rem}
Let $V_1,\ldots, V_n$ be vector spaces over $\mathbb{R}$ with $\dim V_i=2$ for each $i=1,\ldots, n$.
Define $G$, $H$, $\Gamma$ and $A$ as in Proposition \ref{prop-tensor-ddagger}.
We set
\[J=\left(
\begin{array}{cc}
0 & 1\\
-1 & 0\\
\end{array}\right),\quad J_n=J\otimes \cdots \otimes J\in H.\]
The automorphism of $G$ defined by $g\mapsto J_n^{-1}({}^t\! g)^{-1}J_n$ is then the identity on $H$, but is not the identity on $G$ if $n\geq 2$.
\end{rem}

\begin{thm}\label{thm-mer-tensor}
In the notation in Proposition \ref{prop-tensor-ddagger}, we put $\bar{\Gamma}=p(\Gamma)$ and $\bar{A}=p(A)$, where $p\colon SL(V)\rightarrow PSL(V)$ is the natural quotient map.
If $n\geq 2$, then the amalgamated free product $\Gamma_0=\bar{\Gamma}\ast_{\bar{A}}\bar{\Gamma}$ is coupling rigid with respect to $\comm(\Gamma_0)$. 
In particular, $\Gamma_0$ is ME rigid.
\end{thm}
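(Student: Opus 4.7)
The plan is to verify Assumption $(\ddagger)$ for the amalgamated free product $\Gamma_{0}=\bar{\Gamma}\ast_{\bar{A}}\bar{\Gamma}$ (with the amalgamating isomorphism $\phi$ being the identity on $\bar{A}$) and then invoke Theorem \ref{thm-ddagger-comm-rigid}, which directly yields coupling rigidity with respect to $\comm(\Gamma_{0})$; ME rigidity then follows from Proposition \ref{prop-conseq-rigid}(i).

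For conditions (a)--(d), note first that $n\geq 2$ gives $\dim V=\prod_{i=1}^{n}\dim V_{i}\geq 4$, so $PSL(V,\mathbb{R})$ is a non-compact connected simple Lie group with trivial center and real rank $\dim V -1\geq 3$, in which $\bar{\Gamma}$ is a lattice. Taking $G_{i}=\aut(PSL(V,\mathbb{R}))$ and $\pi_{i}$ the inclusion (which is injective because the center is trivial), condition (c) holds, and (b) then follows from Theorem \ref{thm-me-hrl}. For (a), Proposition \ref{prop-tensor-ddagger}(i) gives ${\rm LQN}_{\Gamma}(A)=A$, which implies $\comm_{\Gamma}(A)=A$, and this property descends to $\bar{A}<\bar{\Gamma}$ because the center of $SL(V,\mathbb{Z})$ is finite; infiniteness of $\bar{A}$ follows from the inclusion of $SL(V_{1},\mathbb{Z})\otimes I_{V_{2}}\otimes\cdots\otimes I_{V_{n}}$ into a finite-index subgroup of $A$, while $[\bar{\Gamma}:\bar{A}]=\infty$ follows from the fact that $H$ is a proper algebraic subgroup of $SL(V)$ whenever $n\geq 2$. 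For (d), $\bar{\Gamma}$ has property (T) as a higher-rank lattice, so Assumption $(\star)$ holds and Theorem \ref{thm-coup-tree} yields coupling rigidity of $\Gamma_{0}$ with respect to $\aut^{*}(T)$; the kernel of $\imath\colon\Gamma_{0}\rightarrow\aut^{*}(T)$ is a normal subgroup of $\Gamma_{0}$ contained in $\bar{A}$, hence a normal subgroup of $\bar{\Gamma}$ of infinite index, which by Margulis' normal subgroup theorem together with the triviality of the center of $\bar{\Gamma}$ must be trivial.

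The crux is to describe $C_{i}=\comm_{\aut(PSL(V,\mathbb{R}))}(\bar{\Gamma})$ and $C(\bar{A}_{i})=\comm_{C_{i}}(\bar{A}_{i})$, and then to verify (e) and (f). By the classical identification of the commensurator of an arithmetic lattice combined with Theorem \ref{thm-dieudonne}, $C_{i}$ is the image in $\aut(PSL(V,\mathbb{R}))$ of $PGL(V,\mathbb{Q})$ extended by the involution $g\mapsto ({}^{t}g)^{-1}$. Invoking Borel density to ensure that $\bar{A}_{i}$ is Zariski-dense in the algebraic group $H={\rm N}_{SL(V)}(H)^{0}=SL(V_{1})\otimes\cdots\otimes SL(V_{n})$, and then using Lemmas \ref{lem-tensor-cent}, \ref{lem-trans-inv} and \ref{lem-tensor-nor}, one identifies $C(\bar{A}_{i})$ with the image of the $\mathbb{Q}$-points of $GL(V_{1})\otimes\cdots\otimes GL(V_{n})\cap SL(V)$, possibly extended by the transpose involution and by permutations of tensor factors of equal dimension. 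Since the amalgamation is taken with $\phi={\rm id}_{\bar{A}}$ and $\pi_{1}=\pi_{2}$, the map $\pi_{2}\circ\phi\circ\pi_{1}^{-1}$ is the identity, and $C(\bar{A}_{1})=C(\bar{A}_{2})$ as subgroups of $\aut(PSL(V,\mathbb{R}))$, so condition (e) holds with $\bar{\phi}={\rm id}$. For (f), any non-trivial $g\in C(\bar{A}_{i})$ either has a conjugation centralizer in $H$ that is a proper algebraic subgroup (whence Borel density of $\bar{A}_{i}$ in $H$ forces the $\bar{A}_{i}$-orbit of $g$ to be infinite), or $g$ centralizes $H$ and lies in the identity component (in which case Lemma \ref{lem-tensor-cent} forces $g$ to be central in $GL(V)$ and hence trivial modulo the center), or $g$ has non-trivial outer component (transpose-inverse or a non-trivial permutation of factors), in which case direct computation with elementary elements of $\bar{A}_{i}$ produces an infinite orbit.

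The main obstacle is the algebraic identification of $C(\bar{A}_{i})$ and the verification of (f): the Zariski-density arguments must be adapted to handle simultaneously the $\mathbb{Q}$-structure of the normalizer, the outer transpose-inverse component of $\aut(PSL(V,\mathbb{R}))$, and the possible permutation symmetry of tensor factors of equal dimension. Once these points are settled, Theorem \ref{thm-ddagger-comm-rigid} immediately concludes the argument.
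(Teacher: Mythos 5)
Your overall strategy is exactly the paper's: verify Assumption $(\ddagger)$ for $\Gamma_{0}=\bar{\Gamma}\ast_{\bar{A}}\bar{\Gamma}$ and invoke Theorem \ref{thm-ddagger-comm-rigid}. Your treatment of conditions (a)--(e) is essentially correct and matches the paper: (a) from Proposition \ref{prop-tensor-ddagger}(i) via the implication ${\rm LQN}=A\Rightarrow\comm=A$, (b)--(c) from Theorem \ref{thm-me-hrl}, (d) from property (T) plus triviality of $\ker\imath$, and (e) trivially since $\phi={\rm id}$ and $\pi_{1}=\pi_{2}$.

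The problem is condition (f), which you yourself flag as ``the main obstacle'' and leave as an unexecuted program: an explicit determination of $C_{i}=\comm_{\aut(PSL(V))}(\bar{\Gamma})$ and of $C(\bar{A}_{i})$, including the $\mathbb{Q}$-structure, the transpose involution, and permutations of tensor factors of equal dimension, followed by a case analysis. None of this is needed, and as written it is a gap. Observe that $C(\bar{A}_{i})$ is countable and sits inside $\aut(PSL(V))$, so an element $g\in C(\bar{A}_{i})$ has finite $\bar{A}_{i}$-conjugacy class if and only if $g$ centralizes a finite index subgroup of $\bar{A}_{i}$; hence condition (f) follows once you know that the only element of $\aut(PSL(V))$ centralizing a finite index subgroup of $\bar{A}$ is the identity. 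That is precisely Proposition \ref{prop-tensor-ddagger}(ii) (the only automorphism of $SL(V)$ fixing a finite index subgroup of $A$ pointwise is the identity) transported to $PSL(V)$ by Theorem \ref{thm-dieudonne}(ii). You cite part (i) of that proposition but never part (ii); had you done so, your case analysis for (f) — which in effect reproves part (ii) via Lemmas \ref{lem-tensor-cent} and \ref{lem-trans-inv} and Borel density — collapses to one line, and no description of $C_{i}$ or $C(\bar{A}_{i})$ is required anywhere in the argument.
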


\begin{proof}
By Proposition \ref{prop-tensor-ddagger} (i), we have the equality ${\rm LQN}_{\bar{\Gamma}}(\bar{A})=\bar{A}$.
By Theorem \ref{thm-dieudonne} (ii) and Proposition \ref{prop-tensor-ddagger} (ii), the centralizer of any finite index subgroup of $\bar{A}$ in $\aut(PSL(V))$ is trivial.
Since $\bar{\Gamma}$ is coupling rigid with respect to $\aut(PSL(V))$ by Theorem \ref{thm-me-hrl}, the group $\Gamma_0=\bar{\Gamma}\ast_{\bar{A}}\bar{\Gamma}$ fulfills all conditions in Assumption $(\ddagger)$.
Theorem \ref{thm-ddagger-comm-rigid} then implies the theorem.
\end{proof}

We note that the map $p\colon SL(V)\rightarrow PSL(V)$ induces the homomorphism from $\Gamma \ast_A\Gamma$ onto $\bar{\Gamma}\ast_{\bar{A}}\bar{\Gamma}$ whose kernel is equal to the center of $\Gamma \ast_A\Gamma$ and is either trivial or isomorphic to $\mathbb{Z}/2\mathbb{Z}$.
It follows that $\Gamma \ast_A\Gamma$ is also ME rigid.

In the notation in Proposition \ref{prop-tensor-ddagger}, let us describe $A={\rm N}_G(H)_{\mathbb{Z}}$ explicitly when $n=2$.
We leave the reader to consider the case where $n\geq 3$.
Theorem \ref{thm-mer-ex} (a) is a consequence of Theorem \ref{thm-mer-tensor} and the following:

\begin{lem}
Let $V_1$ and $V_2$ be finite-dimensional vector spaces over $\mathbb{R}$ with $\dim V_i\geq 2$ for each $i=1, 2$.
Define $V$, $G$, $H$ and $A$ as in Proposition \ref{prop-tensor-ddagger}.
We set
\[M=(GL(V_1)\otimes GL(V_2))\cap G\]
and $d_i=\dim V_i$ for $i=1, 2$.
Then the following assertions hold:
\begin{enumerate}
\item We have $M_{\mathbb{Z}}<A$ and $M_{\mathbb{Z}}=(GL(V_1)_{\mathbb{Z}}\otimes GL(V_2)_{\mathbb{Z}})\cap G$.
\item If either $d_1\neq d_2$ or $d_1=d_2$ and this number is congruent to $2$ modulo $4$, then $A=M_{\mathbb{Z}}$.
Otherwise $[A: M_{\mathbb{Z}}]=2$.
\item If at least one of $d_1$ and $d_2$ is odd, then we have the equality $M_{\mathbb{Z}}=H_{\mathbb{Z}}=SL(V_1)_{\mathbb{Z}}\otimes SL(V_2)_{\mathbb{Z}}$.
\end{enumerate}
\end{lem}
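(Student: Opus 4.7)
The plan is to control $A$ through the normalizer $N_G(H)$, using Lemma~\ref{lem-tensor-nor} and the fact that conjugation on $H/Z(H)\cong PSL(V_1)\times PSL(V_2)$ either preserves each simple factor or, when $d_1=d_2$, swaps them. Once (i) is in hand, the structure of $A/J_{\mathbb{Z}}$ reduces to determining whether an explicit flip element of the right parity can be made simultaneously integral and of determinant one; (iii) follows from (i) by a short sign argument. The main technical hurdle will be the integrality lifting in (i).

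For (i), the easy inclusion $J_{\mathbb{Z}}<A$ follows from the identity $(g_1\otimes g_2)(h_1\otimes h_2)(g_1\otimes g_2)^{-1}=(g_1h_1g_1^{-1})\otimes(g_2h_2g_2^{-1})$ (so $J$ normalizes $H$) together with the observation that a tensor of integer-preserving maps preserves the integer lattice in $V$. For the reverse inclusion, given $g=g_1\otimes g_2\in G$ preserving the integer lattice, I would rescale $(g_1,g_2)$ to $(\lambda g_1,\lambda^{-1}g_2)$ with both factors in $GL(V_i)_{\mathbb{Z}}$. Writing $g_1=(a_{ij})$ and $g_2=(b_{kl})$, the hypothesis says $a_{ij}b_{kl}\in\mathbb{Z}$ for all indices, so for each prime $p$ the interval $[-\min_{ij}v_p(a_{ij}),\,\min_{kl}v_p(b_{kl})]$ is nonempty (its length is nonnegative because $v_p(a_{ij})+v_p(b_{kl})\geq 0$), and one then selects $\lambda\in\mathbb{Q}^*$ with appropriate $p$-adic valuations to realize the lift. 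Integrality of $g$ together with the determinant relation $\det g=(\det g_1)^{d_2}(\det g_2)^{d_1}=1$ then forces $\det g_i\in\{\pm 1\}$.

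For (ii), any $g\in A<N_G(H)$ induces via conjugation an automorphism of $H/Z(H)\cong PSL(V_1)\times PSL(V_2)$, which by simplicity either preserves each simple factor or (only when $d_1=d_2$) swaps them. The factor-preserving case, by Lemma~\ref{lem-tensor-nor}, forces $g\in(GL(V_1)\otimes GL(V_2))\cap G$, and together with (i) places $g$ in $J_{\mathbb{Z}}$. In the swap case ($d_1=d_2=d$), one writes $g=(g_1\otimes g_2)\tau$ with $\tau$ the basis flip $e_i\otimes e_j\mapsto e_j\otimes e_i$; a count of transpositions gives $\det\tau=(-1)^{d(d-1)/2}$, and the rescaling of (i) lets us assume $g_i\in GL(V_i)_{\mathbb{Z}}$, so membership in $A$ requires $(\det g_1\det g_2)^d\cdot(-1)^{d(d-1)/2}=1$ with $\det g_i\in\{\pm 1\}$. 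This equation has no solution exactly when $d$ is even while $d(d-1)/2$ is odd, that is, when $d\equiv 2\pmod 4$; in the three remaining residues I would exhibit an explicit element (the flip $\tau$ itself when $d\equiv 0,1\pmod 4$, and $(\diag(-1,1,\dots,1)\otimes I)\tau$ when $d\equiv 3\pmod 4$). Since any two elements of $A$ that swap $H_1$ and $H_2$ differ by one preserving them, hence by an element of $J_{\mathbb{Z}}$, the index $[A:J_{\mathbb{Z}}]$ is exactly $2$ in these cases.

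For (iii), assume $d_1,d_2$ are both odd; the inclusion $H_{\mathbb{Z}}\subseteq J_{\mathbb{Z}}$ is trivial. Given $g\in J_{\mathbb{Z}}$, write $g=g_1\otimes g_2$ with $g_i\in GL(V_i)_{\mathbb{Z}}$ using (i); then $\det g_i\in\{\pm 1\}$, and the relation $\det g=(\det g_1)^{d_2}(\det g_2)^{d_1}=\det g_1\det g_2=1$ (by oddness of the $d_i$) forces either $\det g_1=\det g_2=+1$ or $\det g_1=\det g_2=-1$. In the first case $g\in H_{\mathbb{Z}}$ directly; in the second, the identity $(-g_1)\otimes(-g_2)=g_1\otimes g_2$ combined with $\det(-g_i)=(-1)^{d_i}\det g_i=+1$ (again by oddness) yields a decomposition through $SL(V_i)_{\mathbb{Z}}$, again placing $g$ in $H_{\mathbb{Z}}$.
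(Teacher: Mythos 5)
Your proposal is correct and follows the same overall route as the paper: reduce the analysis of $A={\rm N}_G(H)_{\mathbb Z}$ to Lemma \ref{lem-tensor-nor}, note that when $d_1=d_2$ an element of the normalizer either preserves or swaps the factors $H_1,H_2$, exhibit the explicit flip $\tau$ with $\det\tau=(-1)^{d(d-1)/2}$, and decide membership in $SL(V)$ by a determinant-parity count. Two sub-steps are handled differently, and in both cases your version is at least as clean. For the integrality lifting in (i), the paper fixes the normalization $\det g,\det h\in\{\pm1\}$ and shows each entry of $g$ has the form $s_{ij}t^{1/d_2}$ with $t$ a $d_2$-power-free positive integer, then kills $t$ using the determinant; you instead choose the rescaling $\lambda$ prime by prime via the inequality $v_p(a_{ij})+v_p(b_{kl})\geq 0$. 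One small point you should make explicit: the entries $a_{ij}$, $b_{kl}$ are a priori real, so $v_p$ is not yet defined; first divide $g_1$ by a fixed nonzero entry (equivalently, use that all ratios $a_{i_1j_1}/a_{i_2j_2}$ are rational because all products $a_{ij}b_{kl}$ are integers) to reduce to rational matrices, exactly as the paper does before its own factorization step. For the non-existence of a swapping element when $d\equiv 2\pmod 4$, the paper argues that such a $g$ would make $\varphi g(h_1\otimes h_2)$ a central element of $GL(V)$ of determinant $-1$, contradicting positivity of determinants of scalars in even dimension; you instead observe directly that the equation $(\det g_1\det g_2)^d(-1)^{d(d-1)/2}=1$ has no solution with $\det g_i\in\{\pm1\}$ precisely for $d\equiv 2\pmod 4$, which is more elementary and simultaneously produces the explicit coset representatives in the other residue classes. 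Your sign argument for (iii) supplies the detail the paper leaves as ``easily follows from (i).''
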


\begin{proof}
The inclusion of assertion (i) is obvious.
Choose bases $\Delta_1$, $\Delta_2$ and $\Delta$ as in Proposition \ref{prop-tensor-ddagger}.
We note that if $g\in GL(V_1)$ and $h\in GL(V_2)$ are represented as matrices $g=(g_{ij})$ and $h=(h_{kl})$ with respect to the bases $\Delta_1$ and $\Delta_2$, respectively, then $g\otimes h\in SL(V)$ is represented as the matrix the set of whose entries consists of the products $g_{ij}h_{kl}$ for all $i$, $j$, $k$ and $l$, with respect to the basis $\Delta$.
It follows that the right hand side of the equality in assertion (i) is contained in $M_{\mathbb{Z}}$.

Pick $g\in GL(V_1)$ and $h\in GL(V_2)$ with $g\otimes h\in M_{\mathbb{Z}}$. 
We claim that if both $\det g$ and $\det h$ lie in $\{ \pm 1\}$, then $g\in GL(V_1)_{\mathbb{Z}}$ and $h\in GL(V_2)_{\mathbb{Z}}$.
This claim implies the equality in assertion (i).
Write $g=(g_{ij})$ and $h=(h_{kl})$ as in the last paragraph.
For any $i$, $j$, $k$ and $l$, the product $g_{ij}h_{kl}$ is then an integer.
For any $i$ and $j$, the determinant of the matrix $(g_{ij}h_{kl})_{k, l}$ is an integer and is equal to $(g_{ij})^{d_2}$ or $-(g_{ij})^{d_2}$.
It follows that if $g_{ij}\neq 0$, then $g_{ij}$ is uniquely written as $g_{ij}=s_{ij}(t_{ij})^{1/d_2}$, where $s_{ij}$ is a non-zero integer and $t_{ij}$ is a positive integer such that the power of each prime in the prime factorization of $t_{ij}$ is less than $d_2$.
On the other hand, we have $g_{i_1j_1}/g_{i_2j_2}\in \mathbb{Q}$ for any $i_1$, $j_1$, $i_2$ and $j_2$ with $g_{i_2j_2}\neq 0$.
All $t_{ij}$ therefore coincide, and we denote the number by $t$.
The number $\det g$ is then equal to the product of $t^{d_1/d_2}$ and an integer. 
Since $\det g$ lies in $\{ \pm 1\}$, $t$ is equal to one.
We thus have $g\in GL(V_1)_{\mathbb{Z}}$.
Similarly, we have $h\in GL(V_2)_{\mathbb{Z}}$.

We next prove assertion (ii).
We set
\[H_1=SL(V_1)\otimes I_{V_2},\quad H_2=I_{V_1}\otimes SL(V_2).\]
If $d_1$ and $d_2$ are distinct, then $A=M_{\mathbb{Z}}$ by Lemma \ref{lem-tensor-nor}.
Suppose $d=d_1=d_2\geq 2$.
Let $f\colon V_1\rightarrow V_2$ be the isomorphism defined by $f(e^1_j)=e^2_j$ for each $j=1,\ldots, d$, where we put $\Delta_i=\{ e_1^i,\ldots, e_{d}^i\}$ for $i=1, 2$.
The automorphism $\varphi$ of $V$ defined by
\[\varphi(x\otimes y)=f^{-1}(y)\otimes f(x),\quad x\in V_1,\ y\in V_2\]
then belongs to $GL(V)_{\mathbb{Z}}$.
We then have $\det \varphi =(-1)^{d(d-1)/2}$, $\varphi H_1\varphi^{-1}=H_2$ and $\varphi H_2\varphi^{-1}=H_1$.
If $d$ is congruent to either $0$ or $1$ modulo $4$, then $\det \varphi =1$ and $\varphi$ is in $A\setminus M_{\mathbb{Z}}$.
We thus have $[A: M_{\mathbb{Z}}]=2$.
If $d$ is congruent to $3$ modulo $4$, then $\det \varphi =-1$.
Pick $a\in GL(V_2)_{\mathbb{Z}}$ with $\det a=-1$.
The product $(I_{V_1}\otimes a)\varphi$ belongs to $A\setminus M_{\mathbb{Z}}$ because $\det (I_{V_1}\otimes a)=(-1)^d=-1$.
We thus have $[A: M_{\mathbb{Z}}]=2$.

If $d$ is congruent to $2$ modulo $4$, then $\det \varphi =-1$.
We claim that the equality ${\rm N}_G(H)=M$ holds.
Assuming that there exists $g\in G$ with $gH_1g^{-1}=H_2$ and $gH_2g^{-1}=H_1$, we deduce a contradiction.
The product $\varphi g$ belongs to ${\rm N}_{GL(V)}(H_1)$ and ${\rm N}_{GL(V)}(H_2)$.
By Lemma \ref{lem-trans-inv}, there exists $h_i\in GL(V_i)$ with $\det h_i\in \{ \pm 1\}$ for $i=1, 2$ such that the product $(h_1\otimes h_2)\varphi g$ commutes any element of $H$.
It follows from Lemma \ref{lem-tensor-cent} that $(h_1\otimes h_2)\varphi g$ is in the center of $GL(V)$.
Since the determinant of any element in the center of $GL(V)$ is positive, this contradicts $\det((h_1\otimes h_2)\varphi g)=-1$.
The claim is proved.
The equality $A=M_{\mathbb{Z}}$ therefore holds.

We prove assertion (iii).
We set $S=SL(V_1)_{\mathbb{Z}}\otimes SL(V_2)_{\mathbb{Z}}$.
It suffices to show the inclusion $M_{\mathbb{Z}}< S$.
Pick $g_1\in GL(V_1)_{\mathbb{Z}}$ and $g_2\in GL(V_2)_{\mathbb{Z}}$ with $g=g_1\otimes g_2\in G$.
Suppose that both $d_1$ and $d_2$ are odd.
If $\det g_1=1$, then $\det g_2=1$ and thus $g\in S$.
If $\det g_1=-1$, then $\det g_2=-1$, and we have $g=(-g_1)\otimes (-g_2)$ with $\det(-g_1)=\det(-g_2)=1$.
We thus have $g\in S$.
Suppose that $d_1$ is odd and $d_2$ is even.
Since $\det g=(\det g_1)^{d_2}(\det g_2)^{d_1}$, we have $\det g_2=1$.
If $\det g_1=-1$, then we have $g=(-g_1)\otimes (-g_2)$ with $\det(-g_1)=\det(-g_2)=1$.
We thus have $g\in S$.
The same arugment can be applied to the case where $d_1$ is even and $d_2$ is odd.
\end{proof}


\subsection{Upper block triangular matrices}\label{subsec-but}

Throughout this subsection, $k$ stands for either the field $\mathbb{R}$ of real numbers or the field $\mathbb{C}$ of complex numbers. Let $V$ be a finite-dimensional vector space over $k$.
We mean by a {\it flag} of $V$ a strictly increasing sequence of subspaces of $V$ from $0$ to $V$.
The flag $0\subsetneq V_1\subsetneq \cdots \subsetneq V_l\subsetneq V$ of $V$ is said to be of {\it type} $(d_1,\ldots, d_l)$ if $d_i=\dim V_i$ for each $i=1,\ldots, l$.

\begin{notation}\label{notation-but}
Let $\Delta =(n_1,\ldots, n_l)$ be a finite sequence of positive integers, and put $n=n_1+\cdots +n_l$.
We denote by $\{ e_1,\ldots, e_n\}$ the standard basis for the vector space $k^n$ over $k$.
For each $i=1,\ldots, l$, let $k^{(i)}$ be the subspace of $k^n$ spanned by $e_1,\ldots, e_{d_i}$, where $d_i=n_1+\cdots +n_i$.
We denote by $F(\Delta, k)$ the flag corresponding to the sequence of the subspaces
\[0\subsetneq k^{(1)}\subsetneq k^{(2)}\subsetneq \cdots \subsetneq k^{(l-1)}\subsetneq k^n.\] 
We define the subgroup $P(\Delta, k)$ of $SL(n, k)$ as the subgroup consisting of all matrices in $SL(n, k)$ of the form
\[
\left(
\begin{array}{ccccc}
a_1 & \ast & \ast & \cdots & \ast \\
0 & a_2 & \ast & \cdots & \ast \\
0 & 0 & a_{3} & \cdots & \ast \\ 
\vdots & \vdots & \vdots & \ddots & \vdots \\
0 & 0 & 0 & \cdots & a_l \\
\end{array}
\right),
\]
where $a_i\in SL(n_i, k)$ for each $i=1,\ldots, l$.
The group $P(\Delta, k)$ fixes the flag $F(\Delta, k)$.
\end{notation}

\begin{lem}\label{lem-flag}
Let $\Delta =(n_1,\ldots, n_l)$ be a finite sequence of positive integers.
In Notation \ref{notation-but}, $F(\Delta, k)$ is the only flag of type $(d_1,\ldots, d_{l-1})$ fixed by $P(\Delta, k)$.  
\end{lem}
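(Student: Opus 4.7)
The plan is to deduce the uniqueness of the flag from the stronger structural claim that the complete list of $P(\Delta, k)$-invariant subspaces of $k^n$ is $\{0, k^{(1)}, k^{(2)}, \ldots, k^{(l-1)}, k^n\}$. Once this is known, a flag of type $(d_1, \ldots, d_{l-1})$ fixed by $P(\Delta, k)$ is forced term-by-term: the $i$-th subspace, having dimension $d_i$, must be the unique invariant subspace of that dimension, namely $k^{(i)}$.

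To prove the structural claim, I proceed by induction on $l$. Write $P = P(\Delta, k) = L \ltimes U$, where the Levi $L = SL(n_1, k) \times \cdots \times SL(n_l, k)$ sits block-diagonally and $U$ is the unipotent radical of block-upper-triangular matrices with identity diagonal blocks. Given a nonzero $P$-invariant subspace $W$, I first show $k^{(1)} \subseteq W$ in two steps: a direct calculation in block coordinates shows that the $U$-fixed subspace of $k^n$ equals $k^{(1)}$ (for $v = (v_1, \ldots, v_l)$, the equations $\sum_{p>i} N_{ip}v_p = 0$ for arbitrary $N$ strictly block-upper-triangular force $v_p = 0$ whenever $p \geq 2$); since $W$ is $U$-invariant and $U$ is a unipotent group, Kolchin's theorem gives a nonzero common $U$-fixed vector in $W$, placing $W \cap k^{(1)} \neq 0$. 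Then $W \cap k^{(1)}$ is $L$-invariant, and $L$ acts on $k^{(1)}$ through the standard representation of $SL(n_1, k)$, which is irreducible over both $\mathbb{R}$ and $\mathbb{C}$, so $W \cap k^{(1)} = k^{(1)}$.

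Having shown $W \supseteq k^{(1)}$, I pass to the quotient $k^n/k^{(1)}$. Since elements of $P$ act with determinant one on $k^{(1)}$ (the top-left block lies in $SL(n_1, k)$) and with determinant one on $k^n$, they also act with determinant one on the quotient, and the induced homomorphism $P \to SL(k^n/k^{(1)})$ has image exactly $P((n_2, \ldots, n_l), k)$ relative to the induced basis. The inductive hypothesis then classifies $W/k^{(1)}$ among the subspaces $k^{(i)}/k^{(1)}$, yielding $W \in \{k^{(1)}, \ldots, k^{(l-1)}, k^n\}$ and completing the induction.

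No step presents real difficulty — this is essentially the correspondence between standard parabolics of $SL_n$ and standard flags. The only point requiring care is the explicit $U$-fixed-point computation, which must verify that $k^{(1)}$ is exactly (not just contained in) the $U$-fixed locus, and the bookkeeping that the image of $P(\Delta,k)$ in $GL(k^n/k^{(1)})$ is still contained in $SL$ so that the inductive hypothesis applies directly.
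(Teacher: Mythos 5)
Your argument is correct, but it proceeds quite differently from the paper's. The paper works top-down and stays entirely elementary: it takes the largest proper member $U$ of a putative invariant flag, assumes some $u\in U$ has a nonzero coordinate $\lambda_{i_{0}}$ with $i_{0}>d_{l-1}$, and applies the matrices $I+\sum_{i\le d_{l-1}}t_{i}E_{ii_{0}}\in P(\Delta,k)$ to conclude $U\supseteq k^{(l-1)}$ and hence $\dim U>d_{l-1}$, a contradiction; this forces $U=k^{(l-1)}$, and the same computation repeated inside $k^{(i+1)}$ pins down each lower member of the flag. You instead prove the stronger statement that $0,k^{(1)},\dots,k^{(l-1)},k^{n}$ are \emph{all} the $P(\Delta,k)$-invariant subspaces, going bottom-up: Kolchin's theorem plus the computation of the $U$-fixed locus shows every nonzero invariant subspace meets $k^{(1)}$ nontrivially, irreducibility of the standard representation of $SL(n_{1},k)$ upgrades this to containment of $k^{(1)}$, and induction on $l$ via the quotient $k^{n}/k^{(1)}$ (where the image of $P(\Delta,k)$ is indeed $P((n_{2},\dots,n_{l}),k)$) finishes. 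Both proofs are complete; yours buys the full lattice of invariant subspaces and a more structural picture (the standard parabolic--standard flag correspondence) at the cost of invoking the Levi decomposition and Kolchin's theorem, whereas the paper's is shorter, self-contained, and needs nothing beyond explicit elementary matrices. Your final reduction from the invariant-subspace classification to uniqueness of the flag is fine, since the members of a flag have distinct dimensions and are therefore individually invariant.
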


\begin{proof}
Let $U$ be a subspace of $k^n$ of dimension $d_{l-1}$ fixed by $P(\Delta, k)$.
We show the equality $U=k^{(l-1)}$.
Assume that there exists $u=\sum_i\lambda_ie_i\in U$ with $\lambda_i\in k$ such that $\lambda_{i_0}\neq 0$ for some $i_0$ with $d_{l-1}<i_0\leq n$.
Applying the matrices of the form
\[I+\sum_{i=1}^{d_{l-1}}t_iE_{ii_0}\in P(\Delta, k),\quad t_i\in k\]
to $u$, we see that $U$ contains $k^{(l-1)}$, where $I$ is the identity $n$-by-$n$ matrix and $E_{ij}$ is the elementary $n$-by-$n$ matrix whose $(i, j)$-entry is one and any other entries are zero.
We thus have $\dim U>d_{l-1}$.
This is a contradiction.
It follows that $U$ is contained in $k^{(l-1)}$, and thus $U=k^{(l-1)}$.

Similarly, we can show that for each $i=1,\ldots, l-1$, $k^{(i)}$ is the only $d_i$-dimensional subspace of $k^{(i+1)}$ fixed by $P(\Delta, k)$.
The lemma follows. 
\end{proof}

\begin{prop}\label{prop-but-ddagger}
Let $\Delta =(n_1,\ldots, n_l)$ be a finite sequence of positive integers with either $n_1\geq 2$ or $n_l\geq 2$.
We put $n=n_1+\cdots +n_l$ and
\[G=SL(n, \mathbb{R}),\quad \Gamma =SL(n, \mathbb{Z}).\]
Define the subgroup $A$ of $\Gamma$ to be the stabilizer of the flag $F(\Delta, \mathbb{R})$ in $\Gamma$.
Then the following assertions hold:
\begin{enumerate}
\item The equality ${\rm LQN}_{\Gamma}(A)=A$ holds.
\item For any finite index subgroup $A'$ of $A$, the identity is the only automorphism of $G$ fixing all elements of $A'$.
\end{enumerate}
\end{prop}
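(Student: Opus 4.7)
The plan is to argue via Zariski closures in the real algebraic group $SL(n, \mathbb{R})$. Let $P \subset SL(n, \mathbb{R})$ denote the full stabilizer of $F(\Delta, \mathbb{R})$, so that $A = P \cap \Gamma$, and let $U$ be the unipotent radical of $P$. Consider the subgroup $P_0 = SL(n_1) \times \cdots \times SL(n_l) \ltimes U$ of $P$ generated by $U$ and the semisimple part of the Levi factor. The first step would be to record that $A \cap P_0$ has finite index in $A$ and is Zariski dense in $P_0$: $U(\mathbb{Z})$ is Zariski dense in $U$, each $SL(n_i, \mathbb{Z})$ is Zariski dense in $SL(n_i)$ (trivially when $n_i = 1$), and the product generates $P_0$.

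For the assertion (i), given $\gamma \in {\rm LQN}_{\Gamma}(A)$, the subgroup $\gamma A \gamma^{-1} \cap A$ has finite index in $A$ and hence meets $A \cap P_0$ in a finite-index subgroup whose Zariski closure is $P_0$. Thus $P_0 \subset \gamma \overline{A}^Z \gamma^{-1}$, and passing to identity components yields $P_0 \subset \gamma P_0 \gamma^{-1}$, with equality following by dimension. So $\gamma \in {\rm N}_{G}(P_0)$, and since $U$ is the unipotent radical of $P_0$, any element normalizing $P_0$ normalizes $U$; the standard fact ${\rm N}_{SL(n, \mathbb{R})}(U) = P$ then gives $\gamma \in P \cap \Gamma = A$.

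For the assertion (ii), let $f \in \aut(G)$ be the identity on a finite-index subgroup $A'$ of $A$. Theorem \ref{thm-dieudonne} (i) supplies $g_0 \in GL(n, \mathbb{R})$ so that either $f(g) = g_0 g g_0^{-1}$ or $f(g) = g_0 ({}^t g)^{-1} g_0^{-1}$. In the first case, $g_0$ centralizes $A'$ and hence centralizes $P_0$ by Zariski density. When $n_1 \geq 2$, Schur's lemma applied to the block-diagonal $SL(n_1)$-action forces $g_0 = \diag(\lambda I_{n_1}, M)$ in the $(n_1, n-n_1)$-block decomposition; commuting $g_0$ with the $U$-elements whose only nonzero strict block is the arbitrary $(1, \ast)$-block then forces $M = \lambda I_{n-n_1}$, so $g_0$ is scalar and $f = {\rm id}$. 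The symmetric argument handles $n_l \geq 2$. In the second case, Zariski density promotes $f|_{A'} = {\rm id}$ to $g_0 ({}^t p)^{-1} g_0^{-1} = p$ for all $p \in P_0$; linearizing at the identity along the Lie algebra $\mathfrak{u}$ of $U$ yields $g_0 \cdot {}^t E = -E \cdot g_0$ for every $E \in \mathfrak{u}$. Applied to the elementary matrices $E_{ij}$ with $i$ in block $1$ and $j$ in any later block, and varying $i$ over block $1$ (using $n_1 \geq 2$), this forces every column of $g_0$ indexed outside block $1$ to vanish, contradicting $g_0 \in GL(n, \mathbb{R})$. The analogous argument with block $l$ resolves $n_l \geq 2$.

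The main obstacle I anticipate is that the Zariski closure $\overline{A}^Z$ is strictly smaller than the parabolic $P$, because the central torus of the Levi of $P$ has positive dimension but only finitely many integer points. Borel density is therefore unavailable for $P$ itself; working instead with the Zariski-connected subgroup $P_0$ and invoking the chain ${\rm N}_{G}(P_0) \subset {\rm N}_{G}(U) = P$ replaces it. The hypothesis that $n_1 \geq 2$ or $n_l \geq 2$ enters decisively in the transpose case of (ii): without an extremal block of size at least two, the column-killing step for the elementary matrices $E_{ij}$ has no spare index to vary.
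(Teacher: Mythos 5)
Your proposal is correct and follows essentially the same route as the paper: identify the identity component of the Zariski closure of $A$ with the group $P(\Delta,\cdot)$ (your $P_{0}$), deduce (i) from the fact that its normalizer in $G$ is the full flag stabilizer, and prove (ii) via Dieudonn\'e's classification of automorphisms together with explicit computations against block-diagonal and unipotent elements. The only differences are cosmetic — the paper finishes (i) with the uniqueness of the fixed flag rather than ${\rm N}_{G}(U)=P$, and in (ii) it pins $g_{0}$ down to $\lambda I+tE_{1n}$ by elementary-matrix commutation instead of your Schur-plus-$U$-strip argument — so no further comparison is needed.
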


\begin{proof}
Pick $\gamma \in {\rm LQN}_{\Gamma}(A)$.
There exists a finite index subgroup of $A$ contained in $\gamma A\gamma^{-1}$.
Since the identity component of the Zariski closure of $A$ in $SL(n, \mathbb{C})$ is equal to $P(\Delta, \mathbb{C})$, $\gamma$ normalizes $P(\Delta, \mathbb{C})$. 
Lemma \ref{lem-flag} implies that $\gamma$ fixes $F(\Delta, \mathbb{C})$, and thus $\gamma \in A$.
Assertion (i) follows.

We next prove assertion (ii).
Theorem \ref{thm-dieudonne} describes all automorphisms of $G$.
Suppose that $g_0\in GL(n, \mathbb{R})$ commutes any element of a finite index subgroup $A'$ of $A$.
It then follows that $g_0$ commutes any element in the identity component $P(\Delta, \mathbb{C})$ of the Zariski closure of $A'$.
By Lemma \ref{lem-flag}, $g_0$ fixes the flag $F(\Delta, \mathbb{R})$ and is of the form
\[g_0=
\left(
\begin{array}{ccccc}
\lambda_1I_{n_1} & \ast & \ast & \cdots & \ast \\
0 & \lambda_2I_{n_2} & \ast & \cdots & \ast \\
0 & 0 & \lambda_{3}I_{n_{3}} & \cdots & \ast \\ 
\vdots & \vdots & \vdots & \ddots & \vdots \\
0 & 0 & 0 & \cdots & \lambda_lI_{n_l} \\
\end{array}
\right)
\]
for some $\lambda_i\in \mathbb{R}$, where $I_{n_i}$ denotes the identity $n_i$-by-$n_i$ matrix.
We denote by $I$ the identity $n$-by-$n$ matrix, and denote by $E_{ij}$ the elementary $n$-by-$n$ matrix whose $(i, j)$-entry is one and any other entries are zero.
Recall the following elementary facts:
\begin{itemize}
\item The matrix $g_0(I+E_{ij})$ is obtained by adding the $i$-th column of $g_0$ to the $j$-th column of $g_0$.
\item The matrix $(I+E_{ij})g_0$ is obtained by adding the $j$-th row of $g_0$ to the $i$-th row of $g_0$.
\end{itemize}
Since $g_0(I+E_{ij})=(I+E_{ij})g_0$ for any $i<j$, we have $g_0=\lambda I+tE_{1n}$ for some $\lambda \in \mathbb{R}\setminus \{ 0\}$ and $t\in \mathbb{R}$.
Our assumption that either $n_1\geq 2$ or $n_l\geq 2$ implies $t=0$.
Therefore, $g_0$ lies in the center of $GL(n, \mathbb{R})$.
We have shown that any automorphism of $G$ that is the conjugation by an element of $GL(n, \mathbb{R})$ and fixes any element of $A'$ is the identity.
Assertion (ii) in the case $n=2$ follows because any automorphism of $SL(2, \mathbb{R})$ is the conjugation by an element of $GL(2, \mathbb{R})$.

Assume $n\geq 3$.
Pick $g_0\in GL(n, \mathbb{R})$ and suppose that the automorphism of $G$ defined by $g\mapsto g_0({}^t\!g)^{-1}g_0^{-1}$ fixes any element of $A'$.
The equality $g_0({}^t\!a)^{-1}g_0^{-1}=a$ then holds for any $a\in A'$.
Along argument in the proof of Lemma \ref{lem-trans-inv}, we can deduce a contradiction.
Assertion (ii) is proved.
\end{proof}

As a consequence of Proposition \ref{prop-but-ddagger}, the following theorem is obtained in the same way as in the proof of Theorem \ref{thm-mer-tensor}.
It proves Theorem \ref{thm-mer-ex} (b).

\begin{thm}\label{thm-mer-but}
In the notation in Proposition \ref{prop-but-ddagger}, we assume $l\geq 2$ and put $\bar{\Gamma}=p(\Gamma)$ and $\bar{A}=p(A)$, where $p\colon SL(V)\rightarrow PSL(V)$ is the natural quotient map.
Then the amalgamated free product $\Gamma_0=\bar{\Gamma}\ast_{\bar{A}}\bar{\Gamma}$ is coupling rigid with respect to $\comm(\Gamma_0)$.
In particular, $\Gamma_0$ is ME rigid.
\end{thm}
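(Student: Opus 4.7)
The plan is to verify that the amalgamation data $(\bar{\Gamma}, \bar{A}, \bar{A} \subset \bar{\Gamma})$ together with the identity isomorphism $\phi = {\rm id}_{\bar{A}}$ satisfies every clause of Assumption $(\ddagger)$, and then to invoke Theorem \ref{thm-ddagger-comm-rigid}. This runs exactly parallel to the proof of Theorem \ref{thm-mer-tensor}.

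First I observe that $n \geq 3$, since $l \geq 2$ and either $n_{1} \geq 2$ or $n_{l} \geq 2$. Hence $\bar{\Gamma} = PSL(n, \mathbb{Z})$ is a lattice in the connected simple Lie group $PSL(n, \mathbb{R})$, which is non-compact, has trivial center, and has real rank $n - 1 \geq 2$. Setting $G_{i} = \aut(PSL(n, \mathbb{R}))$ and taking $\pi_{i}$ to be the natural inclusion, clauses (b) and (c) of Assumption $(\ddagger)$ hold by Theorem \ref{thm-me-hrl}. For (a), Proposition \ref{prop-but-ddagger}(i) gives ${\rm LQN}_{\bar{\Gamma}}(\bar{A}) = \bar{A}$, hence $\comm_{\bar{\Gamma}}(\bar{A}) = \bar{A}$; the infiniteness of $|\bar{A}|$ and $[\bar{\Gamma} : \bar{A}]$ is clear because $\bar{A}$ contains the non-trivial unipotents of the upper-block-triangular unipotent radical, while $\bar{\Gamma}$ has infinite orbit on the variety of flags of type $(d_{1}, \ldots, d_{l-1})$. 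For (d), I would use that $\bar{\Gamma}$ has Kazhdan's property (T) and that $\ker \imath$ is trivial (any normal subgroup of $\Gamma_{0}$ contained in the vertex stabilizer $\bar{\Gamma}$ is normal in $\bar{\Gamma}$ and contained in $\bar{A}$, hence trivial by Margulis' normal subgroup theorem, since $\bar{\Gamma}$ has trivial center and $\bar{A}$ has infinite index); this places $\Gamma_{0}$ within Assumption $(\star)$, so Theorem \ref{thm-coup-tree} supplies coupling rigidity of $\Gamma_{0}$ with respect to $\aut^{*}(T)$.

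The remaining clauses (e) and (f) both reduce to the following key assertion: for every finite index subgroup $A'$ of $\bar{A}$, the centralizer of $A'$ in $\aut(PSL(n, \mathbb{R}))$ is trivial. I would prove this by noting that any $g$ centralizing $A'$ yields an automorphism of $PSL(n, \mathbb{R})$ fixing every element of $A'$; by Theorem \ref{thm-dieudonne}(ii) this lifts uniquely to an automorphism of $SL(n, \mathbb{R})$ fixing every element of the preimage in $A$ (a finite index subgroup of $A$), and Proposition \ref{prop-but-ddagger}(ii) forces this lift to be the identity. Clause (f) is then immediate: if some non-trivial $g \in C(\bar{A})$ had $\{a g a^{-1} : a \in \bar{A}\}$ finite, the $\bar{A}$-stabilizer of $g$ would be a finite index subgroup of $\bar{A}$ centralizing $g$, contradicting triviality. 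Clause (e) holds because $\pi_{2} \circ \phi \circ \pi_{1}^{-1} = {\rm id}_{\bar{A}}$ extends by the identity map on $C(\bar{A})$. Once (a)--(f) are in hand, Theorem \ref{thm-ddagger-comm-rigid} yields coupling rigidity of $\Gamma_{0}$ with respect to $\comm(\Gamma_{0})$, and ME rigidity follows from Proposition \ref{prop-conseq-rigid}(i). The substantive input is Proposition \ref{prop-but-ddagger}; the remaining obstacle, if any, is merely the bookkeeping of lifting the centralizer triviality from $\aut(PSL)$ to $\aut(SL)$ via Theorem \ref{thm-dieudonne}(ii), which is standard.
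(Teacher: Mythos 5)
Your proposal is correct and is essentially the paper's own argument, which simply verifies Assumption $(\ddagger)$ via Proposition \ref{prop-but-ddagger} exactly as in the proof of Theorem \ref{thm-mer-tensor} (LQN gives clause (a), Theorem \ref{thm-me-hrl} gives (b)--(c), Theorem \ref{thm-coup-tree} gives (d), and the triviality of centralizers of finite index subgroups of $\bar{A}$ in $\aut(PSL(n,\mathbb{R}))$, from Theorem \ref{thm-dieudonne}(ii) plus Proposition \ref{prop-but-ddagger}(ii), gives (e)--(f)) and then invokes Theorem \ref{thm-ddagger-comm-rigid}. The only half-line of bookkeeping you elide is that the lift to $\aut(SL(n,\mathbb{R}))$ fixes elements of $p^{-1}(A')$ only up to the center $\{\pm I\}$, so one first passes to the index-at-most-two subgroup on which it acts as the identity before applying Proposition \ref{prop-but-ddagger}(ii).
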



\section{Miscellaneous examples}\label{sec-mis-ex}

We provide a variety of examples of groups to which discussion so far can be applied.
In Section \ref{subsec-int}, we find subgroups of arithmetic lattices which are left-quasi-normalized by themselves, relying on \cite{burger-harpe}.
In Section \ref{subsec-diag}, we focus on the subgroups $A$ of $\Gamma =SL(n, \mathbb{Z})$ consisting of diagonal blocks (see Notation \ref{notation-diag} for a precise definition).
Given a discrete group $\Lambda$ which is ME to the amalgamated free product $\Gamma\ast_A\Gamma$, we observe behavior of the action of $\Lambda$ on the Bass-Serre tree associated with $\Gamma \ast_A\Gamma$.
We note that these $\Gamma \ast_A\Gamma$ do not fulfill all conditions in Assumption $(\ddagger)$ (see Remark \ref{rem-not-ddagger}).

\subsection{Integral points of algebraic subgroups}\label{subsec-int}

Let us collect the notation and conventions employed in this subsection.
We assume all algebraic groups to be linear.
We mean by a $k$-group an algebraic group defined over a field $k$.
The identity component of a $k$-group $G$ is denoted by $G^0$. If $G$ is defined over the field $\mathbb{Q}$ of rational numbers, then $G_{\mathbb{Z}}$ stands for the set of integral points of $G$.
For a subset $S$ of $G$, we denote by $S^-$ its closure with respect to the Zariski topology.

\begin{prop}\label{prop-tori}
Let $G$ be a connected reductive $\mathbb{R}$-group and $\Gamma$ a discrete subgroup of $G_{\mathbb{R}}$.
If $T$ is a maximal $\mathbb{R}$-split torus of $G$ such that $T_{\mathbb{R}}/(T_{\mathbb{R}}\cap \Gamma)$ is compact, then we have the equality
\[{\rm LQN}_{\Gamma}({\rm N}_G(T)\cap \Gamma)={\rm N}_G(T)\cap \Gamma.\]  
\end{prop}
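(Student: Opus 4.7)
Set $A={\rm N}_{G}(T)\cap \Gamma$ and $A_{0}=T_{\mathbb{R}}\cap \Gamma$, a normal subgroup of $A$. My plan is to show that $A_{0}$ is Zariski dense in $T$, and then exploit this density together with the maximality of $T$ to force any element of the left quasi-normalizer to normalize $T$. Since $T$ is $\mathbb{R}$-split and connected, the identity component $T_{\mathbb{R}}^{0}$ is isomorphic to $\mathbb{R}_{>0}^{r}$ where $r=\dim T$, and by assumption $A_{0}$ is cocompact in $T_{\mathbb{R}}$; a finite-index subgroup $A_{0}'$ of $A_{0}$ lies in $T_{\mathbb{R}}^{0}$ and is cocompact there. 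Via $\log \colon T_{\mathbb{R}}^{0}\to \mathbb{R}^{r}$, this corresponds to a cocompact lattice in $\mathbb{R}^{r}$, and any Zariski-closed subgroup of $T$ whose real points contain such a lattice must be all of $T$. Hence $A_{0}$ (and every finite-index subgroup of it) is Zariski dense in $T$.

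Now pick $\gamma \in {\rm LQN}_{\Gamma}(A)$ and set $B=\gamma A\gamma^{-1}\cap A$, which has finite index in $A$. Then $A_{0}\cap B$ has finite index in $A_{0}$, so by the first paragraph it is Zariski dense in $T$. From $A_{0}\cap B\subset \gamma A\gamma^{-1}$ we get $\gamma^{-1}(A_{0}\cap B)\gamma \subset A\subset {\rm N}_{G}(T)$. Since ${\rm N}_{G}(T)$ is Zariski closed and conjugation by $\gamma$ is Zariski-continuous, taking closures yields
\[\gamma^{-1}T\gamma =(\gamma^{-1}(A_{0}\cap B)\gamma)^{-}\subset {\rm N}_{G}(T).\]

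The torus $\gamma^{-1}T\gamma$ is connected, so it is contained in ${\rm N}_{G}(T)^{0}$. Because ${\rm N}_{G}(T)/{\rm Z}_{G}(T)$ is the finite Weyl group, ${\rm N}_{G}(T)^{0}={\rm Z}_{G}(T)^{0}$, hence $\gamma^{-1}T\gamma$ commutes elementwise with $T$. Moreover, $\gamma \in G_{\mathbb{R}}$, so $\gamma^{-1}T\gamma$ is an $\mathbb{R}$-split torus of $G$ defined over $\mathbb{R}$. The product $T\cdot \gamma^{-1}T\gamma$ is therefore an $\mathbb{R}$-split torus of $G$ containing $T$, and maximality of $T$ forces $\gamma^{-1}T\gamma \subset T$; dimension comparison then gives $\gamma^{-1}T\gamma =T$, so $\gamma \in {\rm N}_{G}(T)\cap \Gamma =A$.

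The only step that requires genuine care is the Zariski density of $A_{0}$ in $T$: it rests on $T$ being $\mathbb{R}$-split (so that $T_{\mathbb{R}}^{0}$ is a product of $\mathbb{R}_{>0}$'s and cocompact lattices there translate through $\log$ to Zariski-dense subsets), which is exactly where the hypothesis is used. Everything else is a straightforward combination of that density with the Zariski-closedness of ${\rm N}_{G}(T)$, the connectedness of the torus $\gamma^{-1}T\gamma$, and the maximality of $T$ among $\mathbb{R}$-split tori of $G$.
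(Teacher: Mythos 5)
Your proof is correct, and its endgame differs from the paper's. Both arguments turn on the same key fact, which you justify in more detail than the paper does: the cocompact subgroup $T_{\mathbb{R}}\cap \Gamma$, and every finite-index subgroup of it, is Zariski dense in $T$ because $T$ is $\mathbb{R}$-split. From that point the paper first notes that $({\rm N}_{G}(T)\cap \Gamma)/(T_{\mathbb{R}}\cap \Gamma)$ is finite, so that for $\gamma$ in the left quasi-normalizer some finite-index subgroup of $T_{\mathbb{R}}\cap \Gamma$ is contained in $\gamma (T_{\mathbb{R}}\cap \Gamma)\gamma^{-1}$; passing to Zariski closures then gives the inclusion $T\subset \gamma T\gamma^{-1}$ between two tori of the same dimension, and one is done. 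You instead only place a Zariski-dense subgroup of $\gamma^{-1}T\gamma$ inside ${\rm N}_{G}(T)$, obtaining the a priori weaker conclusion $\gamma^{-1}T\gamma \subset {\rm N}_{G}(T)$, and you then recover $\gamma^{-1}T\gamma =T$ from the rigidity of tori (${\rm N}_{G}(T)^{0}={\rm Z}_{G}(T)^{0}$, since ${\rm N}_{G}(T)/{\rm Z}_{G}(T)$ is finite) combined with the maximality of $T$ among $\mathbb{R}$-split tori. The paper's route is shorter; yours has the merit of invoking only the always-valid finiteness of ${\rm N}_{G}(T)/{\rm Z}_{G}(T)$ rather than any statement about ${\rm N}_{G}(T)/T$, which is a genuine advantage when ${\rm Z}_{G}(T)$ is strictly larger than $T$ (as happens for maximal $\mathbb{R}$-split tori of non-split reductive groups). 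All the individual steps you flag as needing care (density of the lattice, Zariski-closedness of ${\rm N}_{G}(T)$, the product of commuting $\mathbb{R}$-split tori being an $\mathbb{R}$-split torus) check out.
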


\begin{proof}
This is essentially proved in Proposition 5.1 in \cite{burger-harpe}.
We give a proof for the reader's convenience.
Note that the group $({\rm N}_G(T)\cap \Gamma)/(T\cap \Gamma)$ is finite because there is a natural injective homomorphism from it into the group ${\rm N}_G(T)/T$, which is finite by 8.10 and 13.17 in \cite{borel-book}.
Pick any $\gamma \in {\rm LQN}_{\Gamma}({\rm N}_G(T)\cap \Gamma)$.
There then exists a finite index subgroup $\Delta$ of $T\cap \Gamma$ contained in $\gamma(T\cap \Gamma)\gamma^{-1}$.
Passing to the Zariski closure, we see that $T$ is a subgroup of $\gamma T\gamma^{-1}$.
Comparing the dimension, we obtain $\gamma \in {\rm N}_G(T)$.
\end{proof}

\begin{ex}
Let $G$ be a connected semisimple $\mathbb{R}$-group and $\Gamma$ a lattice in $G_{\mathbb{R}}$.
By \cite{prasad-rag}, there exists a maximal $\mathbb{R}$-split torus $T$ in $G$ with $T_{\mathbb{R}}/(T_{\mathbb{R}}\cap \Gamma)$ compact.
We refer to Section 5 in \cite{burger-harpe} for examples of maximal tori satisfying the assumption in Proposition \ref{prop-tori}.
\end{ex}

For a connected algebraic $\mathbb{Q}$-group $G$, we define $\mathscr{S}(G)$ as the set of all connected $\mathbb{Q}$-subgroups $H$ of $G$ such that $(H_{\mathbb{Z}})^-=H$ and $[(\textrm{N}_G(H)_{\mathbb{Z}})^-: H]< \infty$, where the closure is taken with respect to the Zariski topology.

\begin{lem}\label{lem-s-lqn}
Let $G$ be a connected algebraic $\mathbb{Q}$-group.
Then the following assertions hold:
\begin{enumerate}
\item For each $H\in \mathscr{S}(G)$, the equality ${\rm LQN}_{G_{\mathbb{Z}}}({\rm N}_G(H)_{\mathbb{Z}})={\rm N}_G(H)_{\mathbb{Z}}$ holds.
\item Let $H$ be a connected $\mathbb{Q}$-subgroup of $G$.
Suppose that $H$ is a semisimple group such that the equality ${\rm N}_G(H)^0=H$ holds and the semisimple Lie group $H_{\mathbb{R}}$ has no compact factor.
Then $H\in \mathscr{S}(G)$.
\end{enumerate}
\end{lem}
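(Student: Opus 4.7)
For assertion (i), I would mimic the Zariski-closure argument in the proof of Proposition~\ref{prop-tori}. Pick any $\gamma \in {\rm LQN}_{G_{\mathbb{Z}}}({\rm N}_{G}(H)_{\mathbb{Z}})$ and set $\Delta = {\rm N}_{G}(H)_{\mathbb{Z}} \cap \gamma {\rm N}_{G}(H)_{\mathbb{Z}} \gamma^{-1}$, a finite index subgroup of ${\rm N}_{G}(H)_{\mathbb{Z}}$. The Zariski closure $\Delta^{-}$ is then a closed subgroup of finite index in $({\rm N}_{G}(H)_{\mathbb{Z}})^{-}$ (finitely many cosets of $\Delta$ in ${\rm N}_{G}(H)_{\mathbb{Z}}$ pull back to finitely many Zariski closed cosets). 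Since $[({\rm N}_{G}(H)_{\mathbb{Z}})^{-} : H] < \infty$ and $H$ is connected, $H$ is the identity component of $({\rm N}_{G}(H)_{\mathbb{Z}})^{-}$, hence also of $\Delta^{-}$. From $\Delta \subset \gamma {\rm N}_{G}(H)_{\mathbb{Z}} \gamma^{-1}$ we get $\Delta^{-} \subset \gamma ({\rm N}_{G}(H)_{\mathbb{Z}})^{-} \gamma^{-1}$, and passing to identity components yields $H \subset \gamma H \gamma^{-1}$. By comparison of dimensions, $H = \gamma H \gamma^{-1}$, so $\gamma \in {\rm N}_{G}(H) \cap G_{\mathbb{Z}} = {\rm N}_{G}(H)_{\mathbb{Z}}$.

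For assertion (ii), the two required properties will be established separately. For the equality $(H_{\mathbb{Z}})^{-} = H$, I would invoke the Borel density theorem: since $H$ is a connected semisimple $\mathbb{Q}$-group, Borel--Harish-Chandra guarantees that $H_{\mathbb{Z}}$ is a lattice in $H_{\mathbb{R}}$, and since $H_{\mathbb{R}}$ has no compact factor, this lattice is Zariski dense in $H$.

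For the finiteness of $[({\rm N}_{G}(H)_{\mathbb{Z}})^{-} : H]$, the hypothesis ${\rm N}_{G}(H)^{0} = H$ gives $[{\rm N}_{G}(H) : H] < \infty$. Since ${\rm N}_{G}(H)$ is itself a $\mathbb{Q}$-subgroup of $G$ (normalizers of $\mathbb{Q}$-subgroups are defined over $\mathbb{Q}$), we have the inclusions $H = (H_{\mathbb{Z}})^{-} \subset ({\rm N}_{G}(H)_{\mathbb{Z}})^{-} \subset {\rm N}_{G}(H)$, so the index in question is bounded by $[{\rm N}_{G}(H) : H]$ and is therefore finite. This shows $H \in \mathscr{S}(G)$.

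There is no substantive obstacle; the proof is essentially a packaging of the Borel density theorem together with the standard Zariski-closure-of-finite-index-subgroup trick that already appears in Proposition~\ref{prop-tori}. The only small point requiring care is the observation that taking Zariski closures preserves finite index, which justifies identifying the identity component of the closure of a finite index subgroup with that of the ambient group.
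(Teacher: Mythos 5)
Your proof is correct and follows essentially the same route as the paper's: the Zariski-closure trick for (i) and Borel--Harish-Chandra plus Borel density for (ii), with the details (finite index is preserved under Zariski closure, identification of identity components) properly filled in. You also supply the verification that $[({\rm N}_{G}(H)_{\mathbb{Z}})^{-}:H]<\infty$ in (ii), which the paper's proof leaves implicit; your argument for it is correct.
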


\begin{proof}
We owe this lemma to argument in Section 6 of \cite{burger-harpe}.
Pick $H\in \mathscr{S}(G)$ and $\gamma \in {\rm LQN}_{G_{\mathbb{Z}}}({\rm N}_G(H)_{\mathbb{Z}})$.
There then exists a finite index subgroup of ${\rm N}_G(H)_{\mathbb{Z}}$ contained in $\gamma {\rm N}_G(H)_{\mathbb{Z}}\gamma^{-1}$.
Taking the closure, we obtain the inclusion $H<\gamma H\gamma^{-1}$.
We thus have $H=\gamma H\gamma^{-1}$ and $\gamma \in {\rm N}_G(H)_{\mathbb{Z}}$. 
The equality in assertion (i) is proved.

In the notation in assertion (ii), it is known that $H_{\mathbb{Z}}$ is a lattice in $H_{\mathbb{R}}$ and that any lattice in $H_{\mathbb{R}}$ is Zariski dense in $H$. 
These two facts are due to \cite{b-hc} and \cite{borel-density}, respectively.
The equality $(H_{\mathbb{Z}})^-=H$ thus holds.
\end{proof}

\begin{ex}
Let $G$ be a connected algebraic $\mathbb{Q}$-group $G$ which is $\mathbb{Q}$-simple, that is, there is no connected normal $\mathbb{Q}$-subgroup of $G$ other than $\{ e\}$ and $G$.
Let $H$ be a maximal connected $\mathbb{Q}$-subgroup of $G$.
We suppose that $H$ is semisimple and that the semisimple Lie group $H_{\mathbb{R}}$ has no compact factor.
The maximality of $H$ implies that ${\rm N}_G(H)^0$ equals either $G$ or $H$.
Since $G$ is $\mathbb{Q}$-simple, ${\rm N}_G(H)^0$ equals $H$.
We thus have $H\in \mathscr{S}(G)$.

Maximal subgroups of the classical groups are classified by Dynkin \cite{dynkin}.
If $n$ is an even positive integer, then the symplectic group $Sp(n, \mathbb{C})$ is a maximal connected closed subgroup of $SL(n, \mathbb{C})$. 
\end{ex}

\subsection{Diagonal subgroups}\label{subsec-diag}

We first introduce the notation employed throughout this subsection.

\begin{notation}\label{notation-diag}
We fix positive integers $n_1$, $n_2$ with $n=n_1+n_2\geq 3$.
We set
\[G=SL(n, \mathbb{R}),\quad G_1=SL(n_1, \mathbb{R}),\quad G_2=SL(n_2, \mathbb{R}).\]
For $g_i\in GL(n_i, \mathbb{R})$ with $i=1, 2$, we define $\diag(g_1, g_2)$ as the $n$-by-$n$ matrix
\[
\left(
\begin{array}{cc}
g_1 & 0\\
0 & g_2\\
\end{array}
\right)\in GL(n, \mathbb{R}).
\]
For subgroups $H_1<GL(n_1, \mathbb{R})$ and $H_2<GL(n_2, \mathbb{R})$, we denote by $\diag(H_1, H_2)$ the subgroup of $GL(n, \mathbb{R})$ consisting of all elements of the form $\diag(g_1, g_2)$
with $g_i\in H_i$ for each $i=1, 2$.
For a subgroup $K$ of $G$, we set $K_{\mathbb{Z}}=K\cap SL(n, \mathbb{Z})$. Let $p\colon SL(n, \mathbb{R})\rightarrow PSL(n, \mathbb{R})$ denote the natural quotient map.
We put
\[H=\diag(G_1, G_2),\quad \Gamma =SL(n, \mathbb{Z}),\quad A={\rm N}_G(H)_{\mathbb{Z}}\]
and put $\bar{\Gamma}=p(\Gamma)$ and $\bar{A}=p(A)$.
Let $I$, $I_1$ and $I_2$ denote the neutral elements of $G$, $G_1$ and $G_2$, respectively.
\end{notation}

We investigate the amalgamated free product $\Gamma \ast_A\Gamma$ and $\bar{\Gamma}\ast_{\bar{A}}\bar{\Gamma}$.
The latter is isomorphic to the quotient of $\Gamma \ast_A\Gamma$ by its center.

\begin{rem}\label{rem-not-ddagger}
In Notation \ref{notation-diag}, the centralizer of $A$ in $G$ contains all matrices of the form $\diag(\lambda^{n_2}I_1, \lambda^{-n_1}I_2)$ for any non-zero $\lambda \in \mathbb{R}$.
The images of these matrices via the natural projection $p$ are non-trivial if $\lambda \not\in \{ \pm 1\}$.
It follows that $\bar{\Gamma}\ast_{\bar{A}}\bar{\Gamma}$ does not satisfy condition (f) in Assumption $(\ddagger)$.

If either $n_1$ or $n_2$ is even, then the center of $\bar{A}$ is non-trivial and hence $\bar{\Gamma}\ast_{\bar{A}}\bar{\Gamma}$ is not coupling rigid with respect to the commensurator by Proposition \ref{prop-not-coup}.
\end{rem}

The following theorem due to Kazhdan-Margulis states the existence of a lower bound of the covolumes of lattices in semisimple Lie groups.
This lower bound is directly linked with the size of groups which are commensurable with a lattice in $G$.
As a result, it is shown that Conditions $(\mathsf{V})$ and $(\mathsf{E})$ introduced in Notation \ref{notation-vef} are fulfilled by $\Gamma$ and $A$ in Notation \ref{notation-diag}.

\begin{thm}[\ci{Corollary 11.9}{rag-book}]\label{thm-kazhdan-margulis}
Let $G$ be a connected semisimple Lie group without compact factors and $m$ a Haar measure on $G$.
Then there exists a positive constant $M$ satisfying the following:
For any discrete subgroup $\Gamma$ of $G$, the total volume of the homogeneous space $G/\Gamma$ with respect to the measure on it induced by $m$ is at least $M$.
\end{thm}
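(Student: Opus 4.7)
The plan is to combine the Zassenhaus--Kazhdan--Margulis neighborhood lemma with a volume pigeonhole argument. First I would produce a symmetric open neighborhood $U$ of the identity in $G$ such that for every discrete subgroup $\Gamma$ of $G$, the subgroup $\langle \Gamma\cap U\rangle$ is nilpotent, and in fact contained in a connected nilpotent Lie subgroup of $G$. This is the classical Zassenhaus lemma, whose proof relies on the Baker--Campbell--Hausdorff formula applied to iterated commutators: one picks $U$ so small that for any $g_1,\ldots,g_k\in U$ the nested commutators $[g_1,[g_2,[\cdots,g_k]]]$ contract into an arbitrarily small neighborhood of $e$, and discreteness of $\Gamma$ then forces the lower central series of $\langle \Gamma\cap U\rangle$ to terminate.

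Next I would upgrade this to the following sharper statement: there exists a neighborhood $W$ of $e$ in $G$ such that for every discrete subgroup $\Gamma$ of $G$, some conjugate $g\Gamma g^{-1}$ satisfies $(g\Gamma g^{-1})\cap W=\{e\}$. This is where the assumption that $G$ has no compact factors enters decisively. If no such $g$ existed for a given $\Gamma$, then every conjugate of $\Gamma$ would meet a fixed small neighborhood non-trivially, and the Zassenhaus step would produce, for each $g$, a non-trivial connected nilpotent Lie subgroup $N_g$ containing elements of $g\Gamma g^{-1}$. Since $G$ is semisimple without compact factors, every connected nilpotent Lie subgroup of $G$ lies in the unipotent radical of some proper parabolic subgroup $P$, and one may dilate $N_g$ by conjugating along a one-parameter subgroup of a maximal $\mathbb{R}$-split torus normalizing $P$; this Jacobson--Morozov-style contraction, combined with a compactness argument in the space of conjugates, produces the desired $g$.

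With the sharpened lemma in hand, the covolume bound follows from an elementary pigeonhole argument. I would choose a symmetric open neighborhood $V$ of $e$ with $VV^{-1}\subset W$ and set $M=m(V)$. Given any discrete subgroup $\Gamma$, pick $g\in G$ with $(g\Gamma g^{-1})\cap W=\{e\}$. The natural map $V\to G/(g\Gamma g^{-1})$ is then injective, because $v_1(g\Gamma g^{-1})=v_2(g\Gamma g^{-1})$ would force $v_2^{-1}v_1\in (g\Gamma g^{-1})\cap VV^{-1}=\{e\}$. Hence the induced measure on $G/(g\Gamma g^{-1})$ has total mass at least $m(V)=M$, and since conjugation preserves the Haar measure and the covolume, the same lower bound transfers to $G/\Gamma$. (When $\Gamma$ is not a lattice the covolume is infinite and the bound is trivial.)

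The main obstacle will be the upgrade from the Zassenhaus lemma to the conjugation statement, i.e., the step that genuinely uses the ``no compact factors'' hypothesis. In a general connected Lie group a discrete subgroup can accumulate at the identity in every conjugate (for example, lattices inside nilpotent Lie groups themselves), so some non-trivial structural input from semisimplicity is unavoidable. Once the dilation/contraction argument that moves the small part of $\Gamma$ away from $e$ by conjugation is secured, the deduction of the covolume bound is essentially formal.
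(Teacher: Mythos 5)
The paper does not prove this statement at all: it is quoted verbatim from Raghunathan's book (Corollary 11.9 there), so there is no internal proof to compare against, and your proposal is in effect an attempt to reconstruct the Kazhdan--Margulis argument. Your overall architecture is the right one and matches the source: a Zassenhaus neighborhood, an upgrade to the statement that some conjugate of $\Gamma$ meets a fixed neighborhood $W$ only in $\{e\}$, and then the injectivity of $V\to G/\Gamma$ for a symmetric $V$ with $V\cdot V\subset W$. That last step, including the reduction from $g\Gamma g^{-1}$ back to $\Gamma$ via unimodularity of $G$ and the remark that non-lattices have infinite covolume, is complete and correct.

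The gap is in your justification of the middle step, and it is a genuine one. The claim that every connected nilpotent Lie subgroup of a semisimple $G$ without compact factors lies in the unipotent radical of a proper parabolic is false: $SO(2)\subset SL(2,\mathbb{R})$ is connected and abelian but compact, hence contained in no unipotent subgroup. This is not a peripheral issue, because the discrete groups one must handle at this stage really can be of this elliptic type --- the cyclic group generated by a rotation through angle $2\pi/n$ is a discrete subgroup of $SL(2,\mathbb{R})$ with a non-trivial element arbitrarily close to $e$, and its Zassenhaus envelope is the compact torus $SO(2)$, not a unipotent group. (Conjugation by the split torus does still expand such elements, since it blows up their off-diagonal part, but your stated reason for why a suitable contracting/expanding one-parameter subgroup exists does not apply to them.) The ``compactness argument in the space of conjugates'' is also not something that can be waved at: Chabauty limits of conjugates of a discrete group need not be discrete, and the actual proof (Raghunathan, Theorem 11.7) instead runs a careful induction on the nilpotent group $\langle\Gamma\cap U\rangle$, using the hypothesis that $G$ has no compact factors precisely to exclude the case where this nilpotent envelope is a compact torus --- the case in which the theorem genuinely fails, as the subgroups $\mathbb{Z}/n\mathbb{Z}\subset SO(2)$ with covolume $m(SO(2))/n\to 0$ show. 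So the plan identifies the hard point correctly but the mechanism you propose for it would not go through as written.
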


\begin{prop}\label{prop-kazhdan-margulis}
Let $G$ be a connected semisimple Lie group without compact factors.
We denote by $\imath \colon G\rightarrow \aut (\ad G)$ the natural homomorphism.
Let $\Gamma$ be a lattice in $G$.
Then there exists a positive constant $c$ with the following property:
For any subgroup $\Delta$ of $\aut(\ad G)$ with $\Delta \asymp \imath(\Gamma)$, we have the inequality
\[\frac{[\Delta :\imath(\Gamma)\cap \Delta]}{[\imath(\Gamma): \imath(\Gamma)\cap \Delta]}\leq c.\]
\end{prop}

\begin{proof}
Let $m$ be the Haar measure on $\ad G$.
For a discrete subgroup $\Lambda$ of $\ad G$, we denote by $m(\ad G/\Lambda)$ the total volume of $\ad G/\Lambda$ with respect to the measure on it induced by $m$.
Let $\Delta$ be any subgroup of $\aut(\ad G)$ with $\Delta \asymp \imath(\Gamma)$ and put $\Delta_1=\Delta \cap \ad G$.
We then have
\begin{align*}
\frac{[\Delta :\imath(\Gamma)\cap \Delta]}{[\imath(\Gamma): \imath(\Gamma)\cap \Delta]}&\leq \frac{[\aut(\ad G): \ad G][\Delta_1: \imath(\Gamma)\cap \Delta_1]}{[\imath(\Gamma): \imath(\Gamma)\cap \Delta_1]}\\
&\leq [\aut(\ad G): \ad G]m(\ad G/\imath(\Gamma))m(\ad G/\Delta_1)^{-1}\\
&\leq [\aut(\ad G): \ad G]m(\ad G/\imath(\Gamma))M^{-1},
\end{align*}
where $M$ is a positive constant obtained by applying Theorem \ref{thm-kazhdan-margulis} to $\ad G$.
Since the index $[\aut(\ad G): \ad G]$ is finite, the proposition follows.
\end{proof}

\begin{lem}\label{lem-diag-vef}
In Notation \ref{notation-diag}, the following assertions hold:
\begin{enumerate}
\item We have $[A: H_{\mathbb{Z}}]=2$ if $n_1\neq n_2$, and $[A: H_{\mathbb{Z}}]=4$ if $n_1=n_2$.
\item The equality ${\rm LQN}_{\Gamma}(A)=A$ holds.
\item Let $\imath$ denote the natural homomorphism from $G$ into $\aut(\ad G)$. 
Then the pair $(\Gamma, (\aut(\ad G), \imath))$ satisfies Condition $(\mathsf{V})$.
\item The triplet $\mathscr{T}=(\Gamma, A, (\aut(\ad G), \imath))$ satisfies Condition $(\mathsf{E})$.
\end{enumerate}
\end{lem}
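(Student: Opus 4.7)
I will treat each of the four assertions in turn. Parts (i), (ii) and (iii) are essentially direct, while (iv) requires some bookkeeping of finite indices.

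For (i), I will explicitly determine $N_G(H)$. When $n_1\neq n_2$, the two simple factors of $H$ have distinct dimensions, so any element of $N_G(H)$ must preserve each factor, which in turn forces it to preserve the subspace decomposition $\mathbb{R}^n=\mathbb{R}^{n_1}\oplus\mathbb{R}^{n_2}$. This gives $N_G(H)=\{\diag(a,b):a\in GL(n_1,\mathbb{R}),\ b\in GL(n_2,\mathbb{R}),\ \det a\cdot\det b=1\}$, so $A$ consists of block-diagonal integer matrices with $\det a=\det b\in\{\pm 1\}$; the quotient $A/H_\mathbb{Z}$ then has order two, distinguished by the common sign. When $n_1=n_2$, one has in addition a signed block-swap element (a suitably rescaled anti-diagonal permutation matrix) belonging to $A$, contributing two further cosets of $H_\mathbb{Z}$ in $A$, so $[A:H_\mathbb{Z}]=4$.

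For (ii), given $\gamma\in{\rm LQN}_\Gamma(A)$, I would set $\Delta=H_\mathbb{Z}\cap\gamma A\gamma^{-1}$, a finite-index subgroup of $H_\mathbb{Z}$. By Borel--Harish-Chandra, $H_\mathbb{Z}$ is a lattice in the connected semisimple Lie group $H$, and by the Borel density theorem any finite-index subgroup is Zariski dense in $H$. Passing to the Zariski closure of the inclusion $\gamma^{-1}\Delta\gamma\subset A\subset N_G(H)$ (and using that $N_G(H)$ is Zariski closed) gives $\gamma^{-1}H\gamma\subset N_G(H)$. The subgroup $\gamma^{-1}H\gamma$ is connected and semisimple, so it lies in $N_G(H)^0$; a Lie-algebra computation shows that the normalizer of $\mathfrak{h}=\mathfrak{sl}_{n_1}\oplus\mathfrak{sl}_{n_2}$ in $\mathfrak{sl}_n$ is $\mathfrak{h}$ plus a one-dimensional central torus, whose derived subalgebra is $\mathfrak{h}$ itself. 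Hence $\gamma^{-1}H\gamma$, being perfect, sits inside $H$; equality of dimensions gives $\gamma^{-1}H\gamma=H$ and $\gamma\in A$.

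Assertion (iii) is an immediate application of Corollary \ref{cor-kazhdan-margulis} to the lattice $\Gamma=SL(n,\mathbb{Z})$ in the connected semisimple Lie group $G=SL(n,\mathbb{R})$ without compact factors.

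For (iv), my plan is to identify the algebraic envelope of $\Delta_0$ and then apply Kazhdan--Margulis to $\ad H$. Every $g\in\Delta_0$ commensurates $\imath(A)$, so the Zariski-closure argument used in (ii), now carried out in $\aut(\ad G)$ and using that the finite-index subgroup $\imath(H_\mathbb{Z})$ of $\imath(A)$ is Zariski dense in $\ad H$, forces $g$ to normalize $\ad H$. Hence $\Delta_0\subset N:=N_{\aut(\ad G)}(\ad H)$, and a standard argument shows $[N:\ad H]$ is finite. Setting $\Delta_{00}=\Delta_0\cap\ad H$ and $A_0=\imath(A)\cap\ad H$, one verifies using the structure of $\imath$ that $A_0=\imath(H_\mathbb{Z})$ is a lattice in $\ad H$ and that $[\imath(A):A_0]\leq [A:H_\mathbb{Z}]\leq 4$. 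Since $\Delta_{00}\asymp A_0$ in $\ad H$, Theorem \ref{thm-kazhdan-margulis} for $\ad H$ bounds $m(\ad H/\Delta_{00})$ from below by a universal constant, and the covolume identity then yields
\[\frac{[\Delta_{00}:\Delta_{00}\cap A_0]}{[A_0:\Delta_{00}\cap A_0]}=\frac{m(\ad H/A_0)}{m(\ad H/\Delta_{00})}\leq D\]
for a constant $D$ depending only on the fixed lattice $A_0$. The ratio in Condition $(\mathsf{E})$ differs from the displayed one by the finite factors $[\Delta_0:\Delta_{00}]\leq[N:\ad H]$ and $[\imath(A):A_0]$, both of which are uniformly bounded. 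The main subtlety is precisely this last bookkeeping: verifying that the two finite-index corrections $\Delta_0\to\Delta_{00}$ and $\imath(A)\to A_0$ contribute absolutely bounded factors, and handling carefully the possible contribution of the (at-most-order-two) kernel of $\imath$.
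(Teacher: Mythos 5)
Parts (i)--(iii) are correct and essentially coincide with the paper's treatment: (i) is the block-preservation analysis of ${\rm N}_{G}(H)$ together with the extra sign and swap cosets; (ii) is the Zariski-closure argument (which the paper packages as Lemma \ref{lem-s-lqn}, your variant via the perfectness of $\gamma^{-1}H\gamma$ and the computation of the normalizer of $\mathfrak{h}$ is a fine alternative ending); (iii) is an immediate application of Corollary \ref{cor-kazhdan-margulis}.

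Part (iv), however, rests on a false claim: that $[N:\ad H]<\infty$ for $N={\rm N}_{\aut(\ad G)}(\ad H)$. This index is infinite. The group ${\rm N}_{G}(H)$ contains the noncompact one-parameter group $\lambda \mapsto \diag(\lambda^{n_{2}}I_{1}, \lambda^{-n_{1}}I_{2})$, which even centralizes $H$; its image in $\ad G$ is a noncompact closed subgroup normalizing $\ad H$ and meeting it only in a finite set, so $N$ contains $\ad H$ with infinite index. (This is precisely the phenomenon recorded in Remark \ref{rem-not-ddagger}, and it is the reason these examples fail the condition (f) of Assumption $(\ddagger)$.) As a result your bound $[\Delta_{0}:\Delta_{00}]\leq [N:\ad H]$ is vacuous and the finite-index bookkeeping at the end of (iv) collapses: nothing so far prevents $\Delta_{0}$ from having infinite image in $N/\ad H$. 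The missing ingredient is to exploit the commensurability of $\Delta_{0}$ with the \emph{discrete} group $\imath(A)$: every element of $A$ is an integral block matrix, hence has block determinants in $\{\pm 1\}$, so if some $g=\diag(g_{1},g_{2})\in \imath^{-1}(\Delta_{0})$ had $\det g_{1}\notin\{\pm 1\}$, then $g^{k}\in A$ only for $k=0$ and the infinite cyclic group generated by $\imath(g)$ would meet $\imath(A)$ trivially, contradicting $\Delta_{0}\asymp\imath(A)$. This forces $\imath^{-1}(\Delta_{0})$ into the determinant-$\pm 1$ part of ${\rm N}_{G}(H)$, which contains $H=\diag(G_{1},G_{2})$ with index at most four; only after this reduction does the Kazhdan--Margulis bound for $\ad H$ (your $\Delta_{00}$, $A_{0}$ computation, which is otherwise sound) yield Condition $(\mathsf{E})$ with a uniform constant.
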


\begin{proof}
Let $\{ e_1,\ldots, e_n\}$ denote the standard basis for $\mathbb{R}^n$.
Any element of ${\rm N}_G(H)$ preserves the decomposition of the vector space $\mathbb{R}^n$ into the two subspaces spanned by $e_1,\ldots, e_{n_1}$ and spanned by $e_{n_1+1},\ldots, e_n$.
It follows that the subgroup
\[G_0=\diag(GL(n_1, \mathbb{R}), GL(n_2, \mathbb{R}))\cap SL(n, \mathbb{R})\]
of ${\rm N}_G(H)$ has index one if $n_1\neq n_2$, and has index two if $n_1=n_2$. 
Assertion (i) then follows.
Assertions (ii) and (iii) follow from Lemma \ref{lem-s-lqn} and Proposition \ref{prop-kazhdan-margulis}, respectively.

We next prove assertion (iv).
Choose a subgroup $\Delta$ of $\aut (\ad G)$ and a subgroup $\Delta_0$ of $\Delta$ with $\Delta \asymp \imath(\Gamma)$ and $\Delta_0\asymp \imath(A)$.
The group $\imath^{-1}(\Delta_0)$ is contained in ${\rm N}_G(H)$ because it is contained in $\comm_G(A)$.
There thus exists a subgroup of $\imath^{-1}(\Delta_0)$ of index at most two contained in $G_0$.
If there were $g\in \imath^{-1}(\Delta_0)$ of the form $g=\diag(g_1, g_2)$ with $\det g_1\neq \pm 1$, then $\imath^{-1}(\Delta_0)$ and $A$ would not be commensurable in $G$ because for any integer $n$, we have $g^n\in A$ if and only if $n=0$.
There thus exists a subgroup of $\imath^{-1}(\Delta_0)$ of index at most four contained in $\diag(G_1, G_2)$.
Assertion (iv) then follows from Proposition \ref{prop-kazhdan-margulis} because $\diag(G_1, G_2)$ is a connected semisimple Lie group without compact factors.
\end{proof}

\begin{rem}
The triplet $\mathscr{T}=(\Gamma, A, (\aut(\ad G), \imath))$ in Lemma \ref{lem-diag-vef} does not satisfy Condition $(\mathsf{F})$ in Notation \ref{notation-vef}.
For if either $n_1$ or $n_2$ is even, then $\diag(-I_1, I_2)$ or $\diag(I_1, -I_2)$ lies in $G$.
Each of them commutes any element of $H$.
It follows that Condition $(\mathsf{F})$ is not satisfied.
If both $n_1$ and $n_2$ are odd, then we can show that Condition $(\mathsf{F})$ is not satisfied by using the element of $\aut(\ad G)$ defined by the conjugation by $\diag(-I_1, I_2)$.
\end{rem}

By Corollary \ref{cor-coup-tree}, if $\Lambda$ is a discrete group ME to $\bar{\Gamma}\ast_{\bar{A}}\bar{\Gamma}$, then $\Lambda$ acts on the Bass-Serre tree $T$ for $\bar{\Gamma}\ast_{\bar{A}}\bar{\Gamma}$.
The following theorem shows finiteness properties of this action of $\Lambda$ on $T$.

\begin{thm}\label{thm-diag-cocompact}
In Notation \ref{notation-diag}, we define the amalgamated free product $\Gamma_0=\bar{\Gamma}\ast_{\bar{A}}\bar{\Gamma}$ and denote by $T$ the associated Bass-Serre tree.
If $\Lambda$ is a discrete group ME to $\Gamma_0$, then there exist a subgroup $\Lambda_+$ of $\Lambda$ of index at most two and a homomorphism $\rho \colon \Lambda_+\rightarrow \aut(T)$ satisfying the following assertions:
\begin{enumerate}
\item For each $s\in V(T)\cup E(T)$, we denote by $\Lambda_s$ the stabilizer of $s$ in $\Lambda_+$.
If $v\in V(T)$, then $\Lambda_v$ is virtually isomorphic to $\bar{\Gamma}$.
If $e\in E(T)$, then $\Lambda_e$ is virtually isomorphic to $\bar{A}$.
\item For any $v\in V(T)$, the number of orbits for the action of $\Lambda_v$ on the set of vertices in the link of $v$ in $T$ is finite.
\item If there exists a uniform upper bound for the cardinalities of finite subgroups of $\Lambda$, then the action of $\Lambda_+$ on $T$ is cocompact.
\end{enumerate}
\end{thm}

\begin{proof}
Since $n\geq 3$, the group $\bar{\Gamma}$ satisfies property (T).
By Lemma \ref{lem-diag-vef} (ii), the group $\Gamma_0=\bar{\Gamma}\ast_{\bar{A}}\bar{\Gamma}$ fulfills all conditions in Assumption $(\star)$.
Let $(\Sigma, m)$ be a coupling of $\Gamma_0$ and $\Lambda$.
By Corollary \ref{cor-coup-tree}, there exist a homomorphism $\rho \colon \Lambda \rightarrow \aut^*(T)$ and an almost $(\Gamma_0\times \Lambda)$-equivariant Borel map $\Phi \colon \Sigma \rightarrow (\aut^*(T), \imath, \rho)$, where $\imath \colon \Gamma_0\rightarrow \aut^*(T)$ is the homomorphism associated with the action of $\Gamma_0$ on $T$.
As discussed in Section \ref{subsec-circ}, replacing $\rho$ and $\Phi$ appropriately, we may assume that $\Phi^{-1}(\aut(T))$ has positive measure.
We set
\[\Lambda_+=\rho^{-1}(\rho(\Lambda)\cap \aut(T)),\quad \Sigma_+=\Phi^{-1}(\aut(T)).\]
Note that $\Sigma_+$ is a coupling of $\Gamma_0$ and $\Lambda_+$.
Applying Lemma \ref{lem-small-coup} and Corollary \ref{cor-moore} to $\Sigma_+$, we obtain assertion (i).
Proposition \ref{prop-cocompact} and Lemma \ref{lem-diag-vef} (iii), (iv) then show assertions (ii) and (iii).
\end{proof}


\section{Additional results}\label{sec-add}

We present a few consequences of coupling rigidity of groups with respect to their commensurators and brief observations on groups ME to free products, without a precise proof.
These results can be deduced by known techniques.

\subsection{Lattice embeddings}\label{subsec-lat-emb}

Let $\Gamma$ be a discrete group.
We denote by ${\rm vZ}(\Gamma)$ the virtual center of $\Gamma$, i.e., the subgroup of $\Gamma$ consisting of all elements that centralize a finite index subgroup of $\Gamma$.
Note that $\Gamma$ is ICC if and only if ${\rm vZ}(\Gamma)$ is trivial. 
We set $\bar{\Gamma}=\Gamma /{\rm vZ}(\Gamma)$ and denote by $q\colon \Gamma \rightarrow \bar{\Gamma}$ the quotient map.
If ${\rm vZ}(\Gamma)$ is finite, then $\bar{\Gamma}$ is ICC.

We determine locally compact second countable groups containing a lattice isomorphic to $\Gamma$ on the assumption that ${\rm vZ}(\Gamma)$ is finite and $\bar{\Gamma}$ is coupling rigid with respect to $\comm(\bar{\Gamma})$.
We note that on this assumption, $\Gamma$ is coupling rigid with respect to $(\comm(\bar{\Gamma}), {\bf i}\circ q)$, where ${\bf i}\colon \bar{\Gamma}\rightarrow \comm(\bar{\Gamma})$ is the natural homomorphism.
This application of coupling rigidity is originally introduced by Furman \cite{furman-lat}. 
The following theorem is proved along the argument in Section 2 of \cite{furman-lat}.

\begin{thm}
Let $\Gamma$ be a discrete group such that ${\rm vZ}(\Gamma)$ is finite and $\comm(\bar{\Gamma})$ is countable.
Suppose that $\bar{\Gamma}$ is coupling rigid with respect to $\comm(\bar{\Gamma})$.
Let $H$ be a locally compact second countable group and $\tau \colon \Gamma \rightarrow H$ an injective homomorphism such that $\tau(\Gamma)$ is a lattice in $H$.
Then there exists a continuous homomorphism $\Phi_0\colon H\rightarrow \comm(\bar{\Gamma})$ satisfying the following:
\begin{itemize} 
\item The group $K=\ker \Phi_0$ is compact.
\item The equality $\Phi_0(\tau(\gamma))={\bf i}\circ q(\gamma)$ holds for any $\gamma \in \Gamma$.
\item The group $H_0=\Phi_0^{-1}({\bf i}(\bar{\Gamma}))$ is a finite index subgroup of $H$.
\item The group $C=\tau(\Gamma)\cap K$ is equal to $\tau({\rm vZ}(\Gamma))$.
\item Let $\tau(\Gamma)\ltimes K$ be the semi-direct product defined by the action of $\tau(\Gamma)$ on $K$ by conjugation.
We then have the short exact sequence
\[1\rightarrow C\stackrel{j}{\rightarrow}\tau(\Gamma)\ltimes K\stackrel{p}{\rightarrow}H_0\rightarrow 1\]
of groups, where the homomorphisms $j$, $p$ are defined by $j(c)=(c, c^{-1})$ and $p(\tau(\gamma), k)=\tau(\gamma)k$, respectively, for $c\in C$, $\gamma \in \Gamma$ and $k\in K$. 
\end{itemize}
In particular, if ${\rm vZ}(\Gamma)$ is trivial, then $H_0$ is isomorphic to $\tau(\Gamma)\ltimes K$.
\end{thm}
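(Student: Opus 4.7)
View $H$ with its Haar measure as a self-coupling of $\Gamma$ via $(\gamma_{1}, \gamma_{2}) \cdot h = \tau(\gamma_{1}) h \tau(\gamma_{2})^{-1}$; $\tau(\Gamma)$ being a lattice guarantees fundamental domains of finite measure. Coupling rigidity of $\bar{\Gamma}$ with respect to $\comm(\bar{\Gamma})$ passes to coupling rigidity of $\Gamma$ with respect to $(\comm(\bar{\Gamma}), {\bf i} \circ q)$: any self-coupling of $\Gamma$ carries a free action of the finite (normal) group ${\rm vZ}(\Gamma) \times {\rm vZ}(\Gamma)$, and composing the $\bar{\Gamma}$-equivariant map for the resulting self-coupling of $\bar{\Gamma}$ with the quotient projection yields the required $\Gamma$-equivariant map (condition (b) of Definition~\ref{defn-coup-rigid} is inherited). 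Applied to $H$, this produces an essentially unique almost $(\Gamma \times \Gamma)$-equivariant Borel map $\Phi \colon H \to \comm(\bar{\Gamma})$.

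Next, I would form the auxiliary self-coupling $Q = H \times_{\tau(\Gamma)} H$ of $\Gamma$, the quotient of $H \times H$ by the diagonal right $\tau(\Gamma)$-action, with $\Gamma \times \Gamma$ acting by left translation on each factor; one checks directly that $Q$ is a self-coupling. Both Borel maps
\[
[h_{1}, h_{2}] \mapsto \Phi(h_{1}) \Phi(h_{2})^{-1}, \qquad [h_{1}, h_{2}] \mapsto \Phi(h_{1} h_{2}^{-1})
\]
from $Q$ to $\comm(\bar{\Gamma})$ are well-defined and almost $(\Gamma \times \Gamma)$-equivariant (the first by right-$\tau(\Gamma)$-equivariance of $\Phi$, the second by pulling $\Phi$ back along the equivariant Borel map $[h_{1}, h_{2}] \mapsto h_{1} h_{2}^{-1}$). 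Essential uniqueness (Lemma~\ref{lem-uni-quo}(i), lifted as in the first paragraph) forces them to agree, yielding $\Phi(h_{1} h_{2}^{-1}) = \Phi(h_{1}) \Phi(h_{2})^{-1}$ for a.e. $(h_{1}, h_{2}) \in H \times H$. A Fubini argument then produces a genuine Borel homomorphism $\Phi_{0} \colon H \to \comm(\bar{\Gamma})$ agreeing with $\Phi$ almost everywhere. Since $\comm(\bar{\Gamma})$ is countable and discrete, at least one fibre of $\Phi_{0}$ has positive Haar measure; Steinhaus then makes $\ker \Phi_{0}$ open, so $\Phi_{0}$ is continuous. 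The equivariance of $\Phi$ translates into $\Phi_{0} \circ \tau = {\bf i} \circ q$.

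Finally, I would analyze $K = \ker \Phi_{0}$ and $H_{0} = \Phi_{0}^{-1}({\bf i}(\bar{\Gamma}))$. Finiteness of ${\rm vZ}(\Gamma)$ makes $\bar{\Gamma}$ ICC, so ${\bf i}|_{\bar{\Gamma}}$ is injective (Lemma~\ref{lem-comm}(i)), whence $\tau(\Gamma) \cap K = \tau({\rm vZ}(\Gamma)) = C$ is finite. The composition $K \hookrightarrow H \twoheadrightarrow H/\tau(\Gamma)$ has fibres of constant cardinality $|C|$ and maps into a finite-measure space, so $K$ carries finite Haar measure and is therefore compact. Since $K \lhd H$, the set $H_{0} = K\tau(\Gamma) = \tau(\Gamma)K$ is an open subgroup, and the natural surjection $H/\tau(\Gamma) \to H/H_{0}$ has fibres of constant positive measure $m(K)/|C|$, forcing $[H : H_{0}] < \infty$. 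The short exact sequence is then a direct verification: $p(\tau(\gamma), k) = \tau(\gamma) k$ is a homomorphism $\tau(\Gamma) \ltimes K \to H_{0}$ (using normality of $K$), surjective onto $\tau(\Gamma)K = H_{0}$, with kernel $\{(\tau(c), \tau(c)^{-1}) : c \in {\rm vZ}(\Gamma)\} = j(C)$.

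The principal obstacle is the uniqueness step producing the multiplicative identity $\Phi(h_{1} h_{2}^{-1}) = \Phi(h_{1}) \Phi(h_{2})^{-1}$: the whole strategy hinges on setting up the auxiliary self-coupling $Q$ correctly and invoking essential uniqueness across the two natural $(\Gamma \times \Gamma)$-equivariant Borel maps defined on it. Once this is in hand, the remaining ingredients---lifting coupling rigidity across the finite virtual center, upgrading the a.e.\ homomorphism to a continuous one via Steinhaus, and the compact-kernel/finite-index bookkeeping---are standard.
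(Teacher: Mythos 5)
Your proposal is correct and follows essentially the route the paper itself indicates (it defers to the argument of \S 2 of Furman's \emph{Mostow--Margulis rigidity with locally compact targets}): treat $H$ with Haar measure as a self-coupling, extract the essentially unique equivariant map via coupling rigidity lifted across the finite virtual center, derive a.e.\ multiplicativity from uniqueness on the auxiliary coupling $H\times_{\tau(\Gamma)}H$, and upgrade to a continuous homomorphism with compact kernel. The kernel and index bookkeeping at the end also matches the statement's conclusions, so no gaps to report.
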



\subsection{Outer automorphism groups of equivalence relations}

Let $\Gamma$ be an ICC discrete group.
Assuming that $\Gamma$ is coupling rigid with respect to $\comm(\Gamma)$, we compute the outer automorphism groups of the equivalence relations for certain generalized Bernoulli actions of $\Gamma$.
We refer to \cite{kida-out} for the notation used below without a precise definition.
Let $\Gamma \c (X, \mu)$ be an ergodic f.f.m.p.\ action, called $\alpha$, and $\mathcal{R}$ the discrete measured equivalence relation associated with the action. 
We denote by $\varepsilon \colon \aut(\mathcal{R})\rightarrow {\rm Out}(\mathcal{R})$ the quotient map.
Let $\aut^*(\alpha)$ be the subgroup of $\aut(\mathcal{R})$ consisting of all $f\in \aut(X, \mu)$ such that there exists $\pi \in \aut(\Gamma)$ with
\[f(\gamma x)=\pi(\gamma)f(x),\quad \forall \gamma \in \Gamma,\ \textrm{a.e.\ }x\in X.\]
We define the subgroup ${\sf A}^*(\alpha)$ of ${\rm Out}(\mathcal{R})$ as $\varepsilon(\aut^*(\alpha))$.
The following theorem tells us that in some cases, the computation of ${\rm Out}(\mathcal{R})$ is reduced to the computation of $\aut^*(\alpha)$ or ${\sf A}^*(\alpha)$.

\begin{thm}\label{thm-out-ineq}
Let $\Gamma$ be an ICC discrete group such that $\comm(\Gamma)$ is countable and $\Gamma$ is coupling rigid with respect to $\comm(\Gamma)$.
We define $S(\Gamma)$ as the set of all conjugacy classes of ${\bf i}(\Gamma)$ in $\comm(\Gamma)$.
Let $\Gamma \c (X, \mu)$ be an ergodic f.f.m.p.\ action and call it $\alpha$.
Then we have the inequality
\[[{\rm Out}(\mathcal{R}): {\sf A}^*(\alpha)]\leq |S(\Gamma)|.\]
Moreover, the equality can be attained for some ergodic f.f.m.p.\ action $\alpha$ if $|S(\Gamma)|$ is finite.
If the action $\alpha$ is aperiodic, then ${\rm Out}(\mathcal{R})={\sf A}^*(\alpha)$.
\end{thm}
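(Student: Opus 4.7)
The plan is to apply coupling rigidity with respect to $\comm(\Gamma)$ to self-couplings of $\Gamma$ associated with elements of $\aut(\mathcal{R})$, paralleling the analysis carried out in \cite{kida-out}. First I would construct a map $\Psi \colon \aut(\mathcal{R}) \to S(\Gamma)$ as follows. For $f \in \aut(\mathcal{R})$, the twisted action $\gamma \cdot_f x = f(\gamma f^{-1}(x))$ has the same orbits as $\alpha$, so $f$ gives an OE between $\alpha$ and itself. Via the correspondence between (W)OE and isomorphisms of groupoids (Proposition \ref{prop-group-iso}), one obtains a self-coupling $\Sigma_f$ of $\Gamma$. Coupling rigidity with respect to $\comm(\Gamma)$ and the construction of Theorem \ref{thm-furman-rep} yield a homomorphism $\rho_f \colon \Gamma \to \comm(\Gamma)$ and an essentially unique almost $(\Gamma \times \Gamma)$-equivariant Borel map $\Sigma_f \to (\comm(\Gamma), {\bf i}, \rho_f)$. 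Since the associated OE cocycle takes values in $\Gamma$, the image $\rho_f(\Gamma)$ must be of the form $g_f\, {\bf i}(\Gamma)\, g_f^{-1}$ for some $g_f \in \comm(\Gamma)$, and I would set $\Psi(f)$ to be its conjugacy class in $S(\Gamma)$; well-definedness is ensured by the essential uniqueness in Lemma \ref{lem-uni-quo}.

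Next I would verify that $\Psi$ descends to a map on ${\rm Out}(\mathcal{R})$ and is trivial on $\varepsilon(\aut^{*}(\alpha)) = {\sf A}^{*}(\alpha)$. If $f$ lies in the full group $[\mathcal{R}]$, the associated self-coupling is the trivial one (this is essentially Lemma 2.30 of \cite{kida-survey}) and $\rho_f$ is conjugate in $\comm(\Gamma)$ to ${\bf i}$, so $\Psi(f)$ is the trivial class. If $f \in \aut^{*}(\alpha)$ with associated $\pi \in \aut(\Gamma)$, the OE cocycle is the constant cocycle $(\gamma, x) \mapsto \pi(\gamma)$, so $\rho_f = {\bf i} \circ \pi$ and $\rho_f(\Gamma) = {\bf i}(\Gamma)$, again the trivial class. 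The inequality $[{\rm Out}(\mathcal{R}): {\sf A}^{*}(\alpha)] \leq |S(\Gamma)|$ then reduces to the injectivity claim: if $\Psi(f_1) = \Psi(f_2)$ then $\varepsilon(f_1)\varepsilon(f_2)^{-1} \in {\sf A}^{*}(\alpha)$. Assuming equality of conjugacy classes, I would first modify $f_2$ by a suitable element of $[\mathcal{R}]$ (corresponding to altering the fundamental domain in $\Sigma_{f_2}$) so that $\rho_{f_1} = \rho_{f_2}$, after which essential uniqueness of the equivariant map forces $f_1 \circ f_2^{-1}$ to have a constant cocycle in $\Gamma$, placing it in $\aut^{*}(\alpha) \cdot [\mathcal{R}]$.

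For the aperiodicity statement, I note that aperiodicity of $\alpha$ implies that the action $\Gamma \c \Sigma_f/\Gamma$ is aperiodic for every $f$, which by the argument used in Lemma \ref{lem-image-atomic} and Section \ref{subsec-quo} forces the support of the equivariant map $\Sigma_f \to \comm(\Gamma)$ to lie in ${\bf i}(\Gamma)$ itself rather than in an arbitrary conjugate. Hence $\rho_f(\Gamma) = {\bf i}(\Gamma)$ and $\Psi \equiv [{\bf i}(\Gamma)]$, giving ${\rm Out}(\mathcal{R}) = {\sf A}^{*}(\alpha)$. For the attainability statement when $|S(\Gamma)|$ is finite, I would produce a concrete ergodic f.f.m.p.\ action realizing equality along the lines used in \cite{kida-out}: take a generalized Bernoulli action $\Gamma \c (X_0, \mu_0)^{\Gamma/H}$ for a suitable subgroup $H < \Gamma$, and for each representative $g \in \comm(\Gamma)$ of a class in $S(\Gamma)$ (represented by an isomorphism $\Gamma' \to \Gamma''$ between finite index subgroups) realize the corresponding class in $\Psi$ by a coordinate-permuting automorphism of $X_0^{\Gamma/H}$ built from that virtual automorphism.

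The main obstacle is the injectivity step in the second paragraph: the delicate point is to show that whenever two self-couplings produce subgroups of $\comm(\Gamma)$ that are conjugate rather than equal, one can absorb the ambient conjugation into an inner modification in $[\mathcal{R}]$ so as to match the equivariant maps on the nose, and then transport this into a genuine relation between the cocycles of $f_1$ and $f_2$. Here the essential uniqueness results of Lemma \ref{lem-uni-quo} and Lemma \ref{lem-red-comm}, combined with careful bookkeeping of the left- versus right-multiplication actions of $\Gamma$ on $\comm(\Gamma)$, should suffice, but the argument must be assembled with care to ensure that the modification genuinely lies in $[\mathcal{R}]$ rather than in some larger groupoid.
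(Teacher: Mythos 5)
Your proposal follows exactly the route the paper intends: the paper gives no proof of this theorem beyond the remark that it "is proved along the same idea as the proof of Theorems 1.1 and 1.2 in \cite{kida-out}," and your outline — attaching to each $f\in\aut(\mathcal{R})$ the self-coupling $\Sigma_{f}$, using coupling rigidity to extract the conjugate $g_{f}\,{\bf i}(\Gamma)\,g_{f}^{-1}$ as an invariant of the class of $f$ modulo ${\sf A}^{*}(\alpha)\,$, and handling the aperiodic case by forcing the image of the equivariant map into a single coset of ${\bf i}(\Gamma)$ normalizing it — is a faithful reconstruction of that argument, with the genuinely delicate injectivity step correctly identified. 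The only caveats are minor: in the aperiodic case the support of $(\Phi_{f})_{*}m$ lies in a single coset $g\,{\bf i}(\Gamma)$ with $g\in {\rm N}_{\comm(\Gamma)}({\bf i}(\Gamma))$ rather than in ${\bf i}(\Gamma)$ itself (which still yields the trivial class), and the injectivity step remains a plan rather than a completed proof — but that matches the level of detail the paper itself supplies.
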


This theorem is proved along argument in the proof of Theorems 1.1 and 1.2 in \cite{kida-out}, where the theorem is proved when $\Gamma$ is the mapping class group.
Note that if ${\bf i}(\Gamma)$ is a finite index subgroup of $\comm(\Gamma)$, then $|S(\Gamma)|$ is finite.

The following theorem computes the group ${\rm Out}(\mathcal{R})$ if $\mathcal{R}$ is associated with a certain generalized Bernoulli action.
The proof is obtained along argument in the proof of Proposition 5.3 in \cite{kida-out}.

\begin{thm}
Let $\Gamma$ be the group in Theorem \ref{thm-out-ineq}, $A$ an infinite proper subgroup of $\Gamma$ with ${\rm LQN}_{\Gamma}(A)=A$, and $(X, \mu)$ a non-trivial standard probability space, where a standard probability space is said to be non-trivial if there is no point in it whose measure is one.
If the generalized Bernoulli action $\Gamma \c (X, \mu)^{\Gamma /A}$ is essentially free, then the outer automorphism group of the associated equivalence relation is isomorphic to the group
\[\frac{\aut(A<\Gamma)}{\ad(\Gamma)\cap \aut(A<\Gamma)}\times \aut(X, \mu),\]
where $\aut(A<\Gamma)$ is the subgroup of $\aut(\Gamma)$ consisting of all automorphisms of $\Gamma$ preserving $A$, and $\ad(\Gamma)$ is the group of inner automorphisms of $\Gamma$.
\end{thm}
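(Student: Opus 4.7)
The plan is to reduce via Theorem \ref{thm-out-ineq} to a computation of $\mathsf{A}^{*}(\alpha)$ for $\alpha$ the generalized Bernoulli shift $\Gamma \c Y := (X,\mu)^{\Gamma/A}$ (with associated equivalence relation $\mathcal{R}$), and then identify $\mathsf{A}^{*}(\alpha)$ with the asserted product. Since $|A|=\infty$ and ${\rm LQN}_{\Gamma}(A)=A$, one first checks $[\Gamma:A]=\infty$, so every finite-index subgroup of $\Gamma$ has all orbits on $\Gamma/A$ infinite, and a standard Fourier computation then shows $\alpha$ is mixing (in particular aperiodic) when restricted to any such subgroup. Theorem \ref{thm-out-ineq} then yields ${\rm Out}(\mathcal{R})=\mathsf{A}^{*}(\alpha)$, reducing the problem to a description of $\mathsf{A}^{*}(\alpha)$.

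For the construction, each $\pi \in \aut(A<\Gamma)$ descends to a bijection $\bar{\pi}$ of $\Gamma/A$ via $\bar{\pi}(\gamma A)=\pi(\gamma)A$. Given $(\pi,g)\in\aut(A<\Gamma)\times\aut(X,\mu)$, I would set
\[F_{\pi,g}(y)_{\delta A}=g(y_{\bar{\pi}^{-1}(\delta A)}),\qquad y\in Y,\]
and verify directly that $F_{\pi,g}(\gamma \cdot y)=\pi(\gamma)\cdot F_{\pi,g}(y)$, so $F_{\pi,g}\in\aut^{*}(\alpha)$ with associated group automorphism $\pi$. The assignment $(\pi,g)\mapsto F_{\pi,g}$ is a homomorphism $\Psi$ into $\aut^{*}(\alpha)$; composing with the quotient to $\mathsf{A}^{*}(\alpha)$ yields $\bar{\Psi}$. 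For the kernel of $\bar{\Psi}$: if $\pi=\ad(\gamma_{0})$ with $\gamma_{0}\in{\rm N}_{\Gamma}(A)$ and $g=\textrm{id}$, then $F_{\pi,g}$ coincides with the shift by $\gamma_{0}$ and is therefore trivial in $\mathsf{A}^{*}(\alpha)$. Conversely, if $F_{\pi,g}$ lies in the full group $[\mathcal{R}]$, then there exists a measurable $\sigma \colon Y\to\Gamma$ with $F_{\pi,g}(y)=\sigma(y)\cdot y$; essential freeness together with the cocycle identity $\pi(\gamma)=\sigma(\gamma y)\gamma\sigma(y)^{-1}$ and ergodicity of each finite-index subgroup of $\Gamma$ forces $\sigma$ to be essentially constant with some value $\gamma_{0}\in\Gamma$, whence $\pi=\ad(\gamma_{0})$ and $\gamma_{0}\in{\rm N}_{\Gamma}(A)$ since $\pi$ preserves $A$. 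Comparing coordinates and using non-triviality of $(X,\mu)$ then forces $g=\textrm{id}$, so $\ker\bar{\Psi}=(\ad(\Gamma)\cap\aut(A<\Gamma))\times\{\textrm{id}\}$.

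Surjectivity of $\bar{\Psi}$ is the main obstacle. Given $f\in\aut^{*}(\alpha)$ conjugating the $\Gamma$-action by $\pi\in\aut(\Gamma)$, I must, after correcting $f$ by an element of $[\mathcal{R}]$, arrange that $\pi \in \aut(A<\Gamma)$ and $f=F_{\pi,g}$ for some $g\in\aut(X,\mu)$. My approach is to analyze the coordinate sub-$\sigma$-algebras $\mathcal{B}_{\delta A}$ of $Y$ generated by the $\delta A$-th coordinate projection. The shift satisfies $\gamma \cdot \mathcal{B}_{\delta A}=\mathcal{B}_{\gamma\delta A}$, and using ${\rm LQN}_{\Gamma}(A)=A$ one verifies that the stabilizer in $\Gamma$ of $\mathcal{B}_{\delta A}$ is precisely $\delta A\delta^{-1}$. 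The crux is a classification result internal to the shift: the $\{\mathcal{B}_{\delta A}\}_{\delta}$ exhaust the sub-$\sigma$-algebras of $Y$ of ``single-coordinate type'' that are invariant under some conjugate of $A$. The $(\Gamma,\pi)$-equivariance of $f$ sends $\mathcal{B}_{A}$ to $f(\mathcal{B}_{A})$, which is invariant under $\pi(A)$; the classification forces $f(\mathcal{B}_{A})=\mathcal{B}_{\delta_{0}A}$ for some $\delta_{0}\in\Gamma$, and hence $\pi(A)=\delta_{0}A\delta_{0}^{-1}$. Composing $f$ with the shift by $\delta_{0}^{-1}$ puts us in the situation $\pi\in\aut(A<\Gamma)$ and $f(\mathcal{B}_{\delta A})=\mathcal{B}_{\bar{\pi}(\delta A)}$ for every $\delta$. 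Transferring the analysis coordinate by coordinate, $f$ must act on every factor through a common $g\in\aut(X,\mu)$ (common by $\Gamma$-equivariance and ergodicity), yielding $f=F_{\pi,g}$ and completing surjectivity. The technical heart of the argument is thus the sub-$\sigma$-algebra classification, carried out along the lines of Proposition 5.3 in \cite{kida-out}.
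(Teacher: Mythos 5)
Your overall route---reduce to ${\sf A}^{*}(\alpha)$ via Theorem \ref{thm-out-ineq} using aperiodicity, then identify ${\sf A}^{*}(\alpha)$ by a coordinate analysis modelled on Proposition 5.3 of \cite{kida-out}---is the one the paper intends (the paper itself only cites that proposition). But two steps need repair. First, the generalized Bernoulli action $\Gamma \c (X, \mu)^{\Gamma /A}$ is \emph{not} mixing: for the coordinate $A\in \Gamma/A$ the return set $\{\, \gamma \in \Gamma \mid \gamma A=A\, \}=A$ is infinite, so the standard mixing criterion for generalized Bernoulli shifts fails. What you need, and what is true, is only aperiodicity: since $A$ is infinite and proper with ${\rm LQN}_{\Gamma}(A)=A$ one has $[\Gamma :A]=\infty$, hence every finite index subgroup of $\Gamma$ has all orbits on $\Gamma /A$ infinite and its generalized Bernoulli action is ergodic. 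So the reduction to ${\sf A}^{*}(\alpha)$ survives, but not for the reason you give.

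Second, the ``classification of sub-$\sigma$-algebras of single-coordinate type'' that you correctly identify as the technical heart is left undefined, and nothing in your argument explains why $f(\mathcal{B}_{A})$ should be of that type; as written this is the gap. The precise statement that makes the argument run is: since every $\delta A\delta^{-1}$-orbit on $(\Gamma /A)\setminus \{ \delta A\}$ is infinite (this is exactly ${\rm LQN}_{\Gamma}(A)=A$), the $\sigma$-algebra of $\delta A\delta^{-1}$-invariant measurable sets of $(X, \mu)^{\Gamma /A}$ is precisely $\mathcal{B}_{\delta A}$ (trivial factor times an ergodic generalized Bernoulli factor). Then $(\Gamma, \pi)$-equivariance of $f$ carries the fixed-point algebra of $A$, namely $\mathcal{B}_{A}$, which is non-trivial because $(X, \mu)$ is, onto the fixed-point algebra of $\pi(A)$; non-triviality of the latter forces $\pi(A)$ to have a finite orbit on $\Gamma /A$, hence to be commensurable with some $\delta_{0}A\delta_{0}^{-1}$ (running the same argument for $f^{-1}$ and $\pi^{-1}$), and $\comm_{\Gamma}(A)=A$ (a consequence of ${\rm LQN}_{\Gamma}(A)=A$) upgrades commensurability to equality $\pi(A)=\delta_{0}A\delta_{0}^{-1}$. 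This replaces your vague classification and makes the rest of your surjectivity argument go through. A last small point: your kernel computation tacitly uses ${\rm N}_{\Gamma}(A)=A$ (which does follow from ${\rm LQN}_{\Gamma}(A)=A$); for $\gamma_{0}\in {\rm N}_{\Gamma}(A)\setminus A$ the map $F_{\ad \gamma_{0}, {\rm id}}$ would \emph{not} be the shift by $\gamma_{0}$, since $\gamma_{0}^{-1}\delta \gamma_{0}A\neq \gamma_{0}^{-1}\delta A$ in general, so you should record that identity before using it.
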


Let $\Gamma=\Gamma_1\ast_A\Gamma_2$ be an amalgamated free product of discrete groups.
For each $i=1, 2$, if $[\Gamma_i: A]=\infty$, then the equality ${\rm LQN}_{\Gamma}(\Gamma_i)=\Gamma_i$ holds.


\subsection{Groups ME to free products}

Let $\Gamma_1$ and $\Gamma_2$ be infinite discrete groups satisfying property (T).
Let $\Gamma =\Gamma_1\ast \Gamma_2$ be the free product of them.
We denote by $T$ the associated Bass-Serre tree and denote by $V(T)=\Gamma /\Gamma_1\sqcup \Gamma /\Gamma_2$ the set of vertices of $T$.
We define $B$ as the group of bijections on $V(T)$ equipped with the standard Borel structure associated with the pointwise convergence topology.
Let $\imath \colon \Gamma \rightarrow B$ denote the homomorphism arising from the action of $\Gamma$ on $V(T)$.
Using Theorem \ref{thm-adams-spa} as in the proof of Theorem \ref{thm-general-coup-tree}, we can prove the following:

\begin{prop}
If $\Gamma_1$ and $\Gamma_2$ are infinite discrete groups satisfying property (T), then $\Gamma =\Gamma_1\ast \Gamma_2$ is coupling rigid with respect to $(B, \imath)$.
\end{prop}

The group $\Gamma$ is not generally coupling rigid with respect to the automorphism group $\aut^*(T)$ of $T$ because $\comm(\Gamma)$ is not naturally embedded in $\aut^*(T)$.

Let $\Lambda$ be a discrete group and $\Sigma$ a coupling of $\Gamma$ and $\Lambda$.
Applying Theorem \ref{thm-furman-rep}, we obtain a homomorphism $\rho \colon \Lambda \rightarrow B$ with $\ker \rho$ finite and an almost $(\Gamma \times \Lambda)$-equivariant Borel map $\Phi \colon \Sigma \rightarrow (B, \imath, \rho)$. 
Along argument of the same kind as in the proof of Lemma \ref{lem-small-coup}, we can show the following:

\begin{lem}
In the above notation, we pick $v\in V(T)$.
Define $\stab(v)$ as the stabilizer of $v$ in $B$ and set
\[\Sigma_v=\Phi^{-1}(\stab(v)),\quad \Gamma_v=\imath^{-1}(\imath(\Gamma)\cap \stab(v)),\quad \Lambda_v=\rho^{-1}(\rho(\Lambda)\cap \stab(v)).\]
If $\rho$ and $\Phi$ are replaced appropriately as in Section \ref{subsec-circ}, then $\Sigma_v$ has positive measure and is a coupling of $\Gamma_v$ and $\Lambda_v$.
In particular, there exist subgroups $\Lambda_1$ and $\Lambda_2$ of $\Lambda$ with $\Gamma_1\sim_{\rm ME}\Lambda_1$ and $\Gamma_2\sim_{\rm ME}\Lambda_2$.
\end{lem}

For any two distinct vertices $v_1, v_2\in V(T)$, we can prove that $\Lambda_{v_1}\cap \Lambda_{v_2}$ is finite.
It might be interesting to ask whether the relation between $\Lambda_{v_1}$ and $\Lambda_{v_2}$ is nearly free or not.
We note that if $\Gamma_1$ and $\Gamma_2$ are infinite discrete groups containing a common finite subgroup $F$, then $\Gamma_1\ast \Gamma_2$ and $(\Gamma_1\ast_F\Gamma_2)\ast F$ are ME.
This is proved by using twisted actions studied in Section \ref{sec-twist}.



\end{document}